\documentclass[11pt]{amsart}


\linespread{1}

\usepackage{subfiles}

\usepackage{tabu, fullpage,diagbox, booktabs}
\usepackage{placeins}
\usepackage[margin=1in]{geometry}
\usepackage[dvipsnames]{xcolor}   		             		
\usepackage{graphicx}			
\usepackage{amssymb}
\usepackage{dsfont}
\usepackage{mathrsfs}
\usepackage{amsthm}
\usepackage{amsmath}
\usepackage{stmaryrd}
\usepackage{tikz}
\usepackage{tikz-cd}
\usepackage{accents}
\usepackage{upgreek}
\usepackage{enumerate}
\usepackage{bm}
\usepackage{mathtools}
\usepackage[all]{xy}
\usepackage{caption}

\usepackage{url}
\usepackage{float}
\usepackage{todonotes}
\usepackage{bbm}
\usepackage{easyeqn}

\usepackage[full]{textcomp}
\usepackage[cal=cm]{mathalfa}
\usepackage{xparse}
\usepackage{comment}
\usepackage{cite}





\tikzset{
	commutative diagrams/.cd, 
	arrow style=tikz, 
	diagrams={>=stealth}
}

\newcommand{\elt}{(\mathcal{C},(\mathcal{L},s))}

\newcommand{\moduli}{\mathrm{Orb}_{\Lambda}(\mathcal{A}_r)}
\newcommand{\logmoduli}{\mathrm{Log}_{\Lambda}(\mathcal{A}_r|\mathcal{D}_r)}
\newcommand{\lmoduli}{\mathrm{Orb}_{\Lambda}(\mathcal{A}_{\lambda r})}

\newcommand{\geommoduli}{\mathrm{Orb}_{\Lambda}(X_{D,r})}
\theoremstyle{definition}

\makeatletter
\newcommand{\colim@}[2]{%
	\vtop{\m@th\ialign{##\cr
			\hfil$#1\operator@font colim$\hfil\cr
			\noalign{\nointerlineskip\kern1.5\ex@}#2\cr
			\noalign{\nointerlineskip\kern-\ex@}\cr}}%
}
\newcommand{\colim}{%
	\mathop{\mathpalette\colim@{\rightarrowfill@\textstyle}}\nmlimits@
}
\makeatother

\makeatletter
\def\@tocline#1#2#3#4#5#6#7{\relax
	\ifnum #1>\c@tocdepth 
	\else
	\par \addpenalty\@secpenalty\addvspace{#2}%
	\begingroup \hyphenpenalty\@M
	\@ifempty{#4}{%
		\@tempdima\csname r@tocindent\number#1\endcsname\relax
	}{%
		\@tempdima#4\relax
	}%
	\parindent\z@ \leftskip#3\relax \advance\leftskip\@tempdima\relax
	\rightskip\@pnumwidth plus4em \parfillskip-\@pnumwidth
	#5\leavevmode\hskip-\@tempdima
	\ifcase #1
	\or\or \hskip 1em \or \hskip 2em \else \hskip 3em \fi%
	#6\nobreak\relax
	\dotfill\hbox to\@pnumwidth{\@tocpagenum{#7}}\par
	\nobreak
	\endgroup
	\fi}
\makeatother

\usetikzlibrary{calc}
\usetikzlibrary{fadings}
\usetikzlibrary{decorations.pathmorphing}
\usetikzlibrary{decorations.pathreplacing}

\newcounter{marginnote}
\setcounter{marginnote}{0}

\setlength{\parskip}{4pt}

\DeclareFontFamily{U}{wncy}{}
    \DeclareFontShape{U}{wncy}{m}{n}{<->wncyr10}{}
    \DeclareSymbolFont{mcy}{U}{wncy}{m}{n}
    \DeclareMathSymbol{\Sh}{\mathord}{mcy}{"58} 

\DeclareMathAlphabet{\mathpzc}{OT1}{pzc}{m}{it}

\usepackage[backref]{hyperref}
\hypersetup{
	colorlinks   = true,          
	urlcolor     = blue,          
	linkcolor    = purple,          
	citecolor   = blue             
}

\theoremstyle{definition}
\newtheorem{theorem}{Theorem}[section]

\newtheorem{claim}[theorem]{Claim}
\newtheorem{conjecture}[theorem]{Conjecture}
\newtheorem{corollary}[theorem]{Corollary}
\newtheorem{lemma}[theorem]{Lemma}
\newtheorem{proposition}[theorem]{Proposition}
\newtheorem{remark}[theorem]{Remark}

\newtheorem*{runningexample*}{Running example}

\newtheorem*{aside*}{Aside}

\newtheorem{constr}[theorem]{Construction}
\newtheorem{convention}[theorem]{Convention}
\newtheorem{definition}[theorem]{Definition}
\newtheorem{example}[theorem]{Example}

\newtheorem{proposition-definition}[theorem]{Proposition-Definition}
\newtheorem{question}[theorem]{Question}

\newtheorem{slogan}[theorem]{Slogan}
\newtheorem{keypoint}[theorem]{Key point}
\newtheorem{maintheorem}{Theorem}

\newtheorem{mainproposition}[maintheorem]{Proposition}

\newenvironment{construction}    
{%
	\pushQED{\qed}\begin{constr}}
	{\popQED\end{constr}}




\newcommand{\BGm}{\mathbf{B} \mathbb{G}_m}
\newcommand{\BGmr}{\mathbf{B} \mathbb{G}_{m,r}}

\newcommand{\bcd}{\begin{center}\begin{tikzcd}}
	\newcommand{\ecd}{\end{tikzcd}\end{center}}

\newcommand{\C}{\mathbb{C}}

\newcommand{\N}{\mathbb{N}}
\newcommand{\Z}{\mathbb{Z}}

\newcommand{\defelt}{(\tilde{\mathcal{C}},(\tilde{\mathcal{L}},\tilde{s}))}
\newcommand{\defeltgen}{(\tilde{\mathcal{C}}_{\eta},(\tilde{\mathcal{L}}_{\eta},\tilde{s}_{\eta}))}

\usepackage{accents}


\NewDocumentCommand{\compatibilitydatum}{m m m m m m O{} O{} O{}}{
	\begin{equation*} \begin{tikzcd}[ampersand replacement=\&]
	\: \arrow{r} \& {#1} \arrow{r} \arrow{d}{#7} \& {#2} \arrow{r} \arrow{d}{#8} \& {#3} \arrow{r}{[1]} \arrow{d}{#9} \& \: \\
	\: \arrow{r} \& {#4} \arrow{r} \& {#5} \arrow{r} \& {#6} \arrow{r} \& \:
	\end{tikzcd} \end{equation*}}

\NewDocumentCommand{\commutingsquare}{m m m m o O{} O{} O{} O{}}{
	\begin{equation}\begin{tikzcd}[ampersand replacement=\&] \label{#5}
	#1 \arrow{r}{#6} \arrow{d}{#7} \& #2 \arrow{d}{#8} \\
	#3 \arrow{r}{#9} \& #4
	\end{tikzcd}\IfValueTF{#5}{\label{#5}}{} \end{equation}}

\NewDocumentCommand{\cartesiansquare}{m m m m O{} O{} O{} O{}}{
	\begin{equation*}\begin{tikzcd}[ampersand replacement=\&]
	#1 \arrow{r}{#5} \arrow{d}{#6} \arrow[dr, phantom, "\square"] \& #2 \arrow{d}{#7} \\
	#3 \arrow{r}{#8} \& #4
	\end{tikzcd} \end{equation*}}

\NewDocumentCommand{\cartesiansquarelabel}{m m m m m O{} O{} O{} O{}}{
	\begin{tikzcd}[ampersand replacement=\&]
	#1 \arrow{r}{#6} \arrow{d}{#7} \arrow[dr, phantom, "\square"] \& #2 \arrow{d}{#8} \\
	#3 \arrow{r}{#9} \& #4
	\end{tikzcd}\IfValueTF{#5}{\label{#5}}{}
}


\NewDocumentCommand{\triangleofspaces}{m m m O{} O{} O{}}{
	\begin{tikzcd} [ampersand replacement=\&]
	#1 \arrow{r}{#4} \arrow[bend right]{rr}{#5} \& #2 \arrow{r}{#6} \& #3
	\end{tikzcd}}

\setcounter{tocdepth}{3}

\begin{document}
	\title{Moduli spaces of twisted maps to smooth pairs}
	\author{Robert Crumplin}
	\maketitle

 \begin{abstract}
     We study moduli spaces of twisted maps to a smooth pair in arbitrary genus, and give geometric explanations for previously known comparisons between orbifold and logarithmic Gromov--Witten invariants. Namely, we study the space of twisted maps to the universal target and classify its irreducible components in terms of combinatorial/tropical information. We also introduce natural morphisms between these moduli spaces for different rooting parameters and compute their degree on various strata. Combining this with additional hypotheses on the discrete data, we show these degrees are monomial of degree between $0$ and $\max(0,2g-1)$ in the rooting parameter. We discuss the virtual theory of the moduli spaces,  and relate our polynomiality results to work of Tseng and You on the higher genus orbifold Gromov--Witten invariants of smooth pairs, recovering their results in genus $1$. We discuss what is needed to deduce arbitrary genus comparison results using the previous sections. We conclude with some geometric examples, starting by re-framing the original genus $1$ example of Maulik in this new formalism.
 \end{abstract}

	\tableofcontents

\newpage

\newpage

\addtocontents{toc}{\protect\setcounter{tocdepth}{1}}

\setcounter{tocdepth}{0}
\tableofcontents
\newpage

\section{Introduction} 

\subsection{Introduction}\label{intro section}

The subject of this paper is motivated by the study of the enumerative geometry of maps $$(C| p_1 + \cdots + p_n) \rightarrow (X|D) $$ to a smooth pair $(X|D)$, with prescribed non-negative tangency orders at the markings $p_i$. As always in enumerative geometry, one needs a proper moduli space in order to define invariants via integration. There are two well-studied routes to study this problem: \begin{itemize}
    \item \textbf{Logarithmic geometry.} One encodes the tangency conditions via additional monoid structure on $C$ and $X$ \cite{GS11, AC14}.

    \item \textbf{Orbifold geometry.} One replaces the target with the root stack $X_{D,r}$ and tangencies are encoded via group homomorphisms between isotropy groups $\mu_{s_i} \rightarrow \mu_r$; the domains are now twisted curves \cite{AGV01,AV02,Cad07}.
\end{itemize}

 Two of the key players throughout will be the genus $g$ of the domain curves, and the \textit{rooting parameter} $r$ which we are free to fix. Work of \cite{ACW17} shows that in genus $g = 0$, $D$ a smooth divisor and rooting parameter $r$ sufficiently large, the logarithmic and orbifold Gromov--Witten invariants agree on the nose. This is generalised to the higher rank setting in \cite{BNR22} which shows the same remains true, after making an initial log blowup of the target. The negative contact order setting is studied in \cite{BNR24}. A common technique throughout all of these papers is to study a ``universal moduli space". This is a space controlling the virtual geometry of maps to any smooth pair, whose objects are described in terms of line bundle-section pairs on curves. One key benefit is one can often convert virtual statements about a space of maps to $X$ in to non-virtual statement on the universal space. Indeed, the universal spaces in \cite{ACW17, BNR24} can often be forced to be irreducible, and then one proceeds by studying their birational geometry. In this paper, the universal space of interest will be denoted $\moduli$ which controls twisted maps to a smooth pair.

 In higher genus, a major obstruction to proceeding in this manor is due to our lack of understanding of the geometry of $\moduli$. This motivates the following question:

 \begin{question} \label{question cpts}
     What are the irreducible components of $\moduli$?
 \end{question}

Furthermore, in higher genus and for a smooth divisor, \cite{TY2, TY3} showed that the orbifold Gromov--Witten invariants are polynomial in $r$ of degree at most $\max(0,2g - 1)$ and the constant term recovers the associated logarithmic invariant. Their proof follows from an involved degeneration and localisation calculation for a fixed smooth pair $(X|D)$, without reference to the universal moduli space. 

\begin{question}\label{Q1}
    Is there a geometric explanation for the polynomiality, without the need to degenerate, at the level of moduli spaces?
\end{question}

\begin{question}\label{Q2}
    Can we give a geometric interpretation of the coefficients of the polynomials of orbifold invariants?
\end{question}

The rest of this paper gives answers to questions \ref{question cpts}, \ref{Q1} \& \ref{Q2} by formalising the following idea: 

\begin{slogan}
    We have a complete understanding of the irreducible components of $\moduli$ which are \begin{enumerate}
        \item Indexed by combinatorial/tropical style data (Theorem \ref{main irred cpts}). 
        \item Each component is naturally built from gluing together semi-abelian torsors over boundary strata of $\logmoduli$, the space of universal log maps. These semi-abelian torsors are related to universal Jacobians of the domain curves (Proposition \ref{main action}).

        \item Each irreducible component (and more generally each stratum indexed by tropical data) grows \textit{monomially} in the rooting parameter $r$, where the monomial is in terms of the $r$-torsion of the associated semi-abelian variety (Theorem \ref{main degree}).

        \item The degree of the monomials is bounded above by the torsion of a smooth genus $g$ curve, i.e. at most $r^{2g}$, along with a discrepancy related to automorphisms of the curves, bringing the bound down to $r^{2g-1}$. Also, there is a distinguished component $Z_{\mathrm{main}} \subset \moduli$ which is the unique component contributing monomial $r^0 = 1$. This component is birational to $\logmoduli$ (Theorem \ref{main theorem polynomial}).
    \end{enumerate}
\end{slogan}

\subsection{Main results }

In section \ref{setup} we assign, to each object $\elt \in \moduli$, combinatorial data called a mod $r$ tropical type (Definition \ref{orbitroptype}), denoted $[\tau]$. This is roughly the data of a usual tropical type, but the slopes and balancing condition only live in $\Z/r\Z$. In particular $[\tau]$ records a dual graph $\Gamma$ to $\mathcal{C}$. This endows $\moduli$ with a stratification into closed subspaces $$Z_{[\tau]} \subset \moduli.$$ There is distinguished class of mod $r$ tropical types we call \textit{essential types} (Definition \ref{essentialtype}). The classification of irreducible components using this stratification is studied in Section \ref{classification}.

\begin{maintheorem}[Theorem \ref{irredcpts}]\label{main irred cpts}
     There is a bijection $$\mathrm{Irred. \ cpts \ of \ } \moduli \leftrightarrow \mathrm{Inducible \ essential \ tropical \ types \ } [\tau] $$ given by $$[\tau] \mapsto Z_{[\tau]}.$$
\end{maintheorem}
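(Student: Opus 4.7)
The plan is to use the stratification of $\moduli$ by mod $r$ tropical type from Section \ref{setup}. For each type $[\tau]$ write $Z_{[\tau]}^\circ \subseteq Z_{[\tau]}$ for the locally closed locus of objects with tropical type \emph{exactly} $[\tau]$, so that $Z_{[\tau]}$ is its closure. The goal is to show that each inducible essential $[\tau]$ produces an irreducible component, and that every irreducible component of $\moduli$ arises in this way.

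First I would show that $Z_{[\tau]}$ is irreducible whenever $Z_{[\tau]}^\circ$ is non-empty. The natural forgetful morphism $\moduli \to \logmoduli$ (passing from a twisted datum to its log shadow) sends $Z_{[\tau]}^\circ$ onto the corresponding stratum of $\logmoduli$ indexed by the underlying tropical type $\tau$. By Proposition \ref{main action}, the fibres of this map over points of the log stratum are torsors under a semi-abelian group scheme $G_\tau$ built from the universal Jacobian of the domain curves together with the combinatorial data of $\tau$. Since the log strata of $\logmoduli$ are irreducible (by standard results on moduli of pre-stable log curves and maps with fixed tropical type) and a semi-abelian torsor over an irreducible base is irreducible, we conclude that $Z_{[\tau]}^\circ$, and hence $Z_{[\tau]}$, is irreducible. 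The condition of inducibility is precisely the combinatorial reformulation of non-emptiness of $Z_{[\tau]}^\circ$, so for inducible essential $[\tau]$ we obtain a non-empty irreducible closed subspace $Z_{[\tau]} \subseteq \moduli$.

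Next I would show that essentiality corresponds exactly to $Z_{[\tau]}$ being maximal, i.e.\ an irreducible component. The idea is that specialisation in the poset of mod $r$ tropical types mirrors the closure order on strata: a non-trivial specialisation $[\tau'] \rightsquigarrow [\tau]$ would give $Z_{[\tau]} \subsetneq Z_{[\tau']}$ with $Z_{[\tau']}$ irreducible by the previous paragraph, forcing $Z_{[\tau]}$ not to be a component. Essential types are defined precisely to rule this out, so $Z_{[\tau]}$ is then maximal irreducible closed, i.e.\ a component. For the reverse direction, given any irreducible component $Z \subseteq \moduli$, its generic point has some mod $r$ tropical type $[\tau]$, and then $Z \subseteq Z_{[\tau]}$; by irreducibility of $Z_{[\tau]}$ the inclusion is an equality, and maximality of $Z$ forces $[\tau]$ to be essential and inducible. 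The assignment $[\tau] \mapsto Z_{[\tau]}$ is injective because distinct essential types have distinct generic strata $Z_{[\tau]}^\circ$ open in their closures.

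The hard part will be the irreducibility input of the middle step: one must control how the twisted/orbifold data varies over a fixed log stratum, and in particular show that the fibres are honest $G_\tau$-torsors rather than some piecewise-group object or a disjoint union of torsors. This is precisely what Proposition \ref{main action} supplies, and I expect it to be the technical heart of the argument. The remainder — matching the combinatorial notions of inducibility and essentiality with the geometric notions of non-emptiness and maximality of strata — is bookkeeping once the specialisation poset on mod $r$ tropical types is carefully aligned with the closure order on the corresponding strata of $\moduli$, which should already be set up in Section \ref{classification}.
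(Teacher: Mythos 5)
Your irreducibility step is essentially the paper's Lemma~\ref{stratairred}: realise $Z_{[\tau]}^\circ$ as a semi-abelian torsor (Proposition~\ref{action}) over an irreducible base and conclude. You phrase the base as a log stratum of $\logmoduli$, whereas the paper uses the boundary stratum $\mathfrak{M}_{[\tau]}^\circ$ of prestable twisted curves, but this is a cosmetic difference.

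The real gap is in the second step. You assert that ``specialisation in the poset of mod $r$ tropical types mirrors the closure order on strata'' and treat the rest as bookkeeping. This is exactly the hard content of the theorem, and the naive version of the claim is \emph{false}. The paper explicitly notes that the loci $Z_{[\tau]}^\circ$ form only a \emph{weak} stratification, not a stratification: the combinatorial specialisation poset does not control closure. Example~\ref{picture moduli genus 1} is a direct counterexample to your mechanism. In genus $1$, the trivial type (main component) and the type with a single internal genus~$1$ vertex are both essential, both inducible, and related by an obvious combinatorial degeneration; yet $Z_{\mathrm{main}}$ and $Z_{[\tau]}$ are distinct irreducible components of the same dimension. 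Approaching $Z_{[\tau]}^\circ$ from $Z_{\mathrm{main}}^\circ$ only hits the closed sublocus where the Jacobian twist $\mathcal{J}$ is trivial; the open locus $Z_{[\tau]}^+$ where $\mathcal{J} \ne \mathcal{O}$ is \emph{not} in the closure of the main stratum. So your argument that ``essential types are defined precisely to rule out [specialisations]'' does not hold: essentiality is a bipartite/positive-genus condition, essential types certainly admit combinatorial specialisations to one another, and what rules out geometric inclusions is not the definition but a deformation-theoretic obstruction.

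What the paper actually proves, and what your proposal is missing, are the two deformation-theory inputs: (a) \emph{smoothing}, Theorem~\ref{essentialtypethm}, that any object can be infinitesimally deformed so the generic fibre has essential type (built from the explicit line-bundle constructions in Lemmas~\ref{internaledge},~\ref{contractededge},~\ref{genus0},~\ref{genus0general} glued via Lemma~\ref{propagation}), giving $\moduli = \bigcup_{[\tau]\text{ essential}} Z_{[\tau]}$; and (b) \emph{non-smoothing}, Lemma~\ref{opens}, that for $[\tau]$ essential the open locus $Z_{[\tau]}^+ \subset Z_{[\tau]}^\circ$ where the Jacobian factor is nontrivial is actually \emph{open in $\moduli$}, so $Z_{[\tau]}$ is the closure of an irreducible open and hence a component. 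Both are genuine geometric arguments about line bundles on degenerating twisted curves. Treating them as bookkeeping on a combinatorial poset is precisely where your proof would fail.
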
 \qed

A key input to the above Theorem is the construction of an semi-abelian group action that acts freely and transitively on the moduli space, that respects the stratification by mod $r$ tropical type. This allows us to reduce many computations on the moduli space to two easier computations: one on semi-abelian varieties, and one on boundary strata of $\overline{\mathfrak{M}}_{g,n}$. The proof at its core is a deformation theory argument for line bundle-section pairs on twisted curves.

\begin{mainproposition}[= Proposition \ref{action}]\label{main action}
    For $[\tau]$ any mod $r$ tropical type, there is a family of semi-abelian varieties $\mathrm{Jac}_{[\tau]}$ and a free action $$\mathrm{Jac}_{[\tau]} \times [(\mathbb{G}_m)^{b_0(\Gamma_0)}/ \mathbb{G}_m] \curvearrowright Z_{[\tau]}^{\circ} $$ which acts transitively on the sub-locus of data with a fixed curve $\mathcal{C}$.
\end{mainproposition}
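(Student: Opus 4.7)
The plan is to work locally over the moduli of twisted curves, fixing the underlying curve $\mathcal{C}$ so that the statement reduces to describing the fiber of $Z_{[\tau]}^\circ$ over the forgetful map to curves, and to construct the action by directly manipulating line-bundle-section pairs. First I would unpack the constraints imposed by $[\tau]$. The tropical type partitions the vertices of the dual graph $\Gamma$ into those in $\Gamma_0$, where $s$ is not identically zero, and those in $\Gamma \setminus \Gamma_0$, where $s \equiv 0$; it also prescribes the vanishing orders of $s$ at markings and at each branch of every node, together with the multidegree of $\mathcal{L}$. On the open stratum $Z_{[\tau]}^\circ$ the vanishing orders at special points exhaust $\deg \mathcal{L}|_{C_v}$ for each $v \in \Gamma_0$, so $\mathrm{div}(s|_{C_v})$ is the prescribed effective divisor supported on the special points and $(\mathcal{L}|_{C_v}, s|_{C_v})$ is determined up to the $\mathbb{G}_m$ scaling of the section.

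Next I would assemble $\mathrm{Jac}_{[\tau]}$ together with its action. For each $v \notin \Gamma_0$, the bundle $\mathcal{L}|_{C_v}$ is only constrained to lie in the prescribed degree component of the Picard, so $\mathrm{Pic}^0(C_v)$ acts freely and transitively by twisting. The gluing of $\mathcal{L}$ across each node of $\mathcal{C}$ contributes a $\mathbb{G}_m$-factor, giving a torus $T$ indexed by edges of $\Gamma$ modulo the relations coming from the rigid components where the gluing is forced by $s$. Taken together this yields an extension
\begin{equation*}
1 \to T \to \mathrm{Jac}_{[\tau]} \to \prod_{v \notin \Gamma_0} \mathrm{Pic}^0(C_v) \to 1,
\end{equation*}
which is semi-abelian by construction. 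The factor $[(\mathbb{G}_m)^{b_0(\Gamma_0)}/\mathbb{G}_m]$ acts by scaling $s$ independently on each connected component of $\Gamma_0$, the diagonal quotient absorbing the universal scaling automorphism of any line-bundle-section pair.

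To verify freeness and transitivity on the fiber over fixed $\mathcal{C}$, I would observe that for two pairs $(\mathcal{L},s)$ and $(\mathcal{L}',s')$ of tropical type $[\tau]$, on each $v \notin \Gamma_0$ the difference $\mathcal{L}' \otimes \mathcal{L}^{-1}|_{C_v}$ is a unique element of $\mathrm{Pic}^0(C_v)$, and on each $v \in \Gamma_0$ the canonical identification $\mathcal{L}|_{C_v} \cong \mathcal{O}_{C_v}(\mathrm{div}(s|_{C_v}))$ pins $s|_{C_v}$ to $s'|_{C_v}$ up to a unique scalar $\lambda_v \in \mathbb{G}_m$. Collecting the $\lambda_v$ modulo a global scalar and adjusting gluings at nodes exhibits the required element of $\mathrm{Jac}_{[\tau]} \times [(\mathbb{G}_m)^{b_0(\Gamma_0)}/\mathbb{G}_m]$ carrying one pair to the other; freeness is automatic since the only automorphism of a nonzero pair is the global $\mathbb{G}_m$, already quotiented out.

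The hard part will be executing this construction relatively over the moduli of twisted curves and tracking the effect of the $\mu_r$-isotropy at twisted nodes on the allowed gluings. The mod-$r$ nature of $[\tau]$ constrains which gluing parameters are compatible with the combinatorial type, so correctly identifying the torus part of $\mathrm{Jac}_{[\tau]}$ and checking that the action descends to $Z_{[\tau]}^\circ$ with the correct (trivial) stabilizers is the main technical burden. Once this orbifold/deformation-theoretic bookkeeping is in place, the component-wise argument above globalizes via standard deformation theory of line-bundle-section pairs on twisted curves.
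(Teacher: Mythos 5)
Your overall strategy matches the paper's: fix $\mathcal{C}$, twist $\mathcal{L}$ on the internal components by a Jacobian element, scale the section on each connected piece of the external locus, and check transitivity via the degree constraints that pin down $(\mathcal{L}|_{\mathcal{C}_v},s|_{\mathcal{C}_v})$ on external components. The semi-abelian group you describe is (morally) the one the paper calls $\mathrm{Jac}^{+}_{\mathcal{C}}$, built from the partial normalisation along the bipartite nodes and the torus of gluing parameters there.

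However, there is a genuine gap in the construction of the action itself, and it sits exactly where the paper does the real work. You declare that $\mathrm{Pic}^0(C_v)$ ``acts by twisting'' on internal components, but you never say what $\mathcal{J}\cdot s$ is. A global section of $\mathcal{J}\otimes\mathcal{L}$ is not assembled component by component; one must show that the old section on $\mathcal{C}_0$ and the zero section on $\mathcal{C}_+$ actually glue to an honest section of the \emph{twisted} bundle. The paper's definition of $\mathrm{Jac}^{+}_{\mathcal{C}}$ bakes in precisely the condition needed for this: $\mathcal{J}$ must pull back to the trivial bundle on $\mathcal{C}_0$ (and be everywhere age $0$), so that $\mathcal{J}\otimes\mathcal{L}|_{\mathcal{C}_0}\cong\mathcal{L}|_{\mathcal{C}_0}$ canonically, and then the normalisation sequence for $\mathcal{J}\otimes\mathcal{L}$ lets one glue $(s|_{\mathcal{C}_0},0)$ because $s$ vanishes at every bipartite node. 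Your proposal does not identify this triviality-on-$\mathcal{C}_0$ constraint, nor the resulting exact-sequence argument, so the ``free action'' is not actually constructed. Relatedly, your torus $T$ is described only as ``edges modulo relations from rigid components''; the paper pins it down as $\mathbb{G}_m^{|E^{\mathrm{b}}|-b_0(\Gamma^{\dag})+1}$ coming from partial normalisation at bipartite nodes only, with the internal cycles $b_1(\Gamma_+)$ absorbed into $\mathrm{Jac}(\mathcal{C}_+)$ rather than into $T$; without this, the rank of $\mathrm{Jac}_{[\tau]}$ (which is needed later for the torsion counts) is left ambiguous.

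A secondary gap: in the transitivity step you assert that $\mathcal{L}'\otimes\mathcal{L}^{-1}|_{C_v}\in\mathrm{Pic}^0(C_v)$ per internal vertex, but the nontrivial point (when $\mathcal{C}_+$ is reducible) is that the Jacobian factors in the decompositions of Proposition \ref{structure}(2) of $\nu^*\mathcal{L}$ and $\nu^*\mathcal{L}'$ have the same \emph{multi}-degree, and this requires invoking the $(-1/2,1/2)$ degree bound component by component. You also need to account for the non-canonical $\mathcal{G}$ factors arising from self-nodes. These are exactly the verifications the paper carries out, and they do not follow automatically from the tropical type recording $d_v$.
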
 \qed

\begin{figure}[h!]
    \centering
    \tikzset{every picture/.style={line width=0.75pt}} 

\begin{tikzpicture}[x=0.75pt,y=0.75pt,yscale=-1,xscale=1]

\draw  [fill={rgb, 255:red, 126; green, 211; blue, 33 }  ,fill opacity=1 ] (193,159.98) .. controls (193,132.38) and (245.61,110) .. (310.5,110) .. controls (375.39,110) and (428,132.38) .. (428,159.98) .. controls (428,187.59) and (375.39,209.97) .. (310.5,209.97) .. controls (245.61,209.97) and (193,187.59) .. (193,159.98) -- cycle ;
\draw  [fill={rgb, 255:red, 248; green, 231; blue, 28 }  ,fill opacity=1 ] (301.45,35.97) -- (397,35.97) -- (356.05,143) -- (260.5,143) -- cycle ;
\draw    (301,172.97) ;
\draw [shift={(301,172.97)}, rotate = 0] [color={rgb, 255:red, 0; green, 0; blue, 0 }  ][fill={rgb, 255:red, 0; green, 0; blue, 0 }  ][line width=0.75]      (0, 0) circle [x radius= 3.35, y radius= 3.35]   ;
\draw    (161,180.97) .. controls (200.6,151.27) and (250,205.89) .. (289.8,177.87) ;
\draw [shift={(291,177)}, rotate = 143.13] [color={rgb, 255:red, 0; green, 0; blue, 0 }  ][line width=0.75]    (10.93,-3.29) .. controls (6.95,-1.4) and (3.31,-0.3) .. (0,0) .. controls (3.31,0.3) and (6.95,1.4) .. (10.93,3.29)   ;
\draw    (320,141.97) ;
\draw [shift={(320,141.97)}, rotate = 0] [color={rgb, 255:red, 0; green, 0; blue, 0 }  ][fill={rgb, 255:red, 0; green, 0; blue, 0 }  ][line width=0.75]      (0, 0) circle [x radius= 3.35, y radius= 3.35]   ;
\draw    (503,138.97) .. controls (430.73,110.26) and (360.42,176.62) .. (321.18,143.02) ;
\draw [shift={(320,141.97)}, rotate = 42.71] [color={rgb, 255:red, 0; green, 0; blue, 0 }  ][line width=0.75]    (10.93,-3.29) .. controls (6.95,-1.4) and (3.31,-0.3) .. (0,0) .. controls (3.31,0.3) and (6.95,1.4) .. (10.93,3.29)   ;
\draw    (339,80.97) ;
\draw [shift={(339,80.97)}, rotate = 0] [color={rgb, 255:red, 0; green, 0; blue, 0 }  ][fill={rgb, 255:red, 0; green, 0; blue, 0 }  ][line width=0.75]      (0, 0) circle [x radius= 3.35, y radius= 3.35]   ;
\draw    (211,67.97) .. controls (250.6,38.27) and (298.04,109.52) .. (337.8,81.84) ;
\draw [shift={(339,80.97)}, rotate = 143.13] [color={rgb, 255:red, 0; green, 0; blue, 0 }  ][line width=0.75]    (10.93,-3.29) .. controls (6.95,-1.4) and (3.31,-0.3) .. (0,0) .. controls (3.31,0.3) and (6.95,1.4) .. (10.93,3.29)   ;

\draw (57,180.4) node [anchor=north west][inner sep=0.75pt]    {$\left( E,\left(\mathcal{O}\left(\sum _{i\ =\ 1}^{n}\tilde{c}_{i} p_{i}\right) ,s\right)\right) \ $};
\draw (448,141.4) node [anchor=north west][inner sep=0.75pt]    {$\left( E,\left(\mathcal{O}\left(\sum _{i\ =\ 1}^{n}\tilde{c}_{i} p_{i}\right) ,0\right)\right) \ $};
\draw (32,35.4) node [anchor=north west][inner sep=0.75pt]    {$\left( E,\left(\mathcal{O}\left(\sum _{i\ =\ 1}^{n}\tilde{c}_{i} p_{i}\right) \otimes \mathcal{J} ,0\right)\right) \ $};
\draw (217,141.4) node [anchor=north west][inner sep=0.75pt]    {$Z\mathrm{_{\mathrm{m} ain}^{\circ }}$};
\draw (342,42.4) node [anchor=north west][inner sep=0.75pt]    {$Z\mathrm{_{[ \tau ]}^{\circ }}$};

\end{tikzpicture}
    \caption{Two irreducible components of $\moduli$ in genus $1$.}
    \label{fig:enter-label}
\end{figure}

In Section \ref{polynomiality} we investigate the polynomial properties of the strata $Z_{[\tau]}$ as we vary the rooting parameter. For a fixed $r$, there are natural comparison maps between the universal moduli spaces (Section \ref{comparison maps}) $$\pi_{\lambda r, r}: \mathrm{Orb}_{\Lambda}(\mathcal{A}_{\lambda r}) \rightarrow \moduli $$ for any $\lambda \in \N$. We first compute the degree of the comparison morphisms, showing the result is a combinatorial factor multiplied by a monomial whose exponent depends explicitly on tropical data:

\begin{maintheorem}[= Theorem \ref{degrees} + Theorem \ref{essential degree theorem} + Proposition \ref{aut restriction image}]\label{main degree}
    Let $[\tau]$ be a mod $\lambda r$ tropical type. Then under the comparison map $\pi_{\lambda r,r}$, $Z_{[\tau]}$ maps onto a stratum $Z_{[\tau']}$ for $[\tau']$ a mod $r$ tropical type. Furthermore, the map $Z_{[\tau]} \rightarrow Z_{[\tau']}$ is representable by DM stacks, with $0$ dimensional fibres of finite degree $$ C(\lambda)  \cdot \lambda^{|E^{\mathrm{b}}| - b_0(\Gamma^{\dag}) + 1 + b_1(\Gamma_+) + 2\sum_{v \in V_+} g_v - \epsilon} $$ where $C(\lambda)$ is a combinatorial factor related to ghost automorphisms of twisted curves, and the exponent of the monomial term is in terms of torsion in the semi-abelian variety of Proposition \ref{main action}.

    Furthermore, if we additionally suppose \begin{itemize}
     \item \textbf{$r$ is sufficiently large:} $r > \max_{v \in V_0, v \in e} (d_v - m_{\vec{e}} + \sum_{v \in i \in L(\Gamma)} c_i )$.
        \item \textbf{$r$ is sufficiently divisible:} $m_{\vec{e}} | r $ for all $e \in E$.
        \item \textbf{Nodal contacts are non-trivial:} Suppose that $m_{\vec{e}} \not = 0$ for all $e \in E$.
        \item $[\tau]$ is an essential type
    \end{itemize} then $C(\lambda) \sim \lambda^{|E| - b_1(\Gamma)}$ is a monomial in $\lambda$. Thus the total degree is a monomial in $\lambda$ and in fact this degree lies between $0$ and $\max(0,2g-1)$.
\end{maintheorem}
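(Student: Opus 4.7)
The plan is to reduce the degree computation to two essentially independent counts using Proposition \ref{main action}: a count of $\lambda$-torsion points on the semi-abelian varieties, and a count of ghost automorphisms of twisted curves that become trivial under $\mu_{\lambda r} \twoheadrightarrow \mu_r$. First I would establish that $\pi_{\lambda r, r}$ is compatible with the stratification: the mod $r$ reduction of a mod $\lambda r$ tropical type is canonically a mod $r$ tropical type $[\tau']$, and functoriality of the stratification built in Section \ref{setup} forces $Z_{[\tau]}$ to land inside $Z_{[\tau']}$. Surjectivity follows by lifting any mod $r$ datum to a mod $\lambda r$ datum (unobstructed, as it amounts to choosing integral lifts of slopes modulo $\lambda r$). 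Representability by DM stacks with $0$-dimensional fibres follows from the fact that over a fixed underlying curve the fibres inject into finite $\mathrm{Hom}$-sets between cyclic groups, killing both stackiness and positive-dimensional variation.

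Next I would compute the monomial exponent using Proposition \ref{main action}. Both $Z_{[\tau]}^{\circ}$ and $Z_{[\tau']}^{\circ}$ are fibrewise torsors under semi-abelian varieties $\mathrm{Jac}_{[\tau]}$ and $\mathrm{Jac}_{[\tau']}$, and $\pi_{\lambda r, r}$ is equivariant with respect to the $[\lambda]$-isogeny $\mathrm{Jac}_{[\tau]} \to \mathrm{Jac}_{[\tau']}$ (which arises because descending the rooting from $\lambda r$ to $r$ divides tangency data by $\lambda$). The degree over a fixed curve is therefore the order of $\mathrm{Jac}_{[\tau]}[\lambda]$. Writing $\mathrm{Jac}_{[\tau]}$ as an extension of the abelian part $\prod_{v \in V_+} \mathrm{Jac}(C_v)$ — contributing $2\sum_{v \in V_+} g_v$ to the $\lambda$-torsion exponent — by a torus whose character lattice has rank $|E^{\mathrm{b}}| + b_1(\Gamma_+) - b_0(\Gamma^{\dag}) + 1 - \epsilon$ (where $|E^{\mathrm{b}}|$ counts boundary edges giving free $\mathbb{G}_m$-parameters, $b_1(\Gamma_+)$ the independent cycles, and the remaining terms account for the $\mathbb{G}_m$-quotient in Proposition \ref{main action} and the discrepancy from balancing) recovers exactly the stated exponent. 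The combinatorial factor $C(\lambda)$ then measures the difference between the fibre over a fixed \emph{twisted} curve and the fibre over its underlying stable curve: it counts ghost automorphisms at level $\lambda r$ that restrict to the identity at level $r$.

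Under the extra hypotheses, I would show $C(\lambda) = \lambda^{|E| - b_1(\Gamma)}$ by an explicit count: sufficient divisibility and largeness of $r$ together with non-trivial nodal contacts and essentiality of $[\tau]$ guarantee that the ghost automorphism group at level $\lambda r$ is generated freely by edge-twisting by $\mu_{\lambda}$-elements, subject only to the $b_1(\Gamma)$ cycle relations from the balancing conditions. Summing exponents, the total is
\[
|E^{\mathrm{b}}| + |E| - b_0(\Gamma^{\dag}) + 1 + b_1(\Gamma_+) - b_1(\Gamma) + 2\sum_{v \in V_+} g_v - \epsilon,
\]
and the main obstacle is showing this lies in $[0, \max(0, 2g-1)]$. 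I would attack this via the genus formula $g = \sum_v g_v + b_1(\Gamma)$, partitioning contributions by $V_+$ versus $V_0$ and edges inside versus across $\Gamma_+$, and observing that essentiality combined with the non-trivial nodal contact hypothesis forces strict inequalities that drop the exponent below $2g$ by at least one. The lower bound $0$ should be attained precisely by $Z_{\mathrm{main}}$, matching the birational statement in the slogan, while the upper bound $2g-1$ should be sharp on the most degenerate essential types. This compatibility check is the subtlest part of the argument and where I expect the bulk of the bookkeeping to lie.
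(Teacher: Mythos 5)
Your overall architecture matches the paper's: stratify, use the semi-abelian torsor from Proposition~\ref{main action} to reduce the fibre computation to a torsion count plus an automorphism count, then bound the exponent combinatorially. However, there is a structural sign error that derails the endgame. The factor $C(\lambda) = |\Sh(\underline{\tau},\lambda\cdot w)|/|\Sh(\underline{\tau},w)|$ enters the degree \emph{in the denominator}: the fibre is a disjoint union of gerbes $B K_3$ indexed by $\mathrm{Jac}_{\mathcal{C}}^{+}[\lambda]$ (and the $\mu_\lambda^{b_0(\Gamma_0)}/\mu_\lambda$ factor you dropped), and $|K_3| = \lambda^{\epsilon}\,|\Sh(w)|/|\Sh(\lambda w)|$, so the degree is the torsion count divided by $|K_3|$. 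Since $w$ is the $\lambda r$-weighting, $\Sh(w)=\Sh(w_{\lambda r})$ is the \emph{larger} group, so $C(\lambda)\sim\lambda^{-(|E|-b_1(\Gamma))}$, a negative power. Your exponent sum adds $|E|-b_1(\Gamma)$; plugging in the essential-type identities $|E^{\mathrm{b}}|=|E|$, $b_0(\Gamma^{\dag})=|V|$, $b_1(\Gamma_+)=0$ and the connectedness relation $b_1(\Gamma)=|E|-|V|+1$, your total reduces to $|E|+2\sum_{v\in V_+}g_v-\epsilon$, which grows without bound in $|E|$. No partition-by-$V_\pm$ bookkeeping with $g=\sum_v g_v+b_1(\Gamma)$ can bring that into $[0,\max(0,2g-1)]$; the subtraction is forced, and with it the exponent collapses to $k_{\hat\tau}=b_1(\Gamma)+2\sum_{v\in V_+}g_v-|V_+|$, which does bound correctly.

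Two related conceptual slips feed into this. First, you describe $C(\lambda)$ as counting ghost automorphisms at level $\lambda r$ that restrict to the identity at level $r$ --- that is the kernel $\ker(g)$, which is a tool in the paper's proof of Proposition~\ref{aut restriction image}, not $C(\lambda)$ itself. $C(\lambda)$ is a ratio of sizes of \emph{images} of the forgetful map $\mathrm{Aut}(\mathcal{C}/C,(\mathcal{L},s))\to\mathrm{Aut}(\mathcal{C}/C)$, and it is precisely because it is the automorphism order of the objects in the fibre that it divides rather than multiplies. Second, you fold $-\epsilon$ into the torus rank of $\mathrm{Jac}_{[\tau]}$; the torus part of $\mathrm{Jac}_{\mathcal{C}}^{+}$ has rank $|E^{\mathrm{b}}|-b_0(\Gamma^{\dag})+1$ unmodified, and the $\lambda^{-\epsilon}$ comes from the automorphism computation in the case $V=V_+$, where the pair $(\mathcal{L},0)$ has a $\mathbb{G}_m$'s worth of automorphisms and $\mu_\lambda\subset\mathbb{G}_m$ acts trivially on the underlying object. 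Your claim that the degree over a fixed curve ``is therefore the order of $\mathrm{Jac}_{[\tau]}[\lambda]$'' also silently discards both the $(\mu_\lambda^{b_0(\Gamma_0)}/\mu_\lambda)$-torsor of section-scalings and the gerbe structure, and the DM-representability sketch (``killing both stackiness'') cannot be right as stated since the fibres are genuinely stacky --- gerbes over finite groups --- and what must be shown is that they are Deligne--Mumford and zero-dimensional, not that stackiness disappears.
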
 \qed

 The first two constraints have arisen in related work, namely the main results of \cite{ACW17} require these conditions. Furthermore, this fits in intuitively with the notion of ``sensitive subdivision" of \cite{BNR22}; the auxiliary choice of $r$ gives an initial log modification of the target after which the invariants are well-behaved. Section \ref{polynomiality} packages the above results in to a concise statement about Chow-valued polynomials. Before stating the main Theorem of the section, we need to introduce an appropriate notion of compatible families of strata as we vary the rooting parameter. This is done via $\widehat{\Z}$-tropical types (Definition \ref{Zhattype}). A single $\widehat{\Z}$-tropical type $\hat{\tau}$ induces a family of strata $Z_{[\hat{\tau}_r]} \subset \moduli$ for each $r$ which map to each other under the comparison morphisms.

\begin{maintheorem}[= Theorem \ref{essential degree theorem} + Corollary \ref{essential polynomial}]\label{main theorem polynomial} 
    
    Let $\hat{I}$ be a finite set of essential $\widehat{\Z}$-tropical types. Define a family of Chow classes $$P_{\hat{I},r}(\lambda) := (\pi_{\lambda r, r})_* (\sum_{j \in \hat{I}} [Z_{[\hat{\tau}_{j,\lambda r}]}]) \in A_*(\moduli).$$ 
    
    Then under the numerical assumptions on $r$ above, $P_{\hat{I},r}(\lambda)$ is a polynomial in $\lambda$ of degree at most $\max(0,2g-1)$ and whose constant term is given by the contributions from the main component. The top degree terms are given by irreducible components whose associated type has an internal genus $g$ vertex.
\end{maintheorem}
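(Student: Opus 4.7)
The plan is to apply Theorem~\ref{main degree} termwise to the sum defining $P_{\hat{I},r}(\lambda)$. For each index $j \in \hat{I}$, the comparison morphism restricted to $Z_{[\hat{\tau}_{j,\lambda r}]}$ surjects onto a stratum $Z_{[\tau'_j]} \subset \moduli$ whose combinatorial type is determined by the $\widehat{\Z}$-type $\hat{\tau}_j$ (via reduction mod $r$) and is independent of $\lambda$. Since the restricted morphism is representable with $0$-dimensional fibres of finite degree, the pushforward of the fundamental class satisfies
\[
(\pi_{\lambda r,r})_*\,[Z_{[\hat{\tau}_{j,\lambda r}]}] \;=\; \deg\bigl(Z_{[\hat{\tau}_{j,\lambda r}]} \to Z_{[\tau'_j]}\bigr)\cdot [Z_{[\tau'_j]}],
\]
so the only $\lambda$-dependence resides in this numerical degree.

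Next, under the standing numerical hypotheses (essentiality of each type, sufficient largeness and divisibility of $r$, non-triviality of nodal contacts), Theorem~\ref{main degree} asserts that both factors in the degree formula are monomial in $\lambda$: the combinatorial ghost-automorphism correction contributes $C(\lambda) \sim \lambda^{|E| - b_1(\Gamma)}$, and the semi-abelian-torsion factor is manifestly $\lambda^{|E^{\mathrm{b}}| - b_0(\Gamma^{\dag}) + 1 + b_1(\Gamma_+) + 2\sum_{v\in V_+} g_v - \epsilon}$. Hence each summand is a single monomial $\lambda^{d_j}[Z_{[\tau'_j]}]$ with $d_j \in \Z_{\ge 0}$, and the finite sum $P_{\hat{I},r}(\lambda)$ is a polynomial in $\lambda$ with Chow-class coefficients.

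The crux is the degree bound $d_j \le \max(0, 2g-1)$. The plan is to sum the two exponents and use the graph-theoretic identity $|E| - b_1(\Gamma) = |V| - b_0(\Gamma)$ together with the decomposition $V = V_+ \sqcup V_0$ coming from essentiality to rewrite the total exponent in terms of invariants of $\Gamma_+$ only. The absolute genus constraint $g = b_1(\Gamma) + \sum_v g_v$, combined with essentiality (which concentrates positive geometric genus on $\Gamma_+$), then yields the uniform estimate $d_j \le 2g-1$ for $g \ge 1$; the $-1$ reflects the quotient by $\Gm$ and is recorded by the parameter $\epsilon$. When $g = 0$, the same bookkeeping forces $d_j = 0$ for every $j$ and the polynomial is constant. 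This combinatorial reduction is the main obstacle: correctly accounting for $\epsilon$, for the contribution of $|E^{\mathrm{b}}|$ versus $|E|$, and for how genus is distributed among the vertices of $\Gamma_+$ requires delicate bookkeeping, but ultimately boils down to an Euler-characteristic estimate on the subgraph $\Gamma_+$.

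Finally, I would extract the coefficient description. The exponent $d_j$ vanishes precisely when $V_+ = \emptyset$, $b_1(\Gamma_+) = 0$ and $\epsilon$ matches $|E^{\mathrm{b}}| - b_0(\Gamma^{\dag}) + 1$, which by the classification in Theorem~\ref{main irred cpts} and the slogan characterisation of $Z_{\mathrm{main}}$ as the unique component contributing the monomial $\lambda^0$ identifies the constant term with the pushforward of the main-component class. At the opposite extreme, $d_j = 2g-1$ is achieved exactly when the inequalities collapse, which by the genus identity forces $\Gamma_+$ to contain a single internal vertex of genus $g$; this gives the claimed description of the top-degree terms and completes the proof.
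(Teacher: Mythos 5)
Your overall strategy matches the paper's: push forward termwise, use Theorem~\ref{main degree}/\ref{essential degree theorem} to show each term is a single monomial $\lambda^{d_j}[Z_{[\tau_j']}]$, and then bound $d_j$ combinatorially. This is indeed how Corollary~\ref{essential polynomial} is proved. However, there are two genuine bookkeeping errors in how you compute and interpret $d_j$.

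First, when you write ``the plan is to sum the two exponents,'' you are treating the ghost-automorphism factor as contributing $+(|E|-b_1(\Gamma))$. In fact $C(\lambda)=|\Sh(\underline{\tau},\lambda\cdot w)|/|\Sh(\underline{\tau},w)|$ is the \emph{reciprocal} of the quantity $|\Sh(\underline{\tau},w_{\lambda r})|/|\Sh(\underline{\tau},w_r)|\sim\lambda^{|E|-b_1(\Gamma)}$ estimated in Proposition~\ref{aut restriction image}, because more ghost automorphisms upstairs means \emph{smaller} degree. Summing the two exponents would give an answer far exceeding $2g-1$; the correct computation (as in the proof of Theorem~\ref{essential degree theorem}) subtracts $|E|-b_1(\Gamma)$ from the Jacobian-torsion exponent, and after using essentiality ($|E^{\mathrm b}|=|E|$, $b_0(\Gamma^\dagger)=|V|$, $b_1(\Gamma_+)=0$) the result collapses to $k_{\hat\tau}=b_1(\Gamma)+2\sum_{v\in V_+}g_v-|V_+|$. (The statement of Theorem~\ref{main degree} is itself loose on this sign, so you may have inherited the confusion, but the subsequent deduction depends on getting it right.)

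Second, your claim that ``the $-1$ reflects the quotient by $\Gm$ and is recorded by the parameter $\epsilon$'' is not correct. The $\epsilon$ and the section-scaling torus factor $b_0(\Gamma_0)$ in fact cancel out of the final exponent $k_{\hat\tau}$. The $-1$ in the bound $2g-1$ comes from the term $-|V_+|$ in $k_{\hat\tau}$: one has $k_{\hat\tau}\le 2g-|V_+|$, and for a non-trivial essential type $|V_+|\ge 1$, giving $k_{\hat\tau}\le 2g-1$ (Lemma~\ref{essential degree bound}). The characterisation of the top-degree terms then follows exactly as you sketch, from equality forcing $b_1(\Gamma)=0$, $g_v=0$ for external $v$, and $|V_+|=1$ (Lemma~\ref{top degree terms}); similarly $k_{\hat\tau}=0$ iff $V_+=\emptyset$, which for a connected essential type forces the trivial type (Lemma~\ref{constant term}), so your extra conditions there are redundant but harmless.
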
 \qed

This is reminiscent of the main results of \cite{TY2,TY3} on the polynomiality of orbifold invariants. To upgrade Theorem \ref{main theorem polynomial} into a result about the Gromov--Witten invariants of smooth pairs, we need to express the virtual geometry of $\moduli$ in terms of the strata. Indeed, in Section \ref{dimension calculations} we study the dimensions of strata of the moduli space and deduce the following trichotomy: \begin{itemize}
    \item $g = 0:$ $\moduli = Z_{\mathrm{main}}$ is irreducible of expected dimension.
    \item $g = 1:$ $\moduli$ is reducible, but equidimensional of expected dimension.
    \item $g > 1:$ $\moduli$ is reducible and not equidimensional, with components larger than expected dimension.
\end{itemize} As a result we may have non-trivial virtual theory for $g > 1$. As a test case, we prove the virtual class in $g = 1$ is the usual fundamental class, thus recovering results of Tseng--You in this instance. 

\begin{maintheorem}[= Corollary \ref{g = 1 virtual class} + Theorem \ref{g = 1 virtual polynomial}]

In genus $g = 1$, the virtual class is $$[\moduli]^{\mathrm{vir}} = \sum_{[\tau] \ \mathrm{essential}} [Z_{[\tau]}]$$ which is the usual fundamental class on each finite type open substack. Hence, for genus $g = 1$ and $r$ sufficiently large and divisible, the family of Chow classes $$Q(\lambda) := (\pi_{\lambda r, r})_{*}[\lmoduli]^{\mathrm{vir}} \in A_*(\moduli)$$ is a polynomial in $\lambda$ of degree $1 = 2g - 1$ whose constant term is given by the contribution from $Z_{\mathrm{main}}$.

\end{maintheorem}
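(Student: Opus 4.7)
The plan is to split the statement into two claims: first, identify $[\lmoduli]^{\mathrm{vir}}$ with the naïve sum $\sum_{[\tau]} [Z_{[\tau]}]$ of fundamental classes of essential strata in genus one, and second, push this through the comparison morphisms and invoke Theorem~\ref{main theorem polynomial} term by term. For the virtual class, I would start from the perfect obstruction theory on $\moduli$ set up in the virtual theory section, coming from deformations of the line bundle--section pair on the twisted source relative to the stack of twisted curves; its virtual dimension is by construction the expected one. In genus one, the dimension trichotomy of Section~\ref{dimension calculations} guarantees that on each finite-type open substack $\moduli$ is reducible but equidimensional of precisely this virtual dimension. The second input is Proposition~\ref{main action}: each open stratum $Z_{[\tau]}^\circ$ attached to an essential type is a torsor under a smooth connected group stack, hence is smooth. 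Combining smoothness of the open strata with equidimensionality of the ambient at the virtual dimension, the obstruction sheaf of the POT must vanish generically on every irreducible component and hence, being coherent, everywhere on that component; this forces $[\lmoduli]^{\mathrm{vir}}$ to agree with the ordinary fundamental class of the underlying stack, which by Theorem~\ref{main irred cpts} decomposes as $\sum_{[\tau] \text{ essential}} [Z_{[\tau]}]$.

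For the polynomiality part, apply $(\pi_{\lambda r, r})_*$ to the decomposition of $[\lmoduli]^{\mathrm{vir}}$ just established. On a fixed finite-type open substack of $\moduli$ only finitely many essential mod-$\lambda r$ types have image meeting this open, and each such type is of the form $\hat{\tau}_{\lambda r}$ for an essential $\widehat{\Z}$-tropical type $\hat{\tau}$ drawn from some finite set $\hat{I}$. Summing gives
\[
Q(\lambda) = P_{\hat{I}, r}(\lambda),
\]
and Theorem~\ref{main theorem polynomial} identifies the right-hand side as a polynomial in $\lambda$ of degree at most $\max(0, 2g-1) = 1$ with constant term equal to the contribution from $Z_{\mathrm{main}}$, since $Z_{\mathrm{main}}$ is the unique essential component whose monomial exponent in Theorem~\ref{main degree} vanishes. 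To upgrade the degree bound to an equality, I would invoke the last clause of Theorem~\ref{main theorem polynomial}: top-degree contributions come from essential types with an internal genus-$g$ vertex, and in genus one such types genuinely occur (e.g.\ the configuration in Figure~1 attaching a genus-one vertex to a rational vertex meeting $D$). Since pushforward is effective and each such contribution is a nonzero class, the coefficient of $\lambda$ in $Q(\lambda)$ cannot cancel to zero.

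The main obstacle I anticipate is the first step: certifying that the virtual class equals the naïve sum of fundamental classes with multiplicity one on each essential component. Equidimensionality alone only forces $[\lmoduli]^{\mathrm{vir}}$ to be a cycle of the correct dimension; one must still rule out hidden generic multiplicities arising from a nonzero obstruction sheaf supported on a smaller locus, or from embedded components of the underlying stack. The smoothness of the open strata provided by Proposition~\ref{main action} is exactly the lever to kill both possibilities, but propagating this from the dense smooth locus of each component to the whole of $Z_{[\tau]}$ (where the POT might a priori acquire nontrivial $h^1$) is where I would spend the most care; the cleanest packaging is probably via a Siebert-type formula identifying the virtual class with the top Chern class of the excess bundle, which is trivial once the obstruction sheaf vanishes on a dense open of each component.
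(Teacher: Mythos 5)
Your treatment of the polynomiality half follows the paper's route: push forward the decomposition of the virtual cycle, appeal to the degree computations on strata, and identify the constant term via the trivial type. Your remark that the degree is exactly $1$ (an essential type with an internal genus-$1$ vertex exists, and effectivity of pushforward rules out cancellation of the $\lambda$-coefficients) is a useful addition, since Lemma~\ref{essential degree bound} only furnishes the upper bound $k_{\hat{\tau}} \leq 2g-1$.

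However, your derivation of $[\moduli]^{\mathrm{vir}} = \sum_{[\tau]\ \mathrm{essential}}[Z_{[\tau]}]$ contains a genuine gap. The step ``the obstruction sheaf must vanish generically on every irreducible component and hence, being coherent, everywhere on that component'' is false: a coherent sheaf vanishing on a dense open can still be a nonzero torsion sheaf supported on the complement, and that is precisely the phenomenon you need to exclude at the boundary of each $Z_{[\tau]}$. The Siebert-type excess formula you suggest does not close this either, since the Segre class of the intrinsic normal cone is a global invariant not controlled by its restriction to a dense open. The paper instead appeals to Lemma~\ref{equidim virtual class}, whose hypotheses are that \emph{both} the source $\moduli$ (equidimensional in $g=1$ by Corollary~\ref{essentialdim}) \emph{and} the target $\mathfrak{M}_{\Lambda}(\BGmr)$ of the forgetful morphism $\nu$ from Section~\ref{obstruction theories} are equidimensional, together with $\dim = \mathrm{vd}$. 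Under these hypotheses $\nu$ is lci and the obstruction theory is a quasi-isomorphism, so the virtual class equals the fundamental class with no pointwise analysis of $h^1$ required. The equidimensionality of the \emph{target} --- a smooth Picard stack over twisted prestable curves --- is exactly the input your argument does not exploit. Smoothness of the open strata (Proposition~\ref{main action}) remains relevant, but only for the softer point that $\moduli$ is generically reduced along each component, so the fundamental cycle is $\sum_{[\tau]}[Z_{[\tau]}]$ with multiplicity one.
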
 \qed

Section \ref{geometric virtual classes} explains how to upgrade these results about the universal moduli space to ones about the moduli space of maps to a genuine smooth pair $(X|D)$, by relating the universal and geometric moduli spaces via basechange.

We conclude the paper with some down to earth geometric examples (Section \ref{examples}), situating familiar geometries in our new framework. This includes the original Maulik example of $X = E \times \mathbb{P}^1$ relative two elliptic fibres and more general elliptic fibrations over $\mathbb{P}^1$.

\subsection{History of comparison results and future directions.}\label{history}

As mentioned above, there is much previous work on comparison results. We detail other relevant results here: \begin{itemize}

    \item In genus $g = 1$ and rank $1$, the orbifold invariants are of degree at most $1$. The top coefficient is shown to be a \textit{genus $0$ mid-age invariant} \cite{TY3, You21}.

    \item Stationary invariants of $(X|D)$ for $X$ a curve are shown to be constant for all genera and equal to logarithmic stationary invariants \cite{UKW21, TY2}.

    \item \cite[Conjecture W]{BNR22} conjectures the constant coefficient in the higher rank, higher genus setting is the \textit{naive log invariant} (\cite[Section 2.2]{BNR22}). This is roughly an integral over a fibre product of the logarithmic spaces $\mathrm{Log}_{\Lambda}(X|D_i)$ for $i = 1, \cdots, k$. 

    \item A ``reconstruction theorem" is a statement about how absolute Gromov--Witten invariants of $X$ and the strata of $D$ should determine the orbifold Gromov--Witten invariants of $X_{D,\vec{r}}$. Statements of this type are proved in \cite{TY1} for smooth $D$ arbitrary genus and \cite{BNR22} for higher rank genus $0$. Related to this, \cite[Theorem 1.6]{TY3} tells us coefficients of the polynomial of orbifold invariants for a smooth pair $(X|D)$ is expressible in terms of lower genus invariants of $(X|D)$ along with the absolute invariants of $D$.

    \item Recent work of Chiodo and Holmes \cite{CH24} provides an interpretation of the \cite{JPPZ17} procedure in terms of isolating a specific component of a space embedded in a universal Jacobian over the stack of $r$-spin curves. This study is analogous to finding the main component $Z_{\mathrm{main}}$ as in section \ref{classification} and also identifying it as the constant term of a polynomial, akin to Theorem \ref{polynomiality}. We expect the work here will provide a route to classify all non-main components in their setting, and likewise their arguments will inspire many of the proofs in upcoming work \cite{CJ}.
\end{itemize} 

We expect the techniques of this paper will provide a route to re-derive and generalise the above results and verify Conjecture W. Indeed, we will see an intuitive explanation for each monomial term of the polynomial in terms of a stratum of the universal moduli space, and this is pursued further in upcoming work with Sam Johnston. In this upcoming work we study the virtual class in more generality. The key idea is to construct an embedding of $$\moduli \hookrightarrow \mathcal{P}ic$$ where $\mathcal{P}ic$ denotes the universal Picard stack over the space of \textit{twisted log maps}. $\mathcal{P}ic$ is log smooth and then analysis of the Segre class of this embedding gives the virtual class as an expression in terms of a sum of intersections of the strata $Z_{[\tau]}$.

\subsection{Acknowledgements}

I am extremely grateful for many insightful discussions with Sam Johnston who taught me much of what I know about logarithmic and twisted maps. I am grateful for Mark Gross and Luca Battistella for comments and corrections. The same can be said for Navid Nabijou who has helped me through the whole project, especially in understanding stack subtleties. Dhruv Ranganathan suggested the original problem to me last summer at Abramorama, at which we had many insightful discussions, and I am thankful to Jonathan Wise for renewing my interest in the problem. I would also like to thank Renzo Cavalieri for his many wonderful questions about this project during my visit to CSU earlier in the year, and also Longting Wu for his many helpful discussions during my visit to SUSTech. Fenglong You gave very helpful insights throughout, especially on his related polynomiality and mid-age work. This work was supported by the Engineering and Physical Sciences Research Council [EP/S021590/1]. The EPSRC Centre for Doctoral Training in
Geometry and Number Theory (The London School of Geometry and Number Theory), University College London.

\section{Irreducible components of the universal space of twisted maps }\label{irreducible components}

\subsection{Set up}\label{setup}

\subsubsection{Contact data and the moduli space}\label{contactdata}

Throughout the remainder of this paper we denote by $\Lambda$ the data of \begin{itemize}
    \item \textbf{Source rooting parameters} $s_1, \cdots, s_n \in \N$.
    \item \textbf{Ages} $a_1, \cdots, a_n \in \mathbb{Q}$ such that $a_i \in \frac{1}{s_i}\Z \cap [0,1)$.
    \item \textbf{Coarse degree} $d \in \Z$.
    \item \textbf{Genus} $g \in \N$.
\end{itemize} Such data $\Lambda$ we refer to as \textbf{contact data}. 
Additionally, throughout we will pick a \textbf{target rooting parameter} $r \in \N$. From the above data we can construct \textbf{coarse contact orders} \begin{equation}\label{coursecontacts}
    c_i := r a_i.
\end{equation} From now on, we assume the following numerical conditions hold: 

\begin{enumerate}
    \item \textbf{Divisibility.} For each marking $i$ we have $$s_i | r $$ and $$r| c_i s_i. $$
    \item \textbf{Coprimality.} For each marking $i$ we have $$r/\mathrm{gcd}(r,c_i) = s_i.$$
    \item \textbf{Size.} We have $$r > \mathrm{max}_{i} c_i.$$
\end{enumerate}

\begin{remark}
    Note the above conditions are not logically independent. For example, condition $(3)$ is implied by condition \eqref{coursecontacts} along with the fact that $0 \leq a_i < 1$ for each $i$. However these are all important properties, so we list them separately and this also provides a grounds to connection to \cite[Section 2.1]{BNR24}.
\end{remark}

Given contact data $\Lambda$ satisfying the above conditions we may form a moduli space $$\mathrm{Orb}_{\Lambda}(\mathcal{A}_r) $$ of \textbf{representable twisted pre-stable maps to} $\mathcal{A}_r$ \textbf{with contact data} $\Lambda$. Here $\mathcal{A}_r$ denotes the $r$th root stack of $\mathcal{A} := [\mathbb{A}^1/\mathbb{G}_m]$ along $\BGm \subset \mathcal{A}$. More precisely, $\mathrm{Orb}_{\Lambda}(\mathcal{A}_r)$ is the stack over schemes whose objects are the data of $(\mathcal{C},(\mathcal{L},s))$ where:

\begin{itemize}
    \item $\mathcal{C}$ is an $n$-marked pre-stable twisted curve of genus $g$, with markings $p_i = B \mu_{s_i}$.
    \item $(\mathcal{L},s)$ is a line bundle-section pair on $\mathcal{C}$ such that, if $\pi: \mathcal{C} \rightarrow C$ is the coarse space, the $r$th tensor power $(\mathcal{L},s)^{\otimes r}$ is pulled back from $C$ via $\pi$.
    \item At marking $p_i \in \mathcal{C}$, the age of the representation is $$\mathrm{age}_{p_i} \mathcal{L} = a_i. $$
    \item The degree of $\mathcal{L}$ is $$\tilde{d} = \mathrm{deg} \mathcal{L} = d/r \in \frac{1}{r} \Z. $$ We refer to $\tilde{d}$ as the \textbf{gerby degree}. We assume that $$\tilde{d} = \sum_{i = 1}^n a_i$$ always holds.

    \item Additionally, we enforce the following conditions on degrees: For any proper subcurve $\mathcal{C}' \subset \mathcal{C}$, $$-\frac{1}{2} < \mathrm{deg} (\mathcal{L}|_{\mathcal{C}'}) < \frac{1}{2}. $$ In particular, we need $$r > 2d.$$
\end{itemize}

We may define the \textbf{gerby contact order} at marking $p_i$ to be $$\tilde{c}_i = \frac{s_ic_i}{r} = s_i a_i \in \N. $$ The moduli space $\mathrm{Orb}_{\Lambda}(\mathcal{A}_r)$ is an Artin stack which we will see is connected and, for $g >0$, of infinite type. This moduli stack was first introduced in \cite{ACW17} in genus $0$, and studied further in \cite{BNR22}, \cite{BNR24}.

\subsubsection{Tropical types}\label{troptype}

We recall the definition of type of twisted tropical map as introduced in \cite[Definition 3.6]{BNR24}, and make a few different variants of it to suit our higher genus orbifold setting.

\begin{definition}\label{pretroptype}
    We define a \textbf{twisted pre-tropical type} with contact data $\Lambda$ and parameter $r$ to be the data of \begin{enumerate}
        \item \textbf{Source graph.} An $n$-marked graph $\Gamma$ with vertex set $V(\Gamma)$, bounded edges $E(\Gamma)$ and legs $L(\Gamma)$.
        \item \textbf{Genus labelling.} A genus labelling for each $v \in V(\Gamma)$,  $g_v \in \N$, such that $$\sum_{v \in V(\Gamma)} g_v + b_1(\Gamma) = g.$$
        \item  \textbf{Degrees.} A labelling for each $v \in V(\Gamma)$, $d_v \in \Z$, such that $$\sum_{v \in \ V(\Gamma)} d_v = d.$$ Additionally, we enforce that for every subset $S \subset V(\Gamma)$ we have $$-\frac{1}{2} < \frac{1}{r}\sum_{v \in S} d_v < \frac{1}{2}. $$
    
        \item \textbf{Image cones.} A cone of $\mathbb{R}_{\geq 0}$ associated to every vertex and edge of $\Gamma$, $$v \leadsto \sigma_v, e \leadsto \sigma_e$$ such that if $v \in e$ then $\sigma_v \subset \sigma_e.$
    
    \end{enumerate}

We often denote this data by $\underline{\tau}$. 

When we have a line bundle on a twisted curve, the age of the line bundle is not well-defined at nodal points due to the actions on either branch being inverse to each other. The following definition will make it easier for us to record which branch we are taking and will appear throughout the rest of the paper.

\begin{definition}\label{alphas}
    Let $\mathcal{C}$ be a pre-stable twisted curve with node $q \cong B \mu_{t_q}$. Let $\nu : \tilde{\mathcal{C}} \rightarrow \mathcal{C}$ be the normalisation with $\nu^{-1}(q) = \{q_1,q_2\}$. Let $\mathcal{L}$ be a line bundle on $\mathcal{C}$. Define $$\alpha_{q,i}(\mathcal{L}) := t_q \cdot \mathrm{age}_{q_i}(\nu^{*}\mathcal{L}).$$ When $q$ arises as a node from the intersection of two distinct irreducible components $\mathcal{C}_1, \mathcal{C}_2 \subset \mathcal{C}$, the choice of $q_i$ is equivalent to the choice of component $\mathcal{C}_i$ and therefore we may write $$\alpha_{q,\mathcal{C}_i}(\mathcal{L}) := t_q \cdot \mathrm{age}_q (\mathcal{L}|_{\mathcal{C}_i}) $$ which also agrees with $\alpha_{q,i}(\mathcal{L})$. 
\end{definition}

Observe that $$\alpha_{q,1}(\mathcal{L}) + \alpha_{q,2}(\mathcal{L}) = t_q$$ if $\mathrm{age}_{q}(\mathcal{L}) \not = 0$, else $$\alpha_{q,1}(\mathcal{L}) + \alpha_{q,2}(\mathcal{L}) = \alpha_{q,i}(\mathcal{L}) = 0 .$$ In either case we have \begin{equation}\label{congruence}
    \alpha_{q,1}(\mathcal{L}) + \alpha_{q,2}(\mathcal{L}) \equiv 0 \mod t_q.
\end{equation}

\end{definition}

\begin{definition}\label{twisted trop type} A \textbf{twisted tropical type} $\tau$ is the data of a twisted pre-tropical type $\underline{\tau}$ along with the extra data of:


(5) \textbf{Rooting parameters.} An assignment of a rooting parameter to every edge $e \in E(\Gamma), t_e \in \N.$

(6) \textbf{Gerby slopes. } For each oriented edge $\vec{e}$ a gerby slope $\tilde{m}_{\vec{e}} = - \tilde{m}_{\overleftarrow{e}} \in \Z$. At each vertex $v \in V(\Gamma)$ these slopes must also satisfy the \textbf{balancing condition}:

        $$ d_v = \sum_{v \in e} (\frac{r}{t_e}) \tilde{m}_{\vec{e}} + \sum_{v \in l_i \in L(\Gamma)} c_i. $$

        Furthermore, the numerical assumptions of Section \ref{contactdata} must also hold, for both the marked leg data and the gerby edge parameters and gerby slopes. More precisely, if we denote by $$m_{\vec{e}} := (\frac{r}{t_e}) \tilde{m}_{\vec{e}} $$ the \textbf{coarse slope at} $\vec{e}$, we require $$\frac{r}{\gcd(r,m_{\vec{e}})} = t_e.$$

\end{definition}

\begin{definition}\label{orbitroptype}

A \textbf{mod $r$ tropical type} $[\tau]$ is the data of a twisted pre-tropical type $\underline{\tau}$ along with the extra data of:


(5) \textbf{Rooting parameters.} An assignment of a rooting parameter to every edge $e \in E(\Gamma), t_e \in \N.$ 

(6') \textbf{ Gerby slopes mod $r$. } For each oriented edge $\vec{e}$ a gerby slope $[\tilde{m}_{\vec{e}}] = - [\tilde{m}_{\overleftarrow{e}}] \in \Z/r\Z$. At each vertex $v \in V(\Gamma)$ these slopes must also satisfy the \textbf{balancing condition modulo $r$}:

        $$ d_v \equiv \sum_{v \in e} (\frac{r}{t_e}) [\tilde{m}_{\vec{e}}] + \sum_{v \in l_i \in L(\Gamma)} c_i \mod r. $$

   Furthermore, the numerical assumptions of Section \ref{contactdata} must also hold, for both the marked leg data and the gerby edge parameters and gerby slopes. More precisely, let $\tilde{m}_{\vec{e}} \in \Z$ any lift of $[m_{\vec{e}}] := (\frac{r}{t_e}) [\tilde{m}_{\vec{e}}] \in \Z/r\Z$. We then require \begin{equation}\label{noderooting}
       \frac{r}{\gcd(r,m_{\vec{e}})} = t_e.
   \end{equation} Note that this is well-defined since $\gcd(r,m_{\vec{e}})$ is independent of lift. 

\end{definition}

It is then clear that a twisted tropical type $\tau$ induces a mod $r$ tropical type $[\tau]$ by taking the $[m_{\vec{e}}]$ to be the reductions of the $m_{\vec{e}}$ modulo $r$, and in general we shall use $[ \_ ]$ to denote the reduction modulo $r$.

\begin{remark}
    We often denote a tropical type in any of the senses of Definitions \ref{pretroptype}, \ref{troptype}, \ref{orbitroptype} by drawing a graph $\Gamma$ mapping to the ray $\mathbb{R}_{\geq 0}$. Such data gives us the assignment of cones, however it need not be the case that any type is actually realised via such a continuous map $\Gamma \rightarrow \mathbb{R}_{\geq 0}$. This is what is refered to as \textbf{realisability} in \cite[Definition 3.4(2)]{ACGS20}.
\end{remark}

\begin{definition}\label{r-weighting}
    For $\underline{\tau}$ a twisted pre-tropical type, denote by $$W_{\underline{\tau},r} $$ the set of admissible gerby slope assignments as above. An element of $W_{\underline{\tau},r}$ we call an \textbf{r -weighting}. Equivalently, $W_{\underline{\tau},r}$ may be viewed as the set of mod $r$ tropical types with fixed underlying pre-tropical type $\underline{\tau}$.
\end{definition}

The notion of an $r$-weighting is a generalisation of the notion of an $r$-weighting in \cite[Definition 0.4.1]{JPPZ17}.

\begin{lemma}\label{r-weightings}
    The set of $r$-weightings may be identified with $$W_{\underline{\tau},r} = (\Z/r\Z)^{b_1(\Gamma)}. $$

\end{lemma}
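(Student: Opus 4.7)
The plan is to recognise $W_{\underline{\tau},r}$ as the torsor of $\Z/r\Z$-valued $1$-chains on $\Gamma$ with prescribed boundary, and then to reduce the Lemma to a standard cellular-homology computation over $\Z/r\Z$.

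First I would reformulate the data. Orient each edge $e \in E(\Gamma)$, and package the coarse slopes $[m_{\vec{e}}] \in \Z/r\Z$ (recovered from the gerby slopes via $[m_{\vec{e}}] = (r/t_e)[\tilde{m}_{\vec{e}}]$) into a single $1$-chain $\alpha \in C_1(\Gamma;\Z/r\Z)$. The balancing condition (6') of Definition \ref{orbitroptype} then reads
$$\partial \alpha = \beta, \qquad \beta_v := d_v - \sum_{l_i \ni v} c_i \in \Z/r\Z,$$
where $\partial : C_1(\Gamma;\Z/r\Z) \to C_0(\Gamma;\Z/r\Z)$ is the signed incidence boundary of $\Gamma$. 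The numerical condition \eqref{noderooting} uniquely recovers $t_e = r/\gcd(r,m_{\vec{e}})$ from $\alpha$, so the $r$-weighting data is equivalent to the datum of a solution $\alpha \in C_1(\Gamma;\Z/r\Z)$ to $\partial \alpha = \beta$.

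Second, I would apply the standard chain-complex argument. Since $\Gamma$ is connected, the integral short exact sequence
$$0 \to H_1(\Gamma;\Z) \to C_1(\Gamma;\Z) \xrightarrow{\partial} B_0(\Gamma;\Z) \to 0,$$
with $B_0(\Gamma;\Z) = \ker(\operatorname{aug} : C_0(\Gamma;\Z) \to \Z)$ free abelian, remains exact upon tensoring with $\Z/r\Z$. This yields
$$\ker(\partial \otimes \Z/r\Z) = H_1(\Gamma;\Z) \otimes \Z/r\Z = (\Z/r\Z)^{b_1(\Gamma)}.$$
Consistency of $\partial \alpha = \beta$ is automatic since $\sum_v \beta_v = d - \sum_i c_i \equiv 0 \pmod r$ by the relation $d = \sum_v d_v = \sum_i c_i$ built into the contact data $\Lambda$. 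Hence the solution set is a non-empty torsor under $(\Z/r\Z)^{b_1(\Gamma)}$.

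A concrete bijection with $(\Z/r\Z)^{b_1(\Gamma)}$ is then obtained by choosing a spanning tree $T \subset \Gamma$ with $|V(\Gamma)| - 1$ edges: the remaining $b_1(\Gamma)$ non-tree edges carry unconstrained $\alpha$-values in $\Z/r\Z$, and the balancing constraint propagates uniquely from the leaves of $T$ inward to determine $\alpha$ on tree edges. The main subtle point is the initial reformulation, namely verifying that the apparent extra freedom coming from the choice of gerby lift $[\tilde{m}_{\vec{e}}]$ of the coarse slope $[m_{\vec{e}}]$ is absorbed by the rooting parameter $t_e$, so that an $r$-weighting is indeed determined by $\alpha$. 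This follows from the numerical compatibility built into Section \ref{contactdata} together with \eqref{noderooting}; once it is in hand, the remainder of the proof is pure $\Z/r\Z$-linear algebra on the cellular chain complex of $\Gamma$.
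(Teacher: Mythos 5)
Your proof is correct and takes a genuinely different, more structural route than the paper's. The paper argues concretely: pick a maximal tree $T\subset\Gamma$, declare the $b_1(\Gamma)$ non-tree coarse slopes $[m_{\vec{e}}]$ to be free parameters, and invoke \cite[Lemma 3.4]{BNR22} to propagate the balancing equations through $T$ and determine the tree slopes uniquely; the coprimality constraint \eqref{noderooting} then recovers the $t_e$. You instead package the balancing data as $\partial\alpha=\beta$ for the signed incidence boundary $\partial\colon C_1(\Gamma;\Z/r\Z)\to C_0(\Gamma;\Z/r\Z)$, and derive $\ker(\partial\otimes\Z/r\Z)=H_1(\Gamma;\Z)\otimes\Z/r\Z=(\Z/r\Z)^{b_1(\Gamma)}$ from the integral short exact sequence $0\to H_1\to C_1\to B_0\to0$ together with the freeness of $B_0$. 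The homological route makes the exponent $b_1(\Gamma)$ transparently an $H_1$, hands you the torsor structure of the solution set for free, and spells out the solvability check $\sum_v\beta_v\equiv0\bmod r$ (via $d=\sum_i c_i$) explicitly, whereas the paper delegates both tree propagation and solvability to the cited lemma of \cite{BNR22}. Your closing spanning-tree remark is precisely the paper's proof; seen your way, the tree is a choice of set-theoretic section of the chain-level surjection $C_1\twoheadrightarrow B_0$. The gerby-versus-coarse ambiguity you rightly flag is handled identically in both arguments: the coprimality condition \eqref{noderooting} recovers $t_e$ from $[m_{\vec{e}}]$, after which the gerby slope is determined, so both proofs may safely work entirely with the coarse $1$-chain $\alpha$.
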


\begin{proof}
    Pick a maximal tree $T \subset \Gamma$. Fix any values for $[m_{\vec{e}}] \in \Z/r\Z$ for $e \in E(\Gamma) \setminus E(T)$. We may write the mod $r$ balancing equation at $v \in V$ as $$d_v' \equiv \sum_{v \in e \in E(T)} [m_{\vec{e}}] + \sum_{v \in l_i \in L(\Gamma)}c_i \mod r $$ where $d_v'$ is $d_v$ after combining with any terms $[m_{\vec{e}}]$ for $e \not \in E(T)$. Since $T$ is genus $0$, the proof of \cite[Lemma 3.4]{BNR22} shows that, the resulting equations have a unique solution for the remaining $[m_{\vec{e}}]$. Indeed, the Lemma is for the analogous genus $0$ situation by working with equations over $\Z$. Hence the $[m_{\vec{e}}]$ for $e \in E(T)$ are uniquely determined. We were free to pick the $[m_{\vec{e}}]$ for $e \in E(\Gamma) \setminus E(T)$ as we wish, and $|E(\Gamma) \setminus E(T)| = b_1(\Gamma)$. Any choice of the $[m_{\vec{e}}]$ uniquely determines the rooting parameters $t_e$ via equation \eqref{noderooting}, and the result follows.
\end{proof}

\begin{construction}\label{assocmodrtype}
    Given $\elt \in \moduli$ we may associate a mod $r$ tropical type $[\tau]$ as follows: \begin{itemize}
    \item \textbf{Conditions (1), (2) \& (5)} $\Gamma$ is the dual intersection complex of $\mathcal{C}$ with genus labelling of the vertices $g_v \in \N$ induced by the genus of the associated component $\mathcal{C}_v \subset \mathcal{C}.$ The rooting parameters are the orders of the isotropy groups at the associated special points.

    \item \textbf{Condition (3).} Let the degree of $v$ be $$d_v := r \cdot \mathrm{deg}(\mathcal{L}|_{\mathcal{C}_v}). $$

    \item \textbf{Condition (4).} For each vertex $v \in V(\Gamma)$, if $s|_{\mathcal{C}_v} \equiv 0$ then assign the cone $\mathbb{R}_{>0}$. Such vertices we call \textbf{internal vertices} and denote the set of such vertices $V_+$. Else, assign the cone $0 \in \mathbb{R}_{\geq 0}$. These vertices we call \textbf{external vertices} and denote the set of them by $V_0$.

    \item \textbf{Condition (6)'.} Let $\vec{e}$ be a direct edge. This gives us the data of two ordered components $\mathcal{C}_{v_1}, \mathcal{C}_{v_2}$, meeting at a node $q_e$ corresponding to $e$, where $v_1$ is the source and $v_2$ is the sink of the orientation. We then define $$[\tilde{m}_{\vec{e}}] := [\alpha_{q,1}(\mathcal{L})].$$ as in Definition \ref{alphas}.
 
\end{itemize} 

\end{construction}

In order for the above to be a well-defined tropical type, we need to check the balancing condition at each vertex $v$ holds, and anti-symmetry of the gerby slopes holds. For anti-symmetry, note that $t_{e}$ is independent of orientation, and since the actions of the isotropy are inverse on each branch of the node $q_e$ we have that $[\tilde{m}_{\overleftarrow{e}}] = [t_e(1 - \tilde{m}_{\vec{e}})] = -[\tilde{m}_{\vec{e}}] $.

\begin{lemma}
    Under construction \ref{assocmodrtype}, the balancing condition modulo $r$ holds at each vertex.
\end{lemma}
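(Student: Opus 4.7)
The plan is to reduce the statement to the classical identity that the rational degree of a line bundle on a smooth twisted curve equals, modulo $\mathbb{Z}$, the sum of its ages at the special points, and then to clear denominators by multiplying through by $r$.

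First I would fix a vertex $v \in V(\Gamma)$ and set $\mathcal{L}_v := \mathcal{L}|_{\mathcal{C}_v}$, working on the normalisation $\nu_v : \tilde{\mathcal{C}}_v \to \mathcal{C}_v$ so that the curve in question is smooth. The hypothesis that $(\mathcal{L},s)^{\otimes r}$ is pulled back from the coarse curve $C$ immediately gives $r\deg(\mathcal{L}_v) \in \mathbb{Z}$, and this integer is $d_v$ by the definition used in Construction \ref{assocmodrtype}.

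Next I would invoke the structure theorem for line bundles on a smooth twisted curve: one writes $\nu_v^*\mathcal{L}_v \cong \pi_v^* N \otimes \bigotimes_p \mathcal{T}_p^{\otimes \tilde{\alpha}_p}$, where $p$ ranges over the special points of $\tilde{\mathcal{C}}_v$ (markings on $\mathcal{C}_v$ together with the branch preimages of each node), $\mathcal{T}_p$ is the tautological root of degree $1/t_p$, and $\tilde{\alpha}_p \in \{0,\ldots,t_p-1\}$ represents the character of the isotropy action of $\mu_{t_p}$. Taking degrees, and using that pullback along normalisation preserves degree, yields $\deg(\mathcal{L}_v) \equiv \sum_p \mathrm{age}_p(\mathcal{L}_v) \pmod{\mathbb{Z}}$. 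Multiplying by $r$ and using that $t_p \mid r$ (by the divisibility assumption at markings and by \eqref{noderooting} at nodes) upgrades this to a congruence modulo $r$, with $r\cdot\mathrm{age}_{p_i} = c_i$ at a marking and $r \cdot \mathrm{age} = (r/t_e)\tilde{m}_{\vec{e}}$ at the preimage on $\tilde{\mathcal{C}}_v$ of a node $q_e$ (orienting $\vec{e}$ away from $v$, per Definition \ref{alphas} and Construction \ref{assocmodrtype}).

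I do not expect a real obstacle. The only points requiring care are that the divisibility $t_p \mid r$ is available at every special point so that the congruence modulo $\mathbb{Z}$ upgrades cleanly to one modulo $r$, and that, for any self-loop $e$ at $v$, the two preimages on $\tilde{\mathcal{C}}_v$ contribute $(r/t_e)(\tilde{m}_{\vec{e}} + \tilde{m}_{\overleftarrow{e}})$, which vanishes modulo $r$ by the anti-symmetry of the gerby slopes, so the self-loop contribution is automatically consistent with the stated balancing formula.
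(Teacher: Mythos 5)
Your proof is correct and runs along exactly the same lines as the paper's: decompose the pulled-back bundle on the normalisation $\tilde{\mathcal{C}}_v$ into an "age part" supported at special points tensor an everywhere age-$0$ piece of integer degree (your structure theorem is precisely the paper's equation $\eqref{balancingpullback}$ together with "$\mathrm{deg}(\mathcal{F}) \in \Z$"), take degrees, multiply by $r$, and reduce modulo $r$; the self-loop preimages are handled using the congruence $\alpha_{q,1}+\alpha_{q,2}\equiv 0 \pmod{t_q}$ (equation $\eqref{congruence}$), which is what your anti-symmetry remark amounts to once the factor $r/t_e$ is in place.
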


\begin{proof}
     To see the balancing condition modulo $r$ at a vertex $v$, consider the normalisation $$\nu: \tilde{\mathcal{C}}_v \rightarrow \mathcal{C}_v.$$ For a node $q \in \mathcal{C}$ that also lives in $\mathcal{C}_v$, we have $$ \nu^{-1}(q) =\begin{cases}
			\{q_1\}, & \text{a \ singleton \ if \ } q \text{\ not \ a \ node \ of \ } \mathcal{C}_v \\
           \{q_1, q_2 \}, & \text{ \ if \ } q \text{ \ a \ node \ of \ } \mathcal{C}_v
		 \end{cases} $$

     Then, by definition of age, we must have \begin{equation}\label{balancingpullback}
         \nu^{*}\mathcal{L}|_{\mathcal{C}_v} \cong \mathcal{O}(\sum_{i \: \ p_i \in \mathcal{C}_v}  \tilde{c}_i p_i + \sum_{q \in \mathcal{C}_v \ \mathrm{not \ self-node}} \alpha_{q,\mathcal{C}_v}(\mathcal{L}) q_1 +  \sum_{q \in \mathcal{C}_v \ \mathrm{self-node}, i = 1,2} \alpha_{q,i}(\mathcal{L}) q_i) \otimes \mathcal{F}
     \end{equation} where $\mathcal{F}$ is an everywhere age $0$ line bundle. Taking the degree of \eqref{balancingpullback}, noting that $$ \mathrm{deg}(p_i) = \frac{1}{s_i}, \  \mathrm{deg}(q_e) = \frac{1}{t_e}, \  \mathrm{deg}(\mathcal{F}) \in \Z $$ and multiplying the resulting expression by $r$, using congruence \eqref{congruence} then taking the resulting equality modulo $r$ gives the result.
     
\end{proof}

\subsubsection{Distinguished loci of $\moduli$ }\label{strata}

Let $[\tau]$ be a mod $r$ tropical type. Define $$Z_{[\tau]}^{\circ} \subset \moduli $$ to be the locus of objects inducing type $[\tau]$, and denote by $Z_{[\tau]}$ its closure inside $\moduli$.

For $[\tau]$ the unique tropical type consisting of precisely $1$ vertex of genus $g$ mapping to $0$, denote the associated locus $Z_{[\tau]} = Z_{\mathrm{main}}$, the \textbf{main component}. 

\begin{example}
\begin{figure}[h!]
    \centering
    \includegraphics[scale = 0.3]{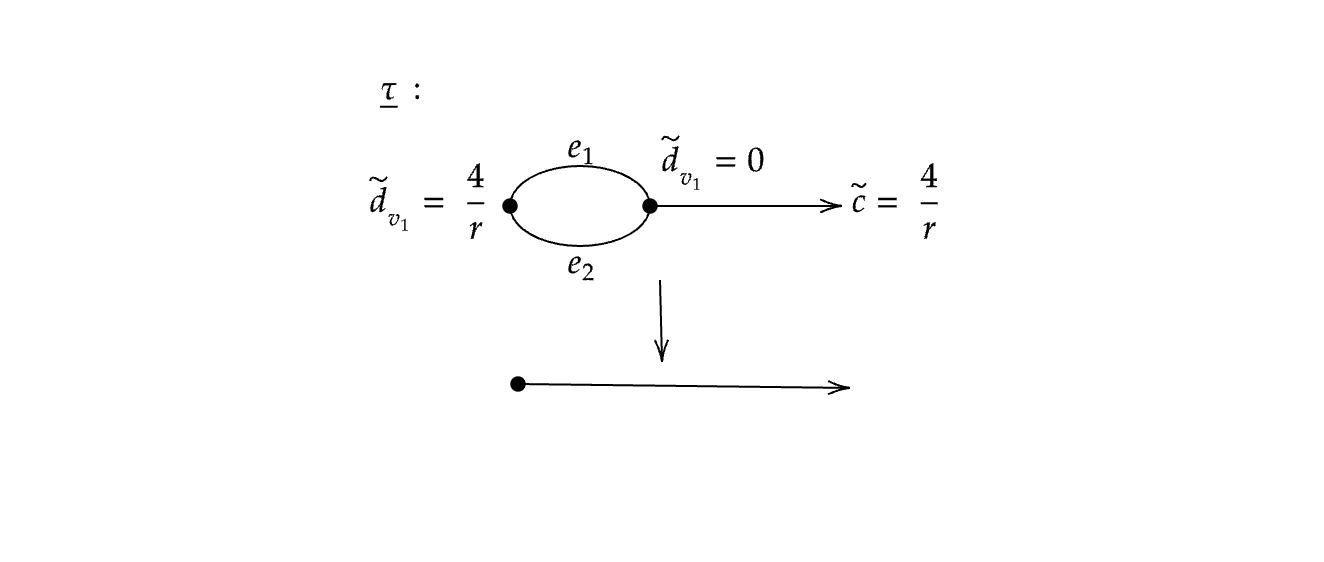}
    \caption{A genus $1$ pre-tropical type.}
    \label{fig:enter-label}
\end{figure}

A genus $1$ twisted pre-tropical type $\underline{\tau} $ is depicted in figure \ref{fig:enter-label}. In order to make $\underline{\tau}$ into a mod $r$ tropical type, we need to assign an $r$-weighting to the two edges $e_1,e_2$. Assume that all isotropy groups appearing are of order $r$. The modulo $r$ balancing condition of Definition \ref{orbitroptype} (6') at vertex $v_1$ says $$ 4 \equiv [\tilde{m}_{e_1}] + [\tilde{m}_{e_2}] \mod r $$ and the global degree condition tells us this is the only equation we need satisfied. There are clearly $r$ choices of solutions to this equation for the $[\tilde{m}_{e_i}]$'s. Indeed, after picking any solutions $[\tilde{m}_{e_1}], [\tilde{m}_{e_2}]$ to the above equation, the gerby rooting parameters $t_{e_1},t_{e_2}$ are uniquely determined by the additive orders of $ [\tilde{m}_{e_i}] \in \Z/r\Z$. 
\end{example} \qed

\subsection{Structure of $\moduli$ }

\subsubsection{Set up }

In this section we will classify the general element of $\moduli$ and construct free actions of certain semi-abelian varieties on strata of $\moduli$ built from Jacobians of curves.

We will often use the following result telling us how to push forward a line bundle on a twisted curve to the coarse space. 

\begin{lemma}[\cite{Cru24} Lemma 3.2.1]\label{pushforward}
    Let $\mathcal{C}$ be a prestable twisted curve with gerby smooth points $p_1, \cdots, p_n$ with isotropy $\mu_{s_i}$ and gerby nodes $q_1, \cdots, q_k $ with isotropy $\mu_{t_j}$. Let $\mathcal{L}$ be a line bundle on $\mathcal{C}$.  Let $\nu : \tilde{\mathcal{C}} \rightarrow \mathcal{C}$ be the normalisation of $\mathcal{C}$ and $\nu^{-1}(q_i) = \{q_{i1}, q_{i2}\}$. Suppose we have $$\nu^{*} \mathcal{L} \cong \mathcal{O}(\sum_{i = 1}^n a_i p_i + \sum_{j = 1}^k b_{j1} q_{j1} + \sum_{j = 1}^k b_{j2} q_{j2}) $$ for some integers $a_i, b_{j1}, b_{j2}$. Let $\pi : \tilde{\mathcal{C}} \rightarrow \tilde{C}$ be the coarse moduli map of the normalisation. Then we have $$\pi_* \nu^{*} \mathcal{L} \cong \mathcal{O}(\sum_{i = 1}^n r \lfloor \frac{a_i}{r} \rfloor p_i + \sum_{j = 1}^k r \lfloor \frac{b_{j1}}{r} \rfloor q_{j1} + \sum_{j = 1}^k r \lfloor \frac{b_{j2}}{r} \rfloor q_{j2}).$$
\end{lemma}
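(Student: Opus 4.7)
The plan is to reduce to an étale-local computation at each special point of the normalised curve $\tilde{\mathcal{C}}$ and assemble the pieces via the projection formula. Since $\pi$ is an isomorphism away from the gerby markings and the preimages of nodes, there is nothing to check on the complement of $\{p_i\} \cup \{q_{jk}\}$, and the pushforward at each special point can be computed independently in a small chart.

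I would first establish the universal local model $\pi : [\mathbb{A}^1_z/\mu_\ell] \to \mathbb{A}^1_w$ with $w = z^\ell$ and $\mu_\ell$ acting on $z$ by its standard character. The key identity is $\pi^{*}\mathcal{O}(0) \cong \mathcal{O}(\ell [0])$, so for any integer $a$ Euclidean division $a = q\ell + b$ with $0 \leq b < \ell$ gives
$$\mathcal{O}(a[0]) \cong \pi^{*}\mathcal{O}(q \cdot 0) \otimes \mathcal{O}(b[0]).$$
By the projection formula $\pi_{*}\mathcal{O}(a[0]) \cong \mathcal{O}(q \cdot 0) \otimes \pi_{*}\mathcal{O}(b[0])$. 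For $0 \leq b < \ell$ the canonical generator of $\mathcal{O}(b[0])$ transforms under $\mu_\ell$ by the weight-$b$ character, so the only $\mu_\ell$-invariant local sections near $[0]$ are those pulled back from the coarse; hence $\pi_{*}\mathcal{O}(b[0]) = \mathcal{O}$ and $\pi_{*}\mathcal{O}(a[0]) = \mathcal{O}(\lfloor a/\ell \rfloor \cdot 0)$ on $\mathbb{A}^1_w$.

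Next I would apply this local calculation at each gerby marking $p_i$ (with $\ell = s_i$) and at each preimage $q_{jk}$ of a node (with $\ell = t_j$, since normalising a nodal gerbe $B\mu_{t_j}$ produces two smooth gerby points each still with $\mu_{t_j}$ isotropy). Since $\nu^{*}\mathcal{L}$ is, away from these points, already pulled back from the coarse and therefore contributes no floor correction there, the local contributions combine via projection formula on the smooth locus to give the stated global formula.

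The only real subtlety is bookkeeping around the equivariance conventions: one must verify that the canonical section of $\mathcal{O}(a[0])$ is assigned the weight $a$ under $\mu_\ell$ (rather than $-a$), which is what pins down the direction of the floor function. Once this convention is fixed, assembling the local models into the global statement is formal.
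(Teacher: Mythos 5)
Your strategy --- reduce to the universal local model $\pi : [\mathbb{A}^1_z/\mu_\ell] \to \mathbb{A}^1_w$ with $w = z^\ell$ at each special point, compute $\pi_*\mathcal{O}(a[0])$ via Euclidean division $a = q\ell + b$ and the projection formula, and assemble globally since $\pi$ is an isomorphism away from the gerby locus --- is the standard and correct proof of this kind of statement, and the local computation is right: for $0 \le b < \ell$ the $\mu_\ell$-invariant local sections of $\mathcal{O}(b[0])$ are exactly $1, z^\ell, z^{2\ell},\dots$, i.e.\ the functions pulled back from the coarse line, so $\pi_*\mathcal{O}(a[0]) \cong \mathcal{O}(\lfloor a/\ell \rfloor \cdot 0)$.

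Two remarks to tighten the write-up. First, a sign convention: the trivializing section $z^{-b}$ of $\mathcal{O}(b[0])$ transforms under $z \mapsto \zeta z$ by $\zeta^{-b}$, so the character has weight $-b$, not $b$. This does not change the conclusion (all that matters is that the weight is nonzero modulo $\ell$ for $0 < b < \ell$), but you flagged this yourself and should pin it down. Second, and more substantively: applying the local model with $\ell = s_i$ at $p_i$ and $\ell = t_j$ at $q_{jk}$, as you do, produces the coefficient $s_i\lfloor a_i/s_i\rfloor$ at $p_i$ and $t_j\lfloor b_{jk}/t_j\rfloor$ at $q_{jk}$, i.e.\ the floor taken with respect to the \emph{local} isotropy order. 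This is the geometrically correct answer, but it does not literally match the formula in the statement, which replaces each local order by the single global parameter $r$. The two expressions differ in general (take $r = 4$, $s_i = 2$, $a_i = 3$: $s_i\lfloor a_i/s_i\rfloor = 2$ while $r\lfloor a_i/r\rfloor = 0$). They agree only if all isotropy orders equal $r$, or if every coefficient lies strictly below the corresponding isotropy order so that all floors vanish --- which happens to be the regime in which the lemma is applied in this paper ($a_i = \tilde{c}_i < s_i$ and $b_{jk} = \alpha_{q_j,k}(\mathcal{L}) < t_j$). Your proof establishes the version with the local orders, which is the correct general statement; be aware of the discrepancy with the formula as printed.
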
 \qed

Intuitively speaking, this says the pushforward of a Cartier divisor is just given by subtracting off the ``fractional part", $\pi_* D = D - \lfloor D \rfloor$.

\subsubsection{Structure of the line bundles }\label{structure of the LB}

Consider $\mathcal{C}$ smooth of genus $0$ with isotropy $\mu_t$ at $q_1$ and $q_2$. Any orbifold line bundle on $\mathcal{C}$ is of the form $\mathcal{L} \cong \mathcal{O}(a q_1 + b q_2)$. The ages of $\mathcal{L}$ determine $a,b \mod t$. Additionally, if $\mathcal{L}$ is pulled back from a nodal genus $1$ curve via normalisation, we have $a \equiv - b \mod t$. We can therefore decompose $\mathcal{L}$ as $$\mathcal{O}(\langle a \rangle q_1  - \langle a \rangle q_2) \otimes \mathcal{F}$$ for $\mathcal{F}$ an age $0$ line bundle. If we enforce the degree constraints of section \ref{setup}, $\mathrm{deg}(\mathcal{F})$ must have integer degree between $-1/2$ and $1/2$, thus is degree $0$. The following lemma generalises this observation, by decomposing line bundles appearing in to two parts: one part capturing the age information, and a second part coming from the coarse space. The lemma will tell us, if we decompose appropriately, we can enforce the latter part to be degree $0$.

\begin{proposition}\label{structure}
    Let $\elt \in \moduli$ and $E \subset \mathcal{C}$ an irreducible proper subcurve. Denote the marked points that lie in $E$ by $p_1, \cdots, p_k$ and the nodes of $\mathcal{C}$ that lie in $E$ which are not also nodes of $E$ by $q_1,\cdots,q_N$, and $q_1', \cdots, q_M'$ the self-nodes of $E$. Denote by $\nu: \tilde{E} \rightarrow E$ the normalisation of $E$. Write $\nu^{-1}(q_{j}') = \{q_{j1},q_{j2}\}$ for the preimages of the nodes of the $E$.
    
    Then \begin{enumerate}

        \item If $s$ does not vanish identically on $E$ we must have $$\nu^{*}\mathcal{L}|_E  \cong \mathcal{O}( \sum_{i = 1}^k \tilde{c}_i p_i + \sum_{j = 1}^N \alpha_{q_j,E}(\mathcal{L}) q_j + \sum_{j = 1}^M \sum_{i = 1,2} \alpha_{q_j',i}(\mathcal{L})q_{ji}') =: \mathcal{F}_0$$ Furthermore, $\nu^{*}s|_{E}$ is the unique section, up to scaling, that cuts out the Cartier divisor $\sum_{i = 1}^k \tilde{c}_i p_i + \sum_{j = 1}^N \alpha_{q_j,E}(\mathcal{L}) q_j + \sum_{j = 1}^M \sum_{i = 1,2} \alpha_{q_j',i}(\mathcal{L})q_{ji}'$.

        \item If $s$ vanishes identically on $E$ and on no other component intersecting $E$, we must have $$ \nu^{*}\mathcal{L}|_E  \cong \mathcal{F}_+ \otimes \mathcal{G} \otimes \mathcal{J}$$ where \begin{itemize}
            \item $\mathcal{F}_+ := \mathcal{O}( \sum_{i = 1}^k \tilde{c}_i p_i + \sum_{j = 1}^N (\alpha_{q_j,E}(\mathcal{L})-t_j) q_j).$
            \item $\mathcal{G} \cong \mathcal{O}( \sum_{j = 1}^M \alpha_{q_j',1}(\mathcal{L})(q_{j1}' - q_{j2}')).$
            \item $\mathcal{J} \in \mathrm{Jac}(\tilde{E})$ is an everywhere age $0$ degree $0$ line bundle.
        \end{itemize} 
    \end{enumerate}
    
\end{proposition}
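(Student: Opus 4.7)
The plan is to prove both parts by combining the $\mu$-equivariance constraints at special points with the degree bounds $-1/2 < \deg(\mathcal{L}|_{\mathcal{C}'}) < 1/2$ on proper subcurves furnished by the moduli space definition.

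For part (1), since $s|_E \not\equiv 0$, the pulled-back section $\nu^{*}s|_E$ cuts out an effective divisor $D$ on the smooth irreducible twisted curve $\tilde{E}$, so $\nu^{*}\mathcal{L}|_E \cong \mathcal{O}(D)$. At each gerby special point, local equivariance forces the vanishing order to lie in a prescribed residue class: $\tilde{c}_i \pmod{s_i}$ at $p_i$, $\alpha_{q_j,E}(\mathcal{L}) \pmod{t_j}$ at an external node $q_j$, and $\alpha_{q_j',i}(\mathcal{L}) \pmod{t_j'}$ at each branch of a self-node. Writing $D$ as the ``age-minimal'' divisor $D_{\min}$ plus non-negative integer multiples $k_i s_i p_i$, $l_j t_j q_j$, $l'_{ji} t_j' q_{ji}'$, together with an effective $D_{\mathrm{extra}}$ supported away from special points, one computes $\deg D_{\min} = \sum_{p_i \in E} a_i + \sum_j \mathrm{age}_{q_j,E}(\mathcal{L}) + M' \geq 0$, while any non-trivial additional contribution pushes $\deg D \geq \deg D_{\min} + 1 \geq 1$. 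The bound $\deg D = \deg \mathcal{L}|_E < 1/2$ then rules out all extras, forcing $D = D_{\min}$ as claimed. Uniqueness of $s$ up to scaling follows because any two sections cutting out the same effective divisor differ by a nowhere-vanishing regular function on the smooth irreducible proper curve $\tilde{E}$, hence by a non-zero scalar.

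For part (2), compatibility at each external node $q_j$ of $E$ gives $s|_{E_j'}(q_j) = s|_E(q_j) = 0$, so by part (1) applied to the adjacent $E_j' \in V_0$ we have $\alpha_{q_j,E_j'}(\mathcal{L}) > 0$, equivalently $\mathrm{age}_{q_j,E}(\mathcal{L}) \in (0,1)$. Setting $\mathcal{J} := \nu^{*}\mathcal{L}|_E \otimes (\mathcal{F}_+ \otimes \mathcal{G})^{-1}$, a direct check using the congruences $\alpha - t_j \equiv \alpha \pmod{t_j}$ and $\alpha_{q_j',1} + \alpha_{q_j',2} \equiv 0 \pmod{t_j'}$ shows that $\mathcal{F}_+ \otimes \mathcal{G}$ has the same age as $\mathcal{L}|_E$ at every marked point, external node, and self-node branch. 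Hence $\mathcal{J}$ is age $0$ everywhere, so it descends to the coarse space and $\deg \mathcal{J} \in \mathbb{Z}$.

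The main obstacle will be proving $\deg \mathcal{J} = 0$. The strategy is to place $\deg(\mathcal{F}_+ \otimes \mathcal{G}) \in (-1/2, 1/2)$: once this is done, the rationals $\deg \mathcal{L}|_E$ and $\deg(\mathcal{F}_+ \otimes \mathcal{G})$ both lie in a length-$1$ interval and share the same fractional part (determined modulo $\mathbb{Z}$ by the common ages), so they must agree. A direct computation gives $\deg(\mathcal{F}_+ \otimes \mathcal{G}) = \sum_{p_i \in E} a_i + \sum_j \mathrm{age}_{q_j,E}(\mathcal{L}) - N$. The key lower bound $\sum_j \mathrm{age}_{q_j,E}(\mathcal{L}) > N - 1/2$ comes from applying the degree bound to the proper subcurve $E^{\mathrm{adj}} := \bigcup_j E_j'$ of adjacent components, which is proper since $E \not\subseteq E^{\mathrm{adj}}$: summing part (1)'s formula over the $E_j'$ gives $\deg \mathcal{L}|_{E^{\mathrm{adj}}} \geq \sum_j (1 - \mathrm{age}_{q_j,E}(\mathcal{L}))$ with all omitted terms non-negative, whence $\deg \mathcal{L}|_{E^{\mathrm{adj}}} < 1/2$ yields the claim. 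Combining this with $\sum_{p_i \in E} a_i \leq \tilde{d} < 1/2$ (the latter from the size condition $r > 2d$) places $\deg(\mathcal{F}_+ \otimes \mathcal{G})$ in $(-1/2, 1/2)$, completing the proof.
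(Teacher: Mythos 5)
Your argument is correct and follows the same core strategy as the paper: use the degree bounds on proper subcurves from the moduli definition to pin down the age-$0$ twist $\mathcal{J}$, with the key lower bound coming from applying part (1) to the adjacent components and using $-1/2 < \deg\mathcal{L}|_{E^{\mathrm{adj}}} < 1/2$. Your part (2) is essentially the paper's inequalities \eqref{ineq1}--\eqref{finalineq} repackaged via the fractional-part observation. You also make explicit a point the paper leaves implicit, namely that $s|_{E_j'}(q_j)=0$ forces $\alpha_{q_j,E_j'}(\mathcal{L})>0$, which is exactly what justifies writing $\mathrm{age}_{q_j,E}=1-\mathrm{age}_{q_j,E_j'}$ rather than having a degenerate $\mathrm{age}=0$ case.

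One genuine difference is in part (1): the paper pushes forward to the coarse space via Lemma \ref{pushforward}, identifies $\pi_*\nu^*\mathcal{L}|_E\cong\mathcal{F}'$, and argues $\mathcal{F}'$ is a degree-$0$ Jacobian element with a nonvanishing section, hence trivial; you instead work directly with the effective divisor cut out by $\nu^*s|_E$, writing it as the minimal divisor plus effective extras and killing the extras by the degree bound. Both are valid, but the paper's route yields the stronger conclusion that $H^0(\tilde{E},\nu^*\mathcal{L}|_E)$ is one-dimensional, whereas your uniqueness claim --- two sections cutting out the same divisor differ by a scalar --- is true but trivially so on any proper irreducible twisted curve. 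The proposition's phrasing is satisfied either way, but the $\dim H^0 = 1$ statement is what is actually used downstream (e.g.\ in the transitivity part of Proposition \ref{action}), so it is worth recording that your divisor argument does give it: since the degree bound kills any extra effective contribution, \emph{every} nonzero section must cut out $D_{\min}$, so they are all proportional.
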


\begin{remark}\label{G diff age 0}
   The line bundle $\mathcal{F}_{+}$ is canonical, up to isomorphism, but the line bundles $\mathcal{G}, \mathcal{J}$ are non-canonical. Indeed, we have chosen a particular ordering for each pair $q_{j1}',q_{j2}'$ to construct $\mathcal{G}$. If we reversed the ordering for a particular $j$, $\mathcal{G}$ differs by twisting by \begin{equation}\label{G diff}
       \mathcal{O}( \alpha_{q_j',1}(\mathcal{L})(q_{j1}' - q_{j2}') - \alpha_{q_j',2}(\mathcal{L})(q_{j2}' - q_{j1}')) \cong \mathcal{O}( (\alpha_{q_j',1}(\mathcal{L}) + \alpha_{q_j',2}(\mathcal{L}))(q_{j1}' - q_{21}')).
   \end{equation} By equation \eqref{congruence}, this is an everywhere age $0$ line bundle and is degree $0$. It can therefore be absorbed in to the $\mathcal{J}$ term.
\end{remark}

\begin{proof}

The proof can be considered a higher genus modification of \cite[Lemma 4.2.7]{ACW17}. The main difference being, where in \cite{ACW17} just the degrees of line bundles are taken in to account, in our higher genus setting it will be essential to keep track the degree $0$ cycles themselves.

In all cases, by definition of age, we can write $$\nu^{*}\mathcal{L}|_E  \cong \mathcal{O}( \sum_{i = 1}^k \tilde{c}_i p_i + \sum_{j = 1}^N \alpha_{q_j,E}(\mathcal{L}) q_j + \sum_{j = 1}^M \sum_{i = 1,2} \alpha_{q_j',i}(\mathcal{L})q_{ji}') \otimes \mathcal{F}$$ for $\mathcal{F}$ age $0$ at every point of $\tilde{E}$. In particular, if $\pi: \tilde{E} \rightarrow \tilde{\bar{E}}$ denotes the coarse map of $\tilde{E}$, then $\mathcal{F} = \pi^{*}\mathcal{F}'$ for some $\mathcal{F}'$ on $\tilde{\bar{E}}$. 

We now prove part (1). Suppose $s \not \equiv 0 $. By the projection formula and Lemma \ref{pushforward}, we see \begin{equation}\label{trivpush}
    \pi_{*} \nu^{*}\mathcal{L}|_E \cong \mathcal{F}'.
\end{equation} Furthermore, $\pi_{*} \nu^{*}s|_E \in H^0(\tilde{\bar{E}}, \mathcal{F}')$ is not the $0$ section since $s$ does not identically vanish on $E$. It follows that $$\mathrm{deg}(\mathcal{F}') \geq 0 $$ and since we are also assuming $\mathrm{deg} \mathcal{L}|_E < 1/2$ as in section \ref{setup}, we must have $$-\frac{1}{2} < \mathrm{deg}(\mathcal{L}|_E) = \mathrm{deg}(\nu^{*}\mathcal{L}|_E) = \mathrm{deg}(\mathcal{O}( \sum_{i = 1}^k \tilde{c}_i p_i + \sum_{j = 1}^N \alpha_{q_j,E}(\mathcal{L}) q_j + \sum_{j = 1}^M \sum_{i = 1,2} \alpha_{q_j',i}(\mathcal{L})q_{ji}')) + \mathrm{deg}(\mathcal{F}') < \frac{1}{2}.$$ Since all the coefficients in the first line bundle are non-negative, this implies $$\mathrm{deg}(\mathcal{F}') < \frac{1}{2}$$ and so, since $\mathcal{F}'$ is from the coarse space and therefore has integral degree, $$\mathrm{deg}(\mathcal{F}') = 0, \mathcal{F}' \in \mathrm{Jac}(\tilde{\bar{E}}).$$ The only Jacobian element that admits a non-zero section is the trivial bundle, thus $\mathcal{F}' \cong \mathcal{O}$ (via section $\pi_* \nu^{*}(s|_E) \not = 0$). Furthermore, equation \eqref{trivpush} implies the space of sections of $\nu^{*} \mathcal{L}|_E$ is one-dimensional, which must be generated by $\pi_* \nu^{*}(s|_E)$. This section cuts out the Cartier divisor as claimed in part (1). This proves part (1).

    Now for part (2), suppose that $s$ vanishes identically on $E$ but on no other component meeting $E$. Let $D$ be the union of the other components of $\mathcal{C}$ that intersect $E$, not including $E$, so $E \cap D = \{q_1, \cdots, q_N \}$. We have $$ \sum_{i = 1}^k \mathrm{age}_{p_i} (\mathcal{L}|_E) < \sum_{i = 1}^n \mathrm{age}_{p_i}(\mathcal{L}) = \mathrm{deg}(\mathcal{L}) < 1/2.$$ On the other hand, $$\sum_{j = 1}^N \mathrm{age}_{q_j} (\mathcal{L}|_E) = \sum_{j = 1}^N (1 - \mathrm{age}_{q_j} (\mathcal{L}|_D)) = N - \sum_{j = 1}^N \mathrm{age}_{q_j} (\mathcal{L}|_D) $$ since the action of the isotropy groups at the node are inverse on each branch. Since $s$ does not vanish identically on any component of $D$ meeting $E$, applying part (1) to these components gives $$  \sum_{j = 1}^N \mathrm{age}_{q_j} (\mathcal{L}|_D) \leq \mathrm{deg} \mathcal{L}|_{D} < 1/2$$ where the latter inequality is by the degree assumption of section \ref{setup}. The above shows that \begin{equation}\label{ineq1}
        N - \frac{1}{2} < \sum_{i = 1}^N \mathrm{age}_{q_j}(\mathcal{L}|_E) \leq N
    \end{equation} and also \begin{equation}\label{ineq2}
        0 \leq \sum_{i = 1}^k \mathrm{age}_{p_i}(\mathcal{L}|_E) < \frac{1}{2}.
    \end{equation} Summing equations \eqref{ineq1} and \eqref{ineq2} gives \begin{equation}\label{ineq3}
        N - \frac{1}{2} < \sum_{i =1 }^k \mathrm{age}_{p_i}(\mathcal{L}|_E) + \sum_{i = 1}^N \mathrm{age}_{q_{j}}(\mathcal{L}|_E) < N + \frac{1}{2}.
    \end{equation}

    Write \begin{equation}\label{window2}
        \nu^{*}\mathcal{L}|_E  \cong \mathcal{O}( \sum_{i = 1}^k \tilde{c}_i p_i + \sum_{j = 1}^N (\alpha_{q_j,E}(\mathcal{L})-t_j) q_j + \sum_{j = 1}^M \alpha_{q_j',1}(q_{j1}' - q_{j2}') \otimes \mathcal{F}
    \end{equation} for some age $0$ line bundle $\mathcal{F}$ to be determined. Taking the degree of equation \eqref{window2} gives \begin{equation}\label{degeqn}
        \mathrm{deg}(\nu^{*} \mathcal{L}|_{E}) = \sum_{i = 1}^k \frac{\tilde{c}_i}{s_i} + \sum_{j = 1}^N (\mathrm{age}_{q_j}(\mathcal{L}|_{E}) - 1) + \mathrm{deg}(\mathcal{F})
    \end{equation} $$  =  \sum_{i = 1}^k \mathrm{age}_{p_i}(\mathcal{L}|_E) + \sum_{j = 1}^N \mathrm{age}_{q_{j}}(\nu^{*} \mathcal{L}|_E) - N + \mathrm{deg}(\mathcal{F}).$$

    Rearranging equation \eqref{degeqn} and plugging into equation \eqref{ineq3} and cancelling $N$ from each term gives \begin{equation}\label{finalineq}
        -\frac{1}{2} < \mathrm{deg}(\mathcal{L}|_E) - \mathrm{deg}(\mathcal{F}) < \frac{1}{2}.
    \end{equation}

    Now $\mathrm{deg}(\mathcal{F}) \in \Z$ and by the degree assumptions in section \ref{setup} we have $-\frac{1}{2} < \mathrm{deg}(\mathcal{L}|_E) < \frac{1}{2}$, therefore inequality \eqref{finalineq} implies $$\mathrm{deg}(\mathcal{F}) = 0 $$ as desired.

\end{proof}

\begin{remark}
    \cite[Example 3.2.6]{Cru24} gives a genus $0$ example where $\mathcal{J}$ cannot be multi-degree $0$. The key point is the degree bound of lying in $(1/2,1/2)$ cannot be satisfied otherwise in the presence of a ``mid-age" contact order at the node.
\end{remark}

\begin{example}\label{Jacexample}
    Let $\mathcal{C}$ be the genus $2$ twisted curve with a single node and genus $1$ normalisation $\tilde{\mathcal{C}}$, as depicted below. \begin{center}
        \includegraphics[scale = 0.5]{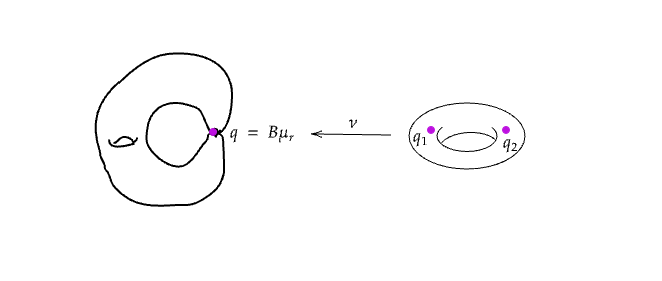}
    \end{center}

    The data of a line bundle $\mathcal{L}$ on $\mathcal{C}$ is equivalent to the data of: \begin{itemize}
        \item A line bundle on the coarse space $\mathcal{F} \in \mathrm{Jac}(C) \cong \mathbb{G}_m \times \mathrm{Jac}(\tilde{C})$.
        \item A line bundle on $\tilde{\mathcal{C}}$ of the form $\mathcal{O}(\lambda(q_1 - q_2))$ for $\lambda \in \{0,1,\cdots,r-1\}$.
    \end{itemize} If $\elt \in \moduli$ then $\nu^{*}\mathcal{L}$ must be age $0$ away from $q_1,q_2$ and $- \frac{1}{2} < \mathrm{deg}(\nu^{*}\mathcal{L}) = \mathrm{deg}(\mathcal{L}) < \frac{1}{2}$. Therefore we must have $$\nu^{*}\mathcal{L} \cong \mathcal{O}(\lambda(q_1 - q_2)) \otimes \mathcal{F}$$ for some $\mathcal{F} \in \mathrm{Jac}(\tilde{C})$ and so $\mathcal{L}$ is induced by $\mathcal{O}(\lambda(q_1 - q_2))$ up to twisting by an element of $\mathrm{Jac}(C)$. In particular, the possibilities for $\mathcal{L}$ is an index $r$ extension of $\mathrm{Jac}(C)$. \qed
\end{example} \qed

\subsubsection{Semi-abelian group actions }\label{abelian group actions}

We now define a semi-abelian variety that helps classify the possible choices of line bundles appearing for a fixed curve $\mathcal{C}$ arising as a part of the data of an object of $\moduli$. 

Given $\elt \in \moduli$, define $$\mathcal{C}_{+} := \bigcup_{v \in V_+} \mathcal{C}_v \subset \mathcal{C}$$ and $$\mathcal{C}_{0} := \bigcup_{v \in V_0} \mathcal{C}_v \subset \mathcal{C}.$$ Let $\alpha: \tilde{\mathcal{C}} \rightarrow \mathcal{C}$ be the partial normalisation of $\mathcal{C}$ at the points $\mathcal{C}_+ \cap \mathcal{C}_0$, so $\tilde{\mathcal{C}} = \mathcal{C}_0 \coprod \mathcal{C}_+$. 

\begin{definition}[bipartite edges and normalised graph]\label{bipartite edges}
Let $\elt$ have associated dual graph $\Gamma$. We call an edge $e \in E$ \textbf{bipartite} if it connects together a vertex in $V_0$ to a vertex in $V_+$. Denote the set of all bipartite edges by $E^{\mathrm{b}}$.

Additionally, define $\Gamma^{\dag}$ to be the graph induced from $\Gamma$ after removing all bipartite edges $E^{\mathrm{b}}$. Equivalently, $\Gamma^{\dag}$ is the dual graph of the curve $\tilde{\mathcal{C}}$.
\end{definition} \qed

We have a short exact sequence of sheaves $$1 \rightarrow \mathcal{O}_{\mathcal{C}}^{*} \rightarrow \alpha_* \mathcal{O}_{\tilde{\mathcal{C}}}^{*} \rightarrow \prod_{q \in \mathcal{C}_+ \cap \mathcal{C}_0} \mathbb{C}^{*}_q \rightarrow 0 $$ where $\mathbb{C}^{*}_q$ is the skyscraper sheaf supported at $q$ with sections $\mathbb{C}^{*}$. Passing to the associated long exact sequence and passing to an induced short exact sequence yields $$1 \rightarrow \mathbb{G}_m^{|E^{\mathrm{b}}| - b_0(\Gamma^{\dag}) + 1} \rightarrow \mathrm{Pic}(\mathcal{C}) \rightarrow \mathrm{Pic}(\mathcal{C}_0) \oplus \mathrm{Pic}(\mathcal{C}_+) \rightarrow 0.$$ Note that we may describe $b_0(\Gamma^{\dag})$ as the number of connected components of $\tilde{\mathcal{C}}$ and $|E^{\mathrm{b}}| = |\mathcal{C}_0 \cap \mathcal{C}_+|$. 
 
We may also pass to multi-degree $0$ line bundles, giving rise to \begin{equation}\label{SESJac}
    1 \rightarrow \mathbb{G}_m^{|E^{\mathrm{b}}| - b_0(\Gamma^{\dag}) + 1} \rightarrow \mathrm{Jac}(\mathcal{C}) \xrightarrow{\alpha^{*}} \mathrm{Jac}(\mathcal{C}_0) \oplus \mathrm{Jac}(\mathcal{C}_+) \rightarrow 0.
\end{equation} The latter non-trivial homomorphism in the above sequence is pullback of line bundles $\alpha^{*}$.

\begin{definition}\label{Jactau}

Define $$\mathrm{Jac}^{+}_{\mathcal{C}} \subset (\alpha^{*})^{-1}(\{ \mathcal{O}_{\mathcal{C}_0} \} \oplus \mathrm{Jac}(\mathcal{C}_+))$$ to be the subgroup of line bundles induced by sequence \eqref{SESJac} which are everywhere age $0$. The age $0$ condition is equivalent to doing the analogous construction but for coarse spaces only, and so $\mathrm{Jac}_{\mathcal{C}}^{+}$ depends only on the coarse curves $C, C_0, C_+$. 

For $[\tau]$ a mod $r$ tropical type, denote by $$\mathrm{Jac}_{[\tau]} \subset \mathrm{Jac}$$ the universal family over the stratum $Z_{[\tau]}^{\circ} \subset \moduli$ of these $\mathrm{Jac}_{\mathcal{C}}^{+}$, where $ \mathrm{Jac}$ is the universal Jacobian pulled back from $\mathfrak{M}_{g,n}^{\mathrm{pre}}$.
    
\end{definition}

\begin{example}
    This example highlights the global nature of elements of $\mathrm{Jac}_{\mathcal{C}}^{+}$ and in particular how Definition \ref{Jactau} accounts for the gluing between $\mathcal{C}_0$ and $\mathcal{C}_+$. Let $\mathcal{C}$ be the twisted curve with two genus $0$ irreducible components: \begin{center}

\tikzset{every picture/.style={line width=0.75pt}} 

\begin{tikzpicture}[x=0.75pt,y=0.75pt,yscale=-1,xscale=1]

\draw  [draw opacity=0] (342.18,161.4) .. controls (320.57,154.69) and (305,135.77) .. (305,113.48) .. controls (305,91.86) and (319.66,73.41) .. (340.26,66.21) -- (359.5,113.48) -- cycle ; \draw   (342.18,161.4) .. controls (320.57,154.69) and (305,135.77) .. (305,113.48) .. controls (305,91.86) and (319.66,73.41) .. (340.26,66.21) ;  
\draw  [draw opacity=0] (310.43,66.43) .. controls (332.09,72.97) and (347.82,91.76) .. (348,114.05) .. controls (348.17,135.67) and (333.66,154.24) .. (313.12,161.61) -- (293.5,114.48) -- cycle ; \draw   (310.43,66.43) .. controls (332.09,72.97) and (347.82,91.76) .. (348,114.05) .. controls (348.17,135.67) and (333.66,154.24) .. (313.12,161.61) ;  

\draw (322,180.4) node [anchor=north west][inner sep=0.75pt]    {$\mathcal{C} \ $};
\draw (271,106.4) node [anchor=north west][inner sep=0.75pt]    {$\mathcal{C}_{0} \ $};
\draw (354,108.4) node [anchor=north west][inner sep=0.75pt]    {$\mathcal{C}_{+} \ $};

\end{tikzpicture}
    \end{center} Then the short exact sequence \eqref{SESJac} is $$1 \rightarrow \mathbb{G}_m \rightarrow \mathrm{Jac}(\mathcal{C}) \rightarrow A_0 \oplus A_+ \rightarrow 0  $$ where $A_0 = \mathrm{Jac}(\mathcal{C}_0) \cong A_+ = \mathrm{Jac(\mathcal{C}_+})$ are the finite abelian groups consisting of line bundles of the form $\mathcal{O}(u q_1 + v q_2) $ for $\frac{u}{t_1} + \frac{v}{t_2} = 0, 0 \leq |u| < t_1, 0 \leq |v| < t_2$ and $q_i \cong B\mu_{t_i}$ are the two nodes. In particular, a line bundle in $A_+$ is everywhere age $0$ if and only if it is the trivial bundle, and thus $$\mathrm{Jac}_{\mathcal{C}}^{+} \cong \mathbb{G}_m$$ corresponding to the moduli of twistings given by gluing $\mathcal{C}_0$ to $\mathcal{C}_+$.
\end{example}


\begin{proposition}\label{action}
    For $[\tau]$ any mod $r$ tropical type, there is a free action $$\mathrm{Jac}_{[\tau]} \times [(\mathbb{G}_m)^{b_0(\Gamma_0)}/ \mathbb{G}_m] \curvearrowright Z_{[\tau]}^{\circ} $$ which acts transitively on the sub-locus of data with a fixed curve $\mathcal{C}$, which we denote here by $Z_{\mathcal{C}} \subset Z_{[\tau]}^{\circ}$. Here $(\mathbb{G}_m)^{b_0(\Gamma_0)}/ \mathbb{G}_m$ is the quotient by the diagonal sub-torus, and this acts via scaling of sections.
\end{proposition}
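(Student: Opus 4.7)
The plan is to construct the action pointwise on objects and then promote it to families, using Proposition \ref{structure} as the main structural input. First I would define the action on a fixed object $(\mathcal{C}, (\mathcal{L}, s)) \in Z_{\mathcal{C}}$: an element $\mathcal{M} \in \mathrm{Jac}_{\mathcal{C}}^+$ acts by tensoring $\mathcal{L} \mapsto \mathcal{L} \otimes \mathcal{M}$, and an element $\vec{\lambda} \in (\mathbb{G}_m)^{b_0(\Gamma_0)}$ acts by rescaling $s$ on each connected component of $\mathcal{C}_0$ (leaving $s \equiv 0$ on $\mathcal{C}_+$ unchanged). Because $\mathcal{M}|_{\mathcal{C}_0} \cong \mathcal{O}_{\mathcal{C}_0}$ by the defining property of $\mathrm{Jac}_{\mathcal{C}}^+$, the section $s|_{\mathcal{C}_0}$ transports to a section of $(\mathcal{L} \otimes \mathcal{M})|_{\mathcal{C}_0}$ after choosing such an isomorphism; any two choices differ by the $(\mathbb{G}_m)^{b_0(\Gamma_0)}$-factor, which is why the two group factors are packaged into a single action. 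The diagonal $\mathbb{G}_m$ acts by scaling $\mathcal{L} \otimes \mathcal{M}$ globally and is absorbed into the isomorphism of the resulting pair, which justifies passing to the quotient. Preservation of $[\tau]$ is then immediate: $\mathcal{M}$ is multi-degree $0$ and everywhere age $0$, so the degrees $d_v$ and the mod-$r$ slopes $[\tilde{m}_{\vec{e}}]$ are unchanged, and rescaling sections on $\mathcal{C}_0$ does not alter the partition $V(\Gamma) = V_0 \sqcup V_+$.

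Next I would prove transitivity on $Z_{\mathcal{C}}$ by direct comparison. Given two objects $(\mathcal{L}_1, s_1), (\mathcal{L}_2, s_2)$ on the same $\mathcal{C}$ with the same mod $r$ tropical type, I would apply Proposition \ref{structure}(1) on each external component $E \subset \mathcal{C}_0$: both $\mathcal{L}_i|_E$ are canonically identified with the explicit bundle $\mathcal{F}_0$ prescribed by the contact data of $[\tau]$, and $s_i|_E$ then differ by non-zero scalars, producing the $\vec{\lambda}$-factor. Proposition \ref{structure}(2) describes $\mathcal{L}_i|_E$ on each internal component up to an everywhere age $0$, degree $0$ piece $\mathcal{J}_i \in \mathrm{Jac}(\tilde{E})$, while Remark \ref{G diff age 0} absorbs the ambiguity in the $\mathcal{G}$-piece into $\mathcal{J}$. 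Gluing these componentwise isomorphisms globally over $\mathcal{C}$ introduces exactly the $\mathbb{G}_m^{|E^{\mathrm{b}}|-b_0(\Gamma^{\dag})+1}$-ambiguity measured by \eqref{SESJac}, so altogether $\mathcal{L}_2 \otimes \mathcal{L}_1^{-1}$ lies in $\mathrm{Jac}_{\mathcal{C}}^+$ and provides the group element that realises the transport of $(\mathcal{L}_1, s_1)$ to $(\mathcal{L}_2, s_2)$.

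Freeness is then formal: if $(\mathcal{L} \otimes \mathcal{M}, s') \cong (\mathcal{L}, s)$ as objects, any witnessing line bundle isomorphism trivialises $\mathcal{M}$ inside $\mathrm{Jac}_{\mathcal{C}}^+$, and the induced comparison of sections forces $\vec{\lambda}$ to lie in the diagonal $\mathbb{G}_m$ already quotiented out. To globalise, one uses that $\mathrm{Jac}_{[\tau]}$ is by definition the pullback of the universal Jacobian from $\mathfrak{M}_{g,n}^{\mathrm{pre}}$, so the fibrewise construction spreads out. I expect the main obstacle to be tracking the gluing-torus piece in families: one must check that the $\mathbb{G}_m^{|E^{\mathrm{b}}|-b_0(\Gamma^{\dag})+1}$-ambiguity in choosing trivialisations of $\mathcal{M}|_{\mathcal{C}_0}$ deforms correctly with the underlying twisted curve and does not produce spurious automorphisms coming from the gerby node structure. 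This is where Definition \ref{alphas} and the $\alpha_{q,i}$-invariants do the real bookkeeping, since those invariants are precisely the discrete data constraining how the local trivialisations may deform.
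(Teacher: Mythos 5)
Your construction of the action and your freeness argument track the paper's closely, but there is a genuine gap in the transitivity step. You write that ``Proposition \ref{structure}(2) describes $\mathcal{L}_i|_E$ on each internal component up to an everywhere age $0$, degree $0$ piece $\mathcal{J}_i \in \mathrm{Jac}(\tilde{E})$.'' That proposition is stated, and proved, only under the hypothesis that $s$ vanishes on $E$ \emph{and on no other component meeting} $E$; its proof crucially invokes part (1) on the neighbours of $E$ to get the inequality $\sum_j \mathrm{age}_{q_j}(\mathcal{L}|_D) < 1/2$, which is what pins down $\deg(\mathcal{J}) = 0$. The moment $\Gamma_+$ has an edge — i.e.\ an internal component $E$ has an internal neighbour — that hypothesis fails, the age at the shared node is no longer forced to be large from $E$'s side, and the proposition's conclusion can break: the Jacobian piece in the decomposition of $\mathcal{L}|_E$ need not be degree $0$. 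So your componentwise gluing argument only establishes transitivity when every connected component of $\mathcal{C}_+$ is irreducible, which is exactly the easy case the paper dispatches in one line.

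For the general case you need a different argument, and the paper supplies one: compare $\mathcal{L}_1$ and $\mathcal{L}_2$ directly. Since they realise the \emph{same} mod $r$ tropical type, the ambiguous factors $\mathcal{G}_i \otimes \mathcal{J}_i$ are age $0$ everywhere (using Remark \ref{G diff age 0} for the $\mathcal{G}$-part), so $\mathcal{L}_2 \otimes \mathcal{L}_1^{\vee}$ restricted to any $\mathcal{C}_v$, $v \in V_+$, has \emph{integer} degree. The subcurve degree constraint $\deg(\mathcal{L}_i|_{\mathcal{C}_v}) \in (-1/2, 1/2)$, which is part of the definition of $\moduli$, then forces that integer to be $0$, i.e.\ $\mathcal{L}_2 \otimes \mathcal{L}_1^{\vee}$ is multi-degree $0$ on $\mathcal{C}_+$, hence lies in $\mathrm{Jac}_{\mathcal{C}}^+$ up to gluing data. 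This degree-bound step is the load-bearing point that your proposal omits; Proposition \ref{structure}(2) is not strong enough by itself to deliver it once $\Gamma_+$ has edges. Your remaining observations — the $\mathbb{G}_m^{b_0(\Gamma_0)}/\mathbb{G}_m$ bookkeeping via \eqref{SESJac}, the freeness via faithfulness of the line bundle action, and the spreading out in families via the universal Jacobian — are all sound.
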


\begin{proof}
    We need to define $\mathcal{J} \cdot \elt$ for $\mathcal{J} \in \mathrm{Jac}^{+}_{\mathcal{C}}$. We keep $\mathcal{C}$ fixed, and define $\mathcal{J}\cdot \mathcal{L} = \mathcal{J} \otimes \mathcal{L}$ the usual tensor product of line bundles on $\mathcal{C}$.  We just need to define the section $\mathcal{J}\cdot s \in H^0(\mathcal{C},\mathcal{J} \otimes \mathcal{L})$. The idea is simple: $\mathcal{J} \otimes \mathcal{L}$ differs from $\mathcal{L}$ only on the locus of $\mathcal{C}_+$ where the section $s$ identically vanishes. We can then define $\mathcal{J}\cdot s|_{\mathcal{C}_v} = s|_{\mathcal{C}_v}$ for each component. More formally, there is a short exact sequence of sheaves $$ 0 \rightarrow \mathcal{J} \otimes \mathcal{L} \rightarrow \alpha_* \alpha^{*} (\mathcal{J} \otimes\mathcal{L}) \rightarrow \bigoplus_{q \in \mathcal{C}_0 \cap \mathcal{C}_+} \mathbb{C}_q \rightarrow 0.$$ This induces the exact sequence on cohomology groups \begin{equation}\label{twistedsections}
        0 \rightarrow H^0(\mathcal{C},\mathcal{J} \otimes \mathcal{L}) \rightarrow H^0(\tilde{\mathcal{C}},\alpha^*(\mathcal{J} \otimes \mathcal{L})) = H^0(\mathcal{C}_0,\alpha^*(\mathcal{J} \otimes \mathcal{L}|_{\mathcal{C}_0})) \oplus H^0(\mathcal{C}_+,\alpha^*(\mathcal{J} \otimes \mathcal{L}|_{\mathcal{C}_+} )) \rightarrow \mathbb{C}^b.
    \end{equation} In other words, sections of $\mathcal{J} \otimes \mathcal{L}$ are sections of $\alpha^* (\mathcal{J} \otimes \mathcal{L})$ that glue at the $b$ nodes $\mathcal{C}_0 \cap \mathcal{C}_+$. However, there is a natural identification $$\mathcal{J} \otimes \mathcal{L}|_{\mathcal{C}_0} \cong \mathcal{L}|_{\mathcal{C}_0} $$ and therefore the data of $(s|_{\mathcal{C}_0},0)$ induces a well-defined element of the second term of sequence \eqref{twistedsections}. This is in the kernel of the latter morphism since all the evaluation maps of the section $s$ at the points in $\mathcal{C}_0 \cap \mathcal{C}_+$ are $0$. This therefore defines a unique element of $H^0(\mathcal{C},\mathcal{J} \otimes \mathcal{L})$ which we define as $\mathcal{J} \cdot s$. It is clear that this gives an action of $\mathrm{Jac}^{+}_{\mathcal{C}}$ on $Z_{\mathcal{C}}$ which is free, since the action on the line bundle component is free. This action is compatible in families and so gives an action of $\mathrm{Jac}_{[\tau]}$ on $Z_{[\tau]}^{\circ}$.

    Now we deal with the transitivity on a given $Z_{\mathcal{C}}$. In the case connected components of $\mathcal{C}_+$ are irreducible, transitivity on the line bundle factor follows immediately from Lemma \ref{structure} (2). 
    
    Now consider the case $\mathcal{C}_+$ is potentially reducible. Suppose that $\elt, (\mathcal{C},(\mathcal{L}',s')) \in Z_{[\tau]}^{\circ}$, and consider the restrictions of the line bundles to $ \mathcal{C}_+$, $$\nu^* \mathcal{L} \cong \mathcal{F}_+ \otimes \mathcal{G}_1 \otimes \mathcal{J}_1, \ \nu^* \mathcal{L}' \cong \mathcal{F}_+ \otimes \mathcal{G}_2 \otimes \mathcal{J}_2$$ as in the notation of Proposition \ref{structure} (2), where $\mathcal{J}_1,\mathcal{J}_2, \mathcal{G}_1, \mathcal{G}_2$ are total degree $0$ line bundles and $\nu : \tilde{\mathcal{C}} \rightarrow \mathcal{C}$ is the partial normalisation along $\mathcal{C}_0 \cap \mathcal{C}_+$. 

    We claim that $$ (\mathcal{J}_2 \otimes \mathcal{G}_2) \otimes (\mathcal{G}_1 \otimes \mathcal{J}_1)^{\vee} \in \mathrm{Jac}(\tilde{\mathcal{C}})$$ is multi-degree $0$. In other words, $\mathcal{J}_1 \otimes \mathcal{G}_1, \mathcal{J}_2 \otimes \mathcal{G}_2$ have the same multi-degrees. If not, then there must exist a vertex $v \in V_+$ such that $\mathrm{deg}((\mathcal{J}_2 \otimes \mathcal{G}_2)|_{\mathcal{C}_v} \otimes (\mathcal{J}_1 \otimes \mathcal{G}_1)|_{\mathcal{C}_v}^{\vee}) \in \mathbb{Z} \setminus 0$. The fact this degree is integral is because $\mathcal{J}_1,\mathcal{J}_2$ are age $0$ and $\mathcal{G}_2 \otimes \mathcal{G}_1^{\vee}$ is age $0$ by Remark \ref{G diff age 0}. But then the condition $$\mathrm{deg}(\mathcal{L}|_{\mathcal{C}_v}) \in (-1/2,1/2)$$ implies $$ \mathrm{deg}(\mathcal{L}'|_{\mathcal{C}_v}) = \mathrm{deg}(\mathcal{L}|_{\mathcal{C}_v} \otimes (\mathcal{J}_2 \otimes \mathcal{G}_2)|_{\mathcal{C}_v} \otimes (\mathcal{J}_1 \otimes \mathcal{G}_1)|_{\mathcal{C}_v}^{\vee}) \not \in (-1/2,1/2)$$ giving a contradiction to $(\mathcal{C},(\mathcal{L}',s'))$ defining an element of $\moduli$. It then follows the action is transitive as in the previous case: $(\mathcal{J}_2 \otimes \mathcal{G}_2) \otimes (\mathcal{G}_1 \otimes \mathcal{J}_1)^{\vee}$ induces a torsors worth of elements of $\mathrm{Jac}_{\mathcal{C}}^{+}$ under the group of possible gluings of $\mathcal{C}_+$ to $\mathcal{C}_0$. Then there is a corresponding $\mathcal{J} \in \mathrm{Jac}_{\mathcal{C}}^{*}$ in this torsor which is the desired element we act by to get from $\mathcal{L}$ to $\mathcal{L}'$. It is easy to see such an element sends $s$ to $s'$, up to scalar, since on external components the action is trivial and the sections already agree (Lemma \ref{structure} (a)) and on internal components both $s,s'$ vanish identically.

    We see that the action above is transitive up to potential scalings of the sections. On components on which $s$ is not identically $0$, we can scale the associated section by a constant. On any neighbouring components, such a scaling induces the same scaling on these components also, giving the $\mathbb{G}_m^{b_0(\Gamma_0)}$ factor. Now, such automorphisms are distinct up to isomorphism induced by the global scaling on the line bundle which is a $\mathbb{G}_m$. This scaling therefore defines a free action of $[(\mathbb{G}_m)^{b_0(\Gamma)}/ \mathbb{G}_m]$ on $Z_{\mathcal{C}}$, again compatible in families. The product of the two actions is transitive by the above discussion and so the result follows.
\end{proof}


\subsection{Classification of irreducible components}\label{classification}

\subsubsection{Essential types }

\begin{definition}[Essential mod $r$ tropical types.]\label{essentialtype}

A mod $r$ tropical type $[\tau]$ is called \textbf{essential} if: 

\begin{itemize}
    \item The decomposition of the set of vertices of $\Gamma$, $$V(\Gamma) = V_+ \cup V_0 $$ as in Section \ref{troptype} makes $\Gamma$ in to a bipartite graph.

    \item For any $v \in V_+$, the genus $g_v > 0$.
\end{itemize}
\end{definition}

\begin{figure}[h!]
    \centering
    \includegraphics[scale = 0.5]{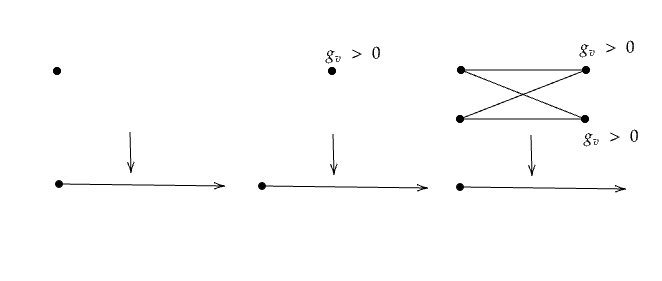}
    \caption{Three different depictions of mod $r$ tropical types}
    \label{essentialtypeexamples}
\end{figure}

\begin{remark}\label{essentialremarks}
    By definition, for $\elt$ to induce an essential mod $r$ type $[\tau]$, $\mathcal{C}_+, \mathcal{C}_0$ must both be a disjoint unions of smooth twisted curves and so Proposition \ref{action} tells us the action of $\mathrm{Jac}^{+}_{\mathcal{C}} \times \mathbb{G}_m^{|V_0| - 1}$ on the fibre of $Z_{[\tau]}^{\circ} $ corresponding to $\mathcal{C}$ is transitive. Furthermore, an essential type has a canonical orientation on edges. Namely, we will often want to orient the edges $\vec{e}$ such that the source of $\vec{e}$ is in $V_0$ and the sink is in $V_+$; the bipartite condition tells us this is well-defined.
\end{remark}

\begin{definition}\label{inducibletype}
    A mod $r$ tropical type $[\tau]$ is called \textbf{inducible} if there exists an object $\elt \in \moduli$ whose associated type is $[\tau]$. Equivalently, $[\tau]$ is inducible if and only if $Z_{[\tau]} \not = \emptyset$.
\end{definition}




\begin{theorem}\label{irredcpts}
    There is a bijection $$\mathrm{Irred. \ cpts \ of \ } \moduli \leftrightarrow \mathrm{Inducible \ essential \ tropical \ types \ } [\tau] $$ given by $[\tau] \mapsto Z_{[\tau]}$.
\end{theorem}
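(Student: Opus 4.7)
The plan is to establish the bijection in two stages: first, irreducibility of each $Z_{[\tau]}$ with $[\tau]$ essential and inducible; second, the fact that every irreducible component of $\moduli$ arises this way. Injectivity of $[\tau] \mapsto Z_{[\tau]}$ is immediate, since distinct mod $r$ tropical types have disjoint open strata $Z_{[\tau]}^\circ$, each of which contains the generic point of its own closure.

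For irreducibility of $Z_{[\tau]}$ I would use Proposition \ref{action} together with the consequence of essentiality recorded in Remark \ref{essentialremarks}: both $\mathcal{C}_+$ and $\mathcal{C}_0$ are disjoint unions of smooth twisted curves. The stratum $\mathfrak{M}_{[\tau]}$ in the stack of twisted pre-stable curves with fixed type $[\tau]$ is then irreducible, being a fiber bundle over a product of moduli stacks of smooth twisted curves, one factor per vertex of $\Gamma$. The forgetful map $Z_{[\tau]}^\circ \to \mathfrak{M}_{[\tau]}$ has fibres which are torsors under the connected algebraic group $\mathrm{Jac}_{[\tau]} \times [(\mathbb{G}_m)^{b_0(\Gamma_0)}/\mathbb{G}_m]$, by Proposition \ref{action}. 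Irreducible base together with irreducible connected fibres gives $Z_{[\tau]}^\circ$ irreducible, and hence so is its closure.

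For surjectivity I need to show that any stratum $Z_{[\tau_0]}^\circ$ with $[\tau_0]$ non-essential lies in the closure of $Z_{[\tau]}^\circ$ for some essential $[\tau]$. I would prove this by iteratively applying two smoothing moves that strictly reduce the lexicographic complexity $\bigl(\#\{v \in V_+ : g_v = 0\},\, \#\text{non-bipartite edges}\bigr)$: (i) smoothing an edge joining two vertices of the same type (both in $V_+$ or both in $V_0$), which reduces the number of non-bipartite edges without creating a new genus-$0$ internal vertex, since on the smoothed component the section remains identically zero (resp.\ nonzero); and (ii) smoothing a bipartite edge adjacent to an internal vertex $v$ with $g_v = 0$, which merges $v$ into a neighbour in $V_0$ and eliminates the offending vertex, trading a possible increase in non-bipartite edges for a strict decrease in the first coordinate. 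After finitely many steps one reaches an essential type $[\tau]$, and by construction the original object lies in $Z_{[\tau]}$.

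The main obstacle will be verifying that each smoothing move genuinely produces a one-parameter family inside $\moduli$: the pair $(\mathcal{L}, s)$ must extend over the smoothing family, and the subcurve degree bounds $-\tfrac{1}{2} < \deg(\mathcal{L}|_{C'}) < \tfrac{1}{2}$ from Section \ref{contactdata} must remain strict on the generic fibre. Extension of $(\mathcal{L}, s)$ across a gerby node smoothing is unobstructed because the ages on the two branches are inverse, so the transition data glue consistently across the smoothing parameter; the degree inequalities are open conditions and hence are preserved under sufficiently small deformation; and inducibility of the target type follows from inducibility of $[\tau_0]$. With these verifications in hand, surjectivity follows and the theorem is established.
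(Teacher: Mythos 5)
The overall shape of your argument (irreducibility of each essential stratum via the torsor structure, plus a smoothing argument showing every object deforms into an essential stratum) matches the paper's Step 1 and Step 3. However, your proof has a genuine gap: it omits the paper's Step 2, the \emph{non-smoothing} direction, and without it you cannot conclude.

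Concretely, what you establish is that $\moduli = \bigcup_{[\tau]\ \mathrm{essential}} Z_{[\tau]}$, that each $Z_{[\tau]}$ is irreducible, and that distinct essential types give distinct closed subsets. This shows every irreducible component of $\moduli$ is equal to \emph{some} $Z_{[\tau]}$ with $[\tau]$ essential and inducible; it does not show the converse. A priori one could have $Z_{[\tau_1]} \subsetneq Z_{[\tau_2]}$ for two distinct essential types, in which case $Z_{[\tau_1]}$ is irreducible and closed but is not a component, and the map $[\tau] \mapsto Z_{[\tau]}$ fails to land in the set of components. The paper closes this gap with Lemma \ref{opens}: for an essential type, the locus $Z_{[\tau]}^{+} \subset Z_{[\tau]}^{\circ}$ where the Jacobian factor at each internal vertex is non-trivial is \emph{open in $\moduli$}. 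Since $Z_{[\tau]}^{+}$ is irreducible (open in the irreducible $Z_{[\tau]}^{\circ}$) and open in $\moduli$, its closure $Z_{[\tau]}$ is automatically an irreducible component. Establishing that openness is itself a nontrivial deformation-theoretic argument (one must show that a non-trivial Jacobian factor obstructs any smoothing of the bipartite nodes), and nothing in your proposal plays this role. You need to add this entirely.

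A secondary issue: your smoothing move (ii), smoothing one bipartite edge at a time adjacent to an internal genus-$0$ vertex, does not obviously produce a family in $\moduli$. The paper's Lemma \ref{genus0} instead smooths \emph{all} nodes of the star simultaneously, via a carefully chosen one-parameter subfamily of the universal smoothing (with exponents $\bar{\alpha}_i$ tuned so that the divisor $\sum \tilde{c}_i \sigma_i + (\prod_i \alpha_{q_i,\mathcal{C}_{v_0}}(\mathcal{L}))\,\mathcal{C}_0$ is Cartier and restricts correctly to each component). Your heuristic that the transition data glue ``because the ages on the two branches are inverse'' does not address the requirement that the restriction of the smoothed line bundle to each surviving component continue to have the form dictated by Proposition \ref{structure}; the genuine content is in arranging the Cartier condition and the degree constraints on the total space, and this does not obviously localise to one node at a time. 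You should either adopt the simultaneous-smoothing construction or prove carefully that a single-node smoothing suffices, and also add the gluing step (the analogue of Lemma \ref{propagation}) needed to extend a smoothing from a subcurve to the whole curve.
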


The proof of the theorem will follow from a sequence of smoothing lemmas for line bundle-section pairs on curves.

\subsubsection{Roadmap}

We briefly outline the different components that go into proving Theorem \eqref{irredcpts}. 

\textbf{Step 1: Smoothing.} The goal of this section is to show, starting with object $\elt \in \moduli$ there always exists an infinitesimal deformation of $\elt$ in the moduli space whose generic fibre is of essential type (culminating in Theorem \ref{essentialtypethm}). This involves producing infinitesimal smoothings of nodes where on each branch the section identically vanishes (Lemma \ref{internaledge}), producing infinitesimal smoothings of nodes where on each branch the section does not identically vanish (Lemma \ref{contractededge}), and deforming away rational components on which the section identically vanishes (Lemma \ref{genus0}). This is achieved by constructing the deformations on simpler example curves, then gluing together deformations to get the general results, using Proposition \ref{propagation}).

\textbf{Step 2: Non-smoothing.} Step $1$ allows us to show that the strata indexed by essential types union to the whole moduli space. We then aim to show each of these strata are unions of irreducible components. This results to identifying explicit open loci in each stratum which corresponds to the locus where non-trivial Jacobian factors on the associated line bundles appear, following Proposition \ref{structure}. Showing these subspaces are indeed open comes down to proving there are obstructions to creating deformations that change the mod $r$ tropical type. Namely, we show there aren't infinitesimal deformations that contract any edges or move interior vertices to an exterior vertex. All of the arguments here rely on the structural results of objects in the moduli space Proposition \ref{structure of the LB}. Once again this is argued first for the case of simpler curves, then the general obstructions to deformations is deduced from these cases. 

\textbf{Step 3: Irreducibility of each essential stratum.} Section \ref{putting it all together} performs a final analysis to deduce each essential stratum is irreducible (Lemma \ref{stratairred}). This uses the description of each stratum as a torsor under a semi-abelian group given in section \ref{abelian group actions}. These abelian groups are irreducible, and the strata are realised as torsors under these groups over boundary strata of moduli of curves (up to stacky issues). The irreducibility then follows from the irreducibility of these two types of objects. The result then easily follows. 

\subsubsection{Smoothing results }\label{smoothing results}

Throughout this section we will denote by $\Delta := \mathrm{Spec}(\C[[\epsilon]])$ the infinitesimal disc. We will make frequent use of the following technical Lemma.

\begin{lemma}[\cite{Cru24} Lemma 3.3.6]\label{propagation}
Let $\mathcal{C}$ be a pre-stable twisted curve and write $\mathcal{C} = Z_1 \cup Z_2$ a union of two proper subcurves $Z_1,Z_2$ intersecting only in nodes $Z_1 \cap Z_2 = \coprod_{j \in J} q_j$. Let: \begin{itemize}
    \item $\mathcal{Z}_i \rightarrow \Delta$ be deformations of $Z_i$ that do not smooth any of the nodes $q_j, j \in J$, for $i = 1,2$. In other words, there are disjoint trivial subfamilies of nodes $\coprod_{j \in J} \tilde{q}_j \subset \mathcal{Z}_i, \ i = 1,2$ where $\tilde{q}_j = q_j \otimes \C[[\epsilon]]$.
    \item $\mathcal{C}^{\mathrm{def}} \rightarrow \Delta$ be the deformation of $\mathcal{C}$ formed by the pushout diagram \begin{center}
        \begin{tikzcd}
\mathcal{C}^{\mathrm{def}}         & \mathcal{Z}_1 \arrow[l, hook']                                   \\
\mathcal{Z}_2 \arrow[u, hook] & \coprod_{j \in J} \tilde{q}_j \arrow[l, hook] \arrow[u, hook']
\end{tikzcd}
   \end{center} 

    \item $(\mathcal{L},s)$ a line bundle-section pair on $\mathcal{C}$.
    \item $(\tilde{\mathcal{L}}_i,\tilde{s}_i)$ any two deformations of $(\mathcal{L}_{i},s_{i}):= (\mathcal{L}|_{Z_i},s|_{Z_i})$ over $\mathcal{Z}_i$ for $i = 1,2$ such that the restrictions of $\tilde{s}_i$ to $\tilde{q}_j$ is independent of $\epsilon$.
\end{itemize}

Then there is a $\mathbb{G}_a^{|J| -k_1 - k_2 + 1}$-torsor of line bundle-section pairs $(\tilde{\mathcal{L}},\tilde{s})$ inducing this data under restriction.

\end{lemma}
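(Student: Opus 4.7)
The plan is to exploit the fact that $\mathcal{C}^{\mathrm{def}}$ is constructed as a pushout, which reduces the extension problem to a gluing-plus-matching calculation at the finitely many nodes $\tilde q_j$. The strategy has three stages: (1) parametrise the line-bundle gluings on the pushout, (2) impose the section-matching constraint at each node, and (3) quotient by the automorphisms that are visible globally but not on each piece separately. The output should be a finite-dimensional affine-type torsor, which is what the exponent $|J| - k_1 - k_2 + 1$ encodes.

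For stage (1), by the pushout description of $\mathcal{C}^{\mathrm{def}}$, a line bundle on $\mathcal{C}^{\mathrm{def}}$ is equivalent to the pair $(\tilde{\mathcal{L}}_1, \tilde{\mathcal{L}}_2)$ together with an isomorphism $\phi_j : \tilde{\mathcal{L}}_1|_{\tilde q_j} \xrightarrow{\sim} \tilde{\mathcal{L}}_2|_{\tilde q_j}$ for every $j \in J$. Since we want a deformation of $(\mathcal{L}, s)$, the reduction $\phi_j|_{\epsilon = 0}$ is fixed as the gluing that produced $\mathcal{L}$, so the candidates form a torsor under $\prod_j (1 + \mathfrak{m}_\Delta)^{\times}$, which is a formal $\mathbb{G}_a^{|J|}$ over $\mathrm{Spec}\, \C$. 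For stage (2), the hypothesis that $\tilde{s}_i|_{\tilde q_j}$ is $\epsilon$-independent forces each restriction to equal $s(q_j)$, and the compatibility required to glue $\tilde s_1$ and $\tilde s_2$ into a global section reads $\phi_j(\tilde s_1|_{\tilde q_j}) = \tilde s_2|_{\tilde q_j}$. At a node where $s(q_j) \neq 0$ this pins $\phi_j$ uniquely inside its torsor, while at a node where $s(q_j) = 0$ the constraint is vacuous, so the valid gluings form a torsor under $\mathbb{G}_a^{|J_0|}$ with $J_0 \subseteq J$ the nodes where $s$ vanishes.

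For stage (3), I would identify the automorphisms of $(\tilde{\mathcal{L}}_i, \tilde s_i)$ that lift the identity on $(\mathcal{L}_i, s_i)$: these are multiplications by $1 + \epsilon \lambda_i$ with $\lambda_i \in H^0(\mathcal{Z}_i, \mathcal{O})$ satisfying $\lambda_i \tilde s_i = 0$. Acting on gluings, such automorphisms identify different extensions, but a single global scaling of the whole $(\tilde{\mathcal{L}}, \tilde s)$ acts trivially on the isomorphism class. The cleanest way to assemble everything is a Mayer--Vietoris sequence for the two-term sheaf complex $[\,\mathcal{O}_{\mathcal{C}^{\mathrm{def}}} \xrightarrow{\cdot s} \mathcal{L}\,]$ that governs deformations of line bundle--section pairs; in characteristic zero this linearises the multiplicative gluings, and the torsor of interest appears as the kernel in the associated long exact sequence, with exponent $|J|$ (nodal gluings) minus $k_1 + k_2$ (automorphism scalings on each $\mathcal{Z}_i$, matching the definition of $k_i$ from \cite{Cru24}) plus $1$ (the global diagonal scalar that was double-counted).

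The main obstacle is the last piece of bookkeeping: correctly separating out which piecewise automorphisms act non-trivially on the gluings and which agree with a global symmetry of the extended pair. This is easy to get off by one without a systematic framework. Packaging it as the Mayer--Vietoris long exact sequence for the complex above makes the $+1$ transparent (it is the image of $H^0$ on the whole curve in the direct sum on each piece) and reduces the problem to computing that each $H^0(\mathcal{Z}_i, \mathrm{Aut}(\tilde{\mathcal{L}}_i, \tilde s_i))$ has dimension $k_i$, which is then a purely local matter on each piece.
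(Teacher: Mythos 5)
Your Stages (1) and (2) are sound and give a correct count, but it is a count of the wrong thing, and Stage (3) does not close the gap between what you computed and what the lemma asserts. Concretely, Stage (2) produces a $\mathbb{G}_a^{|J_0|}$-torsor of \emph{literal} gluings compatible with the fixed boundary sections, where $J_0\subseteq J$ is the subset of nodes at which $s$ vanishes. The lemma's exponent $|J|-k_1-k_2+1$ is something else: comparing with the paper's sequence \eqref{SESJac}, where the identical quantity $|E^{\mathrm{b}}|-b_0(\Gamma^{\dag})+1$ appears as the rank of the torus part of $\mathrm{Jac}(\mathcal{C})$, the intended meaning of $k_i$ is almost certainly the number of connected components of $Z_i$, so that $|J|-k_1-k_2+1$ is the dimension of $\ker\bigl(\mathrm{Pic}(\mathcal{C}^{\mathrm{def}})\to\mathrm{Pic}(\mathcal{Z}_1)\oplus\mathrm{Pic}(\mathcal{Z}_2)\bigr)$. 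That quantity is visibly insensitive to the section, whereas your $|J_0|$ manifestly depends on where $s$ vanishes; so the two cannot agree in general unless further cancellation is demonstrated.

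Your Stage (3) gestures at the right framework (Mayer--Vietoris for the two-term complex $\mathcal{O}\xrightarrow{\cdot s}\mathcal{L}$), but you do not carry it out, and the definition you propose for $k_i$, namely $\dim H^0(\mathcal{Z}_i,\mathrm{Aut}(\tilde{\mathcal{L}}_i,\tilde s_i))$, disagrees with the component-count interpretation forced by \eqref{SESJac} whenever $\tilde s_i$ does not vanish identically. Moreover, quotienting the $\mathbb{G}_a^{|J_0|}$ of Stage (2) by the automorphism action, which is what your Stage (3) does, yields a count of isomorphism classes of pairs, not the torsor of pairs promised by the statement; this is a change of quantifier that you do not flag. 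If you push the Mayer--Vietoris sequence through, the gluing contribution to $\mathbb{H}^1$ is the cokernel of $\mathbb{H}^0(\mathcal{Z}_1)\oplus\mathbb{H}^0(\mathcal{Z}_2)\to\bigoplus_j \mathbb{H}^0(q_j)=\C^{|J_0|}$, which is not, on its face, $|J|-k_1-k_2+1$. So the remaining work in a correct proof, the part you have labelled ``bookkeeping'', is in fact the crux: one must show how the section-matching constraints at the nodes where $s\neq0$ are exactly absorbed by the loss of piecewise automorphisms, and then reconcile the result with the component count $k_1+k_2$. Without that verification, the exponent claimed by the lemma is not justified by your argument.
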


The key use of the above Lemma is it allows us to run deformation theory arguments on a sub-set of components of a curve, and then glue these results together to deduce a global deformation theory statement. Indeed, part (3) tells us that if we write $\mathcal{C} = \mathcal{C}_1 \cup \mathcal{C}_2$ and deform each $\mathcal{C}_i$ individually, along with cooking up a line-bundle section pair over each of these deformations, we can often glue these line bundle section pairs to a global line-bundle section pair over a deformation of $\mathcal{C}$.

\begin{lemma}\label{internaledge}
Let $\elt \in \moduli$ such that there exist two irreducible components $\mathcal{C}_1,\mathcal{C}_2 \subset \mathcal{C}$ meeting at a node $q$ with $s|_{\mathcal{C}_1\cup \mathcal{C}_2} \equiv 0$. Then there exists an infinitesimal smoothing of $q$ in the moduli space $\moduli$.    
\end{lemma}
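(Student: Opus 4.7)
The plan is to produce the deformation locally around $q$ and then glue it to the trivial deformation on the rest of $\mathcal{C}$ via Proposition~\ref{propagation}. I would decompose $\mathcal{C} = Z_1 \cup Z_2$ with $Z_1 = \mathcal{C}_1 \cup \mathcal{C}_2$ and $Z_2$ the closure of the complement; the intersection $Z_1 \cap Z_2$ consists of nodes of $\mathcal{C}$ distinct from $q$, which is exactly what Proposition~\ref{propagation} requires. On $Z_2$ I take the trivial deformation $Z_2 \times \Delta$ together with the trivial extension of $(\mathcal{L}|_{Z_2}, s|_{Z_2})$.

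The substantive step is the construction on $Z_1$. I would smooth $q$ using the standard twisted-node model: étale locally $q$ has chart $[\Spec \C[x,y]/(xy)\,/\,\mu_{t_q}]$, which I deform to $[\Spec \C[x,y,\epsilon]/(xy - \epsilon^{t_q})\,/\,\mu_{t_q}]$, leaving the other nodes of $Z_1$ unchanged. The line bundle $\mathcal{L}|_{Z_1}$ extends across this smoothing because the two $\mu_{t_q}$-representations on the branches at $q$ are inverse to each other, an automatic property of an orbifold line bundle recorded in congruence~\eqref{congruence}. Concretely, in the decomposition of Proposition~\ref{structure}(2), the twist $\mathcal{G} = \mathcal{O}(\alpha_{q,1}(\mathcal{L})(q_1 - q_2))$ becomes trivial under the smoothing as the two preimages $q_1,q_2$ merge, while the remaining factors $\mathcal{F}_+ \otimes \mathcal{J}$ extend as pullbacks from a flat deformation of the coarse space. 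Since $s|_{Z_1} \equiv 0$, I simply extend by the zero section.

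The two local extensions agree independent of $\epsilon$ on the nodes $Z_1 \cap Z_2$, so Proposition~\ref{propagation} assembles them into a global family $(\tilde{\mathcal{C}}, (\tilde{\mathcal{L}}, \tilde{s}))$ over $\Delta$ that smooths $q$. The remaining check is that this family defines a point of $\moduli$: ages at surviving special points are unchanged because they lie in the trivially deformed locus, and the degree bounds $-\tfrac{1}{2} < \deg(\tilde{\mathcal{L}}|_Z) < \tfrac{1}{2}$ for proper subcurves of the generic fibre follow by flatness from the corresponding central-fibre bounds, since any such subcurve either specialises to a proper subcurve of $\mathcal{C}$ or to a subcurve containing $\mathcal{C}_1 \cup \mathcal{C}_2$ together with $q$. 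The main obstacle I expect is the local extension of $\mathcal{L}$ across the smoothed twisted node; this reduces to observing that the character of $\mu_{t_q}$ describing $\mathcal{L}$ near $q$ is compatible with the $\mu_{t_q}$-invariance of $\epsilon^{t_q}$, so survives the smoothing, and the remaining verifications are routine bookkeeping that the deformation stays inside $\moduli$.
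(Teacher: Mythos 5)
Your outline matches the paper's proof of Lemma~\ref{internaledge}: decompose $\mathcal{C} = Z_1 \cup Z_2$ with $Z_1 = \mathcal{C}_1 \cup \mathcal{C}_2$, take the trivial deformation with zero section on $Z_1$ and the trivial deformation of $(\mathcal{L}, s)$ on $Z_2$, then glue via Lemma~\ref{propagation}. You have also correctly identified that the one substantive step is extending the line bundle $\mathcal{L}|_{Z_1}$ across the smoothed twisted node. Where you diverge is in how you justify this step, and that justification has a genuine gap.

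Your appeal to Proposition~\ref{structure}(2) does not apply here. That proposition's hypotheses are: $E$ is an \emph{irreducible} proper subcurve, $s$ vanishes identically on $E$, and $s$ does \emph{not} vanish on any other component meeting $E$. You are applying it to $Z_1 = \mathcal{C}_1 \cup \mathcal{C}_2$, which is reducible; and even applied to $\mathcal{C}_1$ or $\mathcal{C}_2$ individually the third hypothesis fails, since each meets the other and $s$ vanishes on both. Moreover the $\mathcal{G}$ term in Proposition~\ref{structure}(2) is built from \emph{self-nodes} of $E$, so it does not record the node $q$ joining two distinct components; and the claim that ``$\mathcal{F}_+ \otimes \mathcal{J}$ extend as pullbacks from a flat deformation of the coarse space'' is false --- $\mathcal{F}_+$ has nontrivial ages at the markings and at $q$, so it is not pulled back from the coarse curve (only the age-$0$ factor $\mathcal{J}$ is).

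The paper bypasses all of this by a single clean deformation-theoretic fact: the universal Picard stack $\mathrm{Pic} \to \mathfrak{M}^{\mathrm{tw}}_{g,n}$ over pre-stable twisted curves is smooth, so \emph{any} line bundle on the central fibre lifts along \emph{any} infinitesimal deformation of the twisted curve, with no structural analysis of the particular $\mathcal{L}$ required. Your underlying intuition --- that the character of $\mu_{t_q}$ describing $\mathcal{L}$ near $q$ is compatible with the local model $[\Spec \C[x,y,\epsilon]/(xy - \epsilon^{t_q})/\mu_{t_q}]$ because the two branch actions are inverse --- is the local reason that smoothness holds, and could be made rigorous as an explicit cocycle extension. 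But as written you have routed it through a proposition whose hypotheses do not hold, and you should instead either cite the smoothness of $\mathrm{Pic}$ directly or carry out the local extension and vanishing of $H^2$ by hand.
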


\begin{proof}
    Suppose first that $\mathcal{C} = \mathcal{C}_1 \cup_q \mathcal{C}_2$. Then $(\mathcal{L},s) = (\mathcal{L},0)$. The universal Picard stack $$\mathrm{Pic} \rightarrow \mathfrak{M}^{\mathrm{tw}}_{g,n} $$ over the stack of pre-stable twisted genus $g$ $n$-marked curves is smooth. Therefore we can extend any smoothing of the node $\mathcal{C}^{\mathrm{def}}/\Delta$ (for $\Delta = \mathrm{Spec}(\C[[\epsilon]])$) to a smoothing of the line bundle $\mathcal{L}$. 
    
    Now for the general case of $\mathcal{C}$ having more components, take $\mathcal{C}^{\mathrm{def}}/\Delta$ to be a family of marked pre-stable curves that smooth $q$ but restricts to the trivial family on other components away from $\mathcal{C}_1 \cup \mathcal{C}_2$. Then $$ \mathcal{C}^{\mathrm{def}} = \mathcal{Z}_1 \cup \mathcal{Z}_2$$ where \begin{itemize}
        \item $\mathcal{Z}_1$ is an irreducible component of $\mathcal{C}^{\mathrm{def}}$ which is a smoothing of $(\mathcal{C}_1 \cup \mathcal{C}_2)/\Delta$.
        \item $\mathcal{Z}_2$ is a union of other irreducible components corresponding to the trivial families $\mathcal{C}_l \otimes \C[[\epsilon]]$ for $l \not = 1,2$.

    \end{itemize} 

    Observe that \begin{itemize}
    \item $Z_1 \cap Z_2 = \coprod_{i \in J} q_i$ a union of nodes distinct from $q$.

    \item $\mathcal{Z}_1 \cap \mathcal{Z}_2 = \coprod_{i \in J} \tilde{q}_i$ where $\tilde{q}_i = q_i \times \Delta$ a trivial family. Note that in particular, if $q_i = B \mu_t$ then $\tilde{q}_i \cong B \mu_t \times \Delta$ non-canonically. This follows from the vanishing of the sheaf cohomology group $H^2(\Delta, \mu_t) = 0$.
\end{itemize}

Define $(\tilde{\mathcal{L}}_1, \tilde{s}_1)$ be the line bundle constructed from the first part of the proof with $0$ section, and $(\tilde{\mathcal{L}}_2, \tilde{s}_2) = (\mathcal{L}|_{Z_2} \otimes \C[[\epsilon]], s|_{Z_2} \otimes \C[[\epsilon]])$ the trivial deformation of the restriction. The above data then puts us in the setup of Lemma \ref{propagation} and therefore we obtain our desired $(\tilde{\mathcal{L}},\tilde{s})$ on $\mathcal{C}^{\mathrm{def}}$.
\end{proof}

\begin{lemma}\label{contractededge}
    Let $\elt \in \moduli$ such that there exists two irreducible, potentially non-distinct, components $\mathcal{C}_1,\mathcal{C}_2 \subset \mathcal{C}$ meeting at a node $q$ with neither $s|_{\mathcal{C}_1}, s|_{\mathcal{C}_2}$ identically $0$. Then there exists an infinitesimal smoothing of $q$ in the moduli space $\moduli$.
\end{lemma}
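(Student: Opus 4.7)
The plan is to follow the strategy of Lemma~\ref{internaledge}: first handle the local case $\mathcal{C} = \mathcal{C}_1 \cup_q \mathcal{C}_2$ (with the self-node case handled analogously) by constructing an explicit smoothing of $q$ together with an extension of $(\mathcal{L},s)$, and then use Lemma~\ref{propagation} to propagate to the general case. The new ingredient relative to Lemma~\ref{internaledge} is that the section is now non-trivial on both branches, which by Proposition~\ref{structure}(1) rigidifies $(\mathcal{L},s)$ on each external component: $\mathcal{L}|_{\mathcal{C}_i} \cong \mathcal{O}(D_i)$ for the explicit divisor $D_i = \sum_{p_k \in \mathcal{C}_i} \tilde{c}_k p_k + \alpha_{q,\mathcal{C}_i}(\mathcal{L})\,q_i$, and $s|_{\mathcal{C}_i}$ is the unique section cutting out $D_i$ up to scalar.

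For the local case I would work \'etale-locally at the twisted node, using coordinates $(x,y)$ on the two branches with $xy = 0$ and $\mu_{t_q}$-action $x \mapsto \zeta x$, $y \mapsto \zeta^{-1} y$. Because $s$ does not vanish identically on either branch, its local form is $s = a\,x^{\alpha_{q,\mathcal{C}_1}(\mathcal{L})} + b\,y^{\alpha_{q,\mathcal{C}_2}(\mathcal{L})} + (\text{higher order})$ with $a,b \neq 0$, and the congruence $\alpha_{q,\mathcal{C}_1}(\mathcal{L}) + \alpha_{q,\mathcal{C}_2}(\mathcal{L}) \equiv 0 \pmod{t_q}$ from~\eqref{congruence} makes both leading monomials transform under the same character of $\mu_{t_q}$. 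The standard smoothing of the twisted node replaces $xy = 0$ by $xy = \pi$ in the orbifold smoothing parameter; I would then extend the section by $\tilde{s} = a\,x^{\alpha_{q,\mathcal{C}_1}(\mathcal{L})} + b\,y^{\alpha_{q,\mathcal{C}_2}(\mathcal{L})}$, which remains $\mu_{t_q}$-equivariant and specialises to $s$ at $\pi = 0$. Equivalently, take $\tilde{\mathcal{L}} = \mathcal{O}(\tilde{D})$ where $\tilde{D}$ extends the Cartier divisor $D_1 + D_2$ (constant in $\pi$ away from $q$), with tautological section $\tilde{s}$.

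For the general case, decompose $\mathcal{C} = \mathcal{Z}_1 \cup \mathcal{Z}_2$ with $\mathcal{Z}_1 = \mathcal{C}_1 \cup \mathcal{C}_2$ and $\mathcal{Z}_2$ the closure of the complement, so $\mathcal{Z}_1 \cap \mathcal{Z}_2$ consists of nodes distinct from $q$ and becomes a trivial family of nodes in the deformation. Apply the local construction above to $\mathcal{Z}_1$ to get $(\tilde{\mathcal{L}}_1,\tilde{s}_1)$, take the trivial deformation of $\mathcal{Z}_2$ equipped with the trivial extension $(\tilde{\mathcal{L}}_2,\tilde{s}_2)$ of $(\mathcal{L},s)|_{\mathcal{Z}_2}$, and glue via Lemma~\ref{propagation} to obtain the required global deformation.

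The main technical obstacle will be verifying that the resulting family defines an object of $\moduli$. The ages at other special points are preserved because the deformation is trivial there, and the rooting data at other nodes is unaffected by the gluing. The per-subcurve degree bound $-\tfrac{1}{2} < \deg(\tilde{\mathcal{L}}|_{C'}) < \tfrac{1}{2}$ is preserved: on the smoothed component $\mathcal{C}_1 \cup \mathcal{C}_2$ it follows from the assumption already applied to this proper subcurve of $\mathcal{C}$, and on other subcurves the bundle is unchanged. Finally, compatibility of the local smoothing with the rooting data at $q$ (and with the condition that $\tilde{\mathcal{L}}^{\otimes r}$ pull back from the coarse space) uses the divisibility condition $t_q \mid r$ from Section~\ref{contactdata}.
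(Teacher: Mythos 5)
Your strategy matches the paper's: reduce to the simplified case $\mathcal{C}=\mathcal{C}_1\cup_q\mathcal{C}_2$, build the deformation there, and glue with the rest of the curve via Lemma~\ref{propagation}. The one place you do more work than is needed is the \'etale-local analysis at $q$ with exponents $\alpha_{q,\mathcal{C}_i}(\mathcal{L})$. Since $s$ is nontrivial on both branches, Proposition~\ref{structure}(1) applies to both $\mathcal{C}_1$ and $\mathcal{C}_2$, and adding the two resulting degree formulas and comparing against the constraint $\deg\mathcal{L}=\sum_i a_i$ built into the definition of $\moduli$ forces $\alpha_{q,\mathcal{C}_1}(\mathcal{L})+\alpha_{q,\mathcal{C}_2}(\mathcal{L})=0$, so both vanish; representability then forces $q$ to be non-stacky. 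In other words $s$ is nonvanishing at $q$, your $\mu_{t_q}$-equivariance bookkeeping is vacuous, and your divisorial reformulation $\tilde{\mathcal{L}}=\mathcal{O}(\tilde D)$ collapses to the paper's proof, which takes any smoothing $\mathcal{C}^{\mathrm{def}}/\Delta$, forms the Cartier divisor $D=\sum_i\tilde c_i\sigma_i$ from the marked sections (Cartier since $\mathcal{C}^{\mathrm{def}}$ is irreducible and the $\sigma_i$ lie in the smooth locus), and invokes Proposition~\ref{structure}(1) to identify the restriction of $(\mathcal{O}(D),s_D)$ to the central fibre with $(\mathcal{L},s)$ up to scalar. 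One minor imprecision worth flagging: your local $\tilde s=ax^{\alpha_1}+by^{\alpha_2}$ only matches $s$ to leading order, but this does not matter once you pass to the divisor formulation, since Proposition~\ref{structure}(1) also determines the section up to scalar from the line bundle.
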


\begin{proof}
    First suppose $\mathcal{C} = \mathcal{C}_1 \cup \mathcal{C}_2$. Then take $\mathcal{C}^{\mathrm{def}}/\Delta$ to be any smoothing of $q$ as marked curves. Denote the sections of this family by $\sigma_i$. Consider the Weil divisor $$D := \sum_{i = 1}^n \tilde{c}_i \sigma_i. $$ Since $\mathcal{C}^{\mathrm{def}}$ is irreducible, in fact $D$ is Cartier (as the $\sigma_i$ live only in the smooth locus) and therefore $D$ gives rise to a line bundle-section pair $$(\mathcal{O}(D), s_D) $$ on $\mathcal{C}^{\mathrm{def}}$. Lemma \ref{structure} part (1) implies that this deforms $(\mathcal{L},s)$.

    Now for the general case of $\mathcal{C}$ having more components, we choose $\mathcal{C}^{\mathrm{def}}/\Delta$ to smooth $q$ but trivial on the other components. As at the end of Lemma \ref{internaledge}, we can then conclude by applying Lemma \ref{propagation} by gluing the construction in the first part of the proof to the trivial deformation data of the other components.
\end{proof}

\begin{corollary}\label{bipartite}

Given any $\elt \in \moduli$ there is an infinitesimal deformation of $\elt$ to an object whose associated mod $r$ tropical type is bipartite with respect to $V = V_0 \coprod V_+$.
    
\end{corollary}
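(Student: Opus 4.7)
We induct on the number $N = N(\elt)$ of bipartiteness-violating edges in the dual graph $\Gamma$ of $\elt$, i.e.\ edges both of whose endpoints lie in $V_+$ or both in $V_0$. The base case $N = 0$ is trivial: the constant family already has bipartite type. For $N \geq 1$, pick such a bad edge $e$ corresponding to a node $q$ of $\mathcal{C}$. Exactly one of Lemma \ref{internaledge} (both endpoints in $V_+$, i.e.\ $s$ vanishes identically on both adjacent components) or Lemma \ref{contractededge} (both endpoints in $V_0$, so $s$ is not identically zero on either component) applies and produces an infinitesimal deformation $\tilde{\elt}/\Delta$ of $\elt$ smoothing $q$.

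The key step is to verify that $N(\tilde{\elt}_\eta) \leq N - 1$ for the generic fibre. Smoothing the node $q$ removes the edge $e$ from the dual graph: either by merging its two endpoints into a single vertex (if $e$ is a non-loop) or by deleting a self-loop, the latter only decreasing $N$. Vertices of $\Gamma$ corresponding to components on which the deformation is trivial (the complement of $\mathcal{C}_1 \cup \mathcal{C}_2$ in the notation of those lemmas) retain their $V_0/V_+$ type verbatim in the generic fibre. For the merged vertex, one checks directly from the construction that the type is inherited from the two endpoints: in the case of Lemma \ref{internaledge} the deformed section is constructed to be identically zero on the deformed component, placing the merged vertex in $V_+$; in the case of Lemma \ref{contractededge} the deformed section is cut out by the Cartier divisor $\sum \tilde{c}_i \sigma_i$, which does not vanish generically on the smoothed component, placing the merged vertex in $V_0$. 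In particular, no previously bipartite edge becomes bad, and we lose the bad edge $e$.

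We then apply the inductive hypothesis to $\tilde{\elt}_\eta$ to obtain a further infinitesimal deformation landing in a fully bipartite object. To combine the two deformations into a single infinitesimal deformation of $\elt$, view them as defining a family over $\mathrm{Spec}\,\mathbb{C}[[\epsilon_1, \epsilon_2]]$ (or iteratively smoothing $\epsilon_2$ over $\mathbb{C}[[\epsilon_1]]$) and restrict along a one-parameter arc that approaches the generic fibre of both parameters, producing the required deformation over $\Delta$.

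The main technical obstacle is the bookkeeping of the $V_0/V_+$ types across the deformation — ensuring that smoothing one bad edge does not accidentally create new ones elsewhere. This is handled uniformly by the two observations above: the deformations produced by Lemmas \ref{internaledge} and \ref{contractededge} are supported on a neighbourhood of $e$ (via the gluing provided by Lemma \ref{propagation}), and they are tailored so that the identically-vanishing, respectively generically non-vanishing, nature of $s$ on each involved component is preserved on the generic fibre. An alternative, non-inductive route is to apply Lemma \ref{propagation} once to glue local smoothings at all bad nodes simultaneously; the inductive phrasing is conceptually cleaner but the combinatorial content is the same.
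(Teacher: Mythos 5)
Your proof is correct and matches the paper's approach: the paper's one-line argument simply applies Lemma \ref{internaledge} to all purely internal edges in $E(\Gamma_+)$ and then Lemma \ref{contractededge} to all purely external edges in $E(\Gamma_0)$, which is the same iterative use of those two lemmas that you formalise as an induction on the number of non-bipartite edges. The extra bookkeeping you supply --- that smoothing a single bad edge strictly decreases the count because the merged vertex inherits the $V_0/V_+$ type of the two endpoints and all other components are untouched --- is what the paper leaves implicit, and your closing remark about gluing all local smoothings at once via Lemma \ref{propagation} is essentially the route the paper pursues in the more general Lemma \ref{genus0general}.
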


\begin{proof}
    First apply Lemma \ref{internaledge} repeatedly for each purely internal edge $e \in E(\Gamma_+)$ to infinitesimally smooth out such nodes, giving a curve $\mathcal{C}'$. Then apply Lemma \ref{contractededge} repeatedly for each purely external edge $e \in E(\Gamma_0) $ puts us in the setup as in the statement.
\end{proof}

\begin{lemma}\label{genus0}

Let $\elt \in \moduli$ whose associated mod $r$ tropical type $[\tau]$ is of the form \begin{center}

\tikzset{every picture/.style={line width=0.75pt}} 

\begin{tikzpicture}[x=0.75pt,y=0.75pt,yscale=-1,xscale=1]

\draw    (239,188) -- (403,189.95) ;
\draw [shift={(405,189.97)}, rotate = 180.68] [color={rgb, 255:red, 0; green, 0; blue, 0 }  ][line width=0.75]    (10.93,-3.29) .. controls (6.95,-1.4) and (3.31,-0.3) .. (0,0) .. controls (3.31,0.3) and (6.95,1.4) .. (10.93,3.29)   ;
\draw [shift={(239,188)}, rotate = 0.68] [color={rgb, 255:red, 0; green, 0; blue, 0 }  ][fill={rgb, 255:red, 0; green, 0; blue, 0 }  ][line width=0.75]      (0, 0) circle [x radius= 3.35, y radius= 3.35]   ;
\draw    (239,80) -- (335,105.97) ;
\draw [shift={(335,105.97)}, rotate = 15.14] [color={rgb, 255:red, 0; green, 0; blue, 0 }  ][fill={rgb, 255:red, 0; green, 0; blue, 0 }  ][line width=0.75]      (0, 0) circle [x radius= 3.35, y radius= 3.35]   ;
\draw [shift={(239,80)}, rotate = 15.14] [color={rgb, 255:red, 0; green, 0; blue, 0 }  ][fill={rgb, 255:red, 0; green, 0; blue, 0 }  ][line width=0.75]      (0, 0) circle [x radius= 3.35, y radius= 3.35]   ;
\draw    (239,100) -- (335,105.97) ;
\draw [shift={(335,105.97)}, rotate = 3.56] [color={rgb, 255:red, 0; green, 0; blue, 0 }  ][fill={rgb, 255:red, 0; green, 0; blue, 0 }  ][line width=0.75]      (0, 0) circle [x radius= 3.35, y radius= 3.35]   ;
\draw [shift={(239,100)}, rotate = 3.56] [color={rgb, 255:red, 0; green, 0; blue, 0 }  ][fill={rgb, 255:red, 0; green, 0; blue, 0 }  ][line width=0.75]      (0, 0) circle [x radius= 3.35, y radius= 3.35]   ;
\draw    (240,131) -- (335,105.97) ;
\draw [shift={(335,105.97)}, rotate = 345.24] [color={rgb, 255:red, 0; green, 0; blue, 0 }  ][fill={rgb, 255:red, 0; green, 0; blue, 0 }  ][line width=0.75]      (0, 0) circle [x radius= 3.35, y radius= 3.35]   ;
\draw [shift={(240,131)}, rotate = 345.24] [color={rgb, 255:red, 0; green, 0; blue, 0 }  ][fill={rgb, 255:red, 0; green, 0; blue, 0 }  ][line width=0.75]      (0, 0) circle [x radius= 3.35, y radius= 3.35]   ;
\draw    (310,136) -- (310.95,174.97) ;
\draw [shift={(311,176.97)}, rotate = 268.6] [color={rgb, 255:red, 0; green, 0; blue, 0 }  ][line width=0.75]    (10.93,-3.29) .. controls (6.95,-1.4) and (3.31,-0.3) .. (0,0) .. controls (3.31,0.3) and (6.95,1.4) .. (10.93,3.29)   ;
\draw    (239,80) .. controls (279,50) and (296,77.94) .. (335,105.97) ;
\draw    (240,131) .. controls (279,139.97) and (326,136.97) .. (335,105.97) ;

\draw (233,105.4) node [anchor=north west][inner sep=0.75pt]    {$\vdots $};
\draw (211,67.4) node [anchor=north west][inner sep=0.75pt]    {$v_{1}$};
\draw (210,87.4) node [anchor=north west][inner sep=0.75pt]    {$v_{2}$};
\draw (212,121.4) node [anchor=north west][inner sep=0.75pt]    {$v_{k}$};
\draw (342,96.4) node [anchor=north west][inner sep=0.75pt]    {$v_{0}$};
\draw (333,74.4) node [anchor=north west][inner sep=0.75pt]    {$g_{v_{0}} =\ 0$};
\draw (272,71.4) node [anchor=north west][inner sep=0.75pt]    {$\vdots $};
\draw (296,113.4) node [anchor=north west][inner sep=0.75pt]    {$\vdots $};

\end{tikzpicture}
\end{center} i.e. there is precisely one irreducible component $\mathcal{C}_0$ on which $s$ identically vanishes, and every other component only intersects $\mathcal{C}_0$. Then there exists an infinitesimal smoothing of all nodes $q_{i}$ of $\mathcal{C}$ \textbf{simultaneously}.
    
\end{lemma}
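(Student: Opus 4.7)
The plan is to reduce to a simpler base case via Lemma \ref{propagation}, then construct the smoothing explicitly in local coordinates at each node.

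For the reduction, I would write $\mathcal{C} = Z_1 \cup Z_2$ with $Z_1 = \mathcal{C}_0 \cup \mathcal{C}_1 \cup \cdots \cup \mathcal{C}_k$ the star subcurve and $Z_2$ the union of any remaining components. The hypothesis of the Lemma---that each $\mathcal{C}_i$ only intersects $\mathcal{C}_0$ among the star components---guarantees that all nodes of $Z_1 \cap Z_2$ are distinct from the $q_j$'s we wish to smooth. Assuming we have produced a deformation $\mathcal{Z}_1/\Delta$ of $(Z_1, (\mathcal{L}|_{Z_1}, s|_{Z_1}))$ simultaneously smoothing all $q_j$'s, taking $\mathcal{Z}_2$ to be the trivial deformation of $(Z_2, (\mathcal{L}|_{Z_2}, s|_{Z_2}))$ and applying Lemma \ref{propagation} yields the required global family on $\mathcal{C}$.

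For the base case where $\mathcal{C}$ is exactly the star, I would choose any $\mathcal{C}^{\mathrm{def}}/\Delta$ simultaneously smoothing all $k$ nodes $q_j$, with extended marked sections $\sigma_i$. The generic fiber is then smooth of genus $\sum_i g_{v_i}$, and by Proposition \ref{structure}(1) the only valid line bundle--section pair on it is $(\mathcal{O}(\sum_i \tilde{c}_i p_{i,\eta}), s^{\mathrm{can}}_\eta)$, pinning down the generic data. The naive candidate $\mathcal{O}(\sum_i \tilde{c}_i \sigma_i)$ used in Lemma \ref{contractededge} cannot work here: its restriction to $\mathcal{C}$ has degree $0$ on $\mathcal{C}_0$, while $\mathcal{L}|_{\mathcal{C}_0} \cong \mathcal{O}(-\sum_j \alpha_{q_j,\mathcal{C}_j}(\mathcal{L}) q_j)$ (by Proposition \ref{structure}(2), noting $\mathrm{Jac}(\mathcal{C}_0) = 0$ since $\mathcal{C}_0$ is rational) has strictly negative degree (each $\alpha_{q_j,\mathcal{C}_j}(\mathcal{L}) > 0$ since $s|_{\mathcal{C}_0} \equiv 0$ but $s|_{\mathcal{C}_j} \not\equiv 0$ forces non-trivial age at every $q_j$). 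Instead, I would work in the local orbifold model of the twisted node: near $q_j$ the family has atlas $\mathrm{Spec}(\mathbb{C}[u,v])$ modulo $\mu_{t_j}$ with smoothing parameter $\epsilon = uv$ and $\mathcal{C}_0 = \{v=0\}$, $\mathcal{C}_j = \{u=0\}$; the line bundle $\mathcal{L}$ is locally the equivariant line bundle of character $\alpha_{q_j,\mathcal{C}_0}(\mathcal{L})$, admitting the semi-invariant section $v^{\alpha_{q_j, \mathcal{C}_j}(\mathcal{L})}$ which vanishes identically on $\mathcal{C}_0$, matches $s|_{\mathcal{C}_j}$ near $q_j$, and extends non-vanishingly to the generic fiber (since $v \ne 0$ whenever $\epsilon \ne 0$).

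The main obstacle is globally gluing these local candidates: on $\mathcal{C} \setminus \{q_1,\ldots,q_k\}$ the family is a trivial product and $(\mathcal{L}, s)$ extends trivially, yet the multi-degrees of $\mathcal{L}$ on the special fibre cannot be matched by modifying $\mathcal{O}(\sum_i \tilde c_i \sigma_i)$ by integer Cartier divisors supported on $\mathcal{C}_0$ and the $\mathcal{C}_i$'s: the contribution of any such divisor to the two branches of a node has opposite sign, while the required twist $\alpha_{q_j,\mathcal{C}_j}(\mathcal{L}) q_j$ has the same sign on both sides of $q_j$. The resolution I envisage is to first extend $\mathcal{L}$ abstractly to $\mathcal{L}^{\mathrm{def}}$ on $\mathcal{C}^{\mathrm{def}}$ using smoothness of the universal Picard stack over $\mathfrak{M}^{\mathrm{tw}}_{g,n}$, and then glue the local semi-invariant sections $v^{\alpha_{q_j, \mathcal{C}_j}(\mathcal{L})}$ into a global $s^{\mathrm{def}}$ using the free transitive action of $\mathrm{Jac}_{[\tau]} \times [\mathbb{G}_m^k/\mathbb{G}_m]$ from Proposition \ref{action}: any discrepancy between the local extensions near each $q_j$ and a global section agreeing with the trivial extension away from the nodes differs by an action of this group on $(\mathcal{L}^{\mathrm{def}}, s^{\mathrm{def}})$ which, since it preserves restrictions to the special fibre, can be absorbed into the choice of abstract extension, yielding a consistent global pair.
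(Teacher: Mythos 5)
Your proposal has the right local intuition (the semi‑invariant section $v^{\alpha}$ at each node, and the correct diagnosis of why the naive candidate $\mathcal{O}(\sum_i \tilde{c}_i\sigma_i)$ fails), but the global step has genuine gaps that the paper's proof specifically addresses, and you miss the key mechanism.

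First, the claim that ``on $\mathcal{C}\setminus\{q_1,\dots,q_k\}$ the family is a trivial product and $(\mathcal{L},s)$ extends trivially'' is false. The open subscheme $V=\mathcal{C}^{\mathrm{def}}\setminus\{q_1,\dots,q_k\}$ has special fibre $\mathcal{C}$ minus $k$ points (disconnected near each node into two punctured branches) but generic fibre the entire smooth curve; these are not even fibrewise homeomorphic, so $V\to\Delta$ is not a trivial product and there is no well-defined ``trivial extension'' $\tilde{s}$. This invalidates the cocycle you want to glue against.

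Second, the claimed gluing mechanism via the group of Proposition \ref{action} is unjustified. That action operates on a fixed special-fibre object in $Z_{[\tau]}^{\circ}$; it does not act on deformations of that object over $\mathcal{C}^{\mathrm{def}}$, and the generic fibre of your deformation lives in a different (smoother) stratum where Proposition \ref{action} does not apply. The statement that ``any discrepancy\ldots can be absorbed into the choice of abstract extension'' is precisely what one would need to prove, and the obstruction is not obviously zero: in general it lies in $H^1(\mathcal{C},\mathcal{L})$, which does not vanish when $g(\mathcal{C}_0)>0$, exactly the regime of interest.

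Third — and most significantly — you take ``any $\mathcal{C}^{\mathrm{def}}/\Delta$ simultaneously smoothing all $k$ nodes,'' but not every smoothing direction works, and your argument does not choose a good one. If the smoothing parameters at the $k$ nodes are, say, all equal to $\epsilon$, then the intersection numbers $\mathcal{C}_0\cdot\mathcal{C}_j$ are all $q_j/t_j$, and the restriction of any candidate divisor $\sum_i\tilde{c}_i\sigma_i + m\mathcal{C}_0$ to $\mathcal{C}_j$ contributes $m\cdot q_j$ at the node; matching Proposition \ref{structure}(1) forces $m=\alpha_{q_j,\mathcal{C}_j}(\mathcal{L})$ for \emph{every} $j$, which is impossible when the $\alpha$'s differ. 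The paper's proof resolves exactly this tension by pulling back the universal smoothing $\mathcal{C}^{\mathrm{def}'}\to\Delta^N$ along $\epsilon\mapsto(\epsilon^{\bar\alpha_i})_i$ with $\bar\alpha_i=\prod_{j\neq i}\alpha_{q_j,\cdot}(\mathcal{L})$; after this base change the local model at $q_j$ becomes $[\{xy=\epsilon^{\bar\alpha_j}\}/\mu_{t_j}]$, the intersection numbers become $\mathcal{C}_0\cdot\mathcal{C}_j=q_j/\bar\alpha_j$, and a single coefficient $m=\prod_i\alpha_i$ makes $D=\sum_i\tilde{c}_i\sigma_i + m\mathcal{C}_0$ simultaneously Cartier and compatible with all restrictions, after which the final twist by a deformation $\tilde{\mathcal{J}}^{\vee}$ of the Jacobian factor finishes the argument. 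Without this weighted base change, the section–divisor pair cannot be assembled, no matter how one chooses the abstract line bundle deformation.

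Finally, the reduction via Lemma \ref{propagation} that opens your argument is redundant in this lemma: the hypothesis already forces $\mathcal{C}$ to equal the star. (The paper does carry out the propagation reduction, but in the subsequent Lemma \ref{genus0general}.)
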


\begin{proof}
    Let $N$ denote the number of nodes and let $\mathcal{C}^{\mathrm{def}'} \rightarrow \Delta^N$ be the universal smoothing of $\mathcal{C}$ as a marked curve. Define $$\bar{\alpha}_i := \prod_{j \not = i} \alpha_{q_j,\mathcal{C}_{v_0}}(\mathcal{L}) \in \N.$$ Let $\mathcal{C}^{\mathrm{def}}/\Delta$ be the family of marked curves pulled back from $\mathcal{C}^{\mathrm{def}'}$ via $$\Delta \rightarrow \Delta^N $$ $$\epsilon \mapsto (\epsilon^{\bar{\alpha}_i})_{i=1}^N. $$ Consider the Weil divisor \begin{equation}\label{divisor}
    D:= \sum_{i = 1}^n \tilde{c}_i \sigma_i + (\prod_{i = 1}^N \alpha_{q_i,\mathcal{C}_{v_0}}(\mathcal{L}) ) \mathcal{C}_0 \subset \mathcal{C}^{\mathrm{def}}.
    \end{equation} The local model of $\mathcal{C}^{\mathrm{def}}$ about $q_i$ is $$\mathcal{C}^{\mathrm{def}} = [\{xy = \epsilon^{\bar{\alpha}_{i}}\}/ \mu_{t_i}] $$ where $$\mathcal{C}_{i \not = 0} = (x = \epsilon = 0), \mathcal{C}_0 = (y = \epsilon = 0).$$ Then, in this local model, $$\mathrm{deg}(\mathcal{C}_i \cdot \mathcal{C}_0) =  \frac{1}{\bar{\alpha}_i}\mathrm{deg}((x = 0) \cdot (y = 0)) = \mathrm{deg}(\mathcal{C}_i \cap \mathcal{C}_0 \subset \mathcal{C}^{\mathrm{def}'}) = \frac{1}{\bar{\alpha}_i} \mathrm{deg}(q_i) =
    \frac{1}{\bar{\alpha}_i t_i}$$ and so globally $$(\prod_{i = 1}^N \alpha_{q_i,\mathcal{C}_{v_0}}(\mathcal{L})) \mathcal{C}_0 \cdot \mathcal{C}_i = \sum_{q_l \in \mathcal{C}_i} \alpha_{q_l,\mathcal{C}_{v_0}}(\mathcal{L}) q_l.$$ Furthermore, note that $D$ in \eqref{divisor} is in fact Cartier. To see this, note that $\tilde{c}_i \sigma_i$ is Cartier for all $i$ since $c_i \in \N$ and $\sigma_i$ lies in the smooth locus of $\mathcal{C}^{\mathrm{def}}$. Also, the divisor given by $$(y^{\alpha_{q_i,\mathcal{C}_{v_0}}(\mathcal{L})} = 0) $$ is a $(\prod_{i = 1}^N \alpha_{q_i,\mathcal{C}_{v_0}}(\mathcal{L})) $ thickening of $\mathcal{C}_0$ and therefore $(\prod_{i = 1}^N \alpha_{q_i,\mathcal{C}_{v_0}}(\mathcal{L})) \mathcal{C}_0$ is Cartier. We then have an induced line bundle section pair \begin{equation}\label{smoothedpair1}
     (\mathcal{O}(D),s_D)
    \end{equation}  on $\mathcal{C}^{\mathrm{def}}$.
    
    It follows, by Lemma \ref{structure} part (2), that for each $i \not = 0$ we have $$(\mathcal{O}(D),s_D)|_{\mathcal{C}_i} \cong (\mathcal{L},s)|_{\mathcal{C}_i}.$$ Additionally, on $\mathcal{C}_0$ we have $$D \cdot \mathcal{C}_0 = \sum_{p_i \in \mathcal{C}_0} \tilde{c}_i p_i + (\prod_{i = 1}^N \alpha_{q_i,\mathcal{C}_{v_0}}(\mathcal{L}))(- \sum_{i \not = 0} \mathcal{C}_i \cdot \mathcal{C}_0) $$ This follows from the fact $\mathcal{C}^2 = 0$ and that components only intersect $\mathcal{C}_0$ and no others. Again, by Lemma \ref{structure} this shows that $$(\mathcal{O}(D),s_D)|_{\mathcal{C}_0} \cong (\mathcal{L},s)|_{\mathcal{C}_0}.$$ Therefore $(\mathcal{O}(D),s_D)$ \eqref{smoothedpair1} restricted to the central fibre is a deformation of $(\mathcal{L} \otimes \mathcal{J},s)$ for some $\mathcal{J}$ in the non-compact part of $\mathrm{Jac}(\mathcal{C})$. The line bundle $\mathcal{J}$ deforms to a line bundle $\tilde{\mathcal{J}} \in \mathrm{Jac}(\mathcal{C})$ by smoothness of pre-stable curves over the Picard stack. Thus $$\mathcal{O}(D) \otimes \tilde{\mathcal{J}}^{\vee} $$ is a deformation of $\mathcal{L}$. Note also that twisting by $\tilde{\mathcal{J}}^{\vee}$ and the section $s_D$ induces a section of $\mathcal{O}(D) \otimes \tilde{\mathcal{J}}$ which we also call $s_D$ by abuse of notation. This section may be identified with $s$ when restricted to any irreducible component, after removing self-nodes. The formal construction of the section is in the proof of Proposition \ref{action} via equation \eqref{twistedsections}. It is then easy to see this new section $s_D$ also deforms $s$, and so the result follows. 
\end{proof}

\begin{lemma}\label{genus0general}
    Let $\elt \in \moduli$ be of mod $r$ tropical type $[\tau]$ that satisfies the conditions from being essential aside from perhaps the existence of internal genus $0$ vertices. Then there exists an infinitesimal smoothing of $\elt \in \moduli$ smoothing all the internal genus $0$ vertices. In particular, the deformed data is of essential tropical type.
\end{lemma}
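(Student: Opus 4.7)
The plan is to reduce to Lemma \ref{genus0} by handling one internal genus $0$ vertex at a time, gluing the local smoothing to the trivial deformation of the rest of the curve via Lemma \ref{propagation}. By induction on the number of internal genus $0$ vertices, it suffices to smooth a single such vertex $v_0$.

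First I would exploit the bipartite assumption to set up the correct local picture. Since $[\tau]$ is bipartite, every neighbour $v_1, \dots, v_k$ of $v_0$ in $\Gamma$ lies in $V_0$. Set
\[ Z_1 \;:=\; \mathcal{C}_{v_0} \cup \bigcup_{i=1}^{k} \mathcal{C}_{v_i}, \qquad Z_2 \;:=\; \overline{\mathcal{C} \setminus \mathcal{C}_{v_0}} \setminus \bigcup_i \mathcal{C}_{v_i}^{\circ}, \]
so that $Z_1 \cap Z_2$ consists of the nodes on the $\mathcal{C}_{v_i}$ where they meet other (necessarily internal) components of $\mathcal{C}$. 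The subcurve $Z_1$ is precisely of the shape considered in Lemma \ref{genus0}: a central genus $0$ internal component $\mathcal{C}_{v_0}$ connected to outer external components on which the section does not vanish.

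Next I would produce the local deformation. I would apply the construction in the proof of Lemma \ref{genus0}, adapted to $Z_1$ (treating the boundary nodes in $J := Z_1 \cap Z_2$ like ordinary markings on $\mathcal{C}_{v_i}$ whose presence only enters the construction through the age data of $\nu^*\mathcal{L}|_{\mathcal{C}_{v_i}}$ given by Proposition \ref{structure}(1)). This yields a one-parameter family $\mathcal{Z}_1 \to \Delta$ simultaneously smoothing all nodes of $Z_1$ incident to $v_0$ and leaving the nodes $J$ untouched, together with a line bundle-section pair $(\tilde{\mathcal{L}}_1,\tilde{s}_1)$ on $\mathcal{Z}_1$ deforming $(\mathcal{L}|_{Z_1},s|_{Z_1})$. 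On the side of $Z_2$, take the trivial family $\mathcal{Z}_2 := Z_2 \otimes_\C \C[[\epsilon]]$ with the trivial extension $(\tilde{\mathcal{L}}_2,\tilde{s}_2) := (\mathcal{L}|_{Z_2},s|_{Z_2}) \otimes \C[[\epsilon]]$. Since both families are trivial at each $\tilde{q}_j, j \in J$, and the restrictions of $\tilde{s}_1,\tilde{s}_2$ there agree with the original $s$ (hence are $\epsilon$-independent), Lemma \ref{propagation} glues the data to a line bundle-section $(\tilde{\mathcal{L}},\tilde{s})$ on the pushout family $\mathcal{C}^{\mathrm{def}}$, which smooths precisely the nodes at $v_0$ and deforms $(\mathcal{C},(\mathcal{L},s))$. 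The generic fibre has strictly fewer internal genus $0$ vertices, and its type remains essential except possibly at the remaining internal genus $0$ vertices; iterating finitely many times gives the claim.

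The main obstacle I anticipate is the adaptation of Lemma \ref{genus0} from the ``whole curve'' setting in $\moduli$ to the subcurve $Z_1$. The proof of Lemma \ref{genus0} is phrased for a global object of $\moduli$, but its mechanism is purely local: the Cartier divisor
\[ D \;=\; \sum_{p_i \in \mathcal{C}_{v_0}} \tilde{c}_i \sigma_i \;+\; \Bigl(\prod_{i=1}^N \alpha_{q_i,\mathcal{C}_{v_0}}(\mathcal{L})\Bigr)\mathcal{C}_{v_0} \]
is constructed using only the local coordinates at the nodes at $v_0$ and the age data along $\mathcal{C}_{v_0}$ and the $\mathcal{C}_{v_i}$. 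The Cartier verification and the check that $\mathcal{O}(D)|_{\mathcal{C}_{v_i}} \cong \mathcal{L}|_{\mathcal{C}_{v_i}}$ (via Proposition \ref{structure}(1)) and $\mathcal{O}(D)|_{\mathcal{C}_{v_0}} \cong \mathcal{L}|_{\mathcal{C}_{v_0}}$ (via Proposition \ref{structure}(2)) go through identically; the final twist by an age $0$ line bundle $\tilde{\mathcal{J}}^\vee$, deformable by smoothness of the Picard stack over $\mathfrak{M}_{g,n}^{\mathrm{tw}}$, then corrects the discrepancy. Alternatively one can run the entire argument directly on $\mathcal{C}$ with $\mathcal{C}^{\mathrm{def}}$ chosen to smooth only the nodes at $v_0$ and be trivial elsewhere; this circumvents the restriction step, at the cost of bookkeeping additional trivial factors.
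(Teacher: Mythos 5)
Your proof is correct and relies on the same two ingredients as the paper (Lemma \ref{genus0} plus Lemma \ref{propagation}), but you organize the smoothing iteratively where the paper does it simultaneously. The paper decomposes $\mathcal{C}$ at once into star-shaped pieces $\mathcal{C}_l$ (one per internal genus $0$ vertex, each of Lemma \ref{genus0} type) together with the internal positive-genus part $\mathcal{C}'$, applies Lemma \ref{genus0} to each piece, and glues all the resulting local deformations to the trivial one on $\mathcal{C}'$ by repeated use of Lemma \ref{propagation}. You instead smooth one internal genus $0$ vertex at a time, gluing its star $Z_1$ to the trivial deformation of the complement $Z_2$ and inducting on the number of such vertices. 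Your version is arguably the more careful one: the paper's simultaneous decomposition requires the distinct pieces $\mathcal{C}_l$ to intersect pairwise in dimension zero, which can fail if two internal genus $0$ vertices share a common external neighbour (that component cannot lie in two disjoint stars); your iterative route sidesteps this, since after the first step the shared neighbour is absorbed into the newly formed external component and then simply serves as one of the outer components for the next star. You also explicitly flag the need to adapt Lemma \ref{genus0} from a global object of $\moduli$ to the subcurve $Z_1$ --- the paper applies it to subcurves $\mathcal{C}_l$ without comment --- and your observation that the construction is local, with boundary nodes in $J$ playing the role of markings carrying the age data from Proposition \ref{structure}, is exactly what is needed to make this step rigorous.
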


\begin{proof}
    We may write $\mathcal{C} = \bigcup_{l} \mathcal{C}_l \cup \mathcal{C}'$ where \begin{itemize}
    \item $\mathcal{C}_l$ a proper sub-curves whose associated mod $r$ tropical types are all of the form of Lemma \ref{genus0}.

    \item $\mathcal{C}'$ is the union of components of positive genus on which $s$ vanishes identically.

    \item The intersections of any two distinct terms in the above union is zero-dimensional.
\end{itemize}

Let $\mathcal{C}^{\mathrm{def}}$ be an infinitesimal deformation of $\mathcal{C}$ that is a union of sub-families $$\mathcal{C}^{\mathrm{def}} = \bigcup_{l}  \mathcal{C}^{\mathrm{def}}_l \cup \mathcal{C}^{\mathrm{def}'}$$ where \begin{itemize}
    \item $\mathcal{C}^{\mathrm{def}}_l$ is an infinitesimal smoothing of the nodes of $\mathcal{C}_l$ for each $l$.
    \item $\mathcal{C}^{\mathrm{def}'}$ is the trivial deformation of $\mathcal{C}'$.
\end{itemize}
 
 Applying Lemma \ref{genus0} to each $\mathcal{C}_l$ produces line bundle section pairs $(\tilde{\mathcal{L}}_l,\tilde{s}_l)$ on $\mathcal{C}^{\mathrm{def}}_l$ deforming $(\mathcal{L}|_{\mathcal{C}_l},s|_{\mathcal{C}_l})$ for each $l$. Take $(\tilde{\mathcal{L}}',\tilde{s}')$ on $\mathcal{C}^{\mathrm{def}'}$ to be the trivial deformation of $(\mathcal{L}|_{\mathcal{C}'},s|_{\mathcal{C}'})$. We now want to glue together the $(\tilde{\mathcal{L}}_l,\tilde{s}_l), (\tilde{\mathcal{L}}',\tilde{s}') $ to a global line bundle-section pair on $\mathcal{C}^{\mathrm{def}}$ via Lemma \ref{propagation}. Indeed, we have the data of $(\mathcal{L},s)$ on $\mathcal{C}$ along with the deformations above, so repeatedly applying Lemma \ref{propagation} (3) allows us to glue the data as desired.

\end{proof}

\begin{theorem}\label{essentialtypethm}
    Given any object $\elt \in \moduli$ there exists a $1$-parameter infinitesimal deformation $\mathcal{C}^{\mathrm{def}} \rightarrow \Delta$ with deformed line bundle-section pair $(\tilde{\mathcal{L}},\tilde{s})$ in $\moduli$ such that, over the generic fibre, $(\mathcal{C}^{\mathrm{def}}_{\eta}, (\tilde{\mathcal{L}},\tilde{s}))$ is of essential tropical type. 
\end{theorem}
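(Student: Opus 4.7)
My plan is to combine the three smoothing results of this subsection — Lemma \ref{internaledge}, Lemma \ref{contractededge}, and Lemma \ref{genus0general} — with Lemma \ref{propagation} as the gluing device. Given $\elt$ with associated mod-$r$ tropical type $[\tau]$, the defects obstructing $[\tau]$ from being essential come in two flavours: (i) non-bipartite edges of $\Gamma$, namely purely external edges (both endpoints in $V_0$) or purely internal edges (both endpoints in $V_+$); and (ii) internal vertices $v \in V_+$ with $g_v = 0$. The strategy is to infinitesimally smooth defects of type (i) using Corollary \ref{bipartite}, and defects of type (ii) using Lemma \ref{genus0general}, then package both smoothings into a single 1-parameter deformation.

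The cleanest execution is in two stages. Stage one: apply Corollary \ref{bipartite} to $\elt$ to obtain a 1-parameter deformation $(\mathcal{C}^{\mathrm{def}_1},(\tilde{\mathcal{L}}_1,\tilde{s}_1)) \to \Delta$ whose generic fibre has bipartite dual graph with respect to $V_0 \sqcup V_+$. This already invokes Lemmas \ref{internaledge} and \ref{contractededge} node-by-node, patched together by Lemma \ref{propagation}. Stage two: the generic fibre of stage one is of the form required as input to Lemma \ref{genus0general} (bipartite, with possible internal genus-zero vertices), to which we apply that lemma to eliminate the remaining defects of type (ii).

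To present the two stages as a single 1-parameter deformation over $\Delta = \operatorname{Spec} \C[[\epsilon]]$, I would exploit the fact that the nodes targeted in stage one (non-bipartite edges) and the nodes targeted in stage two (the edges around internal genus-zero stars) are disjoint subsets of the nodes of the original $\mathcal{C}$. Consequently, one can decompose $\mathcal{C}$ as $\mathcal{Z}_1 \cup \mathcal{Z}_2$ where $\mathcal{Z}_1$ collects all defect neighbourhoods (pairs of components sharing a non-bipartite edge, together with the stars of the genus-zero internal vertices) and $\mathcal{Z}_2$ collects the remainder. On each defect neighbourhood inside $\mathcal{Z}_1$ the local constructions from Lemmas \ref{internaledge}, \ref{contractededge}, \ref{genus0general} produce a local 1-parameter deformation of curves together with a line bundle–section pair; on $\mathcal{Z}_2$ we take the trivial deformation. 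Lemma \ref{propagation} then assembles these into one global line bundle–section pair over the glued family $\mathcal{C}^{\mathrm{def}} \to \Delta$.

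The main technical obstacle will be matching the base parameters across the different local deformations, since Lemma \ref{genus0general} naturally requires a base change by $\epsilon \mapsto (\epsilon^{\bar\alpha_i})_i$ to realise the simultaneous smoothing of a genus-zero star. Working over a single $\Delta$ is accommodated by performing this weighted base change up front, which is compatible with the trivial deformations on $\mathcal{Z}_2$ and with the local constructions on the other defect neighbourhoods (each of which survives any further base change). Once the global family $(\mathcal{C}^{\mathrm{def}},(\tilde{\mathcal{L}},\tilde{s})) \to \Delta$ is produced, its generic fibre is by construction bipartite with all internal vertices of positive genus, hence of essential tropical type, as required.
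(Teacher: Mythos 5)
Your high-level strategy mirrors the paper's exactly: smooth non-bipartite edges (Lemmas~\ref{internaledge}, \ref{contractededge}), then eliminate internal genus-$0$ vertices (Lemma~\ref{genus0general}). The paper's proof is just the one-line instruction to apply these lemmas in sequence, so you are certainly targeting the right toolkit.

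However, your attempt to package the two stages into a single simultaneous deformation over $\Delta$ has a genuine gap. The issue is the structure of the defect neighbourhoods. Consider a subgraph of $\Gamma_+$ consisting of a tree of genus-$0$ internal vertices $v_1,\dots,v_m$ joined by purely internal edges, with bipartite edges hanging off to external vertices. After stage one is performed these merge into a single genus-$0$ internal vertex whose star is bipartite, and Lemma~\ref{genus0} then applies. But in the original $\mathcal{C}$, the ``defect neighbourhood'' you wish to treat contains \emph{several} components on which $s$ vanishes identically, and this is precisely the hypothesis Lemma~\ref{genus0} (and hence Lemma~\ref{genus0general}) excludes --- they are stated for exactly one internal component in each star, i.e.\ they require bipartiteness as an input. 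Moreover, you cannot build the required local deformation of this ``mega-defect'' by splitting it further and gluing with Lemma~\ref{propagation}, because Lemma~\ref{propagation} only glues along nodes that are \emph{not} being smoothed, and in a mega-defect \emph{every} node is smoothed. So your claim that ``on each defect neighbourhood inside $\mathcal{Z}_1$ the local constructions from Lemmas~\ref{internaledge}, \ref{contractededge}, \ref{genus0general} produce a local $1$-parameter deformation'' fails whenever a genus-$0$ internal vertex shares a purely internal edge with another internal vertex; in that case the three lemmas separately address overlapping sets of nodes on the same components, and the $\epsilon$-reweighting you propose resolves compatibility of bases but not compatibility of the line-bundle--section data on the shared components.

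To fix your argument you would need to extend the Cartier-divisor construction in the proof of Lemma~\ref{genus0} to handle a tree of internal genus-$0$ components directly (allowing a sum $\sum_j \lambda_j \mathcal{C}_{v_j}$ with appropriately chosen thickening multiplicities), which is plausible but not provided by the paper. Alternatively, and this is what the paper implicitly does, one accepts the sequential application: each lemma establishes a closure containment between strata, transitivity of closure iterates this, and one then produces the one-parameter deformation at the end from the fact that the starting object lies in the closure of the essential stratum. Your concern about expressing the result as a single map from $\Delta$ is legitimate, but the standard resolution is via transitivity of specialization rather than a fully parallel construction.
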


\begin{proof}
    First apply Lemmas \ref{contractededge},\ref{internaledge} and then apply Lemma \ref{genus0general}.
\end{proof}

\subsubsection{Non-smoothing results }

\begin{definition}
    Denote by $Z_{[\tau]}^+ \subset Z_{[\tau]}^{\circ}$ the locus of objects where, for each internal vertex $v \in V_+$, the associated Jacobian factor at $v$ as given by Lemma \ref{structure} is non-trivial.
\end{definition}

Note that $Z_{[\tau]}^{+} \subset Z_{[\tau]}^{\circ}$ is an open inclusion for any $[\tau]$.

\begin{lemma}\label{opens}

Let $[\tau]$ be an essential mod $r$ tropical type. Then $$ Z_{[\tau]}^{+} \subset \moduli$$ is an open inclusion.

    
\end{lemma}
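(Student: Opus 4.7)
The plan is to show that every infinitesimal deformation of an object $\elt \in Z_{[\tau]}^+$ stays within $Z_{[\tau]}^\circ$; the lemma then follows immediately since $Z_{[\tau]}^+$ is already open in $Z_{[\tau]}^\circ$ (it is the complement of the zero section in the relative degree-zero Jacobian of the internal components). Concretely, take a DVR $R$ and a morphism $\mathrm{Spec}(R) \to \moduli$ whose closed fibre lies in $Z_{[\tau]}^+$; the claim is that the generic fibre still has tropical type $[\tau]$. The generic type $[\tau']$ could differ from $[\tau]$ only by a \emph{generisation} move, of which there are two: (i) contracting some edges of $\Gamma$ (equivalently smoothing the nodes $q_e$), or (ii) moving a vertex $v \in V_+$ to $V_0$ (equivalently $\tilde{s}$ becoming generically non-zero on $\tilde{\mathcal{C}}_v$). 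The essentiality of $[\tau]$ guarantees that every edge of $\Gamma$ is bipartite and that each $\mathcal{C}_v$ for $v \in V_+$ is smooth of positive genus.

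To rule out (ii), suppose $v \in V_+$ becomes external, so $\tilde{s}|_{\tilde{\mathcal{C}}_{v,\eta}} \not\equiv 0$. Then Proposition \ref{structure}(1) identifies $\tilde{\mathcal{L}}|_{\tilde{\mathcal{C}}_{v,\eta}}$ with the rigid divisorial line bundle $\mathcal{F}_{0,\eta}$ cut out by markings and adjacent nodes, carrying no Jacobian twist. This divisor extends to a family line bundle $\mathcal{F}_0^{\mathrm{fam}}$ on $\tilde{\mathcal{C}}_v/R$, since its constituent markings and nodes are all sections of the family. The difference $\tilde{\mathcal{L}}|_{\tilde{\mathcal{C}}_v} \otimes (\mathcal{F}_0^{\mathrm{fam}})^{\vee}$ is then a degree-zero line bundle trivial at the generic point; by essentiality $\tilde{\mathcal{C}}_v/R$ is a family of smooth positive-genus curves, so the relative $\Pic^0$ is a smooth abelian scheme with closed identity section, and triviality at $\eta$ forces triviality at the special fibre as well. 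This contradicts the non-triviality of $\mathcal{J}_v$ guaranteed by $\elt \in Z_{[\tau]}^+$.

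To rule out (i), pick a bipartite edge $e = \{v_0, v_+\}$ with node $q_e$ and suppose $q_e$ is smoothed over $R$. Let $\tilde{\mathcal{C}}' \subset \tilde{\mathcal{C}}$ denote the sub-family with closed fibre $\mathcal{C}_{v_0} \cup_{q_e} \mathcal{C}_{v_+}$ and smooth generic fibre. Flatness of $\tilde{\mathcal{C}}'/R$ together with $s|_{\mathcal{C}_{v_0}} \not\equiv 0$ yields $\tilde{s}|_{\tilde{\mathcal{C}}'_\eta} \not\equiv 0$, and Proposition \ref{structure}(1) again provides a rigid divisorial line bundle $\mathcal{F}_{0,\eta}$ for $\tilde{\mathcal{L}}|_{\tilde{\mathcal{C}}'_\eta}$, extending to $\mathcal{F}_0^{\mathrm{fam}}$ on $\tilde{\mathcal{C}}'$. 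Generic triviality of $\tilde{\mathcal{L}}|_{\tilde{\mathcal{C}}'} \otimes (\mathcal{F}_0^{\mathrm{fam}})^{\vee}$ means it is a twist by $\mathcal{O}(a\,\mathcal{C}_{v_+} + b\,\mathcal{C}_{v_0})$ for some integers $a,b$; restricting to $\mathcal{C}_{v_+}$ this becomes $\mathcal{O}((b-a) q_e)$, an integer multiple of $q_e$ which can be absorbed into the Proposition \ref{structure}(2) decomposition of $\mathcal{L}|_{\mathcal{C}_{v_+}}$ via the $\mathcal{F}_+$ and $\mathcal{G}$ factors. Matching the resulting description with Proposition \ref{structure}(2) then forces the Jacobian factor $\mathcal{J}_{v_+}$ to be trivial, again contradicting $\elt \in Z_{[\tau]}^+$.

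The main obstacle lies in case (i), in carefully comparing the generic-fibre Proposition \ref{structure}(1) divisor with the closed-fibre Proposition \ref{structure}(2) decomposition on $\mathcal{C}_{v_+}$ after partial normalisation along $q_e$, and verifying that the twist by components of the special fibre matches exactly the corrections appearing in $\mathcal{F}_+$ and $\mathcal{G}$. In spirit, however, both cases reduce to the same principle: the identity section of a smooth relative $\Pic^0$ (respectively of the semi-abelian $\Pic^0$ at the nodal special fibre) is a closed immersion, so a non-trivial Jacobian factor at the closed fibre cannot disappear at the generic fibre.
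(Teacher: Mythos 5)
Your overall strategy matches the paper's: reduce openness to stability under generization over a DVR, and derive a contradiction between the rigid divisorial description of Proposition \ref{structure}(1) at the generic fibre and the non-trivial Jacobian factor at the closed fibre. The organisation into two generization moves (smoothing a node, versus a section becoming generically non-zero) is a sensible reframing, and your $\Pic^0$ argument in case (ii) is exactly what is needed in the $N=0$ situation (single internal vertex, $V_0=\emptyset$), which the paper's written proof leaves implicit since it only discusses node-smoothing. So you have identified a real point.

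However, the execution has gaps that prevent the argument from closing. In case (ii) with $N>0$ external nodes on $\mathcal{C}_v$, the line bundle $\tilde{\mathcal{L}}|_{\tilde{\mathcal{C}}_v}\otimes(\mathcal{F}_0^{\mathrm{fam}})^\vee$ is degree zero by flatness, as you assert, but at the closed fibre Proposition \ref{structure}(2) gives it the explicit form $\mathcal{F}_+\otimes\mathcal{J}\otimes\mathcal{F}_0^\vee$, which has degree $-N\neq 0$; the contradiction therefore already occurs at the level of degree and the $\Pic^0$ step is not the actual mechanism. You apply an argument that is only valid when $N=0$. In case (i) the sub-family $\tilde{\mathcal{C}}'$ with closed fibre exactly $\mathcal{C}_{v_0}\cup_{q_e}\mathcal{C}_{v_+}$ is not in general a sub-family of the deformation: if the deformation also smooths other nodes adjacent to $v_0$ or $v_+$, the closure of the corresponding generic component contains additional pieces of the special fibre, so your $\tilde{\mathcal{C}}'$ does not exist as described. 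Moreover, even when $\tilde{\mathcal{C}}'$ does exist, your final assertion that matching against Proposition \ref{structure}(2) ``forces $\mathcal{J}_{v_+}$ to be trivial'' is not derived; carrying out the comparison one finds $\mathcal{J}$ expressed as a specific divisor class supported on the remaining boundary nodes, and showing this is the trivial class requires the precise intersection-theoretic accounting (the $\mathcal{F}_+$ and $\mathcal{G}$ corrections you flag) that you acknowledge you have not done. This is exactly the core of the paper's argument, which sidesteps the issue by working with the total-space relation $\tilde{\mathcal{L}}\cong\mathcal{O}(\sum\tilde{c}_i\sigma_i+\lambda\mathcal{C}_0)$, restricting to \emph{each} component and using $\mathcal{C}_0^2=-\sum_j(\mathcal{C}_0\cdot\mathcal{C}_j)$ to pin down all the $\alpha$'s and derive $\mathcal{J}\cong\mathcal{O}$ cleanly; it then handles partial smoothings by taking the closure of a generic-fibre component and reducing to the fully-smoothed sub-star.

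So: the skeleton is right, case (ii) for $N=0$ is a nice addition, but the decisive computation in case (i) and the degree bookkeeping in case (ii) for $N>0$ are missing, and these are precisely where the paper does its work.
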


\begin{proof}
   First suppose $\elt \in \moduli$ have associated mod $r$ tropical type $[\tau]$ as in the statement of Lemma \ref{genus0}, except now suppose that the genus of the internal vertex $v_0$ is $g_{v_0} > 0$. Suppose also that $\elt \in Z_{[\tau]}^{+}$, in other words the Jacobian factor at $v_0$ is non-trivial. Note that $[\tau]$ is indeed an essential type. We first show the statement of the Lemma holds for this particular essential type. More precisely, we show that any infinitesimal deformation $\mathcal{C}^{\mathrm{def}}/\Delta$ of $\elt$ in $\moduli$ has generic fibre $(\mathcal{C}^{\mathrm{def}}_{\eta}, (\tilde{\mathcal{L}}_{\eta}, \tilde{s}_{\eta}))$ whose associated mod $r$ tropical type is also $[\tau]$. Throughout we will use the notation as in the previous lemmas. Suppose first, aiming for a contradiction, that the deformation $\mathcal{C}^{\mathrm{def}}/\Delta$ smooths all the nodes of $\mathcal{C}$. Then, by Lemma \ref{structure} (1) we must have $\tilde{s}_{\eta}$ not identically $0$, cutting out the divisor $$\sum_{i = 1}^n \tilde{c}_i \sigma_i|_{\eta} $$ where the $\sigma_i $ are the marked sections. It follows that  $$\tilde{\mathcal{L}} \cong \mathcal{O}(\sum_{i = 1}^n \tilde{c}_i \sigma_i + \lambda \mathcal{C}_0) $$ for some $\lambda > 0$. Pulling back to $\mathcal{C}_i$
 for $i \not = 0$ gives $$\mathcal{L}|_{\mathcal{C}_i} \cong \mathcal{O}(\sum_{p_i \in \mathcal{C}_i} \tilde{c}_i p_i) \otimes \mathcal{O}(\sum_j \lambda (\mathcal{C}_0 \cdot \mathcal{C}_j)q_j)$$ where the latter sum is over nodes $q_j \in \mathcal{C}_i$. Comparing to the line bundle arising in Lemma \ref{structure} (1) we deduce that \begin{equation}\label{mi}
     \alpha_{q_j, \mathcal{C}_i}(\mathcal{L}) = \lambda (\mathcal{C}_0 \cdot \mathcal{C}_i).
 \end{equation} Denote by $p_1, \cdots, p_k$ the marked points of $\mathcal{C}_0$. Pulling back $\tilde{\mathcal{L}}$ to $\mathcal{C}_0$ gives \begin{equation}\label{pullback}
     \tilde{\mathcal{L}}|_{\mathcal{C}_0} \cong \mathcal{O}(\sum_{i = 1}^k \tilde{c}_i p_i + \lambda \mathcal{C}_0^2) = \mathcal{O}(\sum_{i = 1}^k \tilde{c}_i p_i) \otimes \mathcal{O}( - \lambda \sum_{j = 1}^N (\mathcal{C}_0 \cdot \mathcal{C}_j)q_j). 
 \end{equation} On the other hand, Lemma \ref{structure} (2) tells us that \begin{equation}\label{stdform}
     \tilde{\mathcal{L}}|_{\mathcal{C}_0} \cong \mathcal{L}|_{\mathcal{C}_0} \cong \mathcal{O}( \sum_{i = 1}^k \tilde{c}_i p_i) \otimes \mathcal{O}( \sum_{j = 1}^N -\alpha_{q_j,\mathcal{C}_{i_j}}(\mathcal{L})  q_j) \otimes \mathcal{J}
 \end{equation} for some Jacobian element $\mathcal{J}$, where $\mathcal{C}_{i_j}$ is the unique component containing $q_j$ that is not $\mathcal{C}_0$. Combining \eqref{mi} with \eqref{pullback} and comparing to \eqref{stdform} tells us that $$\mathcal{J} \cong \mathcal{O} $$ which is a contradiction to $\elt \in Z^{+}_{[\tau]}$.

The above shows that any infinitesimal deformation can't smooth all of the nodes at once. To deduce the general case where we assume $\mathcal{C}^{\mathrm{def}}$ smooths only a subset of nodes, observe that if we take any irreducible component $\mathcal{C}_{i,\eta}$ of the general fibre $\mathcal{C}^{\mathrm{def}}_{\eta}$ then the closure of this component in the total family $$\bar{\mathcal{C}}_{i,\eta} \subset \mathcal{C}^{\mathrm{def}}$$ is a sub-family. Indeed, the fibres of $\bar{\mathcal{C}}_{i,\eta}$ over $\Delta$ are all $1$-dimensional and so  $\bar{\mathcal{C}}_{i,\eta} \rightarrow \Delta$ is flat by miracle flatness. Since we are assuming the generic fibre has fewer irreducible components than $\mathcal{C}$, there must exist a component $\bar{\mathcal{C}}_{i,\eta}$ whose central fibre of the closure $\mathcal{C}^{\mathrm{def}}_0$ has more than one irreducible component. Then $\mathcal{C}^{\mathrm{def}}_0$ has associated mod $r$ tropical type as in the first part of the proof, giving a contradiction. 

The above two arguments show that an infinitesimal deformation of an element $\elt \in Z_{[\tau]}^{+}$ in $\moduli$ is also inside $Z_{[\tau]}^{+}$ for any $[\tau]$ essential. Thus the result follows.
   
\end{proof}

\subsubsection{Putting it all together }\label{putting it all together}

To finish the proof of Theorem \ref{irredcpts} it suffices to show that $Z_{[\tau]}$ is irreducible.

\begin{lemma}\label{stratairred}
    Let $[\tau]$ be any mod $r$ tropical type. Then $Z_{[\tau]}$ is irreducible. 
\end{lemma}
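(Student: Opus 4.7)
The plan is to exhibit $Z_{[\tau]}^{\circ}$ as a torsor under a fiberwise-irreducible group scheme over an irreducible base, and conclude irreducibility of the closure $Z_{[\tau]}$. The core input is Proposition \ref{action}.

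First I would consider the forgetful morphism
\[
\pi: Z_{[\tau]}^{\circ} \to \mathfrak{M}_{\underline{\tau}}^{\mathrm{tw}},
\]
sending $\elt$ to the underlying twisted curve $\mathcal{C}$, where $\mathfrak{M}_{\underline{\tau}}^{\mathrm{tw}}$ is the locally closed substack of the stack of pre-stable twisted curves parameterising those with dual graph $\Gamma$, genus labels $g_v$, and rooting parameters $s_i, t_e$ as specified by $[\tau]$. This stratum is irreducible: its coarse model is a finite quotient of an open part of $\prod_{v \in V(\Gamma)} \mathcal{M}_{g_v, n_v}$ by edge-swap automorphisms, and passing to the twisted enhancement only adds gerbe structure, preserving irreducibility.

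By Proposition \ref{action}, the group stack $G_{[\tau]} := \mathrm{Jac}_{[\tau]} \times [(\mathbb{G}_m)^{b_0(\Gamma_0)}/\mathbb{G}_m]$ acts freely on $Z_{[\tau]}^{\circ}$ and transitively on each fiber of $\pi$, so $\pi$ exhibits $Z_{[\tau]}^{\circ}$ as a $G_{[\tau]}$-torsor over its image. The relative semi-abelian scheme $\mathrm{Jac}_{[\tau]}$ arises from the extension \eqref{SESJac} as an extension of a product of Jacobians by a torus, and hence has geometrically irreducible fibers; likewise $[(\mathbb{G}_m)^{b_0(\Gamma_0)}/\mathbb{G}_m]$ is irreducible as a stack. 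Consequently $G_{[\tau]}$ is a group scheme with geometrically irreducible fibers.

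Next I would argue that the image of $\pi$ is irreducible. By Proposition \ref{structure}, the pair $(\mathcal{L},s)$ on any curve $\mathcal{C}$ in the stratum decomposes as an explicit canonical line bundle determined by $[\tau]$ tensored with an everywhere age-$0$ Jacobian term; since the universal Picard stack is smooth over $\mathfrak{M}_{g,n}^{\mathrm{tw}}$, the locus of curves supporting such a pair is open in the stratum, and by inducibility (which we may assume, else $Z_{[\tau]} = \emptyset$ is trivially irreducible) it is non-empty. A torsor under a fiberwise-connected group scheme over an irreducible base is irreducible, so $Z_{[\tau]}^{\circ}$ is irreducible, and so too is its closure $Z_{[\tau]}$.

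The main obstacle I anticipate is verifying rigorously that the image of $\pi$ is genuinely an open substack of $\mathfrak{M}_{\underline{\tau}}^{\mathrm{tw}}$ — i.e.\ that the existence of $(\mathcal{L},s)$ of type $[\tau]$ deforms flatly as one moves within the stratum. I expect this to follow from smoothness of the universal Picard combined with the observation that the ``section vanishes identically on $\mathcal{C}_+$ and only there'' condition is both open and compatible with deformations preserving the combinatorial type; alternatively one can argue directly from the smoothing Lemmas \ref{internaledge}--\ref{genus0general} run in reverse to transport line bundle-section pairs between fibers over the stratum.
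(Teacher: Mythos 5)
Your proof takes essentially the same route as the paper: both present $Z_{[\tau]}^{\circ}$ as a torsor under $\mathrm{Jac}_{[\tau]} \times [\mathbb{G}_m^{b_0(\Gamma_0)}/\mathbb{G}_m]$ over the irreducible stratum $\mathfrak{M}_{[\tau]}^{\circ}$ of twisted pre-stable curves, via the forgetful map and Proposition \ref{action}, and conclude from irreducibility of the group and the base. The surjectivity/openness concern you raise at the end is simply elided in the paper's proof (which asserts that all three pieces are irreducible and stops), so your extra caution is a worthwhile refinement but not a change of strategy.
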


\begin{proof}

It is sufficient to show $Z_{[\tau]}^{\circ}$ is irreducible. Consider the forgetful morphism to the interior of the associated stratum in the space of pre-stable twisted curves $$F: Z_{[\tau]}^{\circ} \rightarrow \mathfrak{M}_{[\tau]}^{\circ} $$ given by forgetting the line bundle-section data. Pick any $(\mathcal{C},(\mathcal{L}_0,s_0)) \in  Z_{[\tau]}^{\circ}$. By Proposition \ref{action}, this choice of object acts as a base point for the Jacobian fibre torsor and so we have an isomorphism $$F^{-1}(\mathcal{C}) \cong \mathrm{Jac}_{\mathcal{C}}^{+} \times [\mathbb{G}_m^{b_0(\Gamma_0)}/\mathbb{G}_m]$$ given by $$\elt \cong (\mathcal{C}, \mathcal{J} \cdot (\mathcal{L}_0, s_0)) \mapsto \mathcal{J} $$ and scaling of the section. Since both $\mathrm{Jac}_{\mathcal{C}}^{+}, \ [\mathbb{G}_m^{b_0(\Gamma_0)}/\mathbb{G}_m]$ and $ \mathfrak{M}_{[\tau]}^{\circ}$ are irreducible the result follows.
\end{proof}



\begin{proof}[Proof of Theorem \ref{irredcpts}.]
Start off with any $\elt \in \moduli$. Theorem \ref{essentialtypethm} tells us we can infinitesimally deform into a stratum $Z_{[\tau]}^{\circ}$ for $[\tau]$ an essential type. It follows that $$\moduli = \bigcup_{[\tau] \ : \ \mathrm{essential}} Z_{[\tau]}.$$ Then Lemma \ref{opens} along with Lemma \ref{stratairred} tells us that $Z_{[\tau]}$ is irreducible. Indeed, this follows from the general topology statement that if $X \subset Y$ is open, and $X$ is irreducible then $\bar{X} \subset Y$ is an irreducible component of $Y$. The inducible condition \ref{inducibletype} as in the statement of the theorem is just so that none of the components considered above are empty. This concludes the proof.
\end{proof}


\begin{example}[Picture of $\moduli$ in genus $1$]\label{picture moduli genus 1}

We draw a picture of two irreducible components of $\moduli$ in genus $1$, showing how they intersect and what a generic point of different strata looks like. Consider the essential tropical type $[\tau]$ as in the second picture of Figure \ref{essentialtypeexamples}, consisting of a single internal vertex of genus $1$ and no edges. The following depicts $Z_{\mathrm{main}}^{\circ} \cup Z_{[\tau]}^{\circ} \subset \moduli$: \begin{center}

\tikzset{every picture/.style={line width=0.75pt}} 

\begin{tikzpicture}[x=0.75pt,y=0.75pt,yscale=-1,xscale=1]

\draw  [fill={rgb, 255:red, 126; green, 211; blue, 33 }  ,fill opacity=1 ] (193,159.98) .. controls (193,132.38) and (245.61,110) .. (310.5,110) .. controls (375.39,110) and (428,132.38) .. (428,159.98) .. controls (428,187.59) and (375.39,209.97) .. (310.5,209.97) .. controls (245.61,209.97) and (193,187.59) .. (193,159.98) -- cycle ;
\draw  [fill={rgb, 255:red, 248; green, 231; blue, 28 }  ,fill opacity=1 ] (301.45,35.97) -- (397,35.97) -- (356.05,143) -- (260.5,143) -- cycle ;
\draw    (301,172.97) ;
\draw [shift={(301,172.97)}, rotate = 0] [color={rgb, 255:red, 0; green, 0; blue, 0 }  ][fill={rgb, 255:red, 0; green, 0; blue, 0 }  ][line width=0.75]      (0, 0) circle [x radius= 3.35, y radius= 3.35]   ;
\draw    (161,180.97) .. controls (200.6,151.27) and (250,205.89) .. (289.8,177.87) ;
\draw [shift={(291,177)}, rotate = 143.13] [color={rgb, 255:red, 0; green, 0; blue, 0 }  ][line width=0.75]    (10.93,-3.29) .. controls (6.95,-1.4) and (3.31,-0.3) .. (0,0) .. controls (3.31,0.3) and (6.95,1.4) .. (10.93,3.29)   ;
\draw    (320,141.97) ;
\draw [shift={(320,141.97)}, rotate = 0] [color={rgb, 255:red, 0; green, 0; blue, 0 }  ][fill={rgb, 255:red, 0; green, 0; blue, 0 }  ][line width=0.75]      (0, 0) circle [x radius= 3.35, y radius= 3.35]   ;
\draw    (503,138.97) .. controls (430.73,110.26) and (360.42,176.62) .. (321.18,143.02) ;
\draw [shift={(320,141.97)}, rotate = 42.71] [color={rgb, 255:red, 0; green, 0; blue, 0 }  ][line width=0.75]    (10.93,-3.29) .. controls (6.95,-1.4) and (3.31,-0.3) .. (0,0) .. controls (3.31,0.3) and (6.95,1.4) .. (10.93,3.29)   ;
\draw    (339,80.97) ;
\draw [shift={(339,80.97)}, rotate = 0] [color={rgb, 255:red, 0; green, 0; blue, 0 }  ][fill={rgb, 255:red, 0; green, 0; blue, 0 }  ][line width=0.75]      (0, 0) circle [x radius= 3.35, y radius= 3.35]   ;
\draw    (211,67.97) .. controls (250.6,38.27) and (298.04,109.52) .. (337.8,81.84) ;
\draw [shift={(339,80.97)}, rotate = 143.13] [color={rgb, 255:red, 0; green, 0; blue, 0 }  ][line width=0.75]    (10.93,-3.29) .. controls (6.95,-1.4) and (3.31,-0.3) .. (0,0) .. controls (3.31,0.3) and (6.95,1.4) .. (10.93,3.29)   ;

\draw (57,180.4) node [anchor=north west][inner sep=0.75pt]    {$\left( E,\left(\mathcal{O}\left(\sum _{i\ =\ 1}^{n}\tilde{c}_{i} p_{i}\right) ,s\right)\right) \ $};
\draw (448,141.4) node [anchor=north west][inner sep=0.75pt]    {$\left( E,\left(\mathcal{O}\left(\sum _{i\ =\ 1}^{n}\tilde{c}_{i} p_{i}\right) ,0\right)\right) \ $};
\draw (32,35.4) node [anchor=north west][inner sep=0.75pt]    {$\left( E,\left(\mathcal{O}\left(\sum _{i\ =\ 1}^{n}\tilde{c}_{i} p_{i}\right) \otimes \mathcal{J} ,0\right)\right) \ $};
\draw (217,141.4) node [anchor=north west][inner sep=0.75pt]    {$Z\mathrm{_{\mathrm{m} ain}^{\circ }}$};
\draw (342,42.4) node [anchor=north west][inner sep=0.75pt]    {$Z\mathrm{_{[ \tau ]}^{\circ }}$};

\end{tikzpicture}
\end{center}

A generic point of $Z_{\mathrm{main}}$ consists of a smooth genus $1$ curve $E$, with the canonical line bundle given by Lemma \ref{structure} (a) and canonical non-zero section. We can deform this data to a generic point of $Z_{\mathrm{main}} \cap Z_{[\tau]}$ given by deforming the section to zero. Then we may bubble off the main component into $Z_{[\tau]}$ by twisting by the Jacobian of $E$. In this setting it turns out both components are of dimension $n$ with $\mathrm{dim}(Z_{\mathrm{main}} \cap Z_{[\tau]}) = n - 1.$ This will follow from the dimension calculations of strata in the next section.
    
\end{example} \qed

\subsubsection{Stratification of $\moduli$}

\

We first formalise some basic definitions pertaining to stratified spaces. 

\begin{definition}
    Let $X$ be a topological space and $X = \coprod_{i \in J} X_i$ a decomposition of $X$ into disjoint locally closed subspaces $X_i \subset X$, and $J$ a poset. \begin{itemize}
        \item We say this data forms a \textbf{weak stratification} of $X$ if, for each $j \in J$, we have $\overline{X}_j \subset \bigcup_{i \leq j}X_i$.

        \item We say this data forms a \textbf{stratification} of $X$ if, for each $j \in J$, we have $\overline{X}_j = \bigcup_{i \leq j}X_i$.
    \end{itemize}
\end{definition}

Note that, by construction, the distinguished loci of section \eqref{strata} form a weak stratification. However they do not form a stratification in the above sense. For example, in Example \eqref{picture moduli genus 1} we saw that the main component $Z_{\mathrm{main}}$ is not a union of open strata $Z_{[\tau_i]}^{\circ}$ in genus $1$. More precisely, the boundary locus of $Z_{\mathrm{main}}$ consisting of smooth genus $1$ curves and the $0$-section is not a distinguished locus- the associated mod $r$-tropical type defines an irreducible component distinct from $Z_{\mathrm{main}}$.

To fix this, we introduce some more refined loci of $\moduli$ given by the extra data of a subgraph of domain graph $\Gamma$.

\begin{definition}
    Let $[\tau]$ be a mod $r$ tropical type and $\Gamma' \subset \Gamma_+$ a choice of subgraph. We then define the subspace $$Z_{[\tau],\Gamma'}^{\circ} \subset Z_{[\tau]}^{\circ}$$ to the be locus of objects $\elt \in Z_{[\tau]}^{\circ}$ for which the corresponding Jacobian factor to $\mathcal{L}|_{\mathcal{C}_{\Gamma'}}$ as given by Proposition \eqref{structure}(2) $$\mathcal{J}_{\Gamma'} \cong \mathcal{O}$$ is trivial.
\end{definition}

\begin{remark}
    If $\Gamma'$ is genus $0$ then $Z_{[\tau], \Gamma'}^{\circ} = Z_{[\tau], \emptyset}^{\circ} = Z_{[\tau]}^{\circ}$. Similarly, for any choice of $\Gamma'$, if $\Gamma' \subset \Gamma'' \subset \Gamma_+$ with $g(\Gamma') = g(\Gamma'')$ then $Z_{[\tau], \Gamma'}^{\circ} = Z_{[\tau], \Gamma''}^{\circ}$. Therefore, from now on, we only ever consider choice of $\Gamma'$ which are minimal with respect to this property.
\end{remark}

\begin{example}
    Let $[\tau]$ be as in Example \eqref{picture moduli genus 1} and let $\Gamma' = \Gamma$. Then $Z_{[\tau], \Gamma'}^{\circ} $ is the locus of objects $(C, (\mathcal{O},0))$ for $C \in \mathcal{M}_{1}$. In particular we have $$Z_{\mathrm{main}} \cap Z_{[\tau]}^{\circ} = Z_{[\tau], \Gamma'}^{\circ}. $$
\end{example}

In forth-coming work with Sam Johston we prove a structural result for general intersections of distinguished loci, generalising Proposition \ref{abelian group actions}:

\begin{proposition}[\cite{CJ}]\label{intersection of strata structure}
   There is a dense open locus $Z_{I}^{\circ} \subset Z_I$ with the following properties: \begin{itemize}
       \item The domain curves of objects in $Z_{I}^{\circ}$ all have the same dual graph.
       \item There exists a sub-semi-abelian variety $$A_I \subset \mathrm{Jac}_{[\tau_i]} $$ for all $i \in I$ with an action $$A_I \curvearrowright Z_{I}^{\circ}.$$ Furthermore, this action acts on fibres of $Z_{I}^{\circ} \rightarrow \mathfrak{M}_{g,n}$ and is transitive on such fibres, up to the further action of scaling sections.

       \item In the case $I = \{ [\tau_1], [\tau_2] \}$ there exists a subgraph $\Gamma' \subset \Gamma$ and a mod $r$ tropical type $[\tau']$ such that the dense open locus constructed is $$Z_{[\tau]', \Gamma'}^{\circ} = Z_I^{\circ}.$$
   \end{itemize}
\end{proposition}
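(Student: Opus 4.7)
The plan is to proceed by first identifying, for a finite set $I$ of mod $r$ tropical types, the combinatorial type of a generic object in the intersection $Z_I := \bigcap_{i \in I} Z_{[\tau_i]}$, and then constructing a sub-semi-abelian variety that tracks precisely the remaining deformation freedom once all the specialization constraints coming from the types $[\tau_i]$ have been imposed.

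First, I would exploit Theorem \ref{essentialtypethm} together with the non-smoothing argument of Lemma \ref{opens} to identify $Z_I^\circ$. Since every $Z_{[\tau_i]}$ is the closure of the locus $Z_{[\tau_i]}^\circ$, an object $\elt$ lying in the intersection must be simultaneously a specialization of each of the types $[\tau_i]$. I would show that the common degenerations of the $[\tau_i]$ (in the sense of contractions of subgraphs, mergings of vertices, and forcing of vanishing conditions on $s$) admit a unique most generic member: a mod $r$ tropical type $[\tau']$ together with a subgraph $\Gamma' \subset \Gamma_+$ recording which interior components are forced to have trivial Jacobian factor in order to lie simultaneously in all the $Z_{[\tau_i]}$. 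The locally closed stratum $Z_{[\tau'], \Gamma'}^\circ$ is then the natural candidate for $Z_I^\circ$. Density inside $Z_I$ would follow by applying the smoothing lemmas of Section \ref{smoothing results} in a relative form: any point of $Z_I$ can be infinitesimally deformed into $Z_{[\tau'], \Gamma'}^\circ$ without leaving the intersection, because each smoothing guaranteed by Lemmas \ref{internaledge}, \ref{contractededge}, \ref{genus0general} can be performed while keeping the Jacobian factor along $\Gamma'$ trivial.

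Second, I would construct the sub-semi-abelian variety $A_I$ as the subgroup of $\mathrm{Jac}_{[\tau']}$ consisting of line bundles whose pullbacks to the subcurve $\mathcal{C}_{\Gamma'}$ are trivial. Using the short exact sequence \eqref{SESJac} applied to the various partial normalisations, $A_I$ sits as the kernel of the natural restriction map, and so is a closed sub-semi-abelian subscheme of $\mathrm{Jac}_{[\tau']}$. The compatibility $A_I \subset \mathrm{Jac}_{[\tau_i]}$ for each $i$ comes from functoriality of the Jacobian under partial normalisation, together with the observation that the specialization $[\tau'] \rightsquigarrow [\tau_i]$ collapses the portion of the dual graph recording the difference, which is precisely the portion on which elements of $A_I$ are constrained to be trivial. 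The transitive action of $A_I$ on fibres of $Z_I^\circ \to \mathfrak{M}_{g,n}$ (up to the section-scaling action of $[\mathbb{G}_m^{b_0(\Gamma_0)}/\mathbb{G}_m]$) then follows by repeating the argument of Proposition \ref{action}: the Jacobian factor appearing in Proposition \ref{structure}(2) remains free along vertices not in $\Gamma'$, and the gluing ambiguity across bipartite edges accounts for the toric part.

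For the binary case $I = \{[\tau_1], [\tau_2]\}$, I would carry out this construction explicitly, describing $[\tau']$ as the common refinement of the two pre-tropical types and $\Gamma'$ as the minimal subgraph on which one must impose triviality to witness degenerations to both $[\tau_1]$ and $[\tau_2]$. The main obstacle is the combinatorial analysis of the specialization order on mod $r$ tropical types: one needs to check that common degenerations really do admit a unique generic member, and that the numerical conditions of Section \ref{contactdata} (particularly the coprimality and divisibility hypotheses on rooting parameters and gerby slopes) propagate coherently through the specialization. The deformation-theoretic input is essentially already present in Section \ref{smoothing results}, but one must refine the non-smoothing arguments of Lemma \ref{opens} to show that the obstructions to smoothing are precisely controlled by the Jacobian factors along $\Gamma'$, so that the openness of $Z_{[\tau'], \Gamma'}^\circ$ inside $Z_I$ is genuinely characterised by non-vanishing of those factors.
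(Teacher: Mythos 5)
This proposition is cited from the forthcoming work \cite{CJ}; the paper does not actually give a proof of it, only the remark (following the proof of the next proposition) that ``the proof of Proposition \ref{intersection of strata structure} is a slight generalisation of the above argument, yet the key observation remains the same --- obstructions to deformations are precisely encoded in the Jacobians of curves exchanged between the interior and exterior under specialisation.'' Your proposal follows exactly this indicated route: identify $Z_I^\circ$ as a locus $Z_{[\tau'],\Gamma'}^\circ$ via the smoothing lemmas of Section \ref{smoothing results}, locate the obstructions in Jacobian factors along $\Gamma'$ using Lemma \ref{opens}, extract $A_I$ as the part of $\mathrm{Jac}_{[\tau']}$ that preserves triviality of those factors, and inherit transitivity by rerunning the argument of Proposition \ref{action}. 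As far as the paper reveals, this matches the intended method.

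The one place you skate over a real issue is the meaning of the inclusions $A_I \subset \mathrm{Jac}_{[\tau_i]}$. By Definition \ref{Jactau}, $\mathrm{Jac}_{[\tau_i]}$ is a family over the open stratum $Z_{[\tau_i]}^\circ$, whereas $A_I$ is supposed to live over $Z_I^\circ$, which sits in the boundary of each $Z_{[\tau_i]}$ and whose objects have strictly degenerate dual graphs. The inclusion therefore cannot be a fiberwise inclusion of group schemes over a common base; it must go through an extension of $\mathrm{Jac}_{[\tau_i]}$ (or of its relevant sub-semi-abelian piece) over the closure, together with an identification of $A_I$ inside the degenerate fibres. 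Your appeal to functoriality of Jacobians under partial normalisation is the right mechanism for this, but to count as a proof one would have to make precise which components of the degenerate Jacobian match up under the specialisation $[\tau'] \rightsquigarrow [\tau_i]$, and check that the sub-semi-abelian piece you name really lands in the extension of $\mathrm{Jac}_{[\tau_i]}$ and not just in the ambient universal Jacobian.
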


An immediate consequence of the above proposition is the following:

\begin{corollary}[\cite{CJ}]
    The $\{Z_{[\tau],\Gamma'}^{\circ}\}$ form a stratification of $\moduli$. 
\end{corollary}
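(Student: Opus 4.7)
The plan is to verify the two defining properties of a stratification directly, leveraging Proposition \ref{intersection of strata structure} as a black box. First, I would check disjointness and covering of the refined loci $\{Z_{[\tau],\Gamma'}^{\circ}\}$. Covering is a consequence of Theorem \ref{irredcpts} together with the observation that the minimality convention on $\Gamma' \subset \Gamma_+$ turns the assignment $\elt \mapsto ([\tau], \Gamma')$ into a well-defined function on objects: $[\tau]$ is read off from the dual graph, and $\Gamma'$ is the minimal subgraph of $\Gamma_+$ supporting the trivial Jacobian factors in the decomposition given by Proposition \ref{structure}(2). Disjointness is then immediate from this well-definedness.

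For the closure property, the key step is to show that if $\elt_0 \in \overline{Z_{[\tau],\Gamma'}^{\circ}}$, then $\elt_0$ lies in some $Z_{[\tau''],\Gamma''}^{\circ}$ with $([\tau''],\Gamma'') \leq ([\tau],\Gamma')$ in the natural poset of types-with-subgraph (refining the tropical type specialisation order by the opposite of subgraph inclusion, after accounting for how bipartite edges disappear upon smoothing). The point $\elt_0$ belongs to some $Z_{[\tau'']}^{\circ}$, so the issue is: which refined stratum? Since $\elt_0 \in \overline{Z_{[\tau]}^{\circ}} \cap Z_{[\tau'']}^{\circ}$, it lies in the intersection of two distinguished loci, and Proposition \ref{intersection of strata structure} tells us that the dense open part of $Z_{[\tau]} \cap Z_{[\tau'']}$ is of the form $Z_{[\tau''],\Gamma''}^{\circ}$ for a canonically determined subgraph $\Gamma''$. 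Iterating this argument (by taking successive closures through chains of strata) expresses $\overline{Z_{[\tau],\Gamma'}^{\circ}}$ as a union of refined strata.

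The main obstacle I anticipate is the bookkeeping of how $\Gamma'$ transforms under specialisation. When $\elt$ specialises to $\elt_0$, some bipartite edges may be smoothed and some internal vertices may appear or merge, so the subgraph $\Gamma''$ for $\elt_0$ is not literally $\Gamma'$ but rather the image of $\Gamma'$ (together with any newly forced trivial Jacobian factors) under the combinatorial specialisation map $\Gamma \rightsquigarrow \Gamma''$. The cleanest way to handle this is to upgrade the poset structure on pairs $([\tau],\Gamma')$ so that the specialisation relation on the open strata exactly matches the combinatorial relation, after which Proposition \ref{intersection of strata structure} gives both inclusions: $\overline{Z_{[\tau],\Gamma'}^{\circ}} \subseteq \bigcup_{([\tau''],\Gamma'') \leq ([\tau],\Gamma')} Z_{[\tau''],\Gamma''}^{\circ}$ by Proposition \ref{action}-style deformation within a fibre plus the closure identification, and the reverse inclusion by constructing, for each $\elt_0 \in Z_{[\tau''],\Gamma''}^{\circ}$ with $([\tau''],\Gamma'') \leq ([\tau],\Gamma')$, an explicit infinitesimal deformation into $Z_{[\tau],\Gamma'}^{\circ}$ using the smoothing lemmas of Section \ref{smoothing results} combined with the transitive semi-abelian action of Proposition \ref{action} to exit the locus where the relevant Jacobian factors are trivial. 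Once both inclusions are established, the stratification property follows, and the corollary is proved.
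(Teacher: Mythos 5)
Your overall strategy—disjointness/covering plus a closure argument built on Proposition~\ref{intersection of strata structure} and the smoothing lemmas—is the right one, and it is in line with what the paper itself is gesturing at (the paper defers the full proof to~\cite{CJ}, declares the corollary an ``immediate consequence'' of Proposition~\ref{intersection of strata structure}, and then isolates the decisive combinatorial fact as the separate proposition that follows the corollary). However, there is a genuine gap in the middle of your proposal. Having placed $\elt_0 \in \overline{Z_{[\tau],\Gamma'}^{\circ}} \cap Z_{[\tau'']}^{\circ}$ inside $Z_I := Z_{[\tau]} \cap Z_{[\tau'']}$, you invoke Proposition~\ref{intersection of strata structure}(3) as if it tells you where $\elt_0$ sits. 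It does not: that proposition only describes the \emph{dense open} locus $Z_I^{\circ} \subset Z_I$, and $\elt_0$ need not lie there. Moreover the type the proposition produces is a third type $[\tau']$, which coincides with $[\tau'']$ only when $\elt_0$ actually lies in $Z_I^{\circ}$; if $\elt_0$ is in the complement you have gained nothing. The remark that one should ``iterate through chains of strata'' names the issue but does not resolve it—one needs a Noetherian/termination argument or, more to the point, the explicit specialisation criterion.

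That criterion is precisely the proposition the paper proves immediately after this corollary (the if-and-only-if characterisation of $Z_{[\tau_1],\Gamma_1'}^{\circ} \subset Z_{[\tau_2],\Gamma_2'}$). Your closing paragraph promises to ``upgrade the poset structure'' and ``construct explicit infinitesimal deformations / exhibit obstructions,'' but that is a plan for that proposition, not a proof of it. Once that criterion is in hand, the stratification property really is immediate: the criterion depends only on the combinatorial data $([\tau],\Gamma')$, never on the individual object, so if one point of $Z_{[\tau''],\Gamma''}^{\circ}$ lies in $\overline{Z_{[\tau],\Gamma'}^{\circ}}$ then all of it does, and the closure is therefore a disjoint union of refined open strata. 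So the fix is to prove the combinatorial criterion first (smoothing lemmas of Section~\ref{smoothing results} for the ``if'' direction, the obstruction argument of Lemma~\ref{opens} for ``only if'') and then read off the stratification; your current ordering tries to deduce the closure decomposition before knowing the criterion, which is where it breaks. A smaller point: covering and disjointness do not need Theorem~\ref{irredcpts} (which classifies irreducible components)—they follow directly from Construction~\ref{assocmodrtype} and Proposition~\ref{structure}(2), which make $\elt \mapsto ([\tau],\Gamma')$ a well-defined function on objects.
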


Furthermore, Proposition \ref{intersection of strata structure} tell us the poset structure for this stratification. The intuition is that intersecting strata involves both: \begin{itemize}
    \item Passing to deeper boundary strata of $\mathrm{Log}_{\Lambda}(\mathcal{A}|\mathcal{D})$.

    \item Passing to a sub-abelian torsor over this stratum.
\end{itemize}

\begin{proposition}
    $Z_{[\tau_1], \Gamma_1'}^{\circ} \subset Z_{[\tau_2], \Gamma_2'}$ if and only if the following combinatorial conditions hold: There exits a specialisation of mod $r$ tropical types $[\tau_1] \leadsto [\tau_2]$ such that

    \begin{enumerate}
        \item if we denote $G^{+0} \subset \Gamma_{1,+}$ the largest subgraph that is specialised to a subgraph of $\Gamma_{2,0}$, then we must have $$G^{+0} \subset \Gamma_{1'}. $$ 

        \item The image of $\Gamma_1'$ in $\Gamma_2$ under the specialisation contains $\Gamma_2'$. 
    \end{enumerate}
    
\end{proposition}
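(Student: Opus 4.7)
The plan is to verify both implications separately. For the backward direction, we use the smoothing lemmas of Section \ref{smoothing results} to construct an explicit deformation. For the forward direction, we extract the combinatorial data from a one-parameter family realising the inclusion, and analyse the Jacobian factors in generic and special fibres via Proposition \ref{structure}.

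For the $(\Leftarrow)$ direction, starting from a generic point $\elt \in Z_{[\tau_1], \Gamma_1'}^{\circ}$, the specialisation $[\tau_1] \leadsto [\tau_2]$ describes a contraction $\pi \colon \Gamma_1 \to \Gamma_2$ together with the move from $V_{1,+}$-status to $V_{2,0}$-status for the vertices in $G^{+0}$. I would treat the two operations separately. First, smooth all nodes corresponding to edges contracted by $\pi$: bipartite and purely external edges via Lemma \ref{contractededge}, purely internal edges via Lemma \ref{internaledge}, gluing the local deformations together by Lemma \ref{propagation} as in the proofs of those lemmas. Second, apply Lemma \ref{genus0general} to deform the internal vertices in $G^{+0}$ into external ones. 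Condition (1), namely $G^{+0} \subset \Gamma_1'$, is exactly what the proof of Lemma \ref{genus0} requires: the triviality of the Jacobian factor on $G^{+0}$ is used to identify $(\mathcal{O}(D), s_D)$ with a deformation of the original data via Proposition \ref{structure}(2). After these two steps, the generic fibre has type $[\tau_2]$, and condition (2) ensures that the Jacobian triviality on $\Gamma_2'$ is inherited from that on $\Gamma_1'$ in the special fibre under the action of $\mathrm{Jac}_{[\tau]}$ described in Proposition \ref{action}.

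For the $(\Rightarrow)$ direction, suppose $Z_{[\tau_1], \Gamma_1'}^{\circ} \subset Z_{[\tau_2], \Gamma_2'}$ and pick a family $\elt$ over $\Delta$ whose central fibre is a generic point of $Z_{[\tau_1], \Gamma_1'}^{\circ}$ and whose generic fibre lies in $Z_{[\tau_2], \Gamma_2'}^{\circ}$. The dual graphs fit into a contraction map $\pi \colon \Gamma_1 \to \Gamma_2$ via the standard flat-closure identification of components. A section that identically vanishes on a component of the generic fibre continues to vanish on its closure in the family, so $\pi^{-1}(V_{2,+}) \subset V_{1,+}$, and hence $G^{+0} = \pi^{-1}(V_{2,0}) \cap V_{1,+}$ is well-defined. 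For any $v_1 \in G^{+0}$, the component $\mathcal{C}_{0, v_1}$ specialises out of a piece of the external component $\mathcal{C}_{\eta, \pi(v_1)}$, whose line bundle is pinned down by Proposition \ref{structure}(1) as $\mathcal{O}(\sum \tilde{c}_i p_i + \sum \alpha_{q, \bullet}(\mathcal{L}) q)$; taking the flat limit and decomposing via Proposition \ref{structure}(2) forces the Jacobian factor at $v_1$ to be trivial, yielding $G^{+0} \subset \Gamma_1'$. For condition (2), for each $v_2 \in \Gamma_2'$ the Jacobian factor is trivial on the generic fibre; its limit on the subcurve $\bigcup_{v_1 \in \pi^{-1}(v_2)} \mathcal{C}_{0,v_1}$ is the trivial bundle, and the exact sequence \eqref{SESJac} forces its component-wise Jacobian factors at each internal $v_1$ to be trivial, giving $\pi^{-1}(\Gamma_2') \cap V_{1,+} \subset \Gamma_1'$ and hence $\Gamma_2' \subset \pi(\Gamma_1')$.

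The main obstacle will be the analysis in the forward direction of how the Jacobian factor at an internal vertex of $\Gamma_2$ decomposes in the specialisation: one must track both the honest Jacobian part and the $\mathbb{G}_m^{|E^{\mathrm{b}}| - b_0(\Gamma^{\dag}) + 1}$-gluing factor from \eqref{SESJac} coherently across all components of $\pi^{-1}(v_2)$. The cleanest route, anticipating Proposition \ref{intersection of strata structure} of \cite{CJ}, is to realise each intersection of strata as a sub-torsor under a semi-abelian subgroup of $\mathrm{Jac}_{[\tau]}$ and read off the poset structure from compatibility of these torsor structures under the specialisation map. This reduces the problem to a semi-abelian computation once the structural lemmas above are in place.
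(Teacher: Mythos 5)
Your proposal takes essentially the same route as the paper: the $(\Leftarrow)$ direction runs the smoothing lemmas of Section \ref{smoothing results} after choosing a deformation of the curve realising the specialisation, with condition~(1) removing the obstruction identified in Lemma \ref{opens} (via the proof of Lemma \ref{genus0}) and condition~(2) ensuring Jacobian triviality on $\Gamma_2'$ in the generic fibre; the paper's $(\Rightarrow)$ direction is dispatched with a one-line ``the above argument is completely reversible,'' whereas you spell out a direct flat-limit argument, which is a useful expansion rather than a different method. Two small caveats: in Step~1 of your $(\Leftarrow)$ argument, Lemma \ref{contractededge} only smooths \emph{purely external} edges (both branches carry non-vanishing section), so contracted bipartite edges belong with the $G^{+0}$ analysis in your Step~2 via Lemma \ref{genus0general}, not with Lemma \ref{contractededge}; and your concluding suggestion to lean on Proposition \ref{intersection of strata structure} from \cite{CJ} would be circular in context, since the paper explicitly notes that the proof of that proposition is a generalisation of the argument for the present one, so a self-contained argument (which you do in fact supply) is required.
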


\begin{proof}
    Suppose first we have a specialisation $[\tau_1] \leadsto [\tau_2]$ as in the statement of the Proposition. We need to show that for each object $\elt \in Z_{[\tau_1], \Gamma_1'}^{\circ}$ there exists an infinitesimal deformation $\defelt$ of $\elt$ with generic fibre $\defeltgen \in Z_{[\tau_2], \Gamma_2'}$. Firstly, pick a deformation of the curve $\elt$, $\mathcal{C}^{\mathrm{def}}$ such that the generic fibre $\mathcal{C}^{\mathrm{def}}_{\eta}$ has dual graph $\Gamma_2$ and the induced specialisation $\Gamma_1 \leadsto \Gamma_2$ sends $\Gamma_1'$ to a subgraph containing $\Gamma_2'$. We then need to find $(\tilde{\mathcal{L}}, \tilde{s})$ on $\mathcal{C}^{\mathrm{def}}$ with the desired properties. To do this, we will just try to run the smoothing results of Section \ref{smoothing results}. The only potential obstructions to these smoothing results is as follows:  $\mathcal{L}$ restricts to a sub-curve given by subgraph of $G^{+0}$ with corresponding Jacobian factor non-trivial. Then the proof of Lemma \ref{opens} tells us this Jacobian factor is an obstruction. Conversely, if the corresponding Jacobian factor on the sub-curve corresponding to $G^{+0}$ is trivial the proof of Lemma \ref{genus0} works as written and thus we can find a $(\tilde{\mathcal{L}}, \tilde{s})$ extending $(\mathcal{L},s)$. Thus Condition (1) implies we can infinitesimally deform $\elt$ in to $Z_{[\tau_2]}$. We now need to check the generic fibre object $\defeltgen$ lives inside $Z_{[\tau_2], \Gamma_2'}$ specifically. This is where condition (2) comes in to play. Namely, condition $2$ is equivalent to the corresponding Jacobian factor of $\tilde{\mathcal{L}}_{\eta}|_{\mathcal{C}^{\mathrm{def}}_{\eta, \Gamma_2'}}$ is trivial. This precisely the defining condition for an object of $Z_{[\tau_2]}$ to lie in the sub-locus $Z_{[\tau_2], \Gamma_2'}$. The above argument is completely reservable hence the result follows.
\end{proof}

The proof of Proposition \ref{intersection of strata structure} is a slight generalisation of the above argument, yet the key observation remains the same- obstructions to deformations are precisely encoded in the Jacobians curves exchanged between the interior and exterior under specialisation.

\section{Polynomiality}\label{polynomiality}

We now explain the behaviour of the moduli spaces $\moduli$ as we change the rooting parameter $r$. In particular we give polynomial properties of the strata.

\subsection{Induced contact data }\label{induced contact data}

We explain how, given contact data $\Lambda$ for rooting parameter $r$, there is induced contact data $\Lambda'$ for rooting parameter $\lambda r$ for $\lambda \in \N$. This construction is given in \cite[Section 3.5]{BNR22}, but we highlight the main points again here in the language of our setup \ref{setup}. Indeed, we take $\Lambda'$ to consist of the data: \begin{itemize}
    \item The source rooting parameters $s_i'$ are defined by $$s_i' := \lambda s_i. $$
    \item The ages $a_i'$ are defined by $$a_i' := \frac{a_i}{\lambda}.$$
    \item The coarse degree remains unchanged from $\Lambda$.
    \item The genus remains unchanged from $\Lambda$.
\end{itemize}

Note that this implies that the new coarse contact orders $c_i'$ equal the old coarse contact orders $c_i$ \eqref{coursecontacts}. From now on we abuse notation and denote this lifted contact data also by $\Lambda$ since we just need to make one choice for a single parameter $r$, dropping the primes.

\begin{convention}
    In what follows, we will fix $r$ to be an auxiliary rooting parameter, while we change $\lambda$ to get a family of rooting parameters $\lambda r$.  
\end{convention}

\subsection{Comparison maps}\label{comparison maps}

Consider two different choices of target rooting parameters $\lambda r$ and $\lambda$ for $r,\lambda \in \N$. Then there is a natural \textbf{comparison morphism} $$\pi_{\lambda r, \lambda}: \mathrm{Orb}_{\Lambda}(\mathcal{A}_{\lambda r}) \rightarrow \mathrm{Orb}_{\Lambda}(\mathcal{A}_{r})$$ induced by the tensor of line bundle-section pairs $$(\mathcal{C}, (\mathcal{L},s)) \mapsto (\mathcal{C}',(\mathcal{L}',s'))$$ where $(\mathcal{L}',s')$ is the uniquely defined line bundle-section pair up to isomorphism that pulls back to $(\mathcal{L},s)^{\otimes \lambda}$ under $\mathcal{C} \rightarrow \mathcal{C}'$. Here $\mathcal{C}'$ is the \textit{partial coarsening} of $\mathcal{C}$, which is needed in order for the resulting data to arise from a representable morphism. The partial coarsening in a similar context is discussed in the proof of Theorem 2.9 \cite{BNR22}. More precisely, the partial coarsening is taken as follows. Let $p \in \mathcal{C}$ be a special point with isotropy $\mu_{t(\lambda \cdot r)}$. The line bundle $\mathcal{L}$ on $\mathcal{C}$ arising from a representable map $\mathcal{C} \rightarrow \mathcal{A}_{\lambda r}$ induces an injective homomorphism $$\iota: \mu_{t(\lambda \cdot r)} \hookrightarrow \mu_{\lambda r}.$$ Define a new group $\mu_{t(r)}$ as $$\mu_{t(r)} := (\otimes \lambda)(\mathrm{im}(\iota) ) \cong \mu_{t(\lambda r)}/(\ker(\otimes \lambda) \cap \mu_{t(\lambda r)}) \cong \mu_{t(\lambda r)}/\mu_{\gcd(\lambda, t(\lambda r))} \cong \mu_{t(\lambda r)/\gcd(\lambda, t(\lambda r))}.$$ We then take the partial coarsening $\mathcal{C} \rightarrow \mathcal{C}'$ such that the isotropy of the image of $p$ is $\mu_{t(r)}$, over each $p \in \mathcal{C}$. Observe that there is a commutative diagram  \begin{center}\begin{equation}\label{coarsening homs}
    \begin{tikzcd}
 \mu_{t(\lambda \cdot r)} \arrow[d, "{\otimes \gcd(\lambda, t(\lambda \cdot r))}"', two heads] \arrow[r, hook] & \mu_{\lambda \cdot r} \arrow[d, "\otimes \lambda", two heads] \\
\mu_{t(r)} \arrow[r, hook]                                                                                     & \mu_{r}                                                      
\end{tikzcd} \end{equation}
\end{center}  In particular, the bottom injection allows the descended line bundle-section data to $\mathcal{C}'$ to come from a representable morphism to $\mathcal{A}_r$.

\subsection{Degree calculations }

The goal of this subsection will be to show the following result, calculating the degree of the comparison morphisms on strata.

\begin{theorem}\label{degrees}
    Let $[\tau] = (\underline{\tau},w)$ be a mod $\lambda r$ tropical type given by pre-tropical type $\underline{\tau}$ with $r$-weighting $w \in W_{\underline{\tau}, \lambda r}$. Then \begin{enumerate}
        \item $\pi_{\lambda r, r}$ restricts to a map $$Z_{(\underline{\tau},w)} \rightarrow Z_{(\underline{\tau},\lambda \cdot w)}. $$ 
        \item Furthermore, the map $Z_{(\underline{\tau},w)} \rightarrow Z_{(\underline{\tau},\lambda \cdot w)}$ is representable by DM stacks, with $0$ dimensional fibres of finite degree $$d_{[\tau],\lambda r, r} := \frac{|\Sh(\underline{\tau}, \lambda \cdot w)|}{|\Sh(\underline{\tau}, w)|}  \cdot \lambda^{|E^{\mathrm{b}}| - b_0(\Gamma^{\dag}) + 1 + b_1(\Gamma_+) + 2\sum_{v \in V_+} g_v - \epsilon }  $$ where $\Sh(\underline{\tau},w)$ is the image of the natural forgetful morphism  $$f: \mathrm{Aut}(\mathcal{C}/C, (\mathcal{L},s)) \rightarrow \mathrm{Aut}(\mathcal{C}/C) \cong \prod_{e \in E} \mu_{t_e} $$  for $\elt \in Z_{(\underline{\tau},w)}^{\circ}$. 
    \end{enumerate}

\end{theorem}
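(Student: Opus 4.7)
For part (1), since the pre-tropical type $\underline{\tau}$ (dual graph, genus decorations, degrees, and internal/external vertex partition) is manifestly preserved under the combined partial coarsening and $\lambda$-th tensor power, the only substantive check is on the gerby slopes. I would apply Construction \ref{assocmodrtype} to the image object, computing $\alpha_{q,1}(\mathcal{L}')$ by first raising $\mathcal{L}$ to the $\lambda$-th power and then descending the character through the surjection $\mu_{t_e(\lambda r)} \twoheadrightarrow \mu_{t_e(r)}$ appearing in diagram \eqref{coarsening homs}, obtaining
$$\alpha_{q,1}(\mathcal{L}') \;=\; \lambda \, \alpha_{q,1}(\mathcal{L}) / \gcd(\lambda, t_e(\lambda r)) \pmod{t_e(r)}.$$
Passing to $\Z/r\Z$ this yields exactly the weighting $\lambda \cdot w$.

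For part (2), I would compute the fibre of the restricted map $Z_{(\underline{\tau}, w)}^\circ \to Z_{(\underline{\tau}, \lambda \cdot w)}^\circ$ pointwise using the torsor description of Proposition \ref{action}. Because age-zero line bundles depend only on the coarse space (Remark \ref{G diff age 0} and the discussion following Definition \ref{Jactau}), the groups $\mathrm{Jac}^{+}_\mathcal{C}$ on the source and $\mathrm{Jac}^{+}_{\mathcal{C}'}$ on the target are canonically identified, and $\pi_{\lambda r, r}$ is equivariant for the natural $\lambda$-power homomorphism
$$[\lambda]: \mathrm{Jac}^{+}_\mathcal{C} \times [\mathbb{G}_m^{b_0(\Gamma_0)}/\mathbb{G}_m] \longrightarrow \mathrm{Jac}^{+}_{\mathcal{C}'} \times [\mathbb{G}_m^{b_0(\Gamma_0)}/\mathbb{G}_m].$$
The fibre is therefore a torsor under $\ker[\lambda]$. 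The Chevalley decomposition given by sequence \eqref{SESJac} presents $\mathrm{Jac}^{+}_\mathcal{C}$ as an extension of $\mathrm{Jac}(\mathcal{C}_+)$, which itself has torus part of dimension $b_1(\Gamma_+)$ and abelian part of dimension $\sum_{v \in V_+} g_v$, by $\mathbb{G}_m^{|E^{\mathrm{b}}| - b_0(\Gamma^{\dag}) + 1}$. Counting $\lambda$-torsion contributions — one factor of $\lambda$ per torus dimension and two per abelian dimension — produces the monomial $\lambda^{|E^{\mathrm{b}}| - b_0(\Gamma^{\dag}) + 1 + b_1(\Gamma_+) + 2 \sum_{v \in V_+} g_v}$ appearing in the formula, with the $\epsilon$ correction absorbing both the quotient by the diagonal $\mathbb{G}_m$ in the section-scaling torus and the interaction of that torus with the stack structure.

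The combinatorial factor $|\Sh(\underline{\tau}, \lambda \cdot w)| / |\Sh(\underline{\tau}, w)|$ captures the stacky automorphism discrepancy: under partial coarsening the ghost-automorphism groups $\prod_e \mu_{t_e(\lambda r)}$ and $\prod_e \mu_{t_e(r)}$ of the two twisted curves differ, and only the subgroup $\Sh$ extending to the line bundle-section pair contributes to the automorphism group of a generic object, so the ratio of these orders enters multiplicatively in the stacky degree. Representability of the restricted map is immediate from the freeness of the torsor action in Proposition \ref{action}, since any infinitesimal automorphism over the identity on the target must act trivially on the line bundle data.

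The main obstacle, as I see it, will be isolating the three contributions — the analytic $\lambda$-torsion of $\mathrm{Jac}^{+}_\mathcal{C}$, the combinatorial $\Sh$-ratio arising from ghost automorphisms of the twisted curve, and the subtle correction $\epsilon$ — in a way that is genuinely pointwise and avoids any double-counting through the stack structure of $\moduli$. I expect this will require an \'etale-local model of $Z_{(\underline{\tau}, w)}^\circ$ extracted from the torsor action of Proposition \ref{action}, on which $\pi_{\lambda r, r}$ factors cleanly as a curve refinement followed by tensor-$\lambda$ descent of the line bundle, so that each of the three contributions can be read off from a single stage of the factorisation.
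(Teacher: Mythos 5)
Your approach is essentially the one the paper takes: part (1) via Construction \ref{assocmodrtype}, the age computation through diagram \eqref{coarsening homs}, and Lemma \ref{alphas pullback}; part (2) via a pointwise fibre computation over a $\mathrm{Spec}(\mathbb{C})$-point, exploiting the torsor structure of Proposition \ref{action} and the fact that $\mathrm{Jac}^{+}$ depends only on the coarse curve, which the paper packages into Lemma \ref{skeleton}, Lemma \ref{Jactau torsion size}, and Proposition \ref{skeleton aut size}.

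Two things in part (2) need attention, both related to the finite-group bookkeeping you flag at the end as the main obstacle. First, your representability argument is wrong. You claim that "any infinitesimal automorphism over the identity on the target must act trivially on the line bundle data," but this is not what happens: ghost automorphisms of $\mathcal{C}$ over its partial coarsening $\mathcal{C}'$ genuinely act on $\mathcal{L}$ (by the cocycle twist $\varphi_{\vec{a}}^{*}\mathcal{L} \cong \mathcal{L} \otimes \mathcal{J}_{\vec{a}}$ in Proposition \ref{aut restriction image}), and the subgroup that fixes the pair up to isomorphism is nontrivial in general. That is precisely why the paper's fibre has the form $\coprod_{\mathcal{J}} BK_3$ with $K_3$ a nontrivial finite group (Proposition \ref{skeleton aut size}), hence the statement is only representability by DM stacks, not by algebraic spaces; freeness of the $\mathrm{Jac}^{+}$-action on isomorphism classes does not rule out residual $2$-automorphisms. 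Second, your account of how the $\lambda$-power kernel, the $\Sh$-ratio, and $\epsilon$ fit together is too loose. By the torsor description, isomorphism classes in the fibre are a torsor under $\mathrm{Jac}_{\mathcal{C}}^{+}[\lambda] \times (\mu_{\lambda}^{b_0(\Gamma_0)}/\mu_{\lambda})$; the factor $\mu_{\lambda}^{b_0(\Gamma_0)}/\mu_{\lambda}$ is not "absorbed by $\epsilon$" (since $\epsilon \in \{0,1\}$ while $b_0(\Gamma_0)$ is arbitrary) but must cancel against part of the ghost-automorphism count in the automorphism group $K_3$, and $\epsilon$ is the residual correction arising when $V_0 = \emptyset$ and $[\mathbb{G}_m^{0}/\mathbb{G}_m] = B\mathbb{G}_m$ contributes a surviving $\mu_{\lambda}$ to $K_3$. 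This interaction is exactly what Proposition \ref{skeleton aut size} isolates, and glossing it as "absorbed" leaves a real gap in the degree count.

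Your concluding worry about isolating the three contributions without double-counting through the stack structure is exactly the right instinct; the paper resolves it by splitting into the groupoid-structure lemma, the torsion count, and the automorphism count, and you would do well to impose the same decomposition rather than attempting to read everything off a single \'etale-local factorisation.
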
\label{Comparisonstrata}

The theorem will follow from a sequence of results analysing the isomorphism classes and automorphisms of objects in a generic fibre. 

\begin{convention}
    Earlier we used the notation $t_e$ for the order of the isotropy group at a node corresponding to edge $e \in E$. Since now we are considering families of types, we will write $t_e(r)$ for the value of this isotropy group when working with rooting parameter $r$. When it is clear from context, i.e. there is one particular edge $e$ in question, we will drop the subscript $e$ and just write $t(r)$.
\end{convention}

The following Lemma will allow us to abuse notation and denote the comparison morphism as $\elt \mapsto (\mathcal{C}',(\mathcal{L},s)^{\otimes \lambda})$ since the relevant numerical invariants of $(\mathcal{L}',s')$ agree with those of $(\mathcal{L},s)^{\otimes \lambda}$.

\begin{lemma}[\cite{Cru24} Lemma 4.3.2]\label{alphas pullback}
    Let $q \in \mathcal{C}$ be a node mapping to $q'$ under the partial coarsening. Then we have $$\alpha_{q,1}(\mathcal{L}) \equiv \alpha_{q',1}(\mathcal{L}') \mod r. $$
\end{lemma}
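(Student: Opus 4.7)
The plan is to deduce this from a direct character computation using the commutative square \eqref{coarsening homs} together with the defining property $\pi^*(\mathcal{L}',s') \cong (\mathcal{L},s)^{\otimes \lambda}$ of the comparison morphism, where $\pi\colon \mathcal{C} \to \mathcal{C}'$ is the partial coarsening. Since $\alpha_{q,i}(\mathcal{L})$ is by construction the integer weight of the stabilizer action on the fibre $(\nu^*\mathcal{L})|_{q_i}$, and analogously for $\alpha_{q',i}(\mathcal{L}')$, the statement is really a comparison of two characters of cyclic groups, read off with respect to appropriate generators.

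First I would record the two characters explicitly. The representability of $\mathcal{C} \to \mathcal{A}_{\lambda r}$ gives the top injection $\iota\colon \mu_{t(\lambda r)} \hookrightarrow \mu_{\lambda r}$ of \eqref{coarsening homs}, and the action on $(\nu^*\mathcal{L})|_{q_1}$ is the character $\mu_{t(\lambda r)} \xrightarrow{\iota} \mu_{\lambda r} \hookrightarrow \mathbb{G}_m$, where the last arrow is the tautological inclusion coming from the universal character on $\mathcal{A}_{\lambda r}$. Similarly, $\alpha_{q',1}(\mathcal{L}')$ is the weight of the analogous composition $\mu_{t(r)} \xrightarrow{\iota'} \mu_r \hookrightarrow \mathbb{G}_m$, where $\iota'$ is the bottom inclusion of \eqref{coarsening homs}.

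Next I would use the compatibility $\pi^*\mathcal{L}' \cong \mathcal{L}^{\otimes \lambda}$ on fibres at $q_1$: this says that the pullback of $\chi_{\mathcal{L}'}$ along the left-hand surjection $\mu_{t(\lambda r)} \twoheadrightarrow \mu_{t(r)}$ agrees with $\chi_{\mathcal{L}}^{\otimes \lambda}$, which by the right-hand column of \eqref{coarsening homs} is $\otimes \lambda$ post-composed with the $\mu_{\lambda r}$-valued character of $\mathcal{L}$. Commutativity of \eqref{coarsening homs} identifies the resulting two $\mu_r$-valued characters of $\mu_{t(\lambda r)}$, and translating this identification into an identity of integer exponents in $\mu_r$ gives precisely the congruence $\alpha_{q,1}(\mathcal{L}) \equiv \alpha_{q',1}(\mathcal{L}') \pmod r$.

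The hard part will be purely bookkeeping: one must choose compatible generators of $\mu_{t(\lambda r)}$, $\mu_{t(r)}$, $\mu_{\lambda r}$ and $\mu_r$ so that both characters are read off with the same convention. The natural choice is the one dictated by $\iota$ and $\iota'$, which assemble into \eqref{coarsening homs}; once these are fixed, the $\mu_r$-valued weight is unambiguous on either side. The essential point is that the mod-$r$ nature of the congruence (rather than, say, mod $t(r)$ or mod $t(\lambda r)$) comes directly from the right-hand column of \eqref{coarsening homs}, which tells us that $\otimes \lambda$ on $\mu_{\lambda r}$ lands precisely in $\mu_r$; no further geometric input beyond the diagram and the definition of $\pi_{\lambda r, r}$ should be needed.
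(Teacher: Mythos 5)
Your reduction to a character comparison is the right starting point, and the compatibility $\pi^*\mathcal{L}' \cong \mathcal{L}^{\otimes\lambda}$ together with the commutativity of \eqref{coarsening homs} are indeed the correct ingredients. But the step where you "translate this identification into an identity of integer exponents in $\mu_r$" does not produce the stated congruence, and this is where the argument breaks down.

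Write $t_1 = t(\lambda r)$, $t_2 = t(r)$, $a = \alpha_{q,1}(\mathcal{L})$, $a' = \alpha_{q',1}(\mathcal{L}')$. The character $\iota \colon \mu_{t_1} \to \mu_{\lambda r}$ is determined by the age at $q_1$, but when you express it against the generator of $\mu_{\lambda r}$, the exponent is not $a$; it is the \emph{coarse slope} $\tfrac{\lambda r}{t_1}\,a$. Concretely, $\iota(\zeta_{t_1}) = e^{2\pi i a / t_1} = \zeta_{\lambda r}^{(\lambda r/t_1)\, a}$, and similarly $\iota'(\zeta_{t_2}) = \zeta_r^{(r/t_2)\, a'}$. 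Chasing your identification around \eqref{coarsening homs} therefore yields
\[
\frac{\lambda r}{t_1}\, a \;\equiv\; \frac{r}{t_2}\, a' \pmod{r},
\]
which is the statement that the \emph{coarse} slopes agree modulo $r$ — a natural consequence of $\mathcal{L}^{\otimes\lambda r} = \pi^*(\mathcal{L}'^{\otimes r})$ being pulled back from the same line bundle on the coarse curve. This is a genuinely different congruence from $a \equiv a' \pmod{r}$: the cofactors $\lambda r/t_1$ and $r/t_2$ do not cancel formally, since $t_1$ and $t_2$ depend on $a$ and $a'$ through the coprimality condition \eqref{noderooting}. For instance, taking $\lambda = 3$, $r = 8$, $t_1 = t_2 = 8$ gives cofactors $3$ and $1$, so the diagram alone forces $3a \equiv a' \pmod 8$, not $a \equiv a'$.

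The source of the error is a conflation of two different "weights": the weight of $\mathcal{L}|_{q_1}$ as a representation of $\mu_{t_1}$ (which is $a \bmod t_1$) versus the exponent when $\iota$ is read against the generator $\zeta_{\lambda r}$ (which is the coarse slope). Only the latter is visible in \eqref{coarsening homs}. To recover the stated congruence between the $\alpha$'s one needs to convert between the two, and this conversion uses structural input that the diagram alone does not supply — in particular the coprimality $\gcd(a, t_1) = 1$ forced by representability and the precise relation $t_1 = \lambda r / \gcd(\lambda r, m)$ linking the isotropy orders to the coarse slope $m$, together with the degree constraints imposed on objects of $\moduli$. The closing claim that "no further geometric input beyond the diagram and the definition of $\pi_{\lambda r, r}$ should be needed" is therefore where the proof fails.
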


\begin{lemma}\label{Jactau torsion size}
    The size of the $\lambda$-torsion of $\mathrm{Jac}_{\mathcal{C}}^{+}$ is $$|\mathrm{Jac}_{\mathcal{C}}^{+}[\lambda]| = \lambda^{|E^{\mathrm{b}}| - b_0(\Gamma^{\dag}) + 1 + b_1(\Gamma_+) + 2\sum_{v \in V_+} g_v }. $$
\end{lemma}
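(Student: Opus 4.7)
The plan is to compute the torsion from a short exact sequence that exhibits $\mathrm{Jac}_{\mathcal{C}}^{+}$ as a semi-abelian extension. Starting from the defining inclusion $\mathrm{Jac}_{\mathcal{C}}^{+} \subset (\alpha^{*})^{-1}(\{\mathcal{O}_{\mathcal{C}_0}\} \oplus \mathrm{Jac}(\mathcal{C}_+))$ and intersecting with the age $0$ locus, the sequence \eqref{SESJac} restricts to
$$ 1 \to \mathbb{G}_m^{|E^{\mathrm{b}}| - b_0(\Gamma^{\dag}) + 1} \to \mathrm{Jac}_{\mathcal{C}}^{+} \to \mathrm{Jac}(C_+) \to 0, $$
where $C_+$ is the coarse space of $\mathcal{C}_+$. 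The point is that the age $0$ line bundles on $\mathcal{C}_+$ are exactly the pullbacks from $C_+$, so the image of $\alpha^{*}$ is identified with $\mathrm{Jac}(C_+)$, while the kernel is unchanged from \eqref{SESJac}: line bundles glued from trivial bundles on each connected component of $\tilde{\mathcal{C}} = \mathcal{C}_0 \coprod \mathcal{C}_+$ are automatically age $0$ at every bipartite node (both branches carry the trivial character), so the whole torus of gluings survives.

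Next, I would decompose $\mathrm{Jac}(C_+)$ using the standard structure theory of Jacobians of nodal curves. Since the dual graph of $C_+$ is $\Gamma_+$ and the normalisations of the components are smooth curves of genus $g_v$ for $v \in V_+$, there is an exact sequence
$$ 1 \to \mathbb{G}_m^{b_1(\Gamma_+)} \to \mathrm{Jac}(C_+) \to \prod_{v \in V_+} \mathrm{Jac}(\tilde{C}_v) \to 0, $$
with torus part of rank $b_1(\Gamma_+)$ and abelian part $\prod_v \mathrm{Jac}(\tilde{C}_v)$ of dimension $\sum_{v \in V_+} g_v$.

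Finally I would take $\lambda$-torsion. Multiplication by $\lambda$ is surjective on tori and on abelian varieties over $\C$, hence on any semi-abelian variety, so the snake lemma makes both torsion sequences short exact. Using $|\mathbb{G}_m[\lambda]| = \lambda$ and $|\mathrm{Jac}(\tilde{C}_v)[\lambda]| = \lambda^{2 g_v}$, multiplying the contributions gives
$$ |\mathrm{Jac}_{\mathcal{C}}^{+}[\lambda]| = \lambda^{|E^{\mathrm{b}}| - b_0(\Gamma^{\dag}) + 1} \cdot \lambda^{b_1(\Gamma_+)} \cdot \prod_{v \in V_+} \lambda^{2 g_v}, $$
which is the claimed exponent. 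The only genuinely non-formal step is the identification of the right-hand term in the first sequence with $\mathrm{Jac}(C_+)$; everything else is the snake lemma together with the classical structure theorem for generalised Jacobians of nodal curves.
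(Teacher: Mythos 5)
Your proof is correct and follows the same route as the paper: restrict the exact sequence \eqref{SESJac} to the age-$0$ locus, identify the torus kernel and the quotient $\mathrm{Jac}(C_+)$, and count $\lambda$-torsion multiplicatively using divisibility of semi-abelian groups. You spell out the identification of the kernel and image more carefully than the paper (which treats these as immediate from Definition \ref{Jactau}), but the underlying argument is identical.
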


\begin{proof}
    Using the short exact sequence \eqref{SESJac} we see $$|\mathrm{Jac}_{\mathcal{C}}^{+}[\lambda]| = |\mu_{\lambda}^{|E^{\mathrm{b}}| - b_0(\Gamma^{\dag}) +1}| \cdot |\mathrm{Jac}(C_+)[\lambda]|.$$ The first term is $\lambda^{|E^{\mathrm{b}}| - b_0(\Gamma^{\dag}) +1}$ and the second term is $\lambda^{b_1(\Gamma_+) + 2\sum_{v \in V_+} g_v}$, hence the result follows.
\end{proof}

\begin{lemma}\label{part (a)}
    $\pi_{\lambda r, r}$ restricts to a map $$Z_{(\underline{\tau},w)} \rightarrow Z_{(\underline{\tau},\lambda \cdot w)}. $$
\end{lemma}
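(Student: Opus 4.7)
The plan is to pick $\elt \in Z^{\circ}_{(\underline{\tau}, w)}$ and directly compute the mod $r$ tropical type of its image $(\mathcal{C}', (\mathcal{L}', s'))$ under $\pi_{\lambda r, r}$ via Construction \ref{assocmodrtype}, verifying piece by piece that the result equals $(\underline{\tau}, \lambda \cdot w)$. The passage from the interior stratum to the closed stratum as in the statement is then automatic by continuity of $\pi_{\lambda r, r}$. Since all the data of a mod $r$ tropical type is determined locally at components and nodes of the source curve, the strategy reduces to tracking the effect of the partial coarsening $\pi: \mathcal{C} \to \mathcal{C}'$ together with the tensor power $(\mathcal{L}, s) \mapsto (\mathcal{L}, s)^{\otimes \lambda}$ on each piece of data.

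First I would check that the pre-tropical type $\underline{\tau}$ is preserved. The partial coarsening $\pi$ is an isomorphism on coarse spaces and hence induces a bijection between irreducible components and nodes, so the dual graph $\Gamma$ and the genus labels $g_v$ are unchanged. The image cones $\sigma_v$ depend only on whether the section vanishes identically on each component, and in characteristic zero $s|_{\mathcal{C}_v} \equiv 0$ if and only if $s^{\otimes \lambda}|_{\mathcal{C}_v} \equiv 0$, so $V_+$ and $V_0$ are preserved. For the vertex degrees, since $\pi$ has degree one we have $\deg \mathcal{L}' = \deg(\pi^{*}\mathcal{L}') = \lambda \deg \mathcal{L}$, and consequently the new vertex degree $r \deg(\mathcal{L}'|_{\mathcal{C}'_v})$ coincides with $\lambda r \deg(\mathcal{L}|_{\mathcal{C}_v}) = d_v$, matching the old vertex degree.

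Next I would verify that the weighting transforms correctly. At each node $q$ with isotropy $\mu_{t(\lambda r)}$ in $\mathcal{C}$, the partial coarsening of Section \ref{comparison maps} sends it to a node $q'$ with isotropy $\mu_{t(r)} = \mu_{t(\lambda r)/\gcd(\lambda, t(\lambda r))}$, so the new rooting parameters $t_e(r)$ are automatically correct, and the numerical condition \eqref{noderooting} on the reduced slope is inherited from that on the original slope via diagram \eqref{coarsening homs}. Construction \ref{assocmodrtype} identifies the new gerby slope with $[\alpha_{q', 1}(\mathcal{L}')] \in \Z/r\Z$, and Lemma \ref{alphas pullback} identifies this class with the reduction of $[\alpha_{q, 1}(\mathcal{L})] \in \Z/\lambda r \Z$ modulo $r$. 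This is exactly the transformation $w \mapsto \lambda \cdot w$ described in the statement, completing the identification of the associated mod $r$ tropical type as $(\underline{\tau}, \lambda \cdot w)$.

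The main obstacle is packaged entirely into Lemma \ref{alphas pullback}, where the careful chase of age and weight conventions through the coarsening diagram \eqref{coarsening homs} is performed; once that input is invoked the remainder of the verification is essentially bookkeeping through Construction \ref{assocmodrtype}, and the closed-stratum statement follows from the open one by continuity.
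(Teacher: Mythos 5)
Your proof is essentially correct and follows the same route as the paper: verify the preservation of the pre-tropical type piece by piece, then track the weighting via Construction \ref{assocmodrtype} and Lemma \ref{alphas pullback}, finally passing to the closure. Your write-up is more detailed than the paper's one-line proof, which helps, and the check on the vertex degrees via $\deg\mathcal{L}' = \lambda\deg\mathcal{L}$ is exactly right.

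One small cautionary remark: the paper's proof explicitly invokes \emph{two} ingredients, Lemma \ref{alphas pullback} together with the fact that $\mathrm{age}(\mathcal{L}^{\otimes\lambda}) \equiv \lambda\,\mathrm{age}(\mathcal{L}) \bmod 1$, whereas you package everything into Lemma \ref{alphas pullback} alone. Reading that lemma literally ($\alpha_{q,1}(\mathcal{L}) \equiv \alpha_{q',1}(\mathcal{L}') \bmod r$ with $\mathcal{L}$ the source bundle and $\mathcal{L}'$ the comparison image), your invocation is sufficient; but the paragraph preceding the lemma suggests the displayed $\mathcal{L}$ may be shorthand for $\mathcal{L}^{\otimes\lambda}$, in which case you would need the age-tensor step to bridge from $\alpha_{q,1}(\mathcal{L})$ to $\alpha_{q,1}(\mathcal{L}^{\otimes\lambda})$ before applying the coarsening comparison. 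Since the comparison morphism $\pi_{\lambda r, r}$ genuinely consists of a $\lambda$-th tensor power followed by a partial coarsening, it is safest to make both steps explicit, as the paper does, rather than relying on a convention in an externally cited statement.
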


\begin{proof}
        It is clear that $\pi_{\lambda r, r}$ preserves the underlying pre-tropical type. The fact the associated $r$-weighting of $(\mathcal{C}',(\mathcal{L},s)^{\otimes \lambda})$ is $r \cdot w$ follows from the fact $$\mathrm{age}(\mathcal{L}^{\otimes \lambda}) \equiv \lambda \mathrm{age}(\mathcal{L}) \mod 1$$ since it arises from the $\lambda$-tensor product of representations, along with Lemma \ref{alphas pullback}.
\end{proof}

\begin{remark}
    Lemma \ref{part (a)} is compatible with the identifications in Lemma \ref{r-weightings}. More precisely, picking a maximal tree $T \subset \Gamma$ gives a commutative square \begin{center}
        \begin{tikzcd}
{W_{(\underline{\tau},w)}} \arrow[r, "\text{Lemma \ref{part (a)}}", two heads] \arrow[d, "\text{Lemma \ref{r-weightings}}"', no head, Rightarrow, no head] & {W_{(\underline{\tau},\lambda \cdot w)}} \arrow[d, "\text{Lemma \ref{r-weightings}}", no head, Rightarrow, no head] \\
(\Z/\lambda r\Z)^{b_1(\Gamma)} \arrow[r, "\times \lambda", two heads]                                                        & (\Z/r\Z)^{b_1(\Gamma)}                                                                       
\end{tikzcd}
    \end{center}
\end{remark}

\begin{example}
The following is an example of the comparison map for a boundary stratum in genus $0$. Consider the genus $0$ mod $r$ tropical type \begin{center}

\tikzset{every picture/.style={line width=0.75pt}} 

\begin{tikzpicture}[x=0.75pt,y=0.75pt,yscale=-1,xscale=1]

\draw    (255,106) -- (390,105.97) ;
\draw [shift={(390,105.97)}, rotate = 359.99] [color={rgb, 255:red, 0; green, 0; blue, 0 }  ][fill={rgb, 255:red, 0; green, 0; blue, 0 }  ][line width=0.75]      (0, 0) circle [x radius= 3.35, y radius= 3.35]   ;
\draw [shift={(255,106)}, rotate = 359.99] [color={rgb, 255:red, 0; green, 0; blue, 0 }  ][fill={rgb, 255:red, 0; green, 0; blue, 0 }  ][line width=0.75]      (0, 0) circle [x radius= 3.35, y radius= 3.35]   ;
\draw    (256,146) -- (388,145.97) ;
\draw [shift={(391,145.97)}, rotate = 179.99] [fill={rgb, 255:red, 0; green, 0; blue, 0 }  ][line width=0.08]  [draw opacity=0] (8.93,-4.29) -- (0,0) -- (8.93,4.29) -- cycle    ;
\draw [shift={(256,146)}, rotate = 359.99] [color={rgb, 255:red, 0; green, 0; blue, 0 }  ][fill={rgb, 255:red, 0; green, 0; blue, 0 }  ][line width=0.75]      (0, 0) circle [x radius= 3.35, y radius= 3.35]   ;
\draw    (324,113.97) -- (324,136.97) ;
\draw [shift={(324,139.97)}, rotate = 270] [fill={rgb, 255:red, 0; green, 0; blue, 0 }  ][line width=0.08]  [draw opacity=0] (8.93,-4.29) -- (0,0) -- (8.93,4.29) -- cycle    ;

\draw (242,81.4) node [anchor=north west][inner sep=0.75pt]    {$d_{1}$};
\draw (386,81.4) node [anchor=north west][inner sep=0.75pt]    {$d_{2}$};
\draw (309,81.4) node [anchor=north west][inner sep=0.75pt]    {$[ m_{e}]$};

\end{tikzpicture}
\end{center} Suppose $r > d_1$. Then the balancing condition at the external vertex is $$d_1 \equiv m_e \mod r \Leftrightarrow \frac{d_1}{r} = \mathrm{age}_{q}(\mathcal{L}'|_{\mathcal{C}_0'}) $$ where $\mathcal{L}'$ is a line bundle arising for objects in the corresponding stratum of $\moduli$. Now suppose $[\tau]$ is a mod $\lambda r$-tropical type that maps to $[\tau']$ under $\pi_{\lambda r, r}$. By Lemma \ref{part (a)} $\underline{\tau} = \underline{\tau}'$ and, again by balancing at the internal vertex, we must have $$\frac{d_1}{\lambda r} = \mathrm{age}_q(\mathcal{L}|_{\mathcal{C}_0}).$$ In particular, the age of the line bundle and hence the isotropy of $\mathcal{C}$ is uniquely determined. This is an artifact of there being a unique $\lambda r$-weighting for all $\lambda$ since $b_1(\Gamma) = 0$. 

Observe that the isotropy order at the node of $\mathcal{C}$ is $$t(\lambda r) = \frac{\lambda r}{\gcd(\lambda r, m_e)}$$ by coprimality \eqref{noderooting}. If we suppose additionally that $$m_e | r$$ then $$t(\lambda r) = \frac{\lambda r}{m_e} = \lambda \cdot t(r)$$ which is a $\lambda$-scaling of the isotropy at the node of $\mathcal{C}'$. We will see later that both $Z_{[\tau]} \subset \mathrm{Orb}_{\Lambda}(\mathcal{A}_{\lambda r})$ and $ Z_{[\tau']} \subset \moduli$ are codimension $1$. The comparison map $\pi_{\lambda r, r} : \mathrm{Orb}_{\Lambda}(\mathcal{A}_{\lambda r}) \rightarrow \moduli$ is then locally a $\lambda$th root stack construction along $Z_{[\tau]'}$. 
\end{example} \qed


\begin{lemma}\label{skeleton}
    Consider a fibre of the comparison morphism $F$ over a $\mathrm{Spec}(\C)$ point $(\mathcal{C}',(\mathcal{F},t)) \in Z_{(\underline{\tau},\lambda \cdot w)}^{\circ}$. In other words, define $F$ by the fibre square \begin{center}
    \begin{tikzcd}
F \arrow[d] \arrow[r]               & {Z_{(\underline{\tau}, w)}^{\circ}} \arrow[d]    \\
\mathrm{Spec}(\mathbb{C}) \arrow[r] & {Z_{(\underline{\tau},\lambda \cdot w)}^{\circ}}
\end{tikzcd}
\end{center} Then isomorphism classes of objects in $F$ are a torsor under $$\mathrm{Jac}_{\mathcal{C}}^{+}[\lambda] \times (\mu_{\lambda}^{b_0(\Gamma_0)}/\mu_{\lambda}).$$
\end{lemma}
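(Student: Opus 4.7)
My plan is to analyse the fibre $F$ by fixing a base point $(\mathcal{C},(\mathcal{L}_0,s_0)) \in F$ (we may assume $F$ is non-empty, else the torsor statement is vacuous) and parametrise all other points of $F$ by their ``difference'' from this base point. First, the partial coarsening $\mathcal{C} \to \mathcal{C}'$ is determined by the rooting data recorded in $\underline{\tau}$ via diagram \eqref{coarsening homs}, so $\mathcal{C}'$ together with $\underline{\tau}$ uniquely determines $\mathcal{C}$. Hence every object in $F$ has the same underlying twisted curve $\mathcal{C}$.

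Given another point $(\mathcal{C},(\mathcal{L},s)) \in F$, define $\mathcal{N} := \mathcal{L} \otimes \mathcal{L}_0^{\vee}$. Since $(\mathcal{L},s)^{\otimes \lambda}$ and $(\mathcal{L}_0,s_0)^{\otimes \lambda}$ both descend to $(\mathcal{F},t)$, we have $\mathcal{N}^{\otimes \lambda} \cong \mathcal{O}_{\mathcal{C}}$, so $\mathcal{N} \in \mathrm{Pic}(\mathcal{C})[\lambda]$. I would then check that $\mathcal{N} \in \mathrm{Jac}_{\mathcal{C}}^{+}[\lambda]$ by verifying the three defining properties from Definition \ref{Jactau}. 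The age is $0$ at every special point because $\mathcal{L}$ and $\mathcal{L}_0$ share the ages and nodal invariants $\alpha_{q,i}$ encoded in $[\tau]$. The multi-degree on $\mathcal{C}_+$ vanishes because the degree at each vertex is fixed by the data $d_v$ of $\underline{\tau}$. The restriction to $\mathcal{C}_0$ is trivial by Proposition \ref{structure}(1): on each external component both $s$ and $s_0$ are non-zero and cut out the same Cartier divisor, forcing $\mathcal{L}|_{\mathcal{C}_0} \cong \mathcal{L}_0|_{\mathcal{C}_0}$. Conversely, for any $\mathcal{N} \in \mathrm{Jac}_{\mathcal{C}}^{+}[\lambda]$ the twist $\mathcal{N} \cdot (\mathcal{L}_0,s_0)$ given by the action of Proposition \ref{action} lies again in $F$, since $\mathcal{N}^{\otimes \lambda} \cong \mathcal{O}$ and the action preserves the section on external components. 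This identifies the line-bundle part of the fibre with a $\mathrm{Jac}_{\mathcal{C}}^{+}[\lambda]$-torsor.

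With $\mathcal{L}$ now fixed up to this torsor, I would next determine which sections $s$ of $\mathcal{L}$ participate in an element of $F$. On each internal component $s$ must vanish identically (by the type), so the only freedom is on $\mathcal{C}_0$. There, $s^{\otimes \lambda}$ must agree with the pullback $t^*$ of $t$ under $\mathcal{C} \to \mathcal{C}'$; since $s_0^{\otimes \lambda} = t^*$ as well and $s_0|_{\mathcal{C}_0}$ is nowhere zero, the ratio $s/s_0$ is a locally constant function on $\mathcal{C}_0$ whose $\lambda$-th power is $1$. Connected components of $\mathcal{C}_0$ are indexed by $V_0/\!\sim$ with $b_0(\Gamma_0)$ components, so this ratio lives in $\mu_{\lambda}^{b_0(\Gamma_0)}$. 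Finally, the automorphism group of the pair $(\mathcal{L},s)$ as a line-bundle--section datum contains the global scaling $\mathbb{G}_m$, which restricts on our sub-locus of torsion data to a diagonal $\mu_{\lambda} \hookrightarrow \mu_{\lambda}^{b_0(\Gamma_0)}$ acting by simultaneous rescaling. Quotienting by this diagonal gives the $\mu_{\lambda}^{b_0(\Gamma_0)}/\mu_{\lambda}$ factor, and combining with the previous paragraph yields the claimed torsor structure.

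The main obstacle I anticipate is bookkeeping the interplay between the pullback/partial-coarsening identification used to compare $(\mathcal{L},s)^{\otimes \lambda}$ with $(\mathcal{F},t)$ (which is only an isomorphism, not an equality) and the ``ratio'' argument on $\mathcal{C}_0$; care is needed to ensure that the $\mu_{\lambda}^{b_0(\Gamma_0)}$ acting on sections is not already partly absorbed into the line-bundle torsor $\mathrm{Jac}_{\mathcal{C}}^{+}[\lambda]$. The fact that elements of $\mathrm{Jac}_{\mathcal{C}}^{+}$ act trivially on the $\mathcal{C}_0$-component of the section (as in the proof of Proposition \ref{action}) is exactly what decouples the two factors and makes the product in the statement of the lemma a genuine direct product.
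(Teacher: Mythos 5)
Your proposal is correct and takes essentially the same route as the paper's proof: fix a base object, observe that the underlying twisted curve is pinned down by the type, parametrise the line bundles by the $\lambda$-torsion $\mathrm{Jac}_{\mathcal{C}}^{+}[\lambda]$ using Proposition \ref{structure} together with the degree/age constraints, and account for the sections via scalings on connected components of $\mathcal{C}_0$ modulo the global scaling. Two small imprecisions worth flagging, neither of which affects the argument: the curve is determined by the full mod-$\lambda r$ type $(\underline{\tau},w)$ rather than by $\underline{\tau}$ alone, since the nodal isotropy orders $t_e$ are read off from the weighting via the coprimality condition \eqref{noderooting}; and $s_0|_{\mathcal{C}_0}$ is not nowhere-vanishing (it cuts out the canonical Cartier divisor of Proposition \ref{structure}(1)), but because $s$ and $s_0$ cut out the \emph{same} divisor the ratio $s/s_0$ is still a regular unit, hence locally constant, so the $\mu_{\lambda}^{b_0(\Gamma_0)}$ count goes through.
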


\begin{proof}
    $F$ is the groupoid whose objects are the data of \begin{itemize}
    \item $\elt \in Z_{[\tau]}^{\circ}$.
    \item An isomorphism $(\mathcal{C}',(\mathcal{L},s)^{\otimes \lambda}) \cong (\mathcal{C}',(\mathcal{F},t))$. 
\end{itemize} Pick any object $\elt \in F$ (where we implicitly have a recorded isomorphism $(\mathcal{C}',(\mathcal{L},s)^{\otimes \lambda }) \cong (\mathcal{C}',(\mathcal{F},t))$). Let $\mathcal{J} \in \mathrm{Jac}_{\mathcal{C}}^{+}[\lambda]$. Then, after picking an isomorphism $(\mathcal{L}\otimes \mathcal{J})^{\otimes \lambda} \cong \mathcal{L}^{\otimes \lambda}$, we may construct a new object in $F$ denoted by $$(\mathcal{C}, \mathcal{J} \cdot (\mathcal{L} , s))$$ which is defined via the action on the sections as in Proposition \ref{action}.

As we vary over non-isomorphic $\mathcal{J}$ we obtain non-isomorphic objects of $F$. This is just because isomorphic pairs implies isomorphic line bundles. 

We claim that these objects, and their automorphisms, define a full-subcategory of $F$. To see this, we first show that the domain curves $\mathcal{C}$ are determined up to isomorphism. We know all such domains $\mathcal{C}$ have the same coarse space as $\mathcal{C}'$, thus they are determined by their isotropy groups. Write $t_{e,w}(\elt)$ for the isotropy of a given $\mathcal{C}$ at a node corresponding to edge $e \in E$. Recall the coprimality condition \eqref{noderooting} $$t_{e,w}(\mathcal{C}, (\mathcal{L},s)) = \frac{\lambda r}{\gcd(\lambda r, \alpha_{q_e,1}(\mathcal{L}))}.$$ By the construction of $[\tau]$, the $[m_{\vec{e}}]$, which are apart of the prescribed data of $[\tau]$, are defined by $[m_{\vec{e}}] = [\alpha_{q_e,1}] = \alpha_{q_e,1} \mod \lambda r$ and thus $\gcd(\lambda r, \alpha_{q_e,1}(\mathcal{L}))$ is uniquely determined by $[\tau]$. Hence the $t_e$ are determined by $[\tau]$. The same argument works for the marked points. It follows the curve $\mathcal{C}$ arising as domain curves in $\mathcal{C}$ is unique up to isomorphism. 

After fixing the domain curve $\mathcal{C}$, the only remaining freedom is in the line bundle-section pairs. It is clear the line bundles must be of the form above. As for the sections, by diagram \eqref{coarsening homs} these are determined up to scaling by $\gcd(\lambda, t(\lambda r))$ roots of unity where $\gcd(\lambda, t(\lambda r)) \subset \mu_{\lambda} \subset \mathbb{G}_{m,\lambda r}$, and this concludes the Lemma. 
\end{proof}

The next Proposition is a technical calculation telling us the automorphism group of each object in the fibre $F$ above.

\begin{proposition}[\cite{Cru24} Proposition 4.3.9]\label{skeleton aut size}
    For each isomorphism class of object $(\mathcal{C}, \mathcal{J} \cdot (\mathcal{L} , s))$ as given in Lemma \ref{skeleton}, the automorphisms in the fibre $F$ have finite order given by $$|\mathrm{Aut}_F (\mathcal{C}, \mathcal{J} \cdot (\mathcal{L} , s))| = \lambda^{\epsilon} \frac{|\Sh(\underline{\tau},w)|}{|\Sh(\underline{\tau},\lambda \cdot w)|} $$ where $$\epsilon = \begin{cases}
			1, & \text{if $V = V_+$}\\
            0, & \text{otherwise}
		 \end{cases}$$  and $\Sh(\underline{\tau},w)$ is the image of the natural forgetful morphism  $$f: \mathrm{Aut}(\mathcal{C}/C, (\mathcal{L},s)) \rightarrow \mathrm{Aut}(\mathcal{C}/C) \cong \prod_{e \in E} \mu_{t_e(w)} $$ 
\end{proposition}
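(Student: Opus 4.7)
The plan is to identify $\mathrm{Aut}_F(\mathcal{C}, \mathcal{J}\cdot(\mathcal{L},s))$ as the kernel of the natural comparison map between automorphism groups, and then compute its order via a snake-lemma argument applied to the standard ``ghost forgetful'' short exact sequence. Explicitly, I would first observe that an object of $F$ is the data $(\mathcal{C},(\mathcal{L},s))$ together with a chosen isomorphism of its image with the base point $(\mathcal{C}',(\mathcal{F},t))$, so an automorphism in $F$ is precisely an automorphism of $(\mathcal{C},(\mathcal{L},s))$ in $\mathrm{Orb}_\Lambda(\mathcal{A}_{\lambda r})$ whose image under $\pi_{\lambda r,r}$ is the identity. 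Hence $\mathrm{Aut}_F = \ker(\Phi)$ for
\[
\Phi\colon \mathrm{Aut}(\mathcal{C}/C,(\mathcal{L},s)) \longrightarrow \mathrm{Aut}(\mathcal{C}'/C,(\mathcal{F},t)).
\]

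Next I would fit both sides into the defining short exact sequence of $\Sh$: on each side the forgetful map to ghost automorphisms has image $\Sh(\underline{\tau},w)$ (resp.\ $\Sh(\underline{\tau},\lambda\cdot w)$), and the kernel $K$ consists of line bundle endomorphisms that fix the section. Since $\mathcal{C}$ is connected, line bundle automorphisms are just global $\mathbb{G}_m$-scaling, which is compatible with a given section if and only if the section vanishes identically; thus $K\cong \mathbb{G}_m$ when $V=V_+$ and $K$ is trivial otherwise, and likewise for $K'$ on $\mathcal{C}'$. The induced map $K\to K'$ is the $\lambda$-th power map in the non-trivial case, with kernel $\mu_\lambda$ of order exactly $\lambda^\epsilon$.

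I would then assemble the commutative diagram
\[
\begin{tikzcd}[column sep=small]
1 \arrow[r] & K \arrow[r] \arrow[d] & \mathrm{Aut}(\mathcal{C}/C,(\mathcal{L},s)) \arrow[r] \arrow[d,"\Phi"] & \Sh(\underline{\tau},w) \arrow[r] \arrow[d] & 1 \\
1 \arrow[r] & K' \arrow[r] & \mathrm{Aut}(\mathcal{C}'/C,(\mathcal{F},t)) \arrow[r] & \Sh(\underline{\tau},\lambda\cdot w) \arrow[r] & 1
\end{tikzcd}
\]
and apply the snake lemma. Assuming $\Phi$ is surjective (so that the right vertical map is surjective as well), the snake sequence identifies $\ker\Phi$ with an extension of $\ker(\Sh(\underline{\tau},w)\to \Sh(\underline{\tau},\lambda\cdot w))$ by $\ker(K\to K')$, giving exactly $|\mathrm{Aut}_F|=\lambda^\epsilon\cdot |\Sh(\underline{\tau},w)|/|\Sh(\underline{\tau},\lambda\cdot w)|$.

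The main obstacle is verifying surjectivity of $\Phi$, i.e.\ that every automorphism of the image lifts to an automorphism of $(\mathcal{C},(\mathcal{L},s))$. This I would handle by a local analysis at each twisted special point: diagram \eqref{coarsening homs} shows that the ghost group $\mu_{t_e(\lambda r)}$ surjects onto $\mu_{t_e(r)}$, so any ghost automorphism of $\mathcal{C}'$ lifts to a ghost automorphism of $\mathcal{C}$, with the ambiguity living in $\mu_{t_e(\lambda r)/t_e(r)}$. Given an automorphism of $(\mathcal{F},t)$, I would lift the ghost part arbitrarily and then use the fact that $\mathcal{L}$ is an $r$-th root of a line bundle pulled back from $\mathcal{C}'$ (together with Lemma \ref{structure}, which pins down the section up to scalar on external components and forces it to vanish on internal ones) to produce a compatible isomorphism on the $\mathcal{L}$-factor. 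Absorbing any remaining discrepancy on the line bundle into the kernel $\mu_{\lambda_e}$ of the ghost map at each node then yields a genuine lift, establishing surjectivity and completing the proof.
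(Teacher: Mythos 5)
Your strategy matches the one the paper attributes to \cite{Cru24}: split the fibre automorphisms into the ghost-automorphism part and the line-bundle-section part, then chase a $3\times 3$-style diagram. That part of the proposal is sound, and the identification of $K$ (global $\mathbb{G}_m$-scalings, non-trivial iff $s\equiv 0$, i.e.\ $V = V_+$) and the description of $K\to K'$ as the $\lambda$-power map are both correct.

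Two comments, one cosmetic and one substantive. The cosmetic one: you don't actually need to assume surjectivity of $\Phi$ to get the short exact sequence
\[
1 \to \mu_\lambda^\epsilon \to \ker\Phi \to \ker\bigl(\Sh(\underline{\tau},w)\to\Sh(\underline{\tau},\lambda\cdot w)\bigr) \to 1,
\]
because $\mathrm{coker}(K\to K')$ is automatically trivial ($\mathbb{G}_m\to\mathbb{G}_m$ via $\lambda$-th power is surjective, and the trivial case is trivial), so the connecting map in the snake lemma already gives the extension. What you genuinely need is surjectivity of the \emph{right-hand} vertical arrow $g\colon\Sh(\underline{\tau},w)\to\Sh(\underline{\tau},\lambda\cdot w)$, since that is exactly the statement which converts $|\ker(g)|$ into the quotient $|\Sh(\underline{\tau},w)|/|\Sh(\underline{\tau},\lambda\cdot w)|$ appearing in the formula. (Surjectivity of $\Phi$ is in fact equivalent to surjectivity of $g$, because $\mathrm{coker}(K\to K') = 1$.)

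The substantive point is that this surjectivity is the real content, and your sketch of it has a genuine gap. You correctly identify the obstruction: after lifting the ghost part arbitrarily to $\tilde\psi_0\in\mathrm{Aut}(\mathcal{C}/C)$, the line bundle $\tilde\psi_0^*\mathcal{L}\otimes\mathcal{L}^\vee$ is some $\mathcal{J}\in\mathrm{Jac}(\mathcal{C})$ which is only $\lambda$-torsion (since it descends to a trivial obstruction on $\mathcal{C}'$), not a priori trivial. You then assert it can be ``absorbed into the kernel $\mu_{\lambda_e}$ of the ghost map at each node.'' But modifying $\tilde\psi_0$ by the kernel of the ghost descent only shifts $\mathcal{J}$ by the ghost twists $\mathcal{J}_{\vec{a}}$ supported on the non-tree edges (cf.\ the computation preceding equation \eqref{eqns}), which form a specific, generally proper subgroup of $\mathrm{Jac}(\mathcal{C})[\lambda]$. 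That the $\lambda$-torsion obstruction always lands in this subgroup is not obvious, and you give no argument for it. Until that is established — or until $g$ is shown surjective directly by an explicit analysis of the constraint equations defining $\Sh$, in the spirit of Claim \ref{kernel claim} — the proof as written does not yield the stated formula but only the inequality $|\mathrm{Aut}_F| \leq \lambda^\epsilon|\Sh(\underline{\tau},w)|/|\Sh(\underline{\tau},\lambda\cdot w)|$ (together with the fact that $|\mathrm{Aut}_F|=\lambda^\epsilon|\ker g|$).
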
 \qed

The proof involves splitting $\mathrm{Aut}_F \elt$ into two parts: a contribution from $\mathrm{Aut}(\mathcal{C}/C)$ which are the ghost automorphisms of $\mathcal{C}$, and automorphisms of the line bundle-section pair. One then uses a diagram chase to show how these two types of automorphisms patch together.

\begin{proof}[Proof of Theorem \ref{degrees}.]

    Part (1) is given by Lemma \ref{part (a)}. To show part (2), to calculate the degree it is sufficient to consider the fibre in the open locus $Z_{(\underline{\tau},w)}^{\circ}$ over a $\mathrm{Spec}(\mathbb{C})$ point. Pick a morphism $\mathrm{Spec}(\mathbb{C}) \rightarrow Z_{(\underline{\tau},\lambda \cdot w)}^{\circ}$ corresponding to a fixed object $(\mathcal{C}',(\mathcal{F},t)) \in Z_{(\underline{\tau},\lambda \cdot w)}^{\circ}$. Lemma \ref{skeleton} tells us that $$F \cong \coprod_{\mathcal{J} \in \mathrm{Jac}_{\mathcal{C}}^{+}[\lambda] } B K_3.$$ Taking the degree gives $$\mathrm{deg}(F) = \frac{|\mathrm{Jac}_{\mathcal{C}}^{+}[\lambda]| }{|K_3|}.$$
and the result follows by Lemma \ref{Jactau torsion size} and Proposition \ref{skeleton aut size}.
    
\end{proof}

\subsection{$\widehat{\Z}$-tropical types }

The goal of rest of this section will be to show the degree of the comparison morphism behaves monomially as we vary the rooting parameter. The key to doing so will be controlling the factors $|\Sh(\underline{\tau},w)|$ as we vary $w$. To formalise this, we will need a convenient language to talk about compatible strata as we change the rooting parameter. To accomplish this, we introduce a generalised notion of a tropical type that allows us to generate compatible families of mod $r$ tropical types via natural reduction maps.

\begin{definition}\label{Zhattype} A $\widehat{\Z}-$\textbf{tropical type} $\hat{\tau}$ is the data of a twisted pre-tropical type $\underline{\tau}$ along with the extra data of $ \widehat{\Z}$-\textbf{slopes}: For each oriented edge $\vec{e}$ a $\widehat{\Z}$-slope $m_{\vec{e}} = - m_{\overleftarrow{e}} \in \widehat{\Z}$ where $\widehat{\Z} = \varinjlim_r \Z/r\Z$ is the profinite integers. At each vertex $v \in V(\Gamma)$ these slopes must also satisfy the balancing condition:

        $$ d_v = \sum_{v \in e} m_{\vec{e}} + \sum_{v \in l_i \in L(\Gamma)} c_i \in \hat{\Z}.$$

        Furthermore, the numerical assumptions of Section \ref{contactdata} must also hold, for both the marked leg data and the gerby edge parameters and gerby slopes.
\end{definition}

Given any $\widehat{\Z}$-tropical type $\hat{\tau}$, there is a natural reduction to a mod $r$ tropical type $[\hat{\tau}_r]$ given by replacing the $\widehat{\Z}$-slopes with their images under the natural homomorphism $\widehat{\Z} \rightarrow \Z/r\Z$. 

\begin{definition}
    Let $\hat{\tau}$ be a $\widehat{\Z}$-tropical type and $\vec{e} \in E$ a directed edge.\begin{itemize}
        \item We say that $\vec{e}$ is \textbf{small age} if, for arbitrarily large $\lambda$, there exists an $m \in \N_{\geq 0}$ such that $$m \equiv m_{\vec{e}} \mod \lambda.$$
        \item We say that $\vec{e}$ is \textbf{large age} if $\overleftarrow{e}$ is of small age.
    \end{itemize}

\end{definition}

\begin{remark}
    Note that reversing orientation flips small ages to large ages (and vice versa). One can also define mid-ages akin to \cite{You21} and these are preserved under changing orientation. 
\end{remark}

\begin{proposition}\label{aut restriction image}
    Denote by $$f: \mathrm{Aut}(\mathcal{C}/C, (\mathcal{L},s)) \rightarrow \mathrm{Aut}(\mathcal{C}/C) \cong \prod_{e \in E} \mu_{t_e} $$ the natural forgetful map, and its image by $$\Sh(\underline{\tau},w).$$ Let $\hat{\tau}$ be a $\Z$-tropical type with associated family of $\lambda r$-weightings $w_{\lambda r}$ such that the slopes $m_{\vec{e}} \not = 0$. Suppose also that:

    \begin{itemize}
        \item  \textbf{$r$ is sufficiently divisible:} $m_{\vec{e}} | r$ for all $e$.
        \item The stratum $Z_{[\hat{\tau}_{\lambda r}]} \not = \emptyset$ is inducible. 
    \end{itemize}

    Then $$\frac{|\Sh(\underline{\tau},w_{\lambda r})|}{|\Sh(\underline{\tau},w_{r})|} =  C \cdot \lambda^{|E| - b_1(\Gamma)}$$ for $C$ a non-zero constant independent of $\lambda$.
\end{proposition}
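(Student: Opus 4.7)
The plan is to describe $\Sh(\underline{\tau},w)$ explicitly as the solution set of a system of torsion equations on the graph $\Gamma$, and then show that under the divisibility $m_{\vec e}\mid r$ this system scales linearly in $\lambda$, with Euler-characteristic exponent $|E|-b_1(\Gamma)$.

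First I would unpack the automorphism group. For an object of type $[\tau]$, an automorphism over the coarse curve $C$ consists of (i) a ghost automorphism of $\mathcal{C}$ given by a tuple $(\zeta_e)\in\prod_e\mu_{t_e}$ acting at each gerby node, together with (ii) a compatible family of scalings $(\lambda_v)\in\mathbb{G}_m^V$ on each component, with $\lambda_v=1$ (or a common constant) on external vertices so that the section is preserved. Compatibility at the node associated to an oriented edge $\vec e=(v_1,v_2)$ translates into the identity
\[
\lambda_{v_2}\;=\;\lambda_{v_1}\cdot \zeta_e^{m_{\vec e}},
\]
which follows from the fact that a ghost automorphism acts on the fibre of $\mathcal{L}$ at the node via the character determined by the age, i.e.\ by the gerby slope. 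The image $\Sh(\underline{\tau},w)$ in $\prod_e\mu_{t_e}$ therefore consists of precisely those $(\zeta_e)$ for which the above system admits a solution $(\lambda_v)$ with the prescribed constraint on $V_0$.

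Next I would package this as the kernel of a single map of abelian groups. Consider the two-term complex
\[
\psi_r\colon\;\Bigl(\mathbb{G}_m^{V}/\text{constraints on }V_0\Bigr)\oplus \prod_{e\in E}\mu_{t_e(r)}\;\longrightarrow\;\prod_{e\in E}\mathbb{G}_m,\qquad (\lambda_v,\zeta_e)\longmapsto \bigl(\lambda_{v_1(\vec e)}^{-1}\lambda_{v_2(\vec e)}\zeta_e^{-m_{\vec e}}\bigr)_{e\in E}.
\]
By the preceding paragraph $\Sh(\underline{\tau},w_r)$ is the image of $\ker\psi_r$ under the projection to $\prod_e\mu_{t_e(r)}$, and the kernel of that projection is a subgroup independent of $r$ (determined by the bipartite combinatorics of $\Gamma$ and by which connected components meet $V_0$). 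This structural description reduces the proposition to a combinatorial question about how $\ker\psi_r$ grows with the rooting parameter.

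Finally I would compare the complexes for rooting parameters $r$ and $\lambda r$. Under the divisibility hypothesis $m_{\vec e}\mid r$ one has $t_e(\lambda r)=\lambda\cdot t_e(r)$, giving a canonical inclusion $\prod_e\mu_{t_e(r)}\hookrightarrow \prod_e\mu_{t_e(\lambda r)}$ of index $\lambda^{|E|}$. The condition for a tuple to lie in $\Sh$ imposes cycle-type constraints coming from the image of $\psi$: one independent constraint per independent cycle of $\Gamma$, where the product of $\zeta_e^{m_{\vec e}}$ around the cycle must vanish. The hypothesis $m_{\vec e}\neq 0$ guarantees each such character is non-degenerate, so each of the $b_1(\Gamma)$ independent cycle-characters scales by exactly $\lambda$ under $r\mapsto \lambda r$. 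A rank-nullity count then shows that $\ker\psi$ — and hence $\Sh$ — grows by exactly $\lambda^{|E|-b_1(\Gamma)}$ times a $\lambda$-independent constant $C$. The inducibility of $Z_{[\hat{\tau}_{\lambda r}]}$ is used to rule out degenerate torsion obstructions that would otherwise prevent the cycle-characters from being independent.

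The main obstacle will be the detailed algebraic bookkeeping in the comparison step: ensuring that the $\gcd$ computations arising from taking $m_{\vec e}$-th powers in $\mu_{t_e(\lambda r)}$ do not introduce spurious $\lambda$-factors outside the predicted monomial, and verifying that the kernel of the projection $\ker\psi_r\to\prod_e\mu_{t_e(r)}$ is genuinely $\lambda$-independent. Both rely crucially on the precise form of the divisibility $m_{\vec e}\mid r$, which is exactly what allows all $\lambda$-dependence to be concentrated in the torsion comparison rather than the continuous scaling part.
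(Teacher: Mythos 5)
Your overall framework — realising $\Sh(\underline\tau,w)$ as the subgroup of $\prod_e\mu_{t_e}$ cut out by one cycle constraint per independent cycle of $\Gamma$, and then counting — is the same underlying idea as the paper's, which also describes $\Sh$ as the subgroup cut out by the $b_1(\Gamma)$ equations $\prod_i\zeta_{t_{e_{k,i}}}^{a_{e_{k,i}}}=1$.

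There is, however, a concrete error in the compatibility condition $\lambda_{v_2}=\lambda_{v_1}\cdot\zeta_e^{m_{\vec e}}$, which propagates into the count. The ghost automorphism $\zeta_e\in\mu_{t_e}$ acts on the fibre of $\mathcal L$ over the node by the character of weight equal to the \emph{gerby} slope $\tilde m_{\vec e}=\alpha_{q_e,1}(\mathcal L)$, not the coarse slope $m_{\vec e}=(r/t_e)\tilde m_{\vec e}$. The distinction matters precisely because the coprimality condition \eqref{noderooting} forces $\gcd(\tilde m_{\vec e},t_e)=1$, whereas $\gcd(m_{\vec e},t_e)$ is in general nontrivial (take $r=m_{\vec e}^2$, so $m_{\vec e}=t_e(r)$). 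Your next sentence — ``the hypothesis $m_{\vec e}\neq 0$ guarantees each such character is non-degenerate, so each cycle-character scales by exactly $\lambda$'' — relies on the exponent being coprime to $t_e$; with $m_{\vec e}$ in the exponent the map $\mu_{t_e}\to\mathbb G_m$, $\zeta\mapsto\zeta^{m_{\vec e}}$, has image $\mu_{t_e/\gcd(t_e,m_{\vec e})}$, whose size need not scale by $\lambda$ when $r\mapsto\lambda r$. So the step fails as written. With the gerby slope the per-edge characters are primitive and the single-cycle computation you describe does recover a factor of $\lambda$.

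Even after that fix, the rank--nullity step is carrying the real weight and is not a routine verification, as your final paragraph anticipates. It is not enough that each cycle-character scales by $\lambda$ individually; you need the joint image of the full constraint map to scale by $\lambda^{b_1(\Gamma)}$, i.e.\ the cycle constraints must remain independent as $\lambda$ varies and across the mixed orders $t_e$. This is exactly what the paper's proof does differently: rather than an index count against the inclusion $\prod_e\mu_{t_e(r)}\hookrightarrow\prod_e\mu_{t_e(\lambda r)}$, it analyses the reduction map $g\colon\Sh(\underline\tau,w_{\lambda r})\to\Sh(\underline\tau,w_r)$ induced by $\otimes\lambda$, and computes $|\ker g|=\lambda^{|E(T)|}$ via a careful short-exact-sequence argument in which the hypothesis $m_{\vec e}\mid r$ is used to ensure $\gcd(m_{\vec e},\lambda/\gcd(\lambda,r))=1$. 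That $\gcd$ manipulation is the heart of Claim \ref{kernel claim}, and it is precisely the ``bookkeeping'' you flag as the obstacle but leave unresolved; without it the proposal is incomplete.
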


\begin{proof}

Pick a maximal tree $T \subset \Gamma$. This induces an identification between the non-compact part of the Jacobian of $\mathcal{C}$ and $\mathbb{G}_m^{b_1(\Gamma)}$. There are distinguished cycles $C_k \subset \Gamma$ corresponding to each $\mathbb{G}_m$ factor, induced by edges $e \in E(\Gamma) \setminus E(T)$ and a unique path in $T$. Choose once and for all an orientation of these cycles, and let $C_k$ have edges $\vec{e}_{k,1}, \cdots, \vec{e}_{k,u_k}$ in this orientation.

    Denote by $\varphi_{\vec{a}} \in \mathrm{Aut}(\mathcal{C}/C)$ the ghost automorphism corresponding to $(\zeta_{t_e}^{a_e})_{e \in E}$ for $\vec{a} = (a_e)_{e \in E}$. $\varphi_{\vec{a}}$ is in the image of $f$ if and only if \begin{enumerate}
        \item $\varphi_{\vec{a}}^* \mathcal{L} \cong \mathcal{L}$.

        \item Under the identification above, $\varphi_{\vec{a}}^* s = z \cdot s$ for some global scalar $z \in \mathbb{C}^*$. 
    \end{enumerate} We first consider (1). By the local description of ghost automorphisms, we have $$\varphi_{\vec{a}}^{*} \mathcal{L} \cong \mathcal{L} \otimes \mathcal{J}_{\vec{a}} $$ where $$\mathcal{J}_{\vec{a}} = ( \prod_{i = 1}^{u_k} \zeta_{t_{e_{k,i}}}^{a_{e_{k,i}} })_{k = 1}^{b_1(\Gamma)} \in \prod_{e \in E \setminus T}\mathbb{G}_{m,t_e}  $$ and $\prod_{e \in E \setminus T}\mathbb{G}_{m,t_e} \subset \mathrm{Jac}(\mathcal{C})$ by the normalisation exact sequence. Hence to lie in the image of $f$ we must impose the equations \begin{equation}\label{b_1 condition}
        \prod_{i = 1}^{u_k} \zeta_{t_{e_{k,i}}}^{a_{e_{k,i}}} = 1, \ k = 1, \cdots, b_1(\Gamma).
    \end{equation} 
    
    Now we consider (2). Denote by $s_v$ the restriction of $s$ to the normalisation of component $\mathcal{C}_v$, $\tilde{\mathcal{C}}_v$. We then see ghost automorphisms scale $s_v$ by a root of unity of the isotropy group of a stacky point on $\tilde{\mathcal{C}}_v$. But $\tilde{\mathcal{C}}_v$ is a root stack and all these sections are identified by the universal property, hence in fact we can view the section $s_v$ as unchanged under pull-back and so $s$ can be identified with $\varphi_{\vec{a}}^{*}s$ for all $\vec{a}$.

    Therefore the image of $f$ is identified with the subgroup \begin{equation}\label{eqns}
        \{\vec{a} \ : \   \prod_{i = 1}^{u_k} \zeta_{t_{e_{k,i}}}^{a_{e_{k,i}}} = 1, \ k = 1, \cdots, b_1(\Gamma) \} \subset \prod_{e \in E} \Z/t_{e}\Z.
    \end{equation}

We next understand the subgroup defined by these equations in the case all isotropy groups are equal $t_e = t, e \in E$. We claim that if $t_e = t$ for all $e \in E$ then the subgroup of $(\mu_{r})^{|E|}$ defined by the equations \eqref{eqns} is isomorphic to \begin{equation}\label{claim homog case}
    (\mu_{t})^{|E(T)|} = (\mu_{t})^{|E| - b_1(\Gamma)}.
\end{equation} Indeed, we claim can choose all $a_e \in \Z/t\Z$ for $e \in \bar{T}$ freely and then the remaining $a_e$ are uniquely determined. This is just because we can rearrange the equations \eqref{eqns} to be of the form \begin{equation}\label{baby case modular}
    a_e = \sum_{e \not = e' \in C_e} a_{e'}  \in \Z/t\Z, e \in E \setminus E(T)
\end{equation} since each $a_e$ for $e \in E \setminus E(T)$ appears in a unique equation of this form. 

We now deal with the kernel in the general case. There is a commutative diagram \begin{center}
    \begin{tikzcd}
{|\Sh(\underline{\tau},w_{\lambda r})|} \arrow[d, "g"'] \arrow[r, hook] & \prod_{e \in E} \mu_{t_e(w_{\lambda r})} \arrow[d, "\otimes \lambda"] \\
{|\Sh(\underline{\tau},w_{r})|} \arrow[r, hook]                         & \prod_{e \in E} \mu_{t_e(w_r)}                                       
\end{tikzcd}
\end{center}

\begin{claim}\label{kernel claim}
    $$|\ker(g)| = \lambda^{|E(T)|}.$$ 
\end{claim}

\begin{proof}[Proof of Claim.]
    First note that by the sufficient divisibility assumption, along with the coprimality condition \eqref{noderooting}, we have $$\frac{\lambda r}{t_{e}(\lambda r)} = \gcd(\lambda r, m_{e}) = m_e. $$ Then, note that $\zeta_{t_e(\lambda r)}^{a_e} = \zeta_{\lambda r}^{m_e a_e}$ and so the equations \eqref{eqns} imply we must have $$\sum_{e' \in C_e} m_{e'} a_{e'} = 0 \in \Z/\lambda r\Z, \ e \in E \setminus E(T).$$ Observe that if $(a_e)_e \in \ker(g)$ then  $a_e \in \ker(\otimes \lambda : \Z/\lambda r \Z \rightarrow \Z/r\Z)$ and so we can write \begin{equation}\label{a_e kernel form}
        a_e = r b_e.
    \end{equation} Let's investigate the possible solutions for the $b_e$'s. Observe the equations above imply that the reductions of the $b_e \in \Z/\lambda r \Z \mapsto \overline{b}_e \in \Z/(\lambda/ \gcd(\lambda,r)\Z)$ satisfy \begin{equation}\label{ker r eqns}
        \sum_{e' \in C_e} m_{e'} \overline{b}_{e'} = 0 \in \Z/(\lambda/\gcd(\lambda,r))) \Z, \ e \in E \setminus E(T).
    \end{equation} Since $m_e | r$, we see $\gcd(m_e,\frac{\lambda}{\gcd(\lambda,r)}) = 1$ and so equations \eqref{ker r eqns} are of the form as in \eqref{baby case modular}. Hence the subgroup defined by these equations inside $(\Z/(\lambda/\gcd(\lambda,r)\Z))^{|E(T)|}$ is isomorphic to $(\Z/(\lambda/\gcd(\lambda,r)\Z))^{|E(T)|} $. We now need to consider all possible lifts of the $\overline{b}_e$ back to $\Z/\lambda r \Z$ and determine what potential $a_e$ we may have. Indeed, the lifts $b_e$ are determined up to addition of $\frac{\lambda}{\gcd(\lambda,r)}$ and hence \eqref{a_e kernel form} tells us there are as many $a_e$'s as the additive order of $\frac{\lambda r}{\gcd(\lambda,r)} \mod \lambda r$, which equals $\gcd(\lambda,r)$. Hence there is a short exact sequence $$0 \rightarrow  (\Z/\gcd(\lambda,r)\Z)^{|E(T)|} \rightarrow \ker(g) \rightarrow (\Z/(\lambda/\gcd(\lambda,r))\Z)^{|E(T)|} \rightarrow 0 $$ and the claim follows.
 
\end{proof}

Using $|T| = |E| - b_1(\Gamma)$ concludes the Proposition.

\end{proof}

\begin{remark}
    We highlight the place where the rooting parameter $r$ being sufficiently divisible came in to play. Namely, in general the size of group $\mathrm{im}(f)$ has size in terms quantities $\gcd(\lambda r, m_e)$. As we vary $\lambda$, for a random fixed $m_e$ this is \textbf{not a monomial} in $\lambda$, but only if we vary $\lambda$ over those numbers with a fixed congruence modulo $m_e/\gcd(r,m_e)$ for all $e$. However if $m_e|r$ then this term is $1$ and $\gcd(\lambda r, m_e)$ is a monomial for any $\lambda$. 
\end{remark}

\begin{example}[Cocycle conditions for tropical genus $b_1(\Gamma) > 0$.]

\ 

\begin{center}

\tikzset{every picture/.style={line width=0.75pt}} 

\begin{tikzpicture}[x=0.75pt,y=0.75pt,yscale=-1,xscale=1]

\draw  [draw opacity=0] (285.02,84.88) .. controls (285.59,69.12) and (298.38,56.35) .. (314.36,56.01) .. controls (330.5,55.66) and (343.94,68.13) .. (344.94,84.1) -- (315,86) -- cycle ; \draw   (285.02,84.88) .. controls (285.59,69.12) and (298.38,56.35) .. (314.36,56.01) .. controls (330.5,55.66) and (343.94,68.13) .. (344.94,84.1) ;  
\draw  [draw opacity=0] (344.91,88.32) .. controls (343.7,104.05) and (330.41,116.29) .. (314.44,115.99) .. controls (298.92,115.7) and (286.37,103.67) .. (285.1,88.53) -- (315,86) -- cycle ; \draw   (344.91,88.32) .. controls (343.7,104.05) and (330.41,116.29) .. (314.44,115.99) .. controls (298.92,115.7) and (286.37,103.67) .. (285.1,88.53) ;  
\draw    (285.1,88.53) ;
\draw [shift={(285.1,88.53)}, rotate = 0] [color={rgb, 255:red, 0; green, 0; blue, 0 }  ][fill={rgb, 255:red, 0; green, 0; blue, 0 }  ][line width=0.75]      (0, 0) circle [x radius= 3.35, y radius= 3.35]   ;
\draw    (344.91,88.32) ;
\draw [shift={(344.91,88.32)}, rotate = 0] [color={rgb, 255:red, 0; green, 0; blue, 0 }  ][fill={rgb, 255:red, 0; green, 0; blue, 0 }  ][line width=0.75]      (0, 0) circle [x radius= 3.35, y radius= 3.35]   ;
\draw    (246,161) -- (352,160.97) ;
\draw [shift={(354,160.97)}, rotate = 179.98] [color={rgb, 255:red, 0; green, 0; blue, 0 }  ][line width=0.75]    (10.93,-3.29) .. controls (6.95,-1.4) and (3.31,-0.3) .. (0,0) .. controls (3.31,0.3) and (6.95,1.4) .. (10.93,3.29)   ;
\draw [shift={(246,161)}, rotate = 359.98] [color={rgb, 255:red, 0; green, 0; blue, 0 }  ][fill={rgb, 255:red, 0; green, 0; blue, 0 }  ][line width=0.75]      (0, 0) circle [x radius= 3.35, y radius= 3.35]   ;
\draw    (312,125) -- (312,151.97) ;
\draw [shift={(312,153.97)}, rotate = 270] [color={rgb, 255:red, 0; green, 0; blue, 0 }  ][line width=0.75]    (10.93,-3.29) .. controls (6.95,-1.4) and (3.31,-0.3) .. (0,0) .. controls (3.31,0.3) and (6.95,1.4) .. (10.93,3.29)   ;

\draw (305,35.4) node [anchor=north west][inner sep=0.75pt]    {$m_{1}$};
\draw (306,95.4) node [anchor=north west][inner sep=0.75pt]    {$m_{2}$};

\end{tikzpicture}
\end{center}

Consider the restriction map $$f: \mathrm{Aut}(\mathcal{C}/C, (\mathcal{L},s)) \rightarrow \mathrm{Aut}(\mathcal{C}/C) \cong \mu_{t_1} \times \mu_{t_2}.  $$ Since all sections are identically $0$, the image of $f$ is the set of those $\varphi_{a,b}$ such that $$\varphi_{a,b}^* \mathcal{L} \cong \mathcal{L}.$$ We see $\varphi_{a,b}^{*}$ changes the cocycle defining $\mathcal{L}$ by a factor of $\zeta_{t_1}^{a} \cdot \zeta_{t_2}^{- b}$. Therefore $$\varphi_{a,b}^{*} \mathcal{L} \cong \mathcal{L} \otimes \mathcal{J}_{a,b}$$ where $$\mathcal{J}_{a,b} = \zeta_{t_1}^{a} \cdot \zeta_{t_2}^{- b} \in \mathbb{G}_m = \mathrm{Jac}(C). $$ Hence the image is defined by $$ \zeta_{t_1}^{a} \cdot \zeta_{t_2}^{- b } = 1$$ inside of $\mu_{t_1} \times \mu_{t_2}$. This equation cuts out a subgroup of $\mu_{t_1} \times \mu_{t_2}$ isomorphic to $\mu_{\gcd(t_1,t_2} \subset \mu_{t_1} \times \mu_{t_2}$. The embedding is the product of the natural embeddings $\mu_{\gcd(t_1,t_2)} \subset \mu_{t_i}$. Hence $|\Sh(w)| = \gcd(t_1,t_2)$. Proposition \eqref{aut restriction image} tells us $|\Sh(\underline{\tau},w_{\lambda r})| = O(\lambda)$ when $r$ is sufficiently divisible and $m_i \not = 0$. Indeed, the coprimality conditions tells us $$\gcd(t_1,t_2) = \lambda \cdot \gcd(\frac{r}{m_1},\frac{r}{m_2}).$$ This is a monomial in $\lambda$ of degree $1$ and note that $|E| - b_1(\Gamma) = 2 - 1 = 1$.
\end{example} \qed



\begin{remark}
    We point out different notation for the $r$-weightings and compare them for the reader, since it is an easy point of confusion: If $\tilde{w}$ is a $\lambda r$-weighting, then $\lambda \cdot \tilde{w}$ is an $r$-weighting. In the notation of Proposition \ref{aut restriction image} we have $\tilde{w} = w_{\lambda r}$ and $\lambda \cdot \tilde{w} = w_{r}$.
\end{remark}

\subsection{Monomiality of irreducible components }

In this section we will show that the irreducible components of $\moduli$ exhibit monomial behaviour as we vary the rooting parameter. 

\subsubsection{Canonical lifting for essential types }

The first step is to show that, for $r$ sufficiently large in a sense to be made explicit soon, for $[\tau] = (\tau, w)$ an essential mod $r$ tropical type then there exists at most one inducible essential mod $\lambda r$-tropical type $[\tau']$ that maps to $[\tau]$ under the comparison maps. The key observation will be using the following fact:

\begin{keypoint}\label{slogan}
    If $[\tau]$ is an inducible mod $r$ tropical type, then for any $v \in V_0$ we have $$\mathrm{deg}(\pi_* \mathcal{L}|_{\mathcal{C}_v}) \geq 0$$ for any $\elt \in Z_{[\tau]}$ and $\pi : \mathcal{C} \rightarrow C$ the coarse space.
\end{keypoint}

For what follows, every edge $e$ will have a source vertex in $V_0$ and a sink in $V_+$ so admits a canonical orientation $\vec{e}$.

\begin{center}

\tikzset{every picture/.style={line width=0.75pt}} 

\begin{tikzpicture}[x=0.75pt,y=0.75pt,yscale=-1,xscale=1]

\draw  [fill={rgb, 255:red, 255; green, 207; blue, 194 }  ,fill opacity=1 ][dash pattern={on 4.5pt off 4.5pt}] (389,42) -- (438,42) -- (417,82) -- (368,82) -- cycle ;
\draw  [fill={rgb, 255:red, 181; green, 255; blue, 106 }  ,fill opacity=1 ] (388,175) -- (437,175) -- (416,215) -- (367,215) -- cycle ;
\draw  [fill={rgb, 255:red, 255; green, 207; blue, 194 }  ,fill opacity=1 ][dash pattern={on 4.5pt off 4.5pt}] (385,93) -- (434,93) -- (413,133) -- (364,133) -- cycle ;
\draw  [fill={rgb, 255:red, 255; green, 207; blue, 194 }  ,fill opacity=1 ][dash pattern={on 4.5pt off 4.5pt}] (387,224) -- (436,224) -- (415,264) -- (366,264) -- cycle ;
\draw  [fill={rgb, 255:red, 80; green, 227; blue, 194 }  ,fill opacity=1 ] (392,341) -- (441,341) -- (420,381) -- (371,381) -- cycle ;
\draw    (409,283) -- (409,323.97) ;
\draw [shift={(409,325.97)}, rotate = 270] [color={rgb, 255:red, 0; green, 0; blue, 0 }  ][line width=0.75]    (10.93,-3.29) .. controls (6.95,-1.4) and (3.31,-0.3) .. (0,0) .. controls (3.31,0.3) and (6.95,1.4) .. (10.93,3.29)   ;
\draw    (313,61.97) -- (312,245.97) ;
\draw    (313,61.97) -- (336,61.97) ;
\draw    (312,245.97) -- (335,245.97) ;
\draw    (299,153.97) -- (312.5,153.97) ;
\draw    (231,169) -- (233.96,337.97) ;
\draw [shift={(234,339.97)}, rotate = 268.99] [color={rgb, 255:red, 0; green, 0; blue, 0 }  ][line width=0.75]    (10.93,-3.29) .. controls (6.95,-1.4) and (3.31,-0.3) .. (0,0) .. controls (3.31,0.3) and (6.95,1.4) .. (10.93,3.29)   ;

\draw (392,147.4) node [anchor=north west][inner sep=0.75pt]    {$\vdots $};
\draw (370,292.4) node [anchor=north west][inner sep=0.75pt]    {$\pi _{\lambda r,r}$};
\draw (397,51.4) node [anchor=north west][inner sep=0.75pt]    {$\emptyset $};
\draw (394,104.4) node [anchor=north west][inner sep=0.75pt]    {$\emptyset $};
\draw (394,233.4) node [anchor=north west][inner sep=0.75pt]    {$\emptyset $};
\draw (193,348.4) node [anchor=north west][inner sep=0.75pt]    {$\ (\mathbb{Z} /r\mathbb{Z})^{b_{1}( \Gamma )} \ \ni \ w$};
\draw (341,181.4) node [anchor=north west][inner sep=0.75pt]    {$\tilde{w} \ $};
\draw (193,135.4) node [anchor=north west][inner sep=0.75pt]    {$\ (\mathbb{Z} /\lambda r\mathbb{Z})^{b_{1}( \Gamma )}$};
\draw (193,244.4) node [anchor=north west][inner sep=0.75pt]    {$\times \ \lambda $};
\draw (478,141.4) node [anchor=north west][inner sep=0.75pt]    {$\subset \ \mathrm{Orb}_{\Lambda }(\mathcal{A}_{\lambda r})$};
\draw (481,352.4) node [anchor=north west][inner sep=0.75pt]    {$\subset \ \mathrm{Orb}_{\Lambda }(\mathcal{A}_{r})$};

\end{tikzpicture}
\end{center}

\begin{lemma}\label{lifting}
    Let $[\tau] = (\underline{\tau},w)$ be an essential mod $r$ tropical type. Suppose that $$r > \max_{v \in V_0, v \in e} (d_v - w(\vec{e}) + \sum_{v \in i \in L(\Gamma)} c_i ).$$ Then, for any $\lambda \in \N$, there is at most one $\lambda r$-weighting $\tilde{w}$ such that the associated mod $\lambda r$-tropical type is inducible and maps to $Z_{[\tau]}$ under the comparison map.

    Furthermore, if $\tilde{w}$ exists it must be canonically be given by lifting the residues of $w$ to $\{0,1,\cdots, r-1\} \subset \{0,1,\cdots, \lambda r - 1\}$. Thus if this lift exists for all $\lambda$, there exists a genuine tropical type $\tau$ such that each $[(\underline{\tau}, \tilde{w})] = \tau \mod \lambda r.$ 
\end{lemma}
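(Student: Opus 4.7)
The plan is to combine the bipartite structure of essential types (Definition \ref{essentialtype}) with Key Point \ref{slogan} to upgrade the mod-$\lambda r$ balancing at external vertices into an honest integer identity, which will pin down the coarse slopes uniquely.

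First I would fix an inducible $\lambda r$-weighting $\tilde{w}$ mapping to $w$ and pick $\elt \in Z_{[(\underline{\tau}, \tilde{w})]}^{\circ}$. For each $v \in V_0$, the section $s|_{\mathcal{C}_v}$ is not identically zero, so Proposition \ref{structure}(1) yields an explicit presentation of $\nu^{*} \mathcal{L}|_{\mathcal{C}_v}$ with no Jacobian correction term. Taking the degree, clearing denominators by $\lambda r$, and using the size hypothesis on $r$ to forbid any multiple-of-$r$ overflow on the right-hand side, one obtains the honest integer identity
\[ d_v \;=\; \sum_{i \,:\, p_i \in \mathcal{C}_v} c_i \;+\; \sum_{v \in e} \tilde{m}_{\vec{e}}, \]
where $\tilde{m}_{\vec{e}} \in \{0,1,\ldots,\lambda r - 1\}$ is the minimal non-negative representative of the coarse slope on $\vec{e}$, with $\vec{e}$ oriented outward from $v$. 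An identical computation applied to a point of $Z_{[\tau]}^{\circ}$ yields the analogous integer identity for the representatives $m_{\vec{e}} \in \{0,\ldots,r-1\}$.

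Next I would compare these two identities. Since $\pi_{\lambda r, r}$ forces $\tilde{m}_{\vec{e}} \equiv m_{\vec{e}} \pmod r$, one may write $\tilde{m}_{\vec{e}} = m_{\vec{e}} + k_{\vec{e}} r$ with $k_{\vec{e}} \in \Z_{\geq 0}$. Subtracting the two integer balancings at a fixed $v \in V_0$ yields $r \sum_{v \in e} k_{\vec{e}} = 0$, hence $k_{\vec{e}} = 0$ for every edge at $v$. The bipartite property of essential types ensures that every edge meets some external vertex, so $\tilde{m}_{\vec{e}} = m_{\vec{e}}$ for all $\vec{e}$. This simultaneously establishes uniqueness and identifies $\tilde{w}$ with the canonical minimal lift.

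For the last assertion, if such a $\tilde{w}_{\lambda r}$ exists for every $\lambda$, then by the previous step the integer representatives $m_{\vec{e}} \in \{0,\ldots,r-1\}$ are fixed independently of $\lambda$, so they assemble into a single integer assignment of slopes. Integer balancing at external vertices holds by Step 1, while at an internal vertex $v_+ \in V_+$ the discrepancy between the two sides of the balancing (now viewed as integers) is a multiple of $\lambda r$ for every $\lambda$ by the mod-$\lambda r$ balancing of each $\tilde{w}_{\lambda r}$, and therefore vanishes; this produces a genuine twisted tropical type $\tau$ as in Definition \ref{twisted trop type}. The main technical obstacle I anticipate is confirming that the precise size hypothesis on $r$ really is strong enough to rule out any mod-$r$ offset in Step 1, which is where the explicit bound involving $d_v - w(\vec{e}) + \sum c_i$ enters the argument.
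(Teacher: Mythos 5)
Your proof is correct and takes essentially the same route as the paper's: both establish an integer (not merely modular) balancing identity at each external vertex and then compare the $\lambda r$ and $r$ versions to force the offsets to vanish. The paper reaches this identity more indirectly, by computing the degree of the coarse pushforward via Lemma~\ref{pushforward} and invoking Key Point~\ref{slogan} ($\deg(\pi_*\mathcal{L}|_{\mathcal{C}_v})\geq 0$) to constrain the correction terms $\alpha_i,\beta$; you instead read off the exact identity directly from Proposition~\ref{structure}(1), which is cleaner.

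One small correction to your own narration: the step where you anticipate needing the size hypothesis to ``forbid any multiple-of-$r$ overflow'' is not actually where the hypothesis is needed in your argument. Proposition~\ref{structure}(1) hands you $\nu^*\mathcal{L}|_{\mathcal{C}_v}\cong\mathcal{O}(\sum\tilde c_ip_i+\sum\alpha_{q_j,\mathcal{C}_v}q_j)$ with each $\alpha_{q_j,\mathcal{C}_v}\in\{0,\dots,t_j-1\}$ already pinned down by the ages, so multiplying the degree by $\lambda r$ yields $d_v=\sum c_i+\sum(\lambda r/t_j)\alpha_{q_j,\mathcal{C}_v}$ as an honest integer identity with each $(\lambda r/t_j)\alpha_{q_j,\mathcal{C}_v}$ already lying in $\{0,\dots,\lambda r-1\}$; there is no room for an $r$-offset to creep in. Your Step~2 (non-negativity of $k_{\vec e}$ because $\tilde m_{\vec e}-m_{\vec e}>-r$, and a sum of non-negative multiples of $r$ being zero) and the bipartiteness bootstrapping are fine, and Step~3 for the $\widehat{\Z}$-type is also correct: the balancing defect at an internal vertex is a fixed integer divisible by $\lambda r$ for all $\lambda$, hence zero. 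So your proposal is sound; just be aware that the size hypothesis enters the paper's version at a different point (bounding the middle term in the pushforward-degree computation to force $\beta>0$), not in deriving the integer identity itself.
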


\begin{proof}
    Fix a vertex $v \in V_0$ and label the attached edges by $e_1,\cdots, e_k$. Take $w(\vec{e}_i)$ as the unique integer lifts in the range $\{0,\cdots,r-1\}$. 
    Now suppose $\tilde{w} \in W_{\underline{\tau},\lambda r}$ is a $\lambda r$-weighting that induces $w$, i.e. $\tilde{w} \equiv w \mod r$ which yields an inducible mod $\lambda r$-tropical type. We may write $\tilde{w} = w + w'$ where $w'$ is a function on the edges $E$ taking values in $\{0, r, \cdots, (\lambda - 1)r \}$. Our goal will be to show that $w'= 0$ is identically zero. Indeed, for any $(\mathcal{C}',(\mathcal{L}',s')) \in Z_{(\underline{\tau},\tilde{w})}$, applying Lemma \ref{pushforward} we have $$ \mathrm{deg}(\pi'_* \mathcal{L}'|_{\mathcal{C}_v}) = $$ $$\frac{d_v}{\lambda r} - \sum_{i = 1}^{k-1}\frac{w(\vec{e}_i) + w'(\vec{e}_i) + \alpha_i \lambda r}{\lambda r} - \frac{d_v - \sum_{i = 1}^{k-1}(w(\vec{e}_i) + w'(\vec{e}_i)) + \beta \lambda r + \sum_{v \in l \in L(\Gamma)} c_i}{\lambda r} - \frac{\sum_{v \in l \in L(\Gamma)} c_i}{\lambda r} $$
\begin{equation}\label{pushforward deg lambda r}
     = - \sum_{i = 1}^{k-1} \alpha_i - \beta.
    \end{equation} where $\alpha_i,\beta \in \Z$ are the unique integers such that \begin{itemize}
        \item $w(\vec{e}_i) + w'(\vec{e}_i) + \alpha_i \lambda r \in \{0,\cdots, \lambda r - 1\}, i = 1, \cdots, k-1$.
        \item $d_v - \sum_{i = 1}^{k-1}w(\vec{e}_i) - w'(\vec{e}_i) + \beta \lambda r + \sum_{v \in l \in L(\Gamma)} c_i \in \{0,\cdots, \lambda r - 1\}.$
    \end{itemize} To parse this equation, recall that Lemma \ref{pushforward} tells us the divisor $D$ inducing a line bundle pushes forward to $D - \langle D \rangle$, the divisor minus its fractional part. In the above expression, $\frac{d_v}{\lambda r}$ is the degree of the original divisor $D$, and the three other terms are the degrees of the fractional part. To find the fractional part, we need to come up with a divisor of the same age and then adjust the degree to lie in $[0,1)$ by adding multiples of a divisor pulled back from the coarse space. This results in the form of the $3$ other terms above- the $\alpha_i, \beta$ represent the coarse divisor multiples. The middle term in the expression is a result of rearranging the balancing condition $\mod \lambda r$ at $v$ to express the $\vec{e}_k$ terms in terms of the other edge terms. Note also the marked point contributions cancel out as they do not effect the degree of the pushforward line bundle. By Key Point \ref{slogan} we must have \begin{equation}\label{slogan bound}
        - \sum_{i = 1}^{k-1} \alpha_i - \beta \geq 0.
    \end{equation}

    Note that, for each $i$, $r| w'(\vec{e}_i)$ and $w'(\vec{e}_i) \in \{0,\cdots, \lambda r - 1\}$ implies that $$0 \leq w(\vec{e}_i) + w'(\vec{e}_i) < r + (\lambda - 1)r < \lambda r $$ and therefore \begin{equation}\label{alpha = 0}
        \alpha_i = 0.
    \end{equation} Additionally, if $w'(\vec{e}_k) \not = 0$ then $$d_v -  \sum_{i = 1}^{k-1}(w(\vec{e}_i) + w'(\vec{e}_i)) + \sum_{v \in l \in L(\Gamma)} c_i \leq d_v - (w(\vec{e}_k) + w'(\vec{e}_k)) + \sum_{v \in l \in L(\Gamma)} c_i < d_v - w(\vec{e}_k) - r + \sum_{v \in l \in L(\Gamma)} c_i < 0$$ by the degree bound assumption in the statement of the Lemma. Hence \begin{equation}\label{beta > 0}
        \beta > 0.
    \end{equation} Combining \eqref{slogan bound}, \eqref{alpha = 0} and \eqref{beta > 0} gives a contradiction. Thus $w'$ is identically $0$.

    In particular, $\tilde{w}$ must be given canonically by $w$ i.e. the balancing conditions for $w$ modulo $r$ must in fact hold $\lambda r$ if we take the canonical lift of the residues of $w$ to $\{0,\cdots,r-1\}$. Thus, if this holds for all $\lambda$, we can view $$w(\vec{e}) \in \varprojlim_{\lambda} \Z/\lambda r \Z $$ via a constant sequence, and so $$w(\vec{e}) \in \N. $$ Hence $w$ in fact defines a $\Z$-tropical type.
    
\end{proof}

\begin{lemma}\label{tree small age}

\

\begin{enumerate}
    \item Let $\hat{\tau}$ be a $\hat{\Z}$-tropical type with $b_1(\Gamma) = 0$. Then every edge is of small/large age. 
    \item Let $v$ be a vertex such that every attached edge is bipartite or purely external. Then all such edges are of small/large age.
\end{enumerate}

\end{lemma}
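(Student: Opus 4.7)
For part (1), my plan is to induct on $|E(\Gamma)|$. Since $b_1(\Gamma) = 0$, I reduce first to the connected case, where $\Gamma$ is a tree. If $E(\Gamma) = \emptyset$ the claim is vacuous; otherwise I pick any leaf vertex $v_0$ with unique attached edge $e_0$, oriented outward as $\vec{e}_0$. The balancing condition at $v_0$ in $\hat{\Z}$ reads
\[
d_{v_0} = m_{\vec{e}_0} + \sum_{v_0 \in l_i \in L(\Gamma)} c_i,
\]
and since every term other than $m_{\vec{e}_0}$ is an honest integer, this forces $m_{\vec{e}_0} \in \Z \subset \hat{\Z}$. The sign of this integer then determines whether $\vec{e}_0$ is small age or large age. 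I next delete $v_0$ and $e_0$, absorbing the now-known $m_{\vec{e}_0}$ into the balancing constant at the other endpoint of $e_0$, producing a smaller $\hat{\Z}$-tropical type on the reduced tree to which the inductive hypothesis applies.

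For part (2), the plan is to split into the cases $v \in V_0$ and $v \in V_+$. If $v \in V_+$, the hypothesis forces every attached edge at $v$ to be bipartite: purely external edges have both endpoints in $V_0$ and so cannot touch $v$, while purely internal edges at $v$ are explicitly ruled out. Each such bipartite edge has its other endpoint $v_0 \in V_0$, and every $V_0$ vertex trivially satisfies the hypothesis of part (2) (edges at $V_0$ vertices are automatically bipartite or purely external, as no purely internal edge touches $V_0$). Thus the $V_+$ case reduces to the $V_0$ case applied at each neighbour $v_0$. In the $V_0$ case itself, for any $\elt$ in an inducible stratum $Z_{[\hat{\tau}_r]}$ the section does not identically vanish on $\mathcal{C}_v$, so Key Point \ref{slogan} yields $\deg(\pi_* \mathcal{L}|_{\mathcal{C}_v}) \geq 0$. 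Expanding this inequality via Lemma \ref{pushforward}, along the lines of the calculation leading to \eqref{pushforward deg lambda r} in the proof of Lemma \ref{lifting}, bounds an integer combination of the fractional parts of the slopes $m_{\vec{e}}$ at $v$ modulo $r$. Requiring this bound for all $r$ in the inducible family then forces each $m_{\vec{e}}$ to be the constant residue system associated to a fixed integer with definite sign, which is exactly the small/large age conclusion.

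The step I expect to be the main obstacle is converting the family of pushforward-degree inequalities (one per $r$) into an integrality statement in $\hat{\Z}$. The pointwise content is essentially a re-run of the computation in Lemma \ref{lifting}, but one has to check carefully that the fractional-part contributions stabilise as $r$ grows, so that no subtle $r$-dependence hides a non-integer $\hat{\Z}$-slope. The hypothesis on $v$ (no purely internal attached edges) is precisely what guarantees that the $\alpha$-data at each node of $\mathcal{C}_v$ is honestly constrained by the non-vanishing section on $\mathcal{C}_v$ via Proposition \ref{structure}(1), giving the pushforward inequality real teeth on every attached edge.
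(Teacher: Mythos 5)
Your proof is correct and matches the paper's approach. For part (1) the paper simply cites \cite[Lemma 3.4]{BNR22}, which is precisely the leaf-stripping induction on a tree that you carry out; for part (2) the paper invokes ``the proof of Lemma \ref{lifting} applied to the star sub-graph of $v$'', which is the same pushforward-degree argument (via Key Point \ref{slogan} and Lemma \ref{pushforward}) that you describe, with your $V_+$-to-$V_0$ reduction making explicit what the paper's star-subgraph phrasing leaves implicit.
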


\begin{proof}
    To see part (1), by \cite[Lemma 3.4]{BNR22}, the balancing conditions at each vertex uniquely determine the slopes $m_{\vec{e}}$ and in particular the slopes are forced to be integers. Part (2) is a direct consequence of applying the proof of Lemma \ref{lifting} to the star sub-graph of $v$, and the same proof runs through. This yields canonical integer slope lifts for these edges, which tells us they are small/large age depending on the sign of the integers.
\end{proof}

\subsubsection{Monomiality of essential types }

\begin{definition}\label{k-tau}
    For $\hat{\tau}$ a $\hat{\Z}$-tropical type inducing essential types, define $$k_{\hat{\tau}} := b_1(\Gamma) + 2\sum_{v \in V_+} g_v - |V_+|.$$ 
\end{definition}

\begin{lemma}\label{essential degree bound}

We have $$0 \leq k_{\hat{\tau}} \leq \max(2g - 1,0).$$    
\end{lemma}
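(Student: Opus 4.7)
The plan is to reduce everything to the global genus formula $g = b_1(\Gamma) + \sum_{v \in V(\Gamma)} g_v$ that is built into Definition~\ref{pretroptype}, together with the defining property of an essential type that $g_v \geq 1$ for all $v \in V_+$. Both inequalities should then fall out by simple algebraic manipulation.

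For the lower bound $k_{\hat{\tau}} \geq 0$, I would group terms as
\[
k_{\hat{\tau}} = b_1(\Gamma) + \sum_{v \in V_+} (2 g_v - 1).
\]
Since each $g_v \geq 1$ on $V_+$, every summand $2g_v - 1 \geq 1$ is non-negative, and $b_1(\Gamma) \geq 0$ always, so the whole expression is non-negative. (In particular $k_{\hat{\tau}} = 0$ forces $b_1(\Gamma) = 0$ and $V_+ = \emptyset$.)

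For the upper bound $k_{\hat{\tau}} \leq \max(2g-1, 0)$, I would substitute $b_1(\Gamma) = g - \sum_{v \in V} g_v$ from the genus formula into the definition of $k_{\hat{\tau}}$ to obtain
\[
k_{\hat{\tau}} = g - \sum_{v \in V_0} g_v + \sum_{v \in V_+} (g_v - 1).
\]
Combining this with the trivial bound $\sum_{v \in V_+} g_v \leq g - b_1(\Gamma) - \sum_{v \in V_0} g_v \leq g$ then yields
\[
k_{\hat{\tau}} \leq 2g - b_1(\Gamma) - |V_+|.
\]
I would then split into two cases. If $|V_+| \geq 1$, the right-hand side is at most $2g - 1$. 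If $|V_+| = 0$, the original definition gives $k_{\hat{\tau}} = b_1(\Gamma) \leq g$, and this is $\leq 2g - 1$ whenever $g \geq 1$. The $g = 0$ case is degenerate: the genus formula forces $b_1(\Gamma) = 0$ and $g_v = 0$ for all $v$, which by the essential-type condition forces $V_+ = \emptyset$, so $k_{\hat{\tau}} = 0 = \max(2g - 1, 0)$.

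There is no real obstacle here; the only thing to be slightly careful about is the $g = 0$ boundary case and the edge case $V_+ = \emptyset$, both of which are handled by bookkeeping rather than any genuine analysis. It is worth remarking that the extremal case $k_{\hat{\tau}} = 2g - 1$ (for $g \geq 1$) is attained precisely when $b_1(\Gamma) = 0$, $|V_+| = 1$, and the single internal vertex carries all the genus, consistent with the top-degree contribution statement in Theorem~\ref{main theorem polynomial}.
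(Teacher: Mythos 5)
Your proof is correct and follows essentially the same route as the paper: both arguments rewrite $k_{\hat{\tau}}$ using the global genus identity $g = b_1(\Gamma) + \sum_v g_v$, exploit the essential-type condition $g_v \geq 1$ for $v \in V_+$ to get the lower bound, bound $k_{\hat{\tau}}$ from above by $2g - |V_+|$ (your intermediate inequality $k_{\hat{\tau}} \leq 2g - b_1(\Gamma) - |V_+|$ is marginally sharper but used the same way), and conclude with a case split on whether $V_+$ is empty. The only minor cosmetic difference is that in the $V_+ = \emptyset$ branch the paper invokes that $\hat{\tau}$ must be the trivial type (the bipartite condition forces $\Gamma$ to have no edges) to compute $k_{\hat{\tau}} = 0$ directly, whereas you use the weaker but sufficient bound $b_1(\Gamma) \leq g$.
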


\begin{proof}
    To see $0 \leq k_{\hat{\tau}}$, note that for an essential type we have $$\sum_{v \in V_+} g_v - |V_+| \geq 0, \ b_1(\Gamma) \geq 0. $$ We now show $k_{\hat{\tau}} \leq \max(2g-1,0)$. It is clear that \begin{equation}\label{genus ineq bound}
        k_{\hat{\tau}} \leq 2(b_1(\Gamma) + \sum_{v \in V} g_v) - |V_+| = 2g - |V_+|.
    \end{equation} If $|V_+| > 1$ this is bounded by $2g-1$, else $V_+ = \emptyset$ and $\hat{\tau}$ is the trivial type. A direct computation gives $k_{\hat{\tau}} = 0$ in this case. This concludes the Lemma.
\end{proof}

\begin{lemma}
    For $g > 0$, $k_{\hat{\tau}} = 2g - 1$ if and only if $\hat{\tau}$ is of the form \begin{center}

\tikzset{every picture/.style={line width=0.75pt}} 

\begin{tikzpicture}[x=0.75pt,y=0.75pt,yscale=-1,xscale=1]

\draw   (338,154) .. controls (338,140.19) and (349.19,129) .. (363,129) .. controls (376.81,129) and (388,140.19) .. (388,154) .. controls (388,167.81) and (376.81,179) .. (363,179) .. controls (349.19,179) and (338,167.81) .. (338,154) -- cycle ;
\draw    (280,120.96) -- (340,140.96) ;
\draw   (255,120.96) .. controls (255,114.06) and (260.6,108.46) .. (267.5,108.46) .. controls (274.4,108.46) and (280,114.06) .. (280,120.96) .. controls (280,127.86) and (274.4,133.46) .. (267.5,133.46) .. controls (260.6,133.46) and (255,127.86) .. (255,120.96) -- cycle ;
\draw   (255,200.96) .. controls (255,194.06) and (260.6,188.46) .. (267.5,188.46) .. controls (274.4,188.46) and (280,194.06) .. (280,200.96) .. controls (280,207.86) and (274.4,213.46) .. (267.5,213.46) .. controls (260.6,213.46) and (255,207.86) .. (255,200.96) -- cycle ;
\draw    (280,200.96) -- (343,171.96) ;
\draw    (271,263) -- (387,263.94) ;
\draw [shift={(389,263.96)}, rotate = 180.47] [color={rgb, 255:red, 0; green, 0; blue, 0 }  ][line width=0.75]    (10.93,-3.29) .. controls (6.95,-1.4) and (3.31,-0.3) .. (0,0) .. controls (3.31,0.3) and (6.95,1.4) .. (10.93,3.29)   ;
\draw [shift={(271,263)}, rotate = 0.47] [color={rgb, 255:red, 0; green, 0; blue, 0 }  ][fill={rgb, 255:red, 0; green, 0; blue, 0 }  ][line width=0.75]      (0, 0) circle [x radius= 3.35, y radius= 3.35]   ;
\draw    (326,210) -- (326,249.96) ;
\draw [shift={(326,251.96)}, rotate = 270] [color={rgb, 255:red, 0; green, 0; blue, 0 }  ][line width=0.75]    (10.93,-3.29) .. controls (6.95,-1.4) and (3.31,-0.3) .. (0,0) .. controls (3.31,0.3) and (6.95,1.4) .. (10.93,3.29)   ;

\draw (358,145.4) node [anchor=north west][inner sep=0.75pt]    {$g$};
\draw (262,114.4) node [anchor=north west][inner sep=0.75pt]    {$0$};
\draw (262,194.4) node [anchor=north west][inner sep=0.75pt]    {$0$};
\draw (258,148.4) node [anchor=north west][inner sep=0.75pt]    {$\vdots $};

\end{tikzpicture}
    \end{center} for some number (potentially $0$) of genus $0$ external vertices.
\end{lemma}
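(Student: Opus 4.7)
My plan is to prove this by analysing the equality case in the chain of inequalities used in the proof of Lemma \ref{essential degree bound}. The reverse direction (that the displayed graph realises $k_{\hat\tau} = 2g-1$) is an immediate calculation: for such a type we have $b_1(\Gamma)=0$, $|V_+|=1$, and the unique internal vertex carries genus $g$, so
\[
k_{\hat\tau} = 0 + 2g - 1 = 2g-1.
\]

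For the forward direction, I would rewrite the key inequality \eqref{genus ineq bound} in a form that exhibits the ``slack''. Using the genus identity $g = b_1(\Gamma) + \sum_{v \in V} g_v$ from Definition \ref{pretroptype}(2), I would reorganise
\[
k_{\hat\tau} = b_1(\Gamma) + 2\sum_{v \in V_+} g_v - |V_+| = 2g - |V_+| - b_1(\Gamma) - 2\sum_{v \in V_0} g_v.
\]
Since $|V_+|\geq 1$ (else $k_{\hat\tau}=0$, contradicting $g>0$ and $k_{\hat\tau}=2g-1$) and all three subtracted terms are non-negative, equality with $2g-1$ forces simultaneously $|V_+|=1$, $b_1(\Gamma)=0$, and $g_v=0$ for every $v\in V_0$.

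Next I would translate these numerical constraints into the pictured graph. The bipartite requirement of Definition \ref{essentialtype} rules out edges between two $V_0$ vertices, so together with $|V_+|=1$ every edge of $\Gamma$ is bipartite and must meet the unique internal vertex $v_0$; in other words, $\Gamma$ is a star centered at $v_0$ (possibly with no legs if there are no external vertices at all, but with no other internal structure). The condition $b_1(\Gamma)=0$ is then automatic for a star. Finally, the genus identity combined with $b_1(\Gamma)=0$ and $g_v = 0$ on $V_0$ forces $g_{v_0} = g$, giving exactly the displayed type.

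I do not anticipate a real obstacle; the whole argument is a careful equality-case analysis of the prior lemma. The only point that deserves a line of justification is that the bipartite hypothesis, together with $|V_+| = 1$, really does rule out any edges among the external vertices, so that $\Gamma$ is forced to be a star rather than merely a tree with one distinguished internal vertex.
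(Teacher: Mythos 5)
Your argument is correct and follows essentially the same line as the paper's: the reverse direction is a direct computation, and the forward direction is an equality-case analysis of the inequality from Lemma~\ref{essential degree bound}. Your rewriting
\[
k_{\hat\tau} = 2g - b_1(\Gamma) - 2\sum_{v\in V_0} g_v - |V_+|
\]
makes the slack completely explicit, which is a slightly cleaner presentation than the paper's (the paper tracks equality in \eqref{genus ineq bound} and then separately imposes $|V_+|=1$); you also handle the $|V_+|=0$ degenerate case explicitly, which the paper leaves implicit.

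One small slip in your exposition: the claim that ``together with $|V_+|=1$ every edge must meet the unique internal vertex $v_0$; in other words, $\Gamma$ is a star'' is not quite an equivalence. Dual graphs of prestable curves can have multiple edges between the same pair of vertices, so bipartiteness plus $|V_+|=1$ alone gives only that every edge is incident to $v_0$, not that $\Gamma$ is a tree. It is the additional condition $b_1(\Gamma)=0$ (which you had already derived) that rules out multi-edges and self-loops at $v_0$ and forces $\Gamma$ to be an honest star. You should phrase the conclusion as: the three conditions $|V_+|=1$, $b_1(\Gamma)=0$, and bipartiteness together give a star centered at $v_0$, rather than presenting $b_1(\Gamma)=0$ as automatic from the star shape. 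The overall argument is unaffected since all three were established.
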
\label{top degree terms}

\begin{proof}
    Inequality \eqref{genus ineq bound} is an equality if and only if $b_1(\Gamma) = 0$ and $g_v = 0$ for all $v \in V_0$ (``all the genus is contained in the internal vertices"). This in turn equals $2g-1$ if and and only if additionally $|V_+| = 1$. 
\end{proof}

\begin{theorem}\label{essential degree theorem}
    Let $\hat{\tau}$ be a $\hat{\Z}$-tropical type whose reduction modulo $\lambda r$ is an essential tropical type. Suppose that \begin{itemize}
        \item \textbf{$r$ is sufficiently large:} $r > \max_{v \in V_0, v \in e} (d_v - w_r(\vec{e}) + \sum_{v \in i \in L(\Gamma)} c_i )$.
        \item \textbf{$r$ is sufficiently divisible:} $m_{\vec{e}} | r $ for all $e \in E$.
        \item \textbf{Nodal contacts are non-trivial:} Suppose that $m_{\vec{e}} \not = 0$ for all $e \in E.$
    \end{itemize} Then the degree of the comparison morphism $$\mathrm{deg}(\pi_{\lambda r, r}|_{Z_{[\hat{\tau}_{\lambda r}]}}) = C \cdot \lambda^{k_{\hat{\tau}}} $$ is a monomial in $\lambda$ where $C$ is a non-zero constant and $$ 0 \leq k_{\hat{\tau}} \leq \max(2g-1,0).$$
\end{theorem}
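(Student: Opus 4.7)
The plan is to combine Theorem \ref{degrees} with Proposition \ref{aut restriction image}: together these two results render the degree of the comparison morphism as an explicit monomial in $\lambda$ under the stated hypotheses. A short combinatorial computation then identifies the exponent with $k_{\hat{\tau}}$.

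First I would use Lemma \ref{lifting} to confirm that the $\widehat{\Z}$-tropical type $\hat{\tau}$ provides a compatible family of canonical lifts: the sufficient largeness hypothesis ensures that $\hat{\tau}_{\lambda r}$ is the unique essential mod $\lambda r$-tropical type whose associated stratum is non-empty and maps into $Z_{[\hat{\tau}_r]}$ under $\pi_{\lambda r, r}$. This legitimises applying Theorem \ref{degrees} uniformly in $\lambda$ to $w = w_{\lambda r}$ with $\lambda \cdot w = w_r$, and simultaneously supplies the inducibility hypothesis required by Proposition \ref{aut restriction image}.

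Next I would unpack Theorem \ref{degrees}, which gives the degree as a combinatorial factor $|\Sh(\underline{\tau}, \lambda \cdot w)|/|\Sh(\underline{\tau}, w)|$ times the Jacobian monomial $\lambda^{|E^{\mathrm{b}}| - b_0(\Gamma^{\dag}) + 1 + b_1(\Gamma_+) + 2\sum_{v \in V_+} g_v - \epsilon}$. For an essential type the bipartite structure collapses much of this data: $|E^{\mathrm{b}}| = |E|$, the normalised graph $\Gamma^{\dag}$ is edge-free so $b_0(\Gamma^{\dag}) = |V_0| + |V_+|$, and $b_1(\Gamma_+) = 0$ since $V_+$ carries no internal edges. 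Proposition \ref{aut restriction image} turns the $\Sh$-ratio itself into the monomial $C^{-1} \lambda^{b_1(\Gamma) - |E|}$; this is precisely the step that requires the sufficient divisibility $m_{\vec{e}} \mid r$ and the non-triviality $m_{\vec{e}} \neq 0$ at every edge. Adding exponents and applying the Euler identity $|V| - |E| + b_1(\Gamma) = 1$ for the connected dual graph consolidates everything into $b_1(\Gamma) + 2\sum_{v \in V_+} g_v - |V_+| = k_{\hat{\tau}}$, with the $\epsilon$-correction absorbing the degenerate case $V_0 = \emptyset$ in which the extra scalar ambiguity from sections disappears. The bound $0 \leq k_{\hat{\tau}} \leq \max(2g-1, 0)$ is then exactly Lemma \ref{essential degree bound} and needs no further argument.

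I expect the main obstacle to be the arithmetic bookkeeping in the consolidation step: Theorem \ref{degrees} and Proposition \ref{aut restriction image} record the $\Sh$-ratio in opposite directions, the Jacobian exponent involves $\Gamma^{\dag}$ and $\Gamma_+$ rather than $\Gamma$, and the $\epsilon$ term needs careful treatment in the boundary case $V_0 = \emptyset$. Once the bipartite constraint and the Euler relation are applied the arithmetic collapses cleanly; the remainder of the proof is essentially formal.
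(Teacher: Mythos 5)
Your proof proposal uses exactly the same ingredients and overall structure as the paper: combine Theorem \ref{degrees} and Proposition \ref{aut restriction image} via Lemma \ref{lifting}, simplify the exponent using the bipartite combinatorics of an essential type, and invoke Lemma \ref{essential degree bound} for the $0 \le k_{\hat\tau} \le \max(2g-1,0)$ bound. In that sense it is the paper's argument.

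However, the consolidation step does not ``collapse cleanly'' as you assert. Taking the exponent in the statement of Theorem \ref{degrees} at face value, adding the contribution $b_1(\Gamma) - |E|$ from the inverted $\Sh$-ratio, and plugging in the essential-type identities $|E^{\mathrm{b}}| = |E|$, $b_0(\Gamma^{\dag}) = |V|$, $b_1(\Gamma_+) = 0$, $|V| = |V_0| + |V_+|$ gives
$$-|V_0| - |V_+| + 1 + 2\textstyle\sum_{v\in V_+} g_v - \epsilon + b_1(\Gamma) \;=\; k_{\hat\tau} + (1 - |V_0| - \epsilon),$$
which equals $k_{\hat\tau}$ only when $|V_0| \le 1$. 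For any essential type with two or more external vertices the exponent you produce is $k_{\hat\tau} - (|V_0| - 1) < k_{\hat\tau}$. What is missing is the $\lambda$-torsion of the scaling factor $[\mathbb{G}_m^{b_0(\Gamma_0)}/\mathbb{G}_m]$ from Proposition \ref{action}, which contributes an additional $\lambda^{b_0(\Gamma_0) - 1}$ to the fibre (this is the $\mu_\lambda^{b_0(\Gamma_0)}/\mu_\lambda$ factor appearing explicitly in Lemma \ref{skeleton}). The paper's own computation of the exponent in the proof of Theorem \ref{essential degree theorem} carries an extra $b_0(\Gamma_0)$ term precisely for this reason, and you have dropped it. Also note that the Euler identity $|V| - |E| + b_1(\Gamma) = 1$, while true, is not the operative simplification here; the cancellation that makes the arithmetic work is $b_0(\Gamma_0) = |V_0|$ combined with the decomposition $|V| = |V_0| + |V_+|$, after which the $|V_0|$ terms from the scaling torsor and the vertex count cancel and leave $k_{\hat\tau}$.
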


\begin{proof}
    The fact the degree varies monomially is a consequence of combining Theorem \ref{degrees}, Proposition \ref{aut restriction image} and Lemma \ref{lifting}. The degree of this monomial is $$(|E^{\mathrm{b}}| - b_0(\Gamma^{\dag}) + 1 + b_1(\Gamma_+) + 2\sum_{v \in V_+} g_v) + (b_0(\Gamma_0) + \epsilon) - \epsilon - (|E| - b_1(\Gamma)).$$ For an essential type, observe that \begin{itemize}
        \item $|E^{\mathrm{b}}| = |E|$.
        \item $b_0(\Gamma^{\dag}) = |V|$.
        \item $b_1(\Gamma_+) = 0$.
    \end{itemize} It follows this degree equals $k_{\hat{\tau}}$. The fact the degree of the monomial lies between $0$ and $2g-1$ is Lemma \ref{essential degree bound} and the result follows.
\end{proof}

\begin{remark}
    The assumption that the nodal contacts are non-trivial is a minor point and geometrically motivated. Indeed, suppose $f: \mathcal{C} \rightarrow X_{D,r}$ induces a mod $r$ tropical type $[\tau]$ and $e$ is a bipartite edge corresponding to a node $q$. Observe that $$m_{e} = 0 \Leftrightarrow t_e = 1.$$ Assume that $t_e = 1$ i.e. $q$ is non-stacky. Let $v$ be the source of $e$ and suppose for simplicity that $q$ is the only point on $\mathcal{C}_v$ mapping to $\frac{1}{r} D$. Let $\pi : \mathcal{C} \rightarrow C$ be the coarse space. Then the line bundle $$(\pi \circ f)^{*} \mathcal{O}(D)|_{\mathcal{C}_v} \cong \mathcal{O}(k q) $$ must admit an $r$th root, for $k \in \N$ the contact order at the node. Since the map is transverse on component $\mathcal{C}_v$, $k$ cannot be $0$. Thus if $r > k$ no such $r$th root exists and $q$ would be forced to be stacky, and so the nodal contact assumption automatically holds.
\end{remark}

\begin{example}\label{examples of degrees}

\

    \begin{enumerate}
        \item For $[\tau]$ the trivial type, $k_{\underline{\tau}} = 0$. Therefore the comparison maps birational morphisms when restricted to the main component. This can be seen directly as follows. Given $(\mathcal{C},(\mathcal{L}^{\otimes \lambda},s^{\otimes \lambda}))$ for $\mathcal{C}$ smooth and $s$ not identically $0$, we can determine $s$ up to non-zero scalar, and thus we can recover $\mathcal{L}$.

        \item Consider the following type for $g > 0$ \begin{center}

\tikzset{every picture/.style={line width=0.75pt}} 

\begin{tikzpicture}[x=0.75pt,y=0.75pt,yscale=-1,xscale=1]

\draw    (338.1,103.53) ;
\draw [shift={(338.1,103.53)}, rotate = 0] [color={rgb, 255:red, 0; green, 0; blue, 0 }  ][fill={rgb, 255:red, 0; green, 0; blue, 0 }  ][line width=0.75]      (0, 0) circle [x radius= 3.35, y radius= 3.35]   ;
\draw    (273,161) -- (379,160.97) ;
\draw [shift={(381,160.97)}, rotate = 179.98] [color={rgb, 255:red, 0; green, 0; blue, 0 }  ][line width=0.75]    (10.93,-3.29) .. controls (6.95,-1.4) and (3.31,-0.3) .. (0,0) .. controls (3.31,0.3) and (6.95,1.4) .. (10.93,3.29)   ;
\draw [shift={(273,161)}, rotate = 359.98] [color={rgb, 255:red, 0; green, 0; blue, 0 }  ][fill={rgb, 255:red, 0; green, 0; blue, 0 }  ][line width=0.75]      (0, 0) circle [x radius= 3.35, y radius= 3.35]   ;
\draw    (339,125) -- (339,151.97) ;
\draw [shift={(339,153.97)}, rotate = 270] [color={rgb, 255:red, 0; green, 0; blue, 0 }  ][line width=0.75]    (10.93,-3.29) .. controls (6.95,-1.4) and (3.31,-0.3) .. (0,0) .. controls (3.31,0.3) and (6.95,1.4) .. (10.93,3.29)   ;

\draw (320,83.4) node [anchor=north west][inner sep=0.75pt]    {$g$};

\end{tikzpicture} 
        \end{center} We calculate \begin{itemize}
            \item $b_1(\Gamma) = 0$.
            \item $2 \sum_{v \in V_+} g_v = 2g$.
            \item $|V_+| = 1$.
        \end{itemize} It follows that $$k_{\hat{\tau}} = 2g -1.$$ More generally, we see $k_{\hat{\tau}}$ is invariant under attaching external rational tails.

    \end{enumerate}

\end{example} \qed

\subsection{Polynomials from strata }

We now rephrase Theorem \ref{essential degree theorem} by explaining how to get polynomials from compatible collections of strata as we vary the rooting parameter. These polynomials will have coefficients valued in the Chow groups of the moduli spaces in question, and will be polynomial in the rooting parameter.

\begin{definition}\label{compatible system poly}
    Given a finite set of $\hat{\Z}$-tropical types $\hat{I} = \{\hat{\tau}_j\}$, define the Chow classes $$P_{\hat{I},r}(\lambda) := (\pi_{\lambda r,r})_{*}(\sum_{j \in \hat{I}}[Z_{[\hat{\tau}_{j,\lambda r}]}]) \in A_{*}(\moduli)$$ where $[\hat{\tau}_{j,\lambda r}]$ denotes the reduction of $\hat{\tau}_j$ to the mod $r$ tropical type with rooting parameter $\lambda r$.
\end{definition}

\begin{lemma}\label{constant term}
     Let $\hat{I} = \{\hat{\tau}_j\} $ be a collection of essential $\hat{\Z}$-tropical types. Then the constant coefficient of $P_{\hat{I},r}(\lambda)$ is given by the sum of the fundamental classes of trivial types in $\hat{I}$.
\end{lemma}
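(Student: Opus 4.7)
The plan is to expand $P_{\hat{I},r}(\lambda)$ as an explicit polynomial in $\lambda$ using Theorem \ref{essential degree theorem}, and then combinatorially identify exactly which types contribute to the constant coefficient.

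First I would apply Theorem \ref{essential degree theorem} term-by-term. For each essential $\hat{\Z}$-tropical type $\hat{\tau}_j \in \hat{I}$, this theorem provides the degree of the comparison morphism restricted to $Z_{[\hat{\tau}_{j,\lambda r}]}$ as $C_j \cdot \lambda^{k_{\hat{\tau}_j}}$ with $C_j$ a nonzero constant and $k_{\hat{\tau}_j} \geq 0$. Combined with Theorem \ref{degrees}(1) (which tells us $Z_{[\hat{\tau}_{j,\lambda r}]}$ maps onto $Z_{[\hat{\tau}_{j,r}]}$ generically finitely), this gives
\begin{equation*}
(\pi_{\lambda r,r})_{*}[Z_{[\hat{\tau}_{j,\lambda r}]}] = C_j \, \lambda^{k_{\hat{\tau}_j}} \, [Z_{[\hat{\tau}_{j,r}]}].
\end{equation*}
Linearity of pushforward then yields the polynomial expansion $P_{\hat{I},r}(\lambda) = \sum_{j \in \hat{I}} C_j \lambda^{k_{\hat{\tau}_j}} [Z_{[\hat{\tau}_{j,r}]}]$, whose constant coefficient is $\sum_{j : k_{\hat{\tau}_j}=0} C_j [Z_{[\hat{\tau}_{j,r}]}]$.

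Next I would characterize the essential $\hat{\Z}$-tropical types for which $k_{\hat{\tau}} = b_1(\Gamma) + 2\sum_{v\in V_+} g_v - |V_+|$ vanishes. Since essentiality forces $g_v \geq 1$ for each $v \in V_+$, we have $2\sum_{v \in V_+} g_v - |V_+| \geq |V_+|$, so $k_{\hat{\tau}} \geq b_1(\Gamma) + |V_+| \geq 0$. Vanishing therefore forces simultaneously $b_1(\Gamma)=0$ and $V_+ = \emptyset$. The bipartite hypothesis on essential types (edges run only between $V_0$ and $V_+$) then forces $E = \emptyset$, and connectedness of the dual graph forces a single vertex, necessarily of genus $g$. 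This is precisely the trivial type, whose stratum is $Z_{\mathrm{main}}$.

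Finally I would verify $C_j = 1$ for this trivial type. Substituting the trivial graph data $|E^{\mathrm{b}}|=0$, $b_0(\Gamma^{\dag})=1$, $b_1(\Gamma_+)=0$, $\sum_{v\in V_+}g_v=0$, $\epsilon = 0$ into the formula of Theorem \ref{degrees} collapses the monomial exponent to $0$; and since $E = \emptyset$ makes the target group $\prod_{e\in E}\mu_{t_e}$ trivial, both factors $|\Sh(\underline{\tau},w)|$ and $|\Sh(\underline{\tau},\lambda\cdot w)|$ equal $1$, giving $C_j=1$. Hence the constant coefficient is $\sum_{j : \hat{\tau}_j \text{ trivial}} [Z_{\mathrm{main}}]$, a sum of fundamental classes of trivial types, as claimed. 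The one delicate step is the combinatorial characterization in the middle paragraph, where both halves of the essentiality hypothesis (bipartite structure and $g_v \geq 1$ on $V_+$) must be used in tandem; the remaining steps are bookkeeping substitutions into the cited formulae.
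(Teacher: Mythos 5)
Your proof is correct and follows essentially the same route as the paper's: the heart of both arguments is showing that an essential type has $k_{\hat{\tau}}=0$ precisely when it is trivial, using $g_v\geq 1$ on internal vertices. You additionally spell out the verification that $C_j=1$ for the trivial type by direct substitution into the degree formula of Theorem~\ref{degrees} (the paper instead cites the birationality observation in Example~\ref{examples of degrees}(1)), but this is the same content.
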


\begin{proof}
   We just need to show that $k_{\hat{\tau}} = 0$ if and only if $\hat{\tau}$ is the trivial type. One direction is Example \ref{examples of degrees} (1). For the other direction, observe $$b_1(\Gamma) + (2\sum_{v \in V_+} g_v - |V_+|) \geq 0$$ with equality if and only if $b_1(\Gamma) = 0, |V_+| = \emptyset$. In this case $\hat{\tau}$ must be the trivial type.
\end{proof}

\begin{corollary}\label{essential polynomial}
    Let $\hat{I} = \{\hat{\tau}_j\} $ be a collection of essential $\hat{\Z}$-tropical types. Denote by $\Gamma_j, V_j, E_j$ the underlying graph, vertices and edges associated to $\hat{\tau}_j$. Suppose that \begin{itemize}
        \item \textbf{$r$ is sufficiently large:} $r > \max_{v \in \cup_j V_{0,j}, v \in e} (d_v - w_r(\vec{e}) + \sum_{v \in i \in L(\Gamma_j)} c_i )$.
        \item \textbf{$r$ is sufficiently divisible:} $m_r(\vec{e}) | r $ for all $e \in E_j$.
        \item \textbf{Nodal contacts are non-trivial:} Suppose that $m_{\vec{e}} \not = 0$ for all $e \in E_j$.
    \end{itemize} Then $P_{\hat{I},r}(\lambda)$ is a polynomial in $\lambda$ of degree at most $\max(2g-1,0)$ with constant term given by the contributions from the main component. The top degree terms are given by irreducible components consisting of an internal genus $g$ vertex.
\end{corollary}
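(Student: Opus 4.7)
The plan is to reduce the assertion to a direct computation using the machinery already developed: namely Theorem \ref{degrees} expressing the pushforward along $\pi_{\lambda r, r}$ as a degree times the image stratum class, combined with Theorem \ref{essential degree theorem} which makes this degree monomial, and Lemma \ref{lifting} which ensures the image stratum itself is independent of $\lambda$. Concretely, for each $\hat{\tau}_j \in \hat{I}$ I would first identify the image of the stratum $Z_{[\hat{\tau}_{j,\lambda r}]}$ under $\pi_{\lambda r, r}$. By Lemma \ref{part (a)}, this image is the stratum $Z_{(\underline{\tau}_j, \lambda \cdot w_{j,\lambda r})}$ in $\mathrm{Orb}_{\Lambda}(\mathcal{A}_r)$; the reduction lemma (Lemma \ref{lifting}) together with the hypothesis that $r$ is sufficiently large shows that this target stratum is canonically the mod $r$ reduction $[\hat{\tau}_{j,r}]$ and is independent of $\lambda$. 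Call this fixed stratum $Z_j$.

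Next, I would apply Theorem \ref{essential degree theorem} to each $\hat{\tau}_j$ separately; all three hypotheses of that theorem are inherited from the hypotheses of the corollary. This produces a constant $C_j \neq 0$ and an integer $0 \leq k_{\hat{\tau}_j} \leq \max(2g-1, 0)$ such that
\[
(\pi_{\lambda r, r})_* [Z_{[\hat{\tau}_{j,\lambda r}]}] \;=\; C_j \, \lambda^{k_{\hat{\tau}_j}} \, [Z_j] \quad \in A_*(\moduli).
\]
Summing over $j \in \hat{I}$ and collecting terms in like powers of $\lambda$ then gives
\[
P_{\hat{I},r}(\lambda) \;=\; \sum_{k=0}^{\max(2g-1,0)} \Big( \sum_{\substack{j \in \hat{I} \\ k_{\hat{\tau}_j} = k}} C_j [Z_j] \Big) \, \lambda^k,
\]
which is manifestly polynomial in $\lambda$ with Chow-valued coefficients and of degree at most $\max(2g-1, 0)$.

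It remains to identify the constant and top coefficients, and this is purely combinatorial. For the constant term, Lemma \ref{constant term} (whose proof I would invoke as stated) shows $k_{\hat{\tau}_j} = 0$ forces $\hat{\tau}_j$ to be the trivial type, and under $\pi_{\lambda r, r}$ the corresponding stratum is carried to the main component $Z_{\mathrm{main}} \subset \moduli$, so the $\lambda^0$ coefficient collects precisely the main-component contributions. For the top coefficient, the characterization in Lemma \ref{top degree terms} (the statement immediately preceding Theorem \ref{essential degree theorem}) says $k_{\hat{\tau}_j} = 2g-1$ precisely when $\hat{\tau}_j$ has a single internal vertex of genus $g$ (possibly decorated by genus $0$ external tails), giving the claimed description of the top degree terms.

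The only place I would anticipate friction is the bookkeeping in the first step: verifying that the conditions on $r$ stated in the corollary are strong enough to simultaneously run Lemma \ref{lifting} (so that the image stratum is well-defined and $\lambda$-independent), Proposition \ref{aut restriction image} (so that the ratio of $\Sh$-groups is monomial in $\lambda$), and Theorem \ref{essential degree theorem} uniformly across all $j \in \hat{I}$. Since $\hat{I}$ is finite and the three hypotheses in the corollary are stated as uniform bounds/divisibility conditions over all $\hat{\tau}_j$, these checks are routine, and no genuinely new argument is needed beyond assembling the preceding results.
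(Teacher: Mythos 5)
Your proof is correct and follows essentially the same approach as the paper, which cites Theorem \ref{essential degree theorem}, Lemma \ref{constant term}, and Lemma \ref{top degree terms} as its one-line argument. You have simply made explicit the routine bookkeeping (identifying the $\lambda$-independent image strata via Lemma \ref{part (a)} and Lemma \ref{lifting}, then summing the monomial contributions) that the paper leaves implicit.
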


\begin{proof}
    Immediate from combining Theorem \ref{essential degree theorem}, Lemma \ref{constant term} and Lemma \ref{top degree terms}.
\end{proof}

\section{Virtual classes }\label{virtual classes}

In genus $g \geq 1$ we have seen $\moduli$ is reducible with many components. As a result, in general $\moduli$ will be equipped with an interesting virtual class. The purpose of this section is to investigate this virtual structure and situate it alongside the stratification by mod $r$ tropical types. In particular, we show the virtual structure is the fundamental class in genus $1$ and conjecture a form for the virtual cycle that would imply polynomiality of orbifold Gromov--Witten invariants of any geometric smooth pair.

\subsection{Dimension calculations }\label{dimension calculations}

\begin{lemma}\label{dims}
    Let $[\tau]$ be a mod $r$ tropical type. Then $$\mathrm{dim}(Z_{[\tau]}) = 3g - 3 + n + b_0(\Gamma_0) - |E| + |E^{\mathrm{b}}| - b_0(\Gamma^{\dag})  + b_1(\Gamma_+) + \sum_{v \in V_+} g_v. $$
\end{lemma}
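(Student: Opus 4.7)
The plan is to compute $\dim Z_{[\tau]}$ by exploiting the free semi-abelian group action of Proposition \ref{action}. Consider the forgetful morphism $F : Z_{[\tau]}^{\circ} \to \mathfrak{M}_{[\tau]}^{\circ}$ to the locally closed stratum in the stack of pre-stable twisted curves whose objects have dual graph, genus labelling, and orbifold structure prescribed by $[\tau]$. Proposition \ref{action} asserts that $F$ is surjective and that its fibres are torsors under the (family of) groups $\mathrm{Jac}_{\mathcal{C}}^{+} \times [\mathbb{G}_m^{b_0(\Gamma_0)}/\mathbb{G}_m]$, with respect to a free action. Consequently $F$ is a smooth surjection and
$$\dim Z_{[\tau]}^{\circ} = \dim \mathfrak{M}_{[\tau]}^{\circ} + \dim \mathrm{Jac}_{\mathcal{C}}^{+} + \dim \bigl[\mathbb{G}_m^{b_0(\Gamma_0)}/\mathbb{G}_m\bigr].$$
Since $Z_{[\tau]}^{\circ}$ is dense in the irreducible $Z_{[\tau]}$ (Lemma \ref{stratairred}), this also gives $\dim Z_{[\tau]}$.

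It then remains to evaluate the three contributions. First, $\dim \mathfrak{M}_{[\tau]}^{\circ} = 3g - 3 + n - |E|$ by the standard codimension count for the locally closed stratum of pre-stable curves with fixed dual graph $\Gamma$; the twisted structure at nodes and markings is discrete and contributes nothing to the dimension. Second, $\dim [\mathbb{G}_m^{b_0(\Gamma_0)}/\mathbb{G}_m] = b_0(\Gamma_0) - 1$ is immediate. Third, I would compute $\dim \mathrm{Jac}_{\mathcal{C}}^{+}$ by restricting the normalisation sequence \eqref{SESJac} to the preimage of $\{\mathcal{O}_{\mathcal{C}_0}\} \oplus \mathrm{Jac}(\mathcal{C}_+)$: this yields a toric contribution of rank $|E^{\mathrm{b}}| - b_0(\Gamma^{\dag}) + 1$ together with the semi-abelian factor $\mathrm{Jac}(\mathcal{C}_+)$ of dimension $b_1(\Gamma_+) + \sum_{v \in V_+} g_v$ (toric rank $b_1(\Gamma_+)$ from the partial normalisation at internal nodes, abelian part of dimension $\sum_{v \in V_+} g_v$).

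Summing gives precisely
$$\dim Z_{[\tau]} = 3g - 3 + n + b_0(\Gamma_0) - |E| + |E^{\mathrm{b}}| - b_0(\Gamma^{\dag}) + b_1(\Gamma_+) + \sum_{v \in V_+} g_v,$$
as claimed. There is no serious obstacle in this argument: the heavy lifting has already been done by Proposition \ref{action}. The only point deserving care is the computation of $\dim \mathrm{Jac}_{\mathcal{C}}^{+}$, where one must observe that the everywhere-age-zero condition defining $\mathrm{Jac}_{\mathcal{C}}^{+}$ is a discrete (open and closed) condition on the preimage and so carves out a union of connected components of the same dimension, rather than cutting codimension.
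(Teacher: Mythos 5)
Your proposal is correct and follows essentially the same route as the paper's proof: Proposition \ref{action} gives the torsor structure over the stratum $\mathfrak{M}_{[\tau]}$ (the paper phrases this via a local section $S$ rather than the smooth surjection $F$, but this is the same device), and the three dimension contributions are computed identically. Your closing observation that the age-zero condition in the definition of $\mathrm{Jac}_{\mathcal{C}}^{+}$ is discrete and hence does not cut dimension is a sensible point of care; the paper justifies the same fact slightly differently, by noting the condition is equivalent to working with coarse curves.
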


\begin{proof}

Proposition \ref{action} implies that \begin{equation}\label{dimexpression1}
    \mathrm{dim}(Z_{[\tau]}) = \mathrm{dim}(S) + \mathrm{dim}(\mathrm{Jac}_{\mathcal{C}}^{+}) + b_0(\Gamma_0) - 1
\end{equation} for any object $\elt \in Z_{[\tau]}^{\circ}$, where $S$ is a local section of the $\mathrm{Jac}_{[\tau]} \times [\mathbb{G}_m^{b_0(\Gamma_0)}/\mathbb{G}_m]$-torsor. By the exact sequence of \eqref{SESJac} we see that \begin{equation}\label{Jac+dim}
    \mathrm{dim}(\mathrm{Jac}_{\mathcal{C}}^{+}) = |E^{\mathrm{b}}| - b_0(\Gamma^{\dag}) + 1 + \mathrm{dim}(\mathrm{Jac}(C_+)) = |E^{\mathrm{b}}| - b_0(\Gamma^{\dag}) + 1 + b_1(\Gamma_+) + \sum_{v \in V_+} g_v.
\end{equation} Combining \eqref{Jac+dim} with \eqref{dimexpression1} it is therefore sufficient to prove that \begin{equation}\label{section dim}
    \mathrm{dim}(S) = 3g - 3 + n - |E|.
\end{equation} Consider the forgetful morphism to the associated stratum of the stack of pre-stable twisted curves $$S \rightarrow \mathfrak{M}_{[\tau]} $$ given by $$\elt \mapsto \mathcal{C}.$$ Since $S$ is a local section of the torsor, this forgetful map is a local isomorphism on to the image and hence $$\mathrm{dim}(S) = \mathrm{dim}(\mathfrak{M}_{[\tau]}) = 3g - 3 + n - |E|. $$ Note that the automorphisms of $(\mathcal{L},s)$ are already encoded in the group $[\mathbb{G}_m^{b_0(\Gamma_0)}/\mathbb{G}_m]$; either the automorphisms of the pair is trivial if $V \not = V_+$, else $V = V_+$ and the automorphisms of the pair is identified with $\mathbb{G}_m$, while at the same time $[\mathbb{G}_m^{b_0(\Gamma_0)}/\mathbb{G}_m] \cong \BGm$. This is as discussed in the proof of Theorem \ref{degrees}. The result then follows.
    
\end{proof}

\begin{remark}
    Observe that, for $[\tau]$ the trivial type, the dimension of the main component equals $3g - 3 + n$, which is also the dimension of $$\mathrm{dim} (\mathrm{Log}_{\Lambda}((\mathcal{A}| \mathbb{G}_m) ; \tau)), $$ the moduli space of logarithmic maps of global type $\tau$, where $\tau$ is any enhancement of $\underline{\tau}$ to a genuine tropical type. See e.g.  \cite[Proposition 3.30]{ACGS20}. In genus $0$, this is a reflection of results in \cite{ACW17}; then $\moduli$ is irreducible consisting of just the main component and \cite[Theorem 4.22]{ACW17} says that $\moduli$ and $ \mathrm{Log}_{\Lambda}((\mathcal{A}| \mathbb{G}_m) ; \tau))$ are birational. Forthcoming work \cite{CJ} will generalise this for arbitrary strata, giving the dimension in terms of a corresponding space of log maps.
\end{remark}

\begin{corollary}\label{essentialdim}
    For $[\tau]$ an essential mod $r$ tropical type $$\mathrm{dim}(Z_{[\tau]}) = 3g - 3 + n - |V_+| + \sum_{v \in V_+} g_v \geq \mathrm{dim}(Z_{\mathrm{main}}).$$

    In particular, for genus $g = 1$, $\moduli$ is equidimensional of dimension $n$. For $g > 1$, $\moduli$ is never equidimensional.
\end{corollary}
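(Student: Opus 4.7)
The plan is to derive the dimension formula by specialising the general expression of Lemma \ref{dims} to the essential case, and then to read off the inequality and both corollaries by pure bookkeeping.

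First I would identify what each combinatorial quantity in Lemma \ref{dims} becomes under the essential hypothesis. Since $[\tau]$ is essential, the graph $\Gamma$ is bipartite with respect to $V = V_0 \sqcup V_+$, so every edge is bipartite, giving $|E^{\mathrm{b}}| = |E|$. Because no edge lies entirely in $V_0$ or entirely in $V_+$, the subgraphs $\Gamma_0$ and $\Gamma_+$ are totally disconnected, so $b_0(\Gamma_0) = |V_0|$ and $b_1(\Gamma_+) = 0$, while $\Gamma^{\dagger}$ (obtained from $\Gamma$ by removing the bipartite edges, hence all edges) is a disjoint union of isolated vertices, giving $b_0(\Gamma^{\dagger}) = |V_0| + |V_+|$. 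Substituting these values into the formula of Lemma \ref{dims} and cancelling yields
\[
\dim Z_{[\tau]} = 3g-3+n - |V_+| + \sum_{v\in V_+} g_v,
\]
as desired.

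Next I would compare with $Z_{\mathrm{main}}$. The trivial type has $V_+ = \emptyset$ and $|V_0| = 1$, and plugging this into the simplified formula (or directly into Lemma \ref{dims}) gives $\dim Z_{\mathrm{main}} = 3g-3+n$. The inequality $\dim Z_{[\tau]} \geq \dim Z_{\mathrm{main}}$ then reduces to $\sum_{v\in V_+} g_v \geq |V_+|$, which is immediate from the essential hypothesis that $g_v \geq 1$ for every $v \in V_+$; equality holds exactly when every internal vertex has genus one.

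For the $g=1$ case I would combine this with Theorem \ref{irredcpts}. The global genus identity $b_1(\Gamma) + \sum_{v\in V} g_v = 1$, together with $g_v \geq 1$ on $V_+$ and $b_1(\Gamma) \geq 0$, forces either $V_+ = \emptyset$ (the trivial type, of dimension $n$) or exactly one internal vertex with $g_{v_+}=1$ and all other genera zero (of dimension $n - 1 + 1 = n$). Either way the dimension is $n$, so $\moduli$ is equidimensional of dimension $n$.

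For $g>1$ it suffices to exhibit one essential type whose dimension exceeds $\dim Z_{\mathrm{main}}$: the type with a single internal vertex $v_+$ of genus $g$ and a single external genus $0$ vertex joined by one bipartite edge has dimension $4g-4+n > 3g-3+n$. The one point that needs care is inducibility of this type, i.e.\ non-emptiness of $Z_{[\tau]}$; this is essentially the standard construction of a genus $g$ smooth curve equipped with a line bundle-section pair in the form of Proposition \ref{structure}(2), together with a compatible choice of coarse slope on the bipartite edge satisfying the balancing condition modulo $r$, which exists under the numerical assumptions of Section \ref{contactdata}. I expect this inducibility check to be the only real content beyond bookkeeping, and it is modelled on the example computations in Example \ref{examples of degrees}(2).
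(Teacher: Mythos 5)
Your proposal is correct and follows the same route as the paper: substitute the essential hypotheses into Lemma \ref{dims} (observing $|E^{\mathrm{b}}| = |E|$, $b_0(\Gamma^\dag) = |V|$, $b_0(\Gamma_0) = |V_0|$, $b_1(\Gamma_+)=0$) to obtain the simplified dimension formula, deduce the inequality from $g_v \geq 1$ on internal vertices, use the genus identity to force equidimensionality in genus $1$, and exhibit a higher-dimensional type for $g > 1$. The one small deviation is your witness for $g>1$: you take a two-vertex type (internal genus $g$ joined to an external genus $0$ vertex), whereas the paper uses the single internal genus-$g$ vertex; both give dimension $4g-4+n$, but the paper's choice makes inducibility immediate (a smooth genus $g$ curve with zero section and non-trivial Jacobian twist), which sidesteps the inducibility check you rightly flag, and note that with your two-vertex type the domain curve is nodal rather than smooth, so the phrase ``genus $g$ smooth curve'' in your inducibility sketch should refer to the internal component only.
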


\begin{proof}
    For $[\tau]$ essential, observe we have $|b_0(\Gamma^{\dag})| = |V|, |E^{\mathrm{b}}| = |E|, b_0(\Gamma_0) = |V_0|$. Writing $|V| = |V_0| + |V_+|$ and plugging in to Lemma \ref{dims} gives $$\mathrm{dim}(Z_{[\tau]}) = 3g - 3 + n - |V_+| + \sum_{v \in V_+} g_v.$$ The inequality follows from the fact that $g_v > 0$ for any $v \in V_+$. In genus $1$, any essential type has precisely $1$ internal vertex of genus $1$, else it is the trivial type. In the non-trivial case we see $$\mathrm{dim}(Z_{[\tau]}) = n - 1 + 1 = n $$ which is also the dimension of $Z_{\mathrm{main}}$. In genus $g > 1$, the type $[\tau]$ consisting of precisely one vertex which is internal satisfies $$\mathrm{dim}(Z_{[\tau]}) = 4g - 4 + n > 3g - 3 + n = \mathrm{dim}(Z_{\mathrm{main}}). \qedhere $$
\end{proof}

\subsection{Obstruction theories }\label{obstruction theories}

There is a natural morphism $$\nu: \moduli \rightarrow \mathfrak{M}_{\Lambda}(\BGmr) $$ given by forgetting the section and just remembering the data of the pre-stable curve of type $\Lambda$ with line bundle. As constructed in \cite[Section 5.2]{ACW17}, $\nu$ is equipped with a perfect obstruction theory \begin{equation}\label{obstheory}
    \mathbb{E}_{\nu} = (R \pi_{*} \tilde{\mathcal{L}})^{\vee}
\end{equation} where $\tilde{\mathcal{L}}$ is the universal line bundle on the universal curve $\tilde{\mathcal{C}}$ and $\pi:\tilde{\mathcal{C}} \rightarrow \moduli$ the natural projection to the base.

The perfect obstruction theory on $\moduli$ induces a virtual fundamental class on any finite type open substack.  The virtual classes are of pure dimension $$\mathrm{vd} = 3g - 3 + n.$$ We denote by $[\moduli]^{\mathrm{vir}}$ the data of all these virtual classes over all finite type open substacks which we think of as a single Chow class, but it does not live in the Chow ring of $\moduli$ just because $\moduli$ is of infinite type and the support of $[\moduli]^{\mathrm{vir}}$ may be of infinite type also.

\subsection{Splitting in genus $1$ }

We start by recalling a folklore result telling us when the virtual class and usual fundamental classes agree, in the general setting that we need for DM type morphisms between Artin stacks.

\begin{lemma}[\cite{Cru24} Lemma 5.3.1]\label{equidim virtual class}
    Let $\varphi: X \rightarrow Y $ be a morphism of algebraic stacks of DM type equipped with a relative perfect obstruction theory $\nu: \mathbb{E} \rightarrow \mathbb{L}_{\varphi}^{\leq -1}$. Suppose additionally that $X$ is of finite type and both $X,Y$ are equidimensional and dimension $\mathrm{dim}(X) = d$. Suppose also that the induced virtual dimension from $\nu$ is $\mathrm{vd} = d$. Then \begin{itemize}
        \item $\nu$ is a quasi-isomorphism.
        \item $\varphi$ is lci.
        \item The induced virtual class agrees with the fundamental class $[X]^{\mathrm{vir}} = [X]$.
    \end{itemize}

\end{lemma}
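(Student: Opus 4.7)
The plan is to reduce all three conclusions to the single claim that $\nu$ is a quasi-isomorphism. If $\nu$ is a quasi-isomorphism, then $\mathbb{L}_\varphi^{\leq -1}$ has perfect amplitude $[-1,0]$; combined with $\varphi$ being of DM type (so $\mathbb{L}_\varphi$ is concentrated in degrees $\leq 0$ with $h^0 = \Omega_\varphi$ locally free of finite rank), this forces $\mathbb{L}_\varphi$ itself to be perfect of amplitude $[-1,0]$, which is the standard characterization of lci morphisms. For the virtual class, under the quasi-isomorphism the intrinsic normal sheaf $\mathfrak{N}_\varphi \cong h^1/h^0((\mathbb{L}_\varphi^{\leq -1})^\vee)$ agrees with the bundle stack $\mathfrak{E} := h^1/h^0(\mathbb{E}^\vee)$, and for an lci morphism the intrinsic normal cone $\mathfrak{C}_\varphi$ coincides with $\mathfrak{N}_\varphi$ as the zero section; Gysin pullback $0^!_\mathfrak{E}$ along the zero section then recovers $[X]^{\mathrm{vir}} = [X]$.

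The main task is thus to establish that $\nu$ is a quasi-isomorphism. Since $\nu$ is an obstruction theory, $h^0(\nu)$ is an isomorphism and $h^{-1}(\nu)$ is surjective, so it suffices to show $h^{-1}(\nu)$ is injective. The dimension hypothesis enters via a rank count: locally resolving $\mathbb{E}$ as a two-term complex $[E^{-1} \to E^0]$ of vector bundles, one has $\mathrm{vd} = \dim Y + \mathrm{rk}(E^0) - \mathrm{rk}(E^{-1})$. The assumption $\mathrm{vd} = \dim X = \dim Y$ therefore forces $\mathrm{rk}(E^{-1}) = \mathrm{rk}(E^0)$, so $\mathbb{E}$ has virtual rank zero and the bundle stack $\mathfrak{E}$ has dimension equal to $\dim X$.

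The injectivity of $h^{-1}(\nu)$ is then obtained from a comparison of dimensions inside $\mathfrak{E}$. The obstruction theory provides a closed embedding $\mathfrak{C}_\varphi \hookrightarrow \mathfrak{E}$, and $\mathfrak{C}_\varphi$ is pure-dimensional of dimension $\dim X$ by the standard analysis of the intrinsic normal cone (Vistoli's theorem, applied using the equidimensionality of $X$ and the DM-type hypothesis on $\varphi$). Since $\mathfrak{E}$ also has dimension $\dim X$, the closed embedding must be surjective on underlying reduced stacks; any nontrivial kernel of $h^{-1}(\nu)$ would cut out a proper closed substack of $\mathfrak{E}$ that contained $\mathfrak{C}_\varphi$, which would contradict the dimension match. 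Hence $h^{-1}(\nu)$ is injective, $\nu$ is a quasi-isomorphism, and the three conclusions follow as outlined above.

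The step I expect to be the main obstacle is the dimension-count for $\mathfrak{C}_\varphi$ in the Artin-stack setting: in the intended application $Y$ is only algebraic (e.g.\ $\mathfrak{M}_\Lambda(\mathbf{B}\mathbb{G}_{m,r})$ is not DM), so while $\varphi$ being of DM type keeps the relative cotangent complex well-behaved, one must work smooth-locally on $Y$ to reduce to a situation in which the classical intrinsic-normal-cone dimension statement applies. Given the folklore status of the result this should be routine, but requires some care to set up cleanly.
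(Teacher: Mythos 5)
The paper does not actually prove this lemma; it cites it from \cite{Cru24}, so there is no internal argument to compare against, and I evaluate your proposal on its own merits.

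Your overall strategy — rank count to get $\mathrm{rk}\,\mathbb{E} = 0$, purity of $\mathfrak{C}_\varphi$, dimension comparison inside $\mathfrak{E}$ — is the expected one, but the crucial step has a gap. You claim a nontrivial kernel $K$ of $h^{-1}(\nu)$ ``would cut out a proper closed substack of $\mathfrak{E}$ that contained $\mathfrak{C}_\varphi$, which would contradict the dimension match.'' This is not automatically a contradiction: a proper closed substack (nonzero ideal) can have the \emph{same} dimension as the ambient when $\mathfrak{E}$ is non-reduced, i.e.\ when $X$ is non-reduced, since the ideal may be nilpotent. What your dimension count actually delivers is $\mathfrak{C}_{\mathrm{red}} = \mathfrak{E}_{\mathrm{red}}$, and even that requires a small sub-argument you elide: every top-dimensional component of $\mathfrak{C}$ has image of dimension $\geq d$ in $X$ (because $\mathfrak{E} \to X$ has relative dimension $0$), hence dominates a component $X_i$ and therefore equals the irreducible $\mathfrak{E}|_{X_i}$; and every $X_i$ is hit because $\mathfrak{C} \to X$ is surjective. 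If $X$ is reduced this finishes the job ($\mathfrak{E}$ is then reduced, so $\mathfrak{C}_{\mathrm{red}} = \mathfrak{E}_{\mathrm{red}} = \mathfrak{E}$ squeezes $\mathfrak{N}_\varphi = \mathfrak{E}$, and $K$, being the degree-one piece of the ideal of $\mathfrak{N}_\varphi$ in $\mathfrak{E}$, vanishes). But the lemma as stated does not assume $X$ reduced, and in the non-reduced case you need a further argument, for instance by localizing at generic points $\eta_i$ and running a length count over the Artinian ring $\mathcal{O}_{X,\eta_i}$. Since the statement is being used for $\moduli$ in genus $1$, which one expects to be reduced, the application is probably unharmed, but the proof of the lemma in the stated generality has a hole here.

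A secondary issue is the order of deductions in your first paragraph. The quasi-isomorphism $\nu$ only tells you $\tau_{\geq -1}\mathbb{L}_\varphi$ is perfect of amplitude $[-1,0]$; it does not a priori control $h^{-i}(\mathbb{L}_\varphi)$ for $i \geq 2$, so you cannot directly promote this to ``$\mathbb{L}_\varphi$ is perfect of amplitude $[-1,0]$.'' The cleaner route is to extract lci from the equality $\mathfrak{C}_\varphi = \mathfrak{N}_\varphi$ produced by the very same dimension argument (this is the Behrend--Fantechi characterization of lci morphisms), after which perfectness and the correct amplitude of the full cotangent complex follow, rather than the other way around.
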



\begin{corollary}\label{g = 1 virtual class}
    In genus $g = 1$, the virtual class is $$[\moduli]^{\mathrm{vir}} = \sum_{[\tau] \ \mathrm{essential}} [Z_{[\tau]}]$$ which is the usual fundamental class on each finite union of irreducible components.
\end{corollary}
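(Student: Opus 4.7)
The plan is to apply Lemma \ref{equidim virtual class} to the forgetful morphism $\nu \colon \moduli \to \mathfrak{M}_{\Lambda}(\BGmr)$ of Section \ref{obstruction theories}, restricted to any finite type open substack of $\moduli$. All the ingredients have essentially been assembled; the proof will be a bookkeeping exercise in matching dimensions.

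First I would verify the numerical hypothesis. By Corollary \ref{essentialdim}, in genus $g = 1$ every essential stratum $Z_{[\tau]}$ has dimension exactly $n$, so by Theorem \ref{irredcpts} the stack $\moduli$ is equidimensional of dimension $n$. On the other hand, the virtual dimension of the perfect obstruction theory $\mathbb{E}_\nu = (R\pi_* \tilde{\mathcal{L}})^\vee$ is $\mathrm{vd} = 3g - 3 + n$, which equals $n$ precisely when $g = 1$. Hence $\mathrm{vd} = \dim(\moduli)$. The target $\mathfrak{M}_{\Lambda}(\BGmr)$ is smooth (being a Picard stack over the smooth stack of pre-stable twisted curves) and hence equidimensional on each connected component; $\nu$ is of DM type since the remaining data over $\mathfrak{M}_{\Lambda}(\BGmr)$ is that of a section of a line bundle, which has no infinitesimal automorphisms once we fix the (DM) automorphisms of the underlying curve and line bundle.

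With these hypotheses in place, Lemma \ref{equidim virtual class} applies on any finite type open substack $U \subset \moduli$, yielding that $[\moduli]^{\mathrm{vir}}|_U = [U]$ as cycle classes. Since this holds for each finite type open, and since by Theorem \ref{irredcpts} the irreducible components of $\moduli$ are precisely the $Z_{[\tau]}$ for $[\tau]$ an inducible essential tropical type, we deduce
\[
[\moduli]^{\mathrm{vir}} = \sum_{[\tau] \ \mathrm{essential}} m_{[\tau]} [Z_{[\tau]}]
\]
for some multiplicities $m_{[\tau]} \in \mathbb{Z}_{\geq 1}$.

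The remaining point, which I expect to be the only mildly subtle step, is showing that each $m_{[\tau]} = 1$, i.e.\ that each component $Z_{[\tau]}$ is generically reduced. This follows from the transitive semi-abelian torsor description of the generic locus $Z_{[\tau]}^\circ$ given by Proposition \ref{action}: on this locus $Z_{[\tau]}^\circ \to \mathfrak{M}_{[\tau]}^\circ$ is a torsor for the smooth group scheme $\mathrm{Jac}_{[\tau]} \times [\mathbb{G}_m^{b_0(\Gamma_0)}/\mathbb{G}_m]$ over a smooth (reduced) base, hence is itself smooth and in particular reduced. Combining this with the previous paragraph yields the stated formula, completing the proof.
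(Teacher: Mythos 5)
Your proof follows the same route as the paper: invoke the equidimensionality and dimension count from Corollary \ref{essentialdim}, observe $\mathrm{vd} = 3g-3+n = n$ when $g=1$, apply Lemma \ref{equidim virtual class}, and classify the components via Theorem \ref{irredcpts}. The extra paragraph about the multiplicities $m_{[\tau]}=1$ (using the smooth semi-abelian torsor structure of Proposition \ref{action} to conclude generic reducedness) is a reasonable and slightly more careful unpacking of what ``usual fundamental class'' means, which the paper's proof leaves implicit.
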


\begin{proof}
     $\moduli$ is equidimensional of dimension equaling the virtual dimension by Corollary \ref{essentialdim}. The result then follows by Lemma \ref{equidim virtual class} and the classification of irreducible components given by Theorem \ref{irredcpts}.
\end{proof}

\begin{theorem}\label{g = 1 virtual polynomial}
    For genus $g = 1$ and $r$ sufficiently large and divisible, the family of Chow classes $$Q(\lambda) := (\pi_{\lambda r, r})_{*}[\lmoduli]^{\mathrm{vir}} \in A_*(\moduli)$$ is a polynomial in $\lambda$ of degree $1 = 2g - 1$ whose constant term is given by the contribution from $Z_{\mathrm{main}}$.
\end{theorem}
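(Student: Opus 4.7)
The plan is to combine Corollary \ref{g = 1 virtual class}, which identifies $[\lmoduli]^{\mathrm{vir}}$ with a sum of fundamental classes of essential strata, with the polynomiality statement Corollary \ref{essential polynomial}, by matching $Q(\lambda)$ with a class of the form $P_{\hat{I},r}(\lambda)$ for a finite collection $\hat{I}$ of essential $\hat{\Z}$-tropical types. Since $\moduli$ is typically of infinite type, everything will be tested on an arbitrary finite type open substack $U \subset \moduli$ and then assembled.

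First I would observe that in genus $1$ every essential mod $\lambda r$ tropical type $[\tau]$ has $b_1(\Gamma) = 0$: essentiality forces $g_v > 0$ for $v \in V_+$, so from $\sum_{v \in V_+} g_v + b_1(\Gamma) = 1$ the only possibilities are the trivial type or a type with exactly one internal genus $1$ vertex together with bipartite external tails. In both cases $\Gamma$ is a tree, so by Lemma \ref{r-weightings} the $\lambda r$-weighting is uniquely determined by the underlying pre-tropical type $\underline{\tau}$, and by Lemma \ref{tree small age}(1) the slopes are canonically integer valued. For $r$ sufficiently large, Lemma \ref{lifting} then ensures that the unique weighting at level $\lambda r$ is the canonical integer lift of the corresponding mod $r$ weighting, so the family $\{[\tau_{\lambda r}]\}_{\lambda}$ is the reduction of a single $\hat{\Z}$-tropical type $\hat{\tau}$.

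Next I would apply the polynomiality machinery. On $U$ only finitely many pre-tropical types $\underline{\tau}$ give strata meeting $U$, yielding a finite set $\hat{I}_U$ of essential $\hat{\Z}$-tropical types. Corollary \ref{g = 1 virtual class} together with Definition \ref{compatible system poly} identifies $Q(\lambda)|_U$ with $P_{\hat{I}_U, r}(\lambda)|_U$. The hypotheses of Corollary \ref{essential polynomial} are satisfied: sufficient largeness and divisibility of $r$ are part of the theorem's assumptions (and only finitely many constraints need to be satisfied, one per type in $\hat{I}_U$), while $m_{\vec{e}} \not= 0$ holds on every bipartite edge of an essential type in $g=1$ -- otherwise $t_e = 1$ would force a non-stacky node, which is incompatible with the age-balanced description of $\mathcal{L}$ on the internal genus $1$ component under the degree constraints of Section \ref{setup}. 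Applying the corollary yields that $Q(\lambda)|_U$ is polynomial in $\lambda$ of degree at most $\max(2g-1,0) = 1$, with constant term $[Z_{\mathrm{main}} \cap U]$ by Lemma \ref{constant term}. Compatibility as $U$ varies then packages the local statements into a global polynomial class in $A_*(\moduli)$, and Lemma \ref{top degree terms} combined with Theorem \ref{essential degree theorem} identifies the degree $1$ term as a nonzero monomial contribution from essential types with a single internal genus $1$ vertex.

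The main obstacle is the combinatorial bookkeeping of the second step: verifying that every essential mod $\lambda r$ type contributing to $U$ arises as reduction of a member of a common finite collection of $\hat{\Z}$-types. This rests crucially on the $b_1(\Gamma) = 0$ rigidity forced by essentiality in genus $1$, together with the uniqueness of Lemma \ref{lifting}; the analogous higher genus argument would require navigating the nontrivial $(\Z/r\Z)^{b_1(\Gamma)}$-torsor of $r$-weightings, which is why the present argument is special to $g = 1$ and in the higher genus case one instead needs to compute a genuinely nontrivial virtual class rather than a sum of fundamental classes.
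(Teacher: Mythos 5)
Your argument is correct and follows essentially the same route as the paper's proof: you invoke Corollary \ref{g = 1 virtual class} to replace the virtual class by the sum of fundamental classes of essential strata, then apply the monomiality/polynomiality machinery of Section \ref{polynomiality} (you do so via Corollary \ref{essential polynomial}, while the paper cites its ingredients Theorem \ref{degrees}, Proposition \ref{aut restriction image}, and Lemma \ref{essential degree bound} directly, but these are the same logical content), and finally identify the constant term via Lemma \ref{constant term}. You additionally spell out the $b_1(\Gamma)=0$ rigidity, the formation of the governing $\hat{\Z}$-tropical types via Lemma \ref{r-weightings}, Lemma \ref{tree small age}, and Lemma \ref{lifting}, and the restriction to finite type opens, all of which the paper leaves implicit; this is careful unpacking rather than a different method.
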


\begin{proof}
    Combining Corollary \ref{g = 1 virtual class} with Theorem \ref{degrees}, Proposition \ref{aut restriction image} shows $Q(\lambda)$ is a Laurent polynomial in $\lambda$. Lemma \ref{essential degree bound} shows the degree condition in the statement. The constant term contribution is covered by Lemma \ref{constant term}.
\end{proof}

\subsection{Relation to geometric virtual classes }\label{geometric virtual classes}

Let $(X|D)$ be a smooth pair and consider the space of twisted maps to root stacks of $X$ along $D$, $\geommoduli$. The arguments at the beginning of Section 5.2 \cite{ACW17} hold in higher genus, even though they are stated for genus $0$. The authors consider the natural morphisms \begin{equation}\label{forgetful}
    \geommoduli \xrightarrow{\mu_r} \mathrm{Orb}_{\Lambda}(\mathcal{A}_r) \xrightarrow{\nu} \mathfrak{M}_{\Lambda}(\BGmr)
\end{equation} for $$\mu_r: (f: \mathcal{C} \rightarrow X_{D,r}) \mapsto (f^{*}\mathcal{O}_{X_{D,r}}(\frac{1}{r}D), s_{\frac{D}{r}}) $$ and endow $\mu_r$ with a relative obstruction theory. The obstruction theory for $\nu$ was given in \eqref{obstheory}. In particular, we can describe the virtual class of $\geommoduli$ via virtual pullback from the universal target \begin{equation}\label{virtualpb}
    [\geommoduli]^{\mathrm{vir}} = \mu_r^{!} [\moduli]^{\mathrm{vir}}.
\end{equation}

Since $\geommoduli$ is of finite type, the image of $\mu_r$ is contained in a finite union of open strata $$\bigcup_{i \in I(X_{D,r})} Z_{[\tau_i]}^{\circ}.$$ There are natural comparison maps as in the case for the universal target $$\Pi_{\lambda r, r}: \mathrm{Orb}_{\Lambda}(X_{D,\lambda r}) \rightarrow \geommoduli$$ induced by post-composing with the natural root stack maps $$X_{D, \lambda r} \rightarrow X_{D,r}. $$ Again, one needs to partially rigidify to maintain representability as in Section \ref{comparison maps}.

The following Lemma implies that polynomial properties of the virtual class on the universal space induce polynomiality of $[\geommoduli]^{\mathrm{vir}}$.

\begin{lemma}[\cite{Cru24} Lemma 5.4.1]\label{cartesian}
    There is a cartesian square with compatible obstruction theories for the morphisms $\mu_{\lambda r}, \mu_r$ \begin{center}
    \begin{tikzcd}
{\mathrm{Orb}_{\Lambda}(X_{D,\lambda r})} \arrow[d, "{\Pi_{\lambda r, r}}"'] \arrow[r, "\mu_{\lambda r}"] & \mathrm{Orb}_{\Lambda}(\mathcal{A}_{\lambda r}) \arrow[d, "{\pi_{\lambda r, r}}"] \\
{\mathrm{Orb}_{\Lambda}(X_{D,r})} \arrow[r, "\mu_r"]                                                      & \mathrm{Orb}_{\Lambda}(\mathcal{A}_{r})                                          
\end{tikzcd}
\end{center}

\end{lemma}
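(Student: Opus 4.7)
The plan is to reduce the entire statement to a single cartesian factorisation of the root stack:
$$X_{D,\lambda r} \;\simeq\; X_{D,r} \times_{\mathcal{A}_r} \mathcal{A}_{\lambda r},$$
where $\mathcal{A}_{\lambda r} \to \mathcal{A}_r$ is the natural $\lambda$-th root construction along the boundary gerbe. This identification expresses the $\lambda r$-th root stack as a two-step root construction via the associativity of fibre products ($X_{D,r} = X \times_{\mathcal{A}} \mathcal{A}_r$, and similarly for $X_{D,\lambda r}$). Once in hand, both parts of the claim -- cartesianness of the square and compatibility of obstruction theories -- follow respectively from the universal property of the fibre product and from base change of cotangent complexes.

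First I would verify commutativity of the square directly on objects: starting with $f : \mathcal{C} \to X_{D,\lambda r}$, both compositions $\mu_r \circ \Pi_{\lambda r, r}$ and $\pi_{\lambda r, r} \circ \mu_{\lambda r}$ produce the same partial coarsening $\mathcal{C} \to \mathcal{C}'$ (determined entirely by the isotropy injections $\mu_{t(\lambda r)} \hookrightarrow \mu_{\lambda r}$ of diagram~\eqref{coarsening homs}) together with the same tensored line bundle--section pair on $\mathcal{C}'$. Next, I would construct the inverse functor from the fibre product to $\mathrm{Orb}_{\Lambda}(X_{D,\lambda r})$. Given a point of the fibre product -- that is, a representable $f' : \mathcal{C}' \to X_{D,r}$, a representable $(\mathcal{C},(\mathcal{L},s)) \in \mathrm{Orb}_{\Lambda}(\mathcal{A}_{\lambda r})$, and an isomorphism of their images in $\mathrm{Orb}_{\Lambda}(\mathcal{A}_r)$ -- the isomorphism identifies $\mathcal{C}'$ with the partial coarsening of $\mathcal{C}$. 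Composing $\mathcal{C} \to \mathcal{C}' \xrightarrow{f'} X_{D,r}$ with the representable map $\mathcal{C} \to \mathcal{A}_{\lambda r}$ (which agree over $\mathcal{A}_r$ by the compatibility datum) yields the desired $\mathcal{C} \to X_{D,\lambda r}$ via the universal property of the fibre product $X_{D,r} \times_{\mathcal{A}_r} \mathcal{A}_{\lambda r}$. Representability is automatic from that of $\mathcal{C} \to \mathcal{A}_{\lambda r}$, and a routine check confirms this is an inverse equivalence.

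For the obstruction theories, the construction of Section 5.2 of \cite{ACW17} produces $\mathbb{E}_{\mu_r}$ canonically from the morphism $X_{D,r} \to \mathcal{A}_r$ via derived pushforward along the universal curve, and the construction is functorial in the target pair. Since our cartesian factorisation identifies the morphism $X_{D,\lambda r} \to \mathcal{A}_{\lambda r}$ as the base change of $X_{D,r} \to \mathcal{A}_r$ along $\mathcal{A}_{\lambda r} \to \mathcal{A}_r$, naturality of the construction immediately yields $\mathbb{E}_{\mu_{\lambda r}} \simeq \Pi_{\lambda r, r}^{*} \mathbb{E}_{\mu_r}$, which is exactly the compatibility needed to invoke virtual pullback as in~\eqref{virtualpb}.

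The main obstacle will be the bookkeeping around partial coarsening: different objects in the fibre product carry different underlying twisted curves ($\mathcal{C}$ versus its partial coarsening $\mathcal{C}'$), and one must verify carefully that the compatibility isomorphism genuinely exhibits $\mathcal{C}'$ as the partial coarsening of $\mathcal{C}$. This amounts to reconciling two a priori different definitions of the partial coarsening (one via the representable map to $X_{D,r}$ and one via the line bundle structure), which reduces to matching up both sides of diagram~\eqref{coarsening homs}. Once this is settled, the remaining universal-property and base-change manipulations are formal.
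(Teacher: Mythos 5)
The paper defers this proof to \cite{Cru24}, Lemma 5.4.1, so I cannot compare against the paper's own write-up; I can only assess your argument on its merits. Your strategy is the one that anyone familiar with these spaces would reach for, and I believe it is correct: the key identification $X_{D,\lambda r} \cong X_{D,r}\times_{\mathcal{A}_r}\mathcal{A}_{\lambda r}$ is legitimate (by associativity of fibre products and $X_{D,s}=X\times_{\mathcal{A}}\mathcal{A}_s$, with $\mathcal{A}_{\lambda r}\to\mathcal{A}_r$ the $\lambda$-th power map matching the definition of $\pi_{\lambda r,r}$), and the construction of the inverse functor is essentially right. Two points deserve slightly more care than you give them, though both do go through. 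First, the two partial coarsenings in play (the one in $\pi_{\lambda r,r}$ driven by the line bundle, and the one in $\Pi_{\lambda r,r}$ driven by the geometric map) coincide because $X_{D,\lambda r}\to\mathcal{A}_{\lambda r}$ is representable and so induces the same isotropy images in $\mu_{\lambda r}$; you gesture at this but should spell out that representability of the structure map is what forces agreement. Second, for the obstruction theory compatibility the universal curves over $\mathrm{Orb}_{\Lambda}(X_{D,\lambda r})$ and $\Pi_{\lambda r,r}^*\mathrm{Orb}_{\Lambda}(X_{D,r})$ differ by a partial coarsening $c$, so matching $R\pi_{\lambda r,*}f_{\lambda r}^*T_{X_{D,\lambda r}/\mathcal{A}_{\lambda r}}$ to $\Pi_{\lambda r,r}^*R\pi_{r,*}f_r^*T_{X_{D,r}/\mathcal{A}_r}$ needs the observation that $T_{X_{D,r}/\mathcal{A}_r}$ is pulled back from the un-stacky $X$, hence $Rc_*c^*$ is the identity on its pullback; ``naturality'' alone doesn't deliver this without that remark. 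With those two lines filled in, your proposal is a complete and correct proof.
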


\subsection{Conjectural splitting }

To fully deduce \cite{TY2, TY3} from the results of this paper thus far, one needs to know a general structure of the decomposition of the virtual class $[\moduli]^{\mathrm{vir}}$. For genus $g > 1$, Corollary \ref{essentialdim} tells us there is no hope for the virtual class to be supported on irreducible components given by essential types. The next best thing would be the following:

\begin{conjecture}
    There exists an expression of the form $$[\moduli]^{\mathrm{vir}} = \sum_{([\tau], \Gamma')} a_{[\tau], \Gamma'} [Z_{[\tau], \Gamma'}] \cap C_{[\tau], \Gamma'}$$ where the sum is over strata of dimension at least $\mathrm{vd}$ and $C_{[\tau], \Gamma'}$ is a Chow class which is a \textbf{Laurent polynomial} in $r$ such that $$ \mathrm{deg}_r([Z_{[\tau], \Gamma'}]) + \mathrm{deg}_r (C_{[\tau], \Gamma'}) \geq 0.$$
\end{conjecture}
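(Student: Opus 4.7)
The plan is to reduce the conjecture to a Segre-class computation in a log smooth ambient space, following the strategy foreshadowed in Section \ref{history}. The first step is to construct an embedding
\[
\iota: \moduli \hookrightarrow \mathcal{P}ic
\]
where $\mathcal{P}ic$ is the universal Picard stack over the moduli space of twisted log maps $\logmoduli$. Concretely, an object of $\mathcal{P}ic$ should consist of a twisted log curve equipped with a logarithmic line bundle-section pair whose associated tropical data matches that of a log map to $\mathcal{A}$, while the embedding $\iota$ forgets the log enhancement. The first key task is to prove that $\mathcal{P}ic$ is log smooth over $\mathfrak{M}_{g,n}^{\mathrm{log}}$: this should follow from the fact that both the space of log maps and the relative Picard of a log smooth family of curves are log smooth, so that $\mathcal{P}ic$ inherits this property.

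Given log smoothness of $\mathcal{P}ic$, the next step is to identify the obstruction theory of $\nu: \moduli \to \mathfrak{M}_\Lambda(\mathbf{B}\mathbb{G}_{m,r})$ with the conormal complex of $\iota$ composed with pullback of the cotangent complex of $\mathcal{P}ic \to \mathfrak{M}_\Lambda(\mathbf{B}\mathbb{G}_{m,r})$. Once this is done, standard excess intersection gives
\[
[\moduli]^{\mathrm{vir}} = s(\mathfrak{N}) \cap [\mathcal{P}ic],
\]
where $s(\mathfrak{N})$ denotes the Segre class of the normal cone of $\iota$. By log smoothness of $\mathcal{P}ic$, this Segre class can be decomposed as a sum over log strata of $\mathcal{P}ic$, which in turn pull back to the strata $Z_{[\tau], \Gamma'}$ of $\moduli$ via Proposition \ref{intersection of strata structure}. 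This yields a decomposition of the form claimed in the conjecture, with the coefficients $C_{[\tau], \Gamma'}$ expressible as Chern classes of tautological bundles (Hodge bundles on $\mathfrak{M}_{g_v}$ and normal bundles to stacky loci) on each stratum.

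The polynomiality of $C_{[\tau], \Gamma'}$ in $r$ should follow from two inputs. First, the normal bundle of $Z_{[\tau], \Gamma'}$ in $\mathcal{P}ic$ is built from the semi-abelian torsor $\mathrm{Jac}_{[\tau]}$ of Proposition \ref{main action}, whose fibres at node-smoothing directions are $r$-torsion subgroups; the corresponding Chern class contributions will therefore be monomials in $r$ of degree tracked by Theorem \ref{main degree}. Second, the class $[Z_{[\tau], \Gamma'}]$ itself scales polynomially in $r$ under the comparison morphisms $\pi_{\lambda r, r}$, exactly by Theorem \ref{essential degree theorem} and Proposition \ref{aut restriction image}, provided one works under the sufficient divisibility assumptions on $r$. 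The degree inequality $\deg_r[Z_{[\tau], \Gamma'}] + \deg_r C_{[\tau], \Gamma'} \geq 0$ would then reduce to the numerical inequality $k_{\hat{\tau}} \geq 0$ from Lemma \ref{essential degree bound}, combined with a lower bound on the Chern-class contribution.

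The main obstacle I anticipate is constructing the embedding $\iota$ functorially and verifying log smoothness of $\mathcal{P}ic$ at its deepest strata, where multiple bipartite edges meet internal vertices of positive genus and the logarithmic structure becomes delicate. A secondary obstacle is controlling the higher Segre terms: the naive expectation is that only the top Segre class contributes on an equidimensional stratum, but for $g > 1$ Corollary \ref{essentialdim} forces excess dimension, so one must carefully track both the rank of the excess bundle and its Chern roots to extract the promised Laurent polynomial in $r$. Once these obstacles are overcome, combining the decomposition with Theorem \ref{main theorem polynomial} and Lemma \ref{cartesian} should promote Chow-valued polynomiality on $\moduli$ to Gromov--Witten-valued polynomiality on $\geommoduli$, thereby recovering and generalising the results of \cite{TY2, TY3} for all genera.
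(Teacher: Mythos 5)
This statement is a \emph{conjecture}; the paper does not prove it, and explicitly defers it to ``forthcoming work'' with Sam Johnston. So there is no paper proof to compare against. What you have written is a roadmap rather than a proof, and moreover it is essentially the same roadmap the paper itself sketches in Section~\ref{history}: the embedding $\moduli \hookrightarrow \mathcal{P}ic$ into a log smooth universal Picard stack over twisted log maps, followed by a Segre-class analysis of the embedding. You have filled in some plausible intermediate steps (the identification of the obstruction theory of $\nu$ with a conormal complex, the decomposition of the Segre class over log strata via Proposition~\ref{intersection of strata structure}, the use of Theorem~\ref{essential degree theorem} and Lemma~\ref{essential degree bound} for the degree bound), but none of these intermediate steps is actually carried out, and you honestly flag the two places where the argument is genuinely missing: constructing $\iota$ functorially with verified log smoothness of $\mathcal{P}ic$ at the deepest strata, and controlling the higher Segre terms in the presence of excess dimension for $g > 1$.

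Two concrete gaps worth naming beyond what you already flagged. First, the excess-intersection formula $[\moduli]^{\mathrm{vir}} = s(\mathfrak{N}) \cap [\mathcal{P}ic]$ only applies if the obstruction theory on $\moduli$ is genuinely compatible with the closed embedding $\iota$ in the sense of a compatibility datum of cone stacks; asserting that the obstruction theory of $\nu$ ``can be identified'' with the conormal complex of $\iota$ twisted by the cotangent complex of $\mathcal{P}ic$ is the heart of the matter and needs a diagram chase of distinguished triangles, not a one-line citation. Second, the claim that the normal bundle directions attached to node-smoothings are ``$r$-torsion subgroups'' of $\mathrm{Jac}_{[\tau]}$ is not right on its face: the normal cone of $Z_{[\tau],\Gamma'}$ in $\mathcal{P}ic$ is built from deformation spaces of nodes and of the line bundle, not from torsion subgroups of the Jacobian; the $r$-torsion appears in Theorem~\ref{main degree} in computing \emph{degrees of comparison morphisms}, which is a different invariant. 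Making the Chern class contribution a Laurent monomial in $r$ requires an explicit identification of those deformation spaces with the relevant root-stack twisting, and that is precisely the delicate point you are postponing. As a proposal for a research direction your sketch is reasonable; as a proof it is not one.
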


Further analysis of the degrees generalising Definition \ref{k-tau}, Lemma \ref{essential degree bound} would be needed to conclude polynomiality of degree at most $2g-1$ in this general case. One would then be able to upgrade to the geometric setting with Lemma \ref{cartesian}. This question is addressed in forthcoming work.

\section{Geometric examples }\label{examples}

\subsection{The case of multiple divisor components }

In examples of the next section, and in many other natural geometric situations, a smooth pair $(X|D)$ may have $D$ reducible. The theory of the paper thus far deals well with the case we root along each component of $D$ with a single rooting parameter $r$, however one may want more flexibility and consider the multi-root stack $X_{D, \vec{r}}$ for $D = D_1 + \cdots + D_l$ and $\vec{r} \in \N^{l}$. In this case, the appropriate universal target is the Artin fan associated to $l$ stacky rays $\rho_{r_i}$ with sub-monoid of index $r_i$ in ray $\rho_{r_i}$ glued along the origin, denoted $\Sigma_{\vec{r}}$, corresponding to $\mathcal{A}_{\vec{r}} := \cup_{i = 1}^l \mathcal{A}_l$ identified over their $\mathrm{Spec}(\C)$ points. We may think of $\Sigma_{\vec{r}}$ as a stacky fan and each ray as a stacky cone in the sense of \cite{GS15}. There is a map of stacky fans $$\Sigma_{\vec{r}} \rightarrow \rho_{\mathrm{lcm}_i (r_i)}$$ which induces a morphism of Artin fans $$\mathcal{A}_{\vec{r}} \rightarrow \mathcal{A}_{\mathrm{lcm}_i (r_i)}.$$ On the level of line-bundle section pairs this is given by tensoring together powers of the line bundles $$(\mathcal{L}_j,s_j)_{j = 1}^l \mapsto (\bigotimes_{j = 1}^l \mathcal{L}_j^{\mathrm{lcm}_i(r_i)/r_j}, \bigotimes_{j = 1}^l s_j^{\mathrm{lcm}_i(r_i)/r_j}).$$

The morphism is non-representable in the case not all $r_i$ are equal. We then have the sequence of morphisms $$\mathrm{Orb}_{\Lambda}(X_{D,\vec{r}}) \rightarrow \mathrm{Orb}_{\Lambda}(\mathcal{A}_{\vec{r}}) \rightarrow \mathrm{Orb}_{\Lambda}(\mathcal{A}_{\mathrm{lcm}_i (r_i)})  $$ and the latter morphism is again non-representable if not all $r_i$ are equal. The notion of mod $\vec{r}$-tropical type can be defined, generalising Definition \ref{troptype}, which now has an assignment of cones coming from any cones in $\Sigma_{\vec{r}}$. Construction \ref{assocmodrtype} can be generalised to this setting, endowing $\mathrm{Orb}_{\Lambda}(\mathcal{A}_{\vec{r}})$ with a stratification by mod $\vec{r}$-type such that the morphism $\mathrm{Orb}_{\Lambda}(\mathcal{A}_{\vec{r}}) \rightarrow \mathrm{Orb}_{\Lambda}(\mathcal{A}_{\mathrm{lcm}_i (r_i)})$ sends strata to strata. Additionally, there is an appropriate notion of essential mod $\vec{r}$-tropical type and a generalisation of Theorem \ref{irredcpts} classifying the irreducible components of $ \mathrm{Orb}_{\Lambda}(\mathcal{A}_{\vec{r}})$. The map $\mathrm{Orb}_{\Lambda}(\mathcal{A}_{\vec{r}}) \rightarrow \mathrm{Orb}_{\Lambda}(\mathcal{A}_{\mathrm{lcm}_i (r_i)})$ will send distinct irreducible components of the domain onto the same irreducible component of the codomain.

In examples below where we have a reducible divisor and distinct rooting parameters, we will consider the composite morphism $\mathrm{Orb}_{\Lambda}(X_{D,\vec{r}}) \rightarrow \mathrm{Orb}_{\Lambda}(\mathcal{A}_{\mathrm{lcm}_i (r_i)}) $ and all pictures will be drawn with respect to this map.

\subsection{Pullback of strata to geometric moduli }
    Let $$[f: \mathcal{C} \rightarrow X_{D,r}] \in \geommoduli.$$ This twisted map is equivalent to the data of \begin{itemize}
        \item A map $\bar{f}: \mathcal{C} \rightarrow X.$
        \item A line bundle $\mathcal{L}$ on $\mathcal{C}$ and an isomorphism $$\mathcal{L}^{\otimes r} \cong \bar{f}^* \mathcal{O}(D). $$
    \end{itemize} In particular, the line bundle $\mathcal{L}$ is uniquely determined by $\bar{f}$ up to the action of the $r$-torsion of the associated Jacobian fibre $$\mathrm{Jac}_{\mathcal{C}}^{+}[r].$$ More generally, for $[\tau]$ a mod $r$ tropical type, define $$Z_{[\tau]}^{(X|D),\circ} := \mu_{r}^{-1}(Z_{[\tau]}^{\circ}) \subset \geommoduli. $$ If $Z_{[\tau]}^{(X|D),\circ} \not = \emptyset$, it is naturally a $\mathrm{Jac}_{\mathcal{C}}^{+}[r]$-torsor. Since $\mathrm{Jac}_{\mathcal{C}}^{+}[r]$ is now a finite group, as opposed to $\mathrm{Jac}_{\mathcal{C}}^{+}$ which is connected, the strata $Z_{[\tau]}^{(X|D),\circ}$ may be disconnected as examples in the next section will show.

\subsection{Examples}

In this section we exhibit some explicit computations of orbifold invariants of smooth pairs $(X|D)$ for $X$ a projective variety, and compare to the results about the universal moduli spaces earlier in the paper. Let's start by rephrasing the original example of Maulik \cite[Section 1.7]{ACW17}.

\begin{example}\label{Maulik}
    Let $X = (E \times \mathbb{P}^1| E_0 + E_{\infty})$ for $E$ a smooth genus $1$ curve. Let $X_{r,s}$ be the root stack along $E_0,E_{\infty}$ with parameters $r,s$ respectively and let $l := \mathrm{lcm}(r,s)$. Consider twisted stable maps to $X_{r,s}$ with no marked points $n = 0$ of fibre class $E$ and genus $g = 1$, $\mathrm{Orb}(X_{r,s})$. There is an isomorphism $$\mathrm{Orb}(X_{r,s}) \cong \mathbb{P}^1_{r,s} \cup (B\mu_r)^{r^2 - 1} \cup (B \mu_s)^{s^2 - 1}. $$ To see this, note that an object in $\mathrm{Orb}(X_{r,s})$ is a map $E \rightarrow X_{r,s}$ of fibre class $E$. The fibres are parameterised by the base $\mathbb{P}^1_{r,s}$, however if we map to the stacky fibres, we have the data of $$E \rightarrow E \times B \mu_r $$ (or to $E \times B \mu_s$) which is the identity on the first factor. Such data is then equivalent to a $\mu_r$-torsor over $E$, $ E' \rightarrow E$. Isomorphism classes of such torsors are classified by the \'etale cohomology group $H^1(E,\mu_r)$, and there is a natural isomorphism $$H^1(E,\mu_r) = \mathrm{Hom}(\pi_1(E), \mu_r).$$ The latter group is isomorphic to $E[r]^{\vee}$, after picking a base point in $E$, since $r$-torsion points are the deck transformations of the \'etale covers $\otimes n : E \rightarrow E$ in $\pi_1(E)$ (and these form a cofinal system over all \'etale covers). The Weil pairing on $E$ with this base point gives an identification with $E[r]$. The trivial torsor is already accounted for in the $\mathbb{P}^1_{r,s}$ component and so the remaining $r^2 - 1 + s^2 - 1$ are indexed by the non-trivial torsion points of $E$. The virtual dimension is easily calculated to be $0$. Under the natural map $\geommoduli \rightarrow \moduli$, $\mathbb{P}^1_{r,s} \rightarrow Z_{\mathrm{main}} \subset \mathrm{Orb}_{\Lambda}(\mathcal{A}_l)$ is collapsed to a point in the boundary, and the remaining gerbes $B\mu_r, B \mu_s \rightarrow Z_{[\tau]}$ for $[\tau]$ the genus $1$ single vertex type with $V = V_+$. In particular, we see that we do not obtain the whole Jacobian fibres as in the universal space, and the irreducible components of $\mathrm{Orb}(X_{r,s})$ do not align with those of $\mathrm{Orb}_{\Lambda}(\mathcal{A}_l)$.

    \begin{center}

\tikzset{every picture/.style={line width=0.75pt}} 

\begin{tikzpicture}[x=0.75pt,y=0.75pt,yscale=-1,xscale=1]

\draw  [fill={rgb, 255:red, 126; green, 211; blue, 33 }  ,fill opacity=1 ] (241,148.48) .. controls (241,121.71) and (294.95,100) .. (361.5,100) .. controls (428.05,100) and (482,121.71) .. (482,148.48) .. controls (482,175.26) and (428.05,196.97) .. (361.5,196.97) .. controls (294.95,196.97) and (241,175.26) .. (241,148.48) -- cycle ;
\draw  [fill={rgb, 255:red, 248; green, 231; blue, 28 }  ,fill opacity=1 ] (351.3,37.97) -- (450,37.97) -- (407.7,130.97) -- (309,130.97) -- cycle ;
\draw [color={rgb, 255:red, 208; green, 2; blue, 27 }  ,draw opacity=1 ]   (361.5,130.47) -- (362,130.97) ;
\draw [color={rgb, 255:red, 208; green, 2; blue, 27 }  ,draw opacity=1 ] [dash pattern={on 4.5pt off 4.5pt}]  (362,130.97) -- (361.5,130.47) ;
\draw    (360,129.97) -- (396,37.97) ;
\draw    (468,74) .. controls (436.32,104.66) and (420.32,57.92) .. (379.25,101.62) ;
\draw [shift={(378,102.97)}, rotate = 312.4] [color={rgb, 255:red, 0; green, 0; blue, 0 }  ][line width=0.75]    (10.93,-3.29) .. controls (6.95,-1.4) and (3.31,-0.3) .. (0,0) .. controls (3.31,0.3) and (6.95,1.4) .. (10.93,3.29)   ;
\draw [color={rgb, 255:red, 144; green, 19; blue, 254 }  ,draw opacity=1 ][fill={rgb, 255:red, 208; green, 2; blue, 27 }  ,fill opacity=1 ]   (360.5,130.47) -- (360,129.97) ;
\draw [shift={(360.5,130.47)}, rotate = 225] [color={rgb, 255:red, 144; green, 19; blue, 254 }  ,draw opacity=1 ][fill={rgb, 255:red, 144; green, 19; blue, 254 }  ,fill opacity=1 ][line width=0.75]      (0, 0) circle [x radius= 3.35, y radius= 3.35]   ;
\draw [color={rgb, 255:red, 144; green, 19; blue, 254 }  ,draw opacity=1 ]   (59,140) -- (156,139.97) ;
\draw    (164,138) -- (221,137.97) ;
\draw [shift={(223,137.97)}, rotate = 179.97] [color={rgb, 255:red, 0; green, 0; blue, 0 }  ][line width=0.75]    (10.93,-3.29) .. controls (6.95,-1.4) and (3.31,-0.3) .. (0,0) .. controls (3.31,0.3) and (6.95,1.4) .. (10.93,3.29)   ;
\draw [color={rgb, 255:red, 208; green, 2; blue, 27 }  ,draw opacity=1 ][fill={rgb, 255:red, 208; green, 2; blue, 27 }  ,fill opacity=1 ]   (77,123.97) ;
\draw [shift={(77,123.97)}, rotate = 0] [color={rgb, 255:red, 208; green, 2; blue, 27 }  ,draw opacity=1 ][fill={rgb, 255:red, 208; green, 2; blue, 27 }  ,fill opacity=1 ][line width=0.75]      (0, 0) circle [x radius= 3.35, y radius= 3.35]   ;
\draw [color={rgb, 255:red, 208; green, 2; blue, 27 }  ,draw opacity=1 ][fill={rgb, 255:red, 208; green, 2; blue, 27 }  ,fill opacity=1 ]   (79,74.97) ;
\draw [shift={(79,74.97)}, rotate = 0] [color={rgb, 255:red, 208; green, 2; blue, 27 }  ,draw opacity=1 ][fill={rgb, 255:red, 208; green, 2; blue, 27 }  ,fill opacity=1 ][line width=0.75]      (0, 0) circle [x radius= 3.35, y radius= 3.35]   ;

\draw [color={rgb, 255:red, 208; green, 2; blue, 27 }  ,draw opacity=1 ][fill={rgb, 255:red, 208; green, 2; blue, 27 }  ,fill opacity=1 ]   (78,138.97) ;
\draw [shift={(78,138.97)}, rotate = 0] [color={rgb, 255:red, 208; green, 2; blue, 27 }  ,draw opacity=1 ][fill={rgb, 255:red, 208; green, 2; blue, 27 }  ,fill opacity=1 ][line width=0.75]      (0, 0) circle [x radius= 3.35, y radius= 3.35]   ;
\draw [color={rgb, 255:red, 245; green, 166; blue, 35 }  ,draw opacity=1 ][fill={rgb, 255:red, 208; green, 2; blue, 27 }  ,fill opacity=1 ]   (141,123.97) ;
\draw [shift={(141,123.97)}, rotate = 0] [color={rgb, 255:red, 245; green, 166; blue, 35 }  ,draw opacity=1 ][fill={rgb, 255:red, 245; green, 166; blue, 35 }  ,fill opacity=1 ][line width=0.75]      (0, 0) circle [x radius= 3.35, y radius= 3.35]   ;
\draw [color={rgb, 255:red, 245; green, 166; blue, 35 }  ,draw opacity=1 ][fill={rgb, 255:red, 208; green, 2; blue, 27 }  ,fill opacity=1 ]   (143,74.97) ;
\draw [shift={(143,74.97)}, rotate = 0] [color={rgb, 255:red, 245; green, 166; blue, 35 }  ,draw opacity=1 ][fill={rgb, 255:red, 245; green, 166; blue, 35 }  ,fill opacity=1 ][line width=0.75]      (0, 0) circle [x radius= 3.35, y radius= 3.35]   ;

\draw [color={rgb, 255:red, 245; green, 166; blue, 35 }  ,draw opacity=1 ][fill={rgb, 255:red, 208; green, 2; blue, 27 }  ,fill opacity=1 ]   (141,139.97) ;
\draw [shift={(141,139.97)}, rotate = 0] [color={rgb, 255:red, 245; green, 166; blue, 35 }  ,draw opacity=1 ][fill={rgb, 255:red, 245; green, 166; blue, 35 }  ,fill opacity=1 ][line width=0.75]      (0, 0) circle [x radius= 3.35, y radius= 3.35]   ;
\draw [color={rgb, 255:red, 245; green, 166; blue, 35 }  ,draw opacity=1 ][fill={rgb, 255:red, 208; green, 2; blue, 27 }  ,fill opacity=1 ]   (367.8,116.58) ;
\draw [shift={(367.8,116.58)}, rotate = 0] [color={rgb, 255:red, 245; green, 166; blue, 35 }  ,draw opacity=1 ][fill={rgb, 255:red, 245; green, 166; blue, 35 }  ,fill opacity=1 ][line width=0.75]      (0, 0) circle [x radius= 3.35, y radius= 3.35]   ;
\draw [color={rgb, 255:red, 245; green, 166; blue, 35 }  ,draw opacity=1 ][fill={rgb, 255:red, 208; green, 2; blue, 27 }  ,fill opacity=1 ]   (384.2,70.36) ;
\draw [shift={(384.2,70.36)}, rotate = 0] [color={rgb, 255:red, 245; green, 166; blue, 35 }  ,draw opacity=1 ][fill={rgb, 255:red, 245; green, 166; blue, 35 }  ,fill opacity=1 ][line width=0.75]      (0, 0) circle [x radius= 3.35, y radius= 3.35]   ;

\draw [color={rgb, 255:red, 208; green, 2; blue, 27 }  ,draw opacity=1 ][fill={rgb, 255:red, 208; green, 2; blue, 27 }  ,fill opacity=1 ]   (372.63,105.48) ;
\draw [shift={(372.63,105.48)}, rotate = 0] [color={rgb, 255:red, 208; green, 2; blue, 27 }  ,draw opacity=1 ][fill={rgb, 255:red, 208; green, 2; blue, 27 }  ,fill opacity=1 ][line width=0.75]      (0, 0) circle [x radius= 3.35, y radius= 3.35]   ;
\draw [color={rgb, 255:red, 208; green, 2; blue, 27 }  ,draw opacity=1 ][fill={rgb, 255:red, 208; green, 2; blue, 27 }  ,fill opacity=1 ]   (395.28,44.51) ;
\draw [shift={(395.28,44.51)}, rotate = 0] [color={rgb, 255:red, 208; green, 2; blue, 27 }  ,draw opacity=1 ][fill={rgb, 255:red, 208; green, 2; blue, 27 }  ,fill opacity=1 ][line width=0.75]      (0, 0) circle [x radius= 3.35, y radius= 3.35]   ;

\draw    (471,110) .. controls (439.32,140.66) and (418.42,69.42) .. (377.25,112.62) ;
\draw [shift={(376,113.97)}, rotate = 312.4] [color={rgb, 255:red, 0; green, 0; blue, 0 }  ][line width=0.75]    (10.93,-3.29) .. controls (6.95,-1.4) and (3.31,-0.3) .. (0,0) .. controls (3.31,0.3) and (6.95,1.4) .. (10.93,3.29)   ;

\draw (477,60.4) node [anchor=north west][inner sep=0.75pt]  [color={rgb, 255:red, 208; green, 2; blue, 27 }  ,opacity=1 ]  {$( E,\ (\mathcal{J} ,0)) ,\ \mathcal{J} \ \in \ E[ r]$};
\draw (275,133.4) node [anchor=north west][inner sep=0.75pt]    {$Z_{\mathrm{main}}$};
\draw (134,92.4) node [anchor=north west][inner sep=0.75pt]    {$\vdots $};
\draw (370.45,84.35) node [anchor=north west][inner sep=0.75pt]  [rotate=-17.19]  {$\vdots $};
\draw (70,93.4) node [anchor=north west][inner sep=0.75pt]    {$\vdots $};
\draw (378.05,67.23) node [anchor=north west][inner sep=0.75pt]  [rotate=-17.9]  {$\vdots $};
\draw (479,101.4) node [anchor=north west][inner sep=0.75pt]  [color={rgb, 255:red, 245; green, 166; blue, 35 }  ,opacity=1 ]  {$( E,\ (\mathcal{J} ',0)) ,\ \mathcal{J} '\ \in \ E[ s]$};

\end{tikzpicture} 
    \end{center}

    By Corollary \ref{g = 1 virtual class}, we see the virtual class on the $B\mu_r,B \mu_s$ components is just the fundamental class. On the main component, the contribution is given by the degree of the obstruction bundle $$\mathrm{deg} \mathcal{O}( 0/r + \infty/s) = 1/r + 1/s.$$ Therefore the total degree of the virtual class equals $$\mathrm{deg}[\mathrm{Orb}(X_{r,s})]^{\mathrm{vir}} = 1/r + 1/s + (r^2 - 1)/r + (s^2 - 1)/s = r + s$$ which is monomial in the rooting parameters.
\end{example} \qed

\begin{example}
    Let $X_{r,s}$ be as in the example above, but now take the curve class $\beta = E + S$ where $S$ is a section, $n = 2$ with contact order $1$ with each of $E_0, E_{\infty}$ (i.e. take the components of the inertia stack of $X_{r,s}$ that are age $\frac{1}{r}, \frac{1}{s}$ induced by $E_0, E_{\infty}$ respectively). There are three distinguished loci of $W_1, W_2, W_3 \subset \mathrm{Orb}_{\Lambda}(X_{r,s})$ whose closures union the whole space, whose generic members we depict below: \begin{center}

\tikzset{every picture/.style={line width=0.75pt}} 

\begin{tikzpicture}[x=0.75pt,y=0.75pt,yscale=-1,xscale=1]

\draw   (322.18,10.23) .. controls (338.39,10.23) and (351.55,33.22) .. (351.56,61.59) .. controls (351.58,89.96) and (338.45,112.96) .. (322.24,112.97) .. controls (306.03,112.98) and (292.87,89.99) .. (292.86,61.62) .. controls (292.84,33.25) and (305.97,10.24) .. (322.18,10.23) -- cycle ;
\draw  [draw opacity=0] (325.77,83.41) .. controls (318.18,79.86) and (312.92,72.15) .. (312.92,63.22) .. controls (312.92,54.49) and (317.94,46.94) .. (325.25,43.28) -- (335.2,63.22) -- cycle ; \draw   (325.77,83.41) .. controls (318.18,79.86) and (312.92,72.15) .. (312.92,63.22) .. controls (312.92,54.49) and (317.94,46.94) .. (325.25,43.28) ;  
\draw  [draw opacity=0] (320.43,49.01) .. controls (323.74,53) and (325.67,58.15) .. (325.54,63.73) .. controls (325.42,69.03) and (323.46,73.85) .. (320.29,77.6) -- (303.27,63.22) -- cycle ; \draw   (320.43,49.01) .. controls (323.74,53) and (325.67,58.15) .. (325.54,63.73) .. controls (325.42,69.03) and (323.46,73.85) .. (320.29,77.6) ;  
\draw    (240.14,9) -- (415.42,9.72) ;
\draw    (240.14,9) ;
\draw [shift={(240.14,9)}, rotate = 0] [color={rgb, 255:red, 0; green, 0; blue, 0 }  ][fill={rgb, 255:red, 0; green, 0; blue, 0 }  ][line width=0.75]      (0, 0) circle [x radius= 3.35, y radius= 3.35]   ;
\draw    (415.42,9.72) ;
\draw [shift={(415.42,9.72)}, rotate = 0] [color={rgb, 255:red, 0; green, 0; blue, 0 }  ][fill={rgb, 255:red, 0; green, 0; blue, 0 }  ][line width=0.75]      (0, 0) circle [x radius= 3.35, y radius= 3.35]   ;
\draw    (240.14,9) .. controls (209.42,36.04) and (276.69,38.59) .. (245.35,11) ;
\draw [shift={(243.85,9.72)}, rotate = 39.66] [color={rgb, 255:red, 0; green, 0; blue, 0 }  ][line width=0.75]    (10.93,-3.29) .. controls (6.95,-1.4) and (3.31,-0.3) .. (0,0) .. controls (3.31,0.3) and (6.95,1.4) .. (10.93,3.29)   ;
\draw    (415.42,9.72) .. controls (384.69,36.76) and (451.96,39.31) .. (420.63,11.72) ;
\draw [shift={(419.13,10.44)}, rotate = 39.66] [color={rgb, 255:red, 0; green, 0; blue, 0 }  ][line width=0.75]    (10.93,-3.29) .. controls (6.95,-1.4) and (3.31,-0.3) .. (0,0) .. controls (3.31,0.3) and (6.95,1.4) .. (10.93,3.29)   ;

\draw   (533.08,159.66) .. controls (548.15,159.65) and (560.37,181.02) .. (560.39,207.39) .. controls (560.4,233.76) and (548.2,255.14) .. (533.13,255.15) .. controls (518.06,255.16) and (505.83,233.79) .. (505.82,207.42) .. controls (505.81,181.05) and (518.01,159.67) .. (533.08,159.66) -- cycle ;
\draw  [draw opacity=0] (536.41,227.67) .. controls (529.36,224.37) and (524.47,217.21) .. (524.47,208.91) .. controls (524.47,200.8) and (529.13,193.77) .. (535.93,190.38) -- (545.18,208.91) -- cycle ; \draw   (536.41,227.67) .. controls (529.36,224.37) and (524.47,217.21) .. (524.47,208.91) .. controls (524.47,200.8) and (529.13,193.77) .. (535.93,190.38) ;  
\draw  [draw opacity=0] (531.45,195.7) .. controls (534.52,199.41) and (536.32,204.2) .. (536.2,209.38) .. controls (536.09,214.31) and (534.27,218.79) .. (531.32,222.27) -- (515.5,208.91) -- cycle ; \draw   (531.45,195.7) .. controls (534.52,199.41) and (536.32,204.2) .. (536.2,209.38) .. controls (536.09,214.31) and (534.27,218.79) .. (531.32,222.27) ;  

\draw    (370.3,158.9) -- (533.21,159.56) ;
\draw    (370.3,158.9) ;
\draw [shift={(370.3,158.9)}, rotate = 0] [color={rgb, 255:red, 0; green, 0; blue, 0 }  ][fill={rgb, 255:red, 0; green, 0; blue, 0 }  ][line width=0.75]      (0, 0) circle [x radius= 3.35, y radius= 3.35]   ;
\draw    (370.3,158.9) .. controls (352.34,179.41) and (385.16,181.82) .. (374.65,161.2) ;
\draw [shift={(373.76,159.56)}, rotate = 59.6] [color={rgb, 255:red, 0; green, 0; blue, 0 }  ][line width=0.75]    (10.93,-3.29) .. controls (6.95,-1.4) and (3.31,-0.3) .. (0,0) .. controls (3.31,0.3) and (6.95,1.4) .. (10.93,3.29)   ;
\draw    (545.18,208.91) .. controls (516.62,234.04) and (579.15,236.41) .. (550.03,210.77) ;
\draw [shift={(548.63,209.58)}, rotate = 39.66] [color={rgb, 255:red, 0; green, 0; blue, 0 }  ][line width=0.75]    (10.93,-3.29) .. controls (6.95,-1.4) and (3.31,-0.3) .. (0,0) .. controls (3.31,0.3) and (6.95,1.4) .. (10.93,3.29)   ;
\draw    (549.21,201.56) ;
\draw [shift={(549.21,201.56)}, rotate = 0] [color={rgb, 255:red, 0; green, 0; blue, 0 }  ][fill={rgb, 255:red, 0; green, 0; blue, 0 }  ][line width=0.75]      (0, 0) circle [x radius= 3.35, y radius= 3.35]   ;
\draw   (122.08,158.66) .. controls (137.15,158.65) and (149.37,180.02) .. (149.39,206.39) .. controls (149.4,232.76) and (137.2,254.14) .. (122.13,254.15) .. controls (107.06,254.16) and (94.83,232.79) .. (94.82,206.42) .. controls (94.81,180.05) and (107.01,158.67) .. (122.08,158.66) -- cycle ;
\draw  [draw opacity=0] (125.41,226.67) .. controls (118.36,223.37) and (113.47,216.21) .. (113.47,207.91) .. controls (113.47,199.8) and (118.13,192.77) .. (124.93,189.38) -- (134.18,207.91) -- cycle ; \draw   (125.41,226.67) .. controls (118.36,223.37) and (113.47,216.21) .. (113.47,207.91) .. controls (113.47,199.8) and (118.13,192.77) .. (124.93,189.38) ;  
\draw  [draw opacity=0] (120.45,194.7) .. controls (123.52,198.41) and (125.32,203.2) .. (125.2,208.38) .. controls (125.09,213.31) and (123.27,217.79) .. (120.32,221.27) -- (104.5,207.91) -- cycle ; \draw   (120.45,194.7) .. controls (123.52,198.41) and (125.32,203.2) .. (125.2,208.38) .. controls (125.09,213.31) and (123.27,217.79) .. (120.32,221.27) ;  

\draw    (118.3,158.9) -- (281.21,159.56) ;
\draw    (105.88,199.6) ;
\draw [shift={(105.88,199.6)}, rotate = 0] [color={rgb, 255:red, 0; green, 0; blue, 0 }  ][fill={rgb, 255:red, 0; green, 0; blue, 0 }  ][line width=0.75]      (0, 0) circle [x radius= 3.35, y radius= 3.35]   ;
\draw    (102.43,198.93) .. controls (84.46,219.44) and (117.28,221.86) .. (106.77,201.23) ;
\draw [shift={(105.88,199.6)}, rotate = 59.6] [color={rgb, 255:red, 0; green, 0; blue, 0 }  ][line width=0.75]    (10.93,-3.29) .. controls (6.95,-1.4) and (3.31,-0.3) .. (0,0) .. controls (3.31,0.3) and (6.95,1.4) .. (10.93,3.29)   ;
\draw    (281.21,159.56) .. controls (252.66,184.7) and (315.19,187.07) .. (286.06,161.42) ;
\draw [shift={(284.67,160.23)}, rotate = 39.66] [color={rgb, 255:red, 0; green, 0; blue, 0 }  ][line width=0.75]    (10.93,-3.29) .. controls (6.95,-1.4) and (3.31,-0.3) .. (0,0) .. controls (3.31,0.3) and (6.95,1.4) .. (10.93,3.29)   ;
\draw    (281.21,159.56) ;
\draw [shift={(281.21,159.56)}, rotate = 0] [color={rgb, 255:red, 0; green, 0; blue, 0 }  ][fill={rgb, 255:red, 0; green, 0; blue, 0 }  ][line width=0.75]      (0, 0) circle [x radius= 3.35, y radius= 3.35]   ;

\draw (231.75,27.14) node [anchor=north west][inner sep=0.75pt]    {$\mu _{r}$};
\draw (407.03,30.11) node [anchor=north west][inner sep=0.75pt]    {$\mu _{s}$};
\draw (362.48,185.18) node [anchor=north west][inner sep=0.75pt]    {$\mu _{r}$};
\draw (533.32,225.67) node [anchor=north west][inner sep=0.75pt]    {$\mu _{s}$};
\draw (99.48,215.18) node [anchor=north west][inner sep=0.75pt]    {$\mu _{r}$};
\draw (274.82,187.57) node [anchor=north west][inner sep=0.75pt]    {$\mu _{s}$};
\draw (312,123.4) node [anchor=north west][inner sep=0.75pt]    {$W_{1}$};
\draw (192,246.4) node [anchor=north west][inner sep=0.75pt]    {$W_{2}$};
\draw (430,247.4) node [anchor=north west][inner sep=0.75pt]    {$W_{3}$};

\end{tikzpicture}
    \end{center} In $W_1$ the elliptic curve maps to a generic fibre away from the stacky loci, and the other component depicts a section of the elliptic fibration. In $W_2$ the genus $1$ component maps to $\frac{1}{r} E_0$ and in $W_3$ the genus $1$ component maps to $\frac{1}{s} E_{\infty}$. Then by applying the mod $r$ balancing and coprimality conditions we see the node in picture $W_1$ is non-stacky and the nodes in $W_2,W_3$ have isotropy $\mu_r$.

    Observe that $$W_1 \cong X_{r,s}, \  W_2 \cong ((E \times B \mu_r) \times E)^{r^2 }, \ W_3 \cong ((E \times B \mu_s) \times E)^{s^2}.$$
    
    \begin{center}

\tikzset{every picture/.style={line width=0.75pt}} 

\begin{tikzpicture}[x=0.75pt,y=0.75pt,yscale=-1,xscale=1]

\draw  [fill={rgb, 255:red, 181; green, 255; blue, 106 }  ,fill opacity=1 ] (281.56,131.72) -- (317.86,106.38) -- (308.01,163.81) -- (271.7,189.14) -- cycle ;
\draw  [fill={rgb, 255:red, 181; green, 255; blue, 106 }  ,fill opacity=1 ] (307.56,142.72) -- (343.86,117.38) -- (334.01,174.81) -- (297.7,200.14) -- cycle ;
\draw  [fill={rgb, 255:red, 80; green, 227; blue, 194 }  ,fill opacity=1 ] (335.08,174.76) -- (378.3,197.86) -- (340.92,223.24) -- (297.7,200.14) -- cycle ;
\draw  [fill={rgb, 255:red, 248; green, 231; blue, 28 }  ,fill opacity=1 ] (351.85,165.76) -- (388.15,140.43) -- (378.3,197.86) -- (341.99,223.19) -- cycle ;
\draw  [fill={rgb, 255:red, 248; green, 231; blue, 28 }  ,fill opacity=1 ] (378.85,180.76) -- (415.15,155.43) -- (405.3,212.86) -- (368.99,238.19) -- cycle ;
\draw  [fill={rgb, 255:red, 248; green, 231; blue, 28 }  ,fill opacity=1 ] (421.85,212.76) -- (458.15,187.43) -- (448.3,244.86) -- (411.99,270.19) -- cycle ;
\draw  [fill={rgb, 255:red, 181; green, 255; blue, 106 }  ,fill opacity=1 ] (236.56,106.72) -- (272.86,81.38) -- (263.01,138.81) -- (226.7,164.14) -- cycle ;
\draw    (194,175) -- (262,223.97) ;
\draw    (194,175) -- (203,165.97) ;
\draw    (262,223.97) -- (271,214.94) ;
\draw    (335,242) -- (403,290.97) ;
\draw    (335,242) -- (344,232.97) ;
\draw    (403,290.97) -- (412,281.94) ;
\draw    (360,275.52) -- (369,266.48) ;
\draw    (219,208.52) -- (228,199.48) ;
\draw    (242.78,131.79) .. controls (224.07,116) and (277.15,105.09) .. (253.9,130.57) ;
\draw [shift={(252.78,131.76)}, rotate = 313.92] [color={rgb, 255:red, 0; green, 0; blue, 0 }  ][line width=0.75]    (10.93,-3.29) .. controls (6.95,-1.4) and (3.31,-0.3) .. (0,0) .. controls (3.31,0.3) and (6.95,1.4) .. (10.93,3.29)   ;
\draw    (312.78,169.79) .. controls (294.07,154) and (347.15,143.09) .. (323.9,168.57) ;
\draw [shift={(322.78,169.76)}, rotate = 313.92] [color={rgb, 255:red, 0; green, 0; blue, 0 }  ][line width=0.75]    (10.93,-3.29) .. controls (6.95,-1.4) and (3.31,-0.3) .. (0,0) .. controls (3.31,0.3) and (6.95,1.4) .. (10.93,3.29)   ;
\draw    (355.78,186.79) .. controls (337.07,171) and (390.15,160.09) .. (366.9,185.57) ;
\draw [shift={(365.78,186.76)}, rotate = 313.92] [color={rgb, 255:red, 0; green, 0; blue, 0 }  ][line width=0.75]    (10.93,-3.29) .. controls (6.95,-1.4) and (3.31,-0.3) .. (0,0) .. controls (3.31,0.3) and (6.95,1.4) .. (10.93,3.29)   ;
\draw    (426.78,236.79) .. controls (408.07,221) and (461.15,210.09) .. (437.9,235.57) ;
\draw [shift={(436.78,236.76)}, rotate = 313.92] [color={rgb, 255:red, 0; green, 0; blue, 0 }  ][line width=0.75]    (10.93,-3.29) .. controls (6.95,-1.4) and (3.31,-0.3) .. (0,0) .. controls (3.31,0.3) and (6.95,1.4) .. (10.93,3.29)   ;

\draw (395,222.4) node [anchor=north west][inner sep=0.75pt]    {$\ddots $};
\draw (248,144.4) node [anchor=north west][inner sep=0.75pt]    {$\ddots $};
\draw (319,187.4) node [anchor=north west][inner sep=0.75pt]    {$W_{1}$};
\draw (198,211.4) node [anchor=north west][inner sep=0.75pt]    {$W_{2}$};
\draw (344,276.4) node [anchor=north west][inner sep=0.75pt]    {$W_{3}$};
\draw (248,95.4) node [anchor=north west][inner sep=0.75pt]    {$\mu _{r}$};
\draw (321,133.4) node [anchor=north west][inner sep=0.75pt]    {$\mu _{r}$};
\draw (365,150.4) node [anchor=north west][inner sep=0.75pt]    {$\mu _{s}$};
\draw (435,199.4) node [anchor=north west][inner sep=0.75pt]    {$\mu _{s}$};
\draw (309,326.4) node [anchor=north west][inner sep=0.75pt]    {$\mathrm{Orb}_{\Lambda }( X_{r,s})$};

\end{tikzpicture}
    \end{center}

    To see this, a map of type $W_1$ is determined precisely by the choice of elliptic fibre and the choice of section. When the fibre maps to $\frac{1}{r} E_0$ or $\frac{1}{s} E_{\infty}$ there are $\mu_r, \mu_s$ automorphisms yielding $X_{r,s}$. As for $W_2$, the moduli we have involves the choice of marking on $E$, the choice of principle $\mu_r$-bundle $E \rightarrow B \mu_r$ and the choice of section. The choice of marking on the stacky curve is parameterised by the first factor $E \times B \mu_r$ (as the stacky divisor is the trivial gerbe). The choice of section is parameterised by the second factor, $E$ which is encoding the intersection point with one of the vertical divisors. This factor is un-stacky as the intersection is transverse. A similar description exists for $W_3$, just now the marked point lies on the other stacky divisor so we have replaced $r$ with $s$. Note that the trivial torsors are no longer contained in the first irreducible component, unlike in Example \ref{Maulik}; the intersections of $W_1$ with $W_2, W_3$ correspond to maps from curves with $3$ irreducible components of the following form, with a contracted rational component: \begin{center}

\tikzset{every picture/.style={line width=0.75pt}} 

\begin{tikzpicture}[x=0.75pt,y=0.75pt,yscale=-1,xscale=1]

\draw   (314.33,187.04) .. controls (325.36,186.51) and (335.05,201.74) .. (335.98,221.04) .. controls (336.9,240.35) and (328.71,256.43) .. (317.67,256.96) .. controls (306.64,257.49) and (296.95,242.26) .. (296.02,222.96) .. controls (295.1,203.65) and (303.29,187.57) .. (314.33,187.04) -- cycle ;
\draw  [draw opacity=0] (318.05,241.39) .. controls (313.64,236.17) and (310.98,229.42) .. (310.98,222.04) .. controls (310.98,214.92) and (313.46,208.38) .. (317.6,203.24) -- (340.98,222.04) -- cycle ; \draw   (318.05,241.39) .. controls (313.64,236.17) and (310.98,229.42) .. (310.98,222.04) .. controls (310.98,214.92) and (313.46,208.38) .. (317.6,203.24) ;  
\draw  [draw opacity=0] (313.6,214.04) .. controls (314.53,216.98) and (315.01,220.12) .. (314.98,223.37) .. controls (314.94,226.19) and (314.53,228.91) .. (313.77,231.49) -- (284.98,223.04) -- cycle ; \draw   (313.6,214.04) .. controls (314.53,216.98) and (315.01,220.12) .. (314.98,223.37) .. controls (314.94,226.19) and (314.53,228.91) .. (313.77,231.49) ;  
\draw    (297,85.97) -- (295,256.97) ;
\draw    (281,104) -- (391,103.47) ;
\draw    (296,134.47) ;
\draw [shift={(296,134.47)}, rotate = 0] [color={rgb, 255:red, 0; green, 0; blue, 0 }  ][fill={rgb, 255:red, 0; green, 0; blue, 0 }  ][line width=0.75]      (0, 0) circle [x radius= 3.35, y radius= 3.35]   ;
\draw    (363,103.47) ;
\draw [shift={(363,103.47)}, rotate = 0] [color={rgb, 255:red, 0; green, 0; blue, 0 }  ][fill={rgb, 255:red, 0; green, 0; blue, 0 }  ][line width=0.75]      (0, 0) circle [x radius= 3.35, y radius= 3.35]   ;

\draw (222,146.4) node [anchor=north west][inner sep=0.75pt]    {$\mathrm{deg} \ =\ 0$};

\end{tikzpicture}
    \end{center} These intersection loci are isomorphic to $E \times B \mu_r, E \times B \mu_s$ respectively, parameterising the section's height.
    
     In particular $\mathrm{Orb}_{\Lambda}(X_{r,s})$ is equidimensional of dimension $2$ with precisely $r^2 + s^2 + 1$ irreducible components and one easily computes this is also the virtual dimension. Under the map $\mathrm{Orb}_{\Lambda}(X_{r,s}) \rightarrow \mathrm{Orb}_{\Lambda}(\mathcal{A}_l)$ we draw the mod $l$-tropical types that $W_1,W_2,W_3$ map to, where $W_1$ maps to $[\tau_1]$ and $W_2,W_3$ both map to $[\tau_2]$: \begin{center}

\tikzset{every picture/.style={line width=0.75pt}} 

\begin{tikzpicture}[x=0.75pt,y=0.75pt,yscale=-1,xscale=1]

\draw    (201,73.97) -- (200,144.97) ;
\draw [shift={(200,144.97)}, rotate = 90.81] [color={rgb, 255:red, 0; green, 0; blue, 0 }  ][fill={rgb, 255:red, 0; green, 0; blue, 0 }  ][line width=0.75]      (0, 0) circle [x radius= 3.35, y radius= 3.35]   ;
\draw [shift={(201,73.97)}, rotate = 90.81] [color={rgb, 255:red, 0; green, 0; blue, 0 }  ][fill={rgb, 255:red, 0; green, 0; blue, 0 }  ][line width=0.75]      (0, 0) circle [x radius= 3.35, y radius= 3.35]   ;
\draw    (200,210) -- (341,211.94) ;
\draw [shift={(343,211.97)}, rotate = 180.79] [color={rgb, 255:red, 0; green, 0; blue, 0 }  ][line width=0.75]    (10.93,-3.29) .. controls (6.95,-1.4) and (3.31,-0.3) .. (0,0) .. controls (3.31,0.3) and (6.95,1.4) .. (10.93,3.29)   ;
\draw [shift={(200,210)}, rotate = 0.79] [color={rgb, 255:red, 0; green, 0; blue, 0 }  ][fill={rgb, 255:red, 0; green, 0; blue, 0 }  ][line width=0.75]      (0, 0) circle [x radius= 3.35, y radius= 3.35]   ;
\draw    (200,162.97) -- (199.06,194.97) ;
\draw [shift={(199,196.97)}, rotate = 271.68] [color={rgb, 255:red, 0; green, 0; blue, 0 }  ][line width=0.75]    (10.93,-3.29) .. controls (6.95,-1.4) and (3.31,-0.3) .. (0,0) .. controls (3.31,0.3) and (6.95,1.4) .. (10.93,3.29)   ;
\draw    (382,135.97) -- (452,136.97) ;
\draw [shift={(452,136.97)}, rotate = 0.82] [color={rgb, 255:red, 0; green, 0; blue, 0 }  ][fill={rgb, 255:red, 0; green, 0; blue, 0 }  ][line width=0.75]      (0, 0) circle [x radius= 3.35, y radius= 3.35]   ;
\draw [shift={(382,135.97)}, rotate = 0.82] [color={rgb, 255:red, 0; green, 0; blue, 0 }  ][fill={rgb, 255:red, 0; green, 0; blue, 0 }  ][line width=0.75]      (0, 0) circle [x radius= 3.35, y radius= 3.35]   ;
\draw    (382,211) -- (523,212.94) ;
\draw [shift={(525,212.97)}, rotate = 180.79] [color={rgb, 255:red, 0; green, 0; blue, 0 }  ][line width=0.75]    (10.93,-3.29) .. controls (6.95,-1.4) and (3.31,-0.3) .. (0,0) .. controls (3.31,0.3) and (6.95,1.4) .. (10.93,3.29)   ;
\draw [shift={(382,211)}, rotate = 0.79] [color={rgb, 255:red, 0; green, 0; blue, 0 }  ][fill={rgb, 255:red, 0; green, 0; blue, 0 }  ][line width=0.75]      (0, 0) circle [x radius= 3.35, y radius= 3.35]   ;
\draw    (382,163.97) -- (381.06,195.97) ;
\draw [shift={(381,197.97)}, rotate = 271.68] [color={rgb, 255:red, 0; green, 0; blue, 0 }  ][line width=0.75]    (10.93,-3.29) .. controls (6.95,-1.4) and (3.31,-0.3) .. (0,0) .. controls (3.31,0.3) and (6.95,1.4) .. (10.93,3.29)   ;
\draw    (201,73.97) -- (274.03,87.6) ;
\draw [shift={(276,87.97)}, rotate = 190.57] [color={rgb, 255:red, 0; green, 0; blue, 0 }  ][line width=0.75]    (10.93,-3.29) .. controls (6.95,-1.4) and (3.31,-0.3) .. (0,0) .. controls (3.31,0.3) and (6.95,1.4) .. (10.93,3.29)   ;
\draw    (201,73.97) -- (275.07,54.48) ;
\draw [shift={(277,53.97)}, rotate = 165.26] [color={rgb, 255:red, 0; green, 0; blue, 0 }  ][line width=0.75]    (10.93,-3.29) .. controls (6.95,-1.4) and (3.31,-0.3) .. (0,0) .. controls (3.31,0.3) and (6.95,1.4) .. (10.93,3.29)   ;
\draw    (452,136.97) -- (524,136.97) ;
\draw [shift={(526,136.97)}, rotate = 180] [color={rgb, 255:red, 0; green, 0; blue, 0 }  ][line width=0.75]    (10.93,-3.29) .. controls (6.95,-1.4) and (3.31,-0.3) .. (0,0) .. controls (3.31,0.3) and (6.95,1.4) .. (10.93,3.29)   ;
\draw    (382,135.97) -- (433.2,110.85) ;
\draw [shift={(435,109.97)}, rotate = 153.87] [color={rgb, 255:red, 0; green, 0; blue, 0 }  ][line width=0.75]    (10.93,-3.29) .. controls (6.95,-1.4) and (3.31,-0.3) .. (0,0) .. controls (3.31,0.3) and (6.95,1.4) .. (10.93,3.29)   ;

\draw (135,64.4) node [anchor=north west][inner sep=0.75pt]    {$g_{v_{1}} =\ 0$};
\draw (134,136.4) node [anchor=north west][inner sep=0.75pt]    {$g_{v_{2}} =\ 1$};
\draw (449,146.4) node [anchor=north west][inner sep=0.75pt]    {$g_{v_{2}} =\ 1$};
\draw (331,114.4) node [anchor=north west][inner sep=0.75pt]    {$g_{v_{1}} =\ 0$};
\draw (230,236.4) node [anchor=north west][inner sep=0.75pt]    {$[ \tau _{1}]$};
\draw (431,231.4) node [anchor=north west][inner sep=0.75pt]    {$[ \tau _{2}]$};

\end{tikzpicture}
    \end{center} Stratum $Z_{[\tau_1]}$ is in the boundary of the main component $Z_{\mathrm{main}}$ and $Z_{[\tau_2]}$ is another irreducible component.

The virtual class agrees with the fundamental class as before, and note that $$(\Pi_{\lambda r,r})_* [\mathrm{Orb}_{\Lambda}(X_{r,s})]^{\mathrm{vir}} = 0$$ since $\mathrm{dim}(\overline{M}_{1,2}) = 2$ and on each irreducible component of $\mathrm{Orb}_{\Lambda}(X_{r,s})$ the complex structure of the elliptic curve is fixed, so $\Pi_{\lambda r,r}$ must lower dimension on all components. To get an interesting polynomial, one needs to consider appropriate insertions. Indeed, consider $\mathrm{ev}_1^{*}[\mathrm{pt}]$, the pullback of an age $\frac{1}{r}$ point class on $\frac{1}{r} E_0$. On each irreducible component of $W_2$, the intersection with this locus is isomorphic to $E \times B\mu_r$ corresponding to the moduli of the joining node. The intersection of this locus with $W_3$ fixes the section, and hence get loci isomorphic to $E \times B \mu_s$ corresponding to the moduli of the second marking on $E$. The intersection of this locus with $W_1$ fixes the section and gives $\mathbb{P}^1_{r,s}$ corresponding to choice of elliptic fibre. The map $\Pi_{\lambda r, r}$ contracts the unstable rational tail in each of these loci and we see $$(\Pi_{\lambda r, r})_* \mathrm{ev}_1^{*}[\mathrm{pt}] = (r + s)[(E,p)]$$ where $(E,p) \subset \overline{M}_{1,2} $ corresponds to the elliptic fibre over $\overline{M}_{1,1}$ corresponding to $E$ and a choice of $p \in E$.

\end{example} \qed

\begin{example}[Non smooth pair with similar properties]
    We now do a variant on Example \ref{Maulik} with the case of nodal genus $1$ curves. Let $X \rightarrow \mathbb{P}^1$ be a family of genus $1$ curves such that the fibres above $0$ and $\infty$ are nodal curves $E_0, E_{\infty}$ and all other fibres are smooth. Let $X_{r,s}$ be the root stack along these fibres similarly as in the previous example. An analogous calculation shows that $\mathrm{Orb}(X_{r,s}) \cong \mathbb{P}^1_{r,s} \cup (B \mu_r)^{r - 1} \cup (B \mu_s)^{s-1}$ since now $$\mathrm{Jac}(E_0) \cong \mathbb{G}_m, \ \mathrm{Jac}(E_0)[r] \cong \mu_r$$ and similarly for $E_{\infty}$. It then follows the degree of the virtual class is $$\mathrm{deg}[\mathrm{Orb}(X_{r,s})]^{\mathrm{vir}} = 1/r + 1/s + (r - 1)/r + (s - 1)/s = 2 $$ is now constant, independent of rooting parameter. 

    A priori the theory in this paper does not directly apply to $X_{r,s}$ since $D$ is not smooth. Indeed, one difference is that the tropicalisation $D$ consists of two cones glued to themselves along a ray, whereas the tropicalisation of a smooth pair is always a union of rays. The corresponding universal moduli space is then twisted pre-stable maps to an Artin fan $\mathcal{A}'$ which is a quotient of $[\mathbb{A}^2/\mathbb{G}_m^2]$ by a $\mu_2$ action permuting the coordinate axes, corresponding to the dual of $\N^2/(2e_1 = 2e_2)$. The inclusion of the unique ray gives an embedding $\mathcal{A} \hookrightarrow \mathcal{A}'$. One can generalise the definition of mod $r$ tropical type for target $\mathcal{A}'$ inducing a stratification on $\mathrm{Orb}_{\Lambda}(\mathcal{A}')$. There are now $3$ cones one can assign to each vertex and edge of the domain graph; the new two-dimensional cone corresponds to collapsing to the $0$-stratum of the nodal cubic. However in our setup above with contact data $\Lambda$, all induced maps to $\mathcal{A}'$ factor through $\mathcal{A}$ and so all induced mod $r$ tropical types are of the form in Definition \ref{troptype}.
\end{example} \qed


\newpage

\bibliography{ArXivV1main}

\begin{thebibliography}{10}

\bibitem{ACW17}
Dan Abramovich, Charles Cadman, and Jonathan Wise.
\newblock Relative and orbifold {G}romov-{W}itten invariants.
\newblock {\em Algebr. Geom.}, 4(4):472--500, 2017.

\bibitem{ACGS20}
Dan Abramovich, Qile Chen, Mark Gross, and Bernd Siebert.
\newblock Punctured logarithmic maps.
\newblock {\em Pre-print}, 2020.

\bibitem{AGV01}
Dan Abramovich, Tom Graber, and Angelo Vistoli.
\newblock Algebraic orbifold quantum products.
\newblock In {\em Orbifolds in mathematics and physics ({M}adison, {WI}, 2001)}, volume 310 of {\em Contemp. Math.}, pages 1--24. Amer. Math. Soc., Providence, RI, 2002.

\bibitem{AV02}
Dan Abramovich and Angelo Vistoli.
\newblock Compactifying the space of stable maps.
\newblock {\em J. Amer. Math. Soc.}, 15(1):27--75, 2002.

\bibitem{BNR22}
Luca Battistella, Navid Nabijou, and Dhruv Ranganathan.
\newblock Gromov--{W}itten theory via roots and logarithms.
\newblock {\em Geom. Topol.}, 28(7):3309--3355, 2024.

\bibitem{BNR24}
Luca Battistella, Navid Nabijou, and Dhruv Ranganathan.
\newblock Logarithmic negative tangency and root stacks.
\newblock {\em Pre-print}, 2024.

\bibitem{Cad07}
Charles Cadman.
\newblock Using stacks to impose tangency conditions on curves.
\newblock {\em Amer. J. Math.}, 129(2):405--427, 2007.

\bibitem{AC14}
Qile Chen and Dan Abramovich.
\newblock Stable logarithmic maps to {D}eligne-{F}altings pairs {I}.
\newblock {\em Ann. of Math. (2)}, 180(2):455--521, 2014.

\bibitem{CH24}
Alessandro Chiodo and David Holmes.
\newblock Double ramification cycles within degeneracy loci via moduli of roots.
\newblock {\em Pre-print}, 2024.

\bibitem{Cru24}
Robert Crumplin.
\newblock {\em Moduli spaces of twisted maps to smooth pairs}.
\newblock PhD thesis, KCL, 2024.

\bibitem{CJ}
Robert Crumplin and Sam Johnston.
\newblock Universal relations on moduli spaces of twisted maps.
\newblock {\em In preparation}, 2025.

\bibitem{GS15}
Anton Geraschenko and Matthew Satriano.
\newblock Toric stacks {I}: {T}he theory of stacky fans.
\newblock {\em Trans. Amer. Math. Soc.}, 367(2):1033--1071, 2015.

\bibitem{GS11}
Mark Gross and Bernd Siebert.
\newblock Logarithmic {G}romov-{W}itten invariants.
\newblock {\em J. Amer. Math. Soc.}, 26(2):451--510, 2013.

\bibitem{JPPZ17}
F.~Janda, R.~Pandharipande, A.~Pixton, and D.~Zvonkine.
\newblock Double ramification cycles on the moduli spaces of curves.
\newblock {\em Publ. Math. Inst. Hautes \'{E}tudes Sci.}, 125:221--266, 2017.

\bibitem{TY1}
Hsian-Hua Tseng and Fenglong You.
\newblock On orbifold {G}romov-{W}itten theory in codimension one.
\newblock {\em J. Pure Appl. Algebra}, 220(10):3567--3571, 2016.

\bibitem{TY2}
Hsian-Hua Tseng and Fenglong You.
\newblock Higher genus relative and orbifold {G}romov-{W}itten invariants.
\newblock {\em Geom. Topol.}, 24(6):2749--2779, 2020.

\bibitem{TY3}
Hsian-Hua Tseng and Fenglong You.
\newblock On the polynomiality of orbifold {G}romov-{W}itten theory of root stacks.
\newblock {\em Math. Z.}, 300(1):235--246, 2022.

\bibitem{UKW21}
Ajith Urundolil~Kumaran and Longting Wu.
\newblock A new approach to the operator formalism for {G}romov-{W}itten invariants of the cap and tube.
\newblock {\em Adv. Math.}, 435(part A):Paper No. 109357, 49, 2023.

\bibitem{You21}
Fenglong You.
\newblock Gromov-{W}itten invariants of root stacks with mid-ages and the loop axiom.
\newblock {\em Adv. Math.}, 386:Paper No. 107811, 25, 2021.

\end{thebibliography}
\bibliographystyle{plain}

\end{document}